\def\bR{{\mathbb R}}
\def\sE{{\mathscr E}}
\def\sF{{\mathscr F}}
\def\sA{{\mathscr A}}
\def\sG{{\mathscr G}}
\def\re{{\mathrm{e}}}
\def\bH{\mathbf{H}}
\def\sN{\mathscr{N}}
\def\cH{\mathcal{H}}
\def\sL{\mathscr{L}}
\def\cD{\mathcal{D}}
\def\bH{\mathbf{H}}
\def\bD{\mathbf{D}}
\def\cH{\mathcal{H}}
\def\Tr{\text{Tr}}
\def\bn{\mathbf{n}}
\def\bS{\mathbf{S}}
\def\Tr{\mathop{\mathrm{Tr}}}
\def\${|\!|\!|}
\def\l|{\left|\!\left|\!\left|}
\def\r|{\right|\!\right|\!\right|}
\newtheorem{theorem}{Theorem}[section]
\newtheorem{lemma}[theorem]{Lemma}
\newtheorem{proposition}[theorem]{Proposition}
\newtheorem{corollary}[theorem]{Corollary}
\theoremstyle{definition}
\newtheorem{definition}[theorem]{Definition}
\newtheorem{example}[theorem]{Example}
\theoremstyle{remark}
\newtheorem{remark}[theorem]{Remark}
\numberwithin{equation}{section}
\begin{document}

\title[Generalizing DN operators]{Generalizing Dirichlet-to-Neumann operators}

\author{Liping Li}
\address{RCSDS, HCMS, Academy of Mathematics and Systems Science, Chinese Academy of Sciences, Beijing, China.  }
\address{Bielefeld University,  Bielefeld, Germany.}
\email{liliping@amss.ac.cn}
\thanks{The first named author is partially supported by NSFC (No. 11931004),  and Alexander von Humboldt Foundation in Germany.}


\subjclass[2010]{Primary 31C25, 60J60.}



\keywords{Dirichlet-to-Neumann operators,  Dirichlet forms,  Trace Dirichlet forms,  Perturbations,  Positivity preserving coercive forms,  $h$-transformations,  Irreducibility, Calder\'on's problem}

\begin{abstract}
The aim of this paper is to study the  Dirichlet-to-Neumann operators in the context of Dirichlet forms and especially to figure out their probabilistic counterparts.  Regarding irreducible Dirichlet forms,  we will show that the  Dirichlet-to-Neumann operators for them are associated with the trace Dirichlet forms corresponding to the time changed processes on the boundary.  Furthermore,  the  Dirichlet-to-Neumann operators for perturbations of Dirichlet forms will be also explored.   It turns out that for typical cases such a  Dirichlet-to-Neumann operator corresponds to a quasi-regular positivity preserving (symmetric) coercive form,  so that there exists a family of Markov processes associated with it via Doob's $h$-transformations. 

\end{abstract}

\maketitle

\tableofcontents

\section{Introduction}

Let $\Omega\subset \bR^d$,  $d\geq 2$, be a bounded Lipschitz domain,  i.e. the boundary $\Gamma:=\partial \Omega$ is locally the graph of a Lipschitz function. 
The classical Dirichlet-to-Neumann operator (DN operator in abbreviation) $D$ over $\Omega$  is roughly defined as follows: For $\varphi\in L^2(\Gamma)$ such that there exists a (unique) harmonic function $u\in H^1(\Omega)$,  i.e.  $\Delta u=0$ in $\Omega$ in the weak sense,   with $\Tr(u)=\varphi$ and having a weak normal derivative $\partial_\bn u\in L^2(\Gamma)$,  $D\varphi$ is defined as $\frac{1}{2}\partial_\bn u$.  The domain $\cD(D)$ of $D$ is the totality of all such $\varphi\in L^2(\Gamma)$.  Here $\Tr(u)$ is the trace of $u$ on $\Gamma$ and the weak normal derivative  $\partial_\bn u$ is determined by the Green-Gauss formula \eqref{eq:GreenGauss}.  The rigorous definition will be stated in Definition~\ref{DEF31}. 

 A systemic introduction to DN operators is referred to,  e.g.,  the monograph of Taylor \cite[Section 12C]{T96}.  Analogical DN operators can be defined for other operators,  such as Schr\"odinger operators,  in place of the Laplacian; see,  e.g.,  \cite{AE20, AE14, BE17, EO14, EO19}.  Under mild conditions,  these DN operators are lower semi-bounded and self-adjoint on $L^2(\Gamma)$. There appear various analytic approaches to study them in recent years.  For example,  ter Elst and Ouhabaz \cite{EO14, EO19} showed that the strongly continuous semigroup corresponding to a DN operator is given  by a kernel which satisfies Poisson upper bounds.  Other properties like positivity and irreducibility for the semigroup are explored in,  e.g.,  \cite{AM12,  AE20}.   Based on an indirect ellipticity property called hidden compactness,  Arendt et al.  \cite{AE14} investigated DN operators that may be multi-valued.  However the literatures for probabilistic approach to DN operators are very few.  To our knowledge,  it first arose in \cite{BV17} that the classical DN operator is associated with certain Markov process on $\Gamma$.  
 

It is our first aim in this paper to generalize DN operators to those for operators related to irreducible Dirichlet forms.  Terminologies and notations concerning Dirichlet forms are referred to,  e.g.,  \cite{FOT11, CF12}.  Let $\sL$ be a self-adjoint operator  that is the generator of a regular and irreducible Dirichlet form $(\sE,\sF)$ on $L^2(E,m)$.  The `boundary' and the underlying measure on the `boundary' are chosen as a quasi closed set $F\subset E$ and a positive Radon smooth measure $\mu$ with $\text{qsupp}[\mu]=F$,  where $\text{qsupp}[\mu]$ stands for the quasi support of $\mu$.  A function $u$ is called harmonic in the `interior'  $G:=E\setminus F$ with respect to $\sL$ if $u\in \sF_\re$ and 
$$\sE(u,g)=0,\quad \forall g\in \sF_\re^G,$$ 
where $\sF_\re$ is the extended Dirichlet space of $(\sE,\sF)$ and $\sF^G_\re:=\{u\in \sF_\re: u=0,\sE\text{-q.e. on }F\}$.  Then the  DN operator $\sN$ for $\sL$ (also called for $(\sE,\sF)$) is by definition mapping `the trace on the boundary' $\varphi:=u|_F$ of certain harmonic function $u$ to its `weak normal derivative on the boundary' $f$ determined by
\begin{equation}\label{eq:11}
	\sE(u,v)=\int_F fv|_F d\mu,\quad \text{for any } v\in \sF_\re \text{ with }v|_F\in L^2(F,\mu).  
\end{equation}
Here the `trace' $u|_F$ is the restriction of ($\sE$-quasi-continuous) $u$ to $F$.  We will show in Theorem~\ref{THM26} that $\sN$ is a well-defined positive self-adjoint operator on $L^2(F,\mu)$.  More significantly,  $-\sN$ is the $L^2$-generator of the trace Dirichlet form of $(\sE,\sF)$ on $L^2(F,\mu)$,  whose associated Markov process is the time changed process of $X$ by the positive continuous additive functional (PCAF in abbreviation) corresponding to the Revuz measure $\mu$.  Here $X$ is the Markov process   associated with $(\sE,\sF)$.  This new approach leads to a rich family of DN operators containing those for self-adjoint operators appearing in the literatures like \cite{V21, BV17,  W15, W18}.  Particularly,  the classical DN operator $D$ can be recovered as follows: Consider $(\sE,\sF)=(\frac{1}{2}\mathbf{D},H^1(\Omega))$ on $L^2(\bar{\Omega})$,  where $\bar{\Omega}$ is the closure of $\Omega$ and
\[
	\mathbf{D}(u,v):=\int_\Omega \nabla u(x) \nabla v(x)dx,\quad u,v\in H^1(\Omega),  
\] 
whose is  associated  with the reflected Brownian motion on $\bar{\Omega}$.  Take $F=\Gamma$ and $\mu=\sigma$,  the surface measure on $\Gamma$.  Then $D$ is identified with the  DN operator for $(\sE,\sF)$ on $L^2(\Gamma)$;




Nevertheless this new approach seems not to make us satisfied,  because the Schr\"odinger operators like $\frac{1}{2}\Delta-V$ with $V\in L^\infty(\Omega)$ are hardly associated with a Dirichlet form.  To overcome this lack,  we further introduce the  DN operators for the perturbation of $(\sE,\sF)$.  Let $\bS$ be the family of all positive smooth measures with respect to $\sE$ and take $\kappa=\kappa^+-\kappa^-\in \bS-\bS$, i.e. $\kappa^\pm\in \bS$ and $\kappa^+\perp \kappa^-$.  Then
\[
\begin{aligned}
	\sF^\kappa:=\sF\cap L^2(E,|\kappa|),  \quad \sE^\kappa(u,v):=\sE(u,v)+\int_E uvd\kappa,\; u,v\in \sF^\kappa
\end{aligned}\]
is called the perturbation of $(\sE,\sF)$ by $\kappa$.  Under suitable conditions $(\sE^\kappa,\sF^\kappa)$ is a lower bounded symmetric closed form,  whose $L^2$-generator $\sL_\kappa$ is upper semi-bounded and self-adjoint.  Particularly,  $\frac{1}{2}\Delta-V$ corresponds to the perturbation of $(\frac{1}{2}\mathbf{D},H^1(\Omega))$ by $V(x)dx$.  The  DN operator $\sN_\kappa$ for $\sL_\kappa$ is defined by a similar way to that of $\sN$;  see Definition~\ref{DEF41}.  Then one of the main results in this part, Theorem~\ref{THM44},  states a useful sufficient condition for the self-adjointness of $\sN_\kappa$ on $L^2(F,\mu)$ by an argument involving certain compact embedding property. 

A problem of great interest to us is to figure out the probabilistic counterparts of  DN operators.  This is completely solved for the  DN operators for Dirichlet forms.  However we are stuck in the perturbation case because the $L^2$-semigroup associated with $\sN_\kappa$ is usually not Markovian.  Instead the main result for this case,  Theorem~\ref{THM312},  states that under certain conditions  $\sN_\kappa$ corresponds to a quasi-regular positivity preserving (symmetric) coercive form,  so that there exists a family of Markov processes that are associated with $\sN_\kappa$ via Doob's $h$-transformations.  Several concrete perturbations including constant perturbation of the Laplacian,  perturbations of uniformly elliptic operators and perturbations supported on boundary will be paid special attention to.  For most of them,  the $L^2$-semigroup associated with $\sN_\kappa$ is shown to be irreducible.  As a result, $\sN_\kappa$ has a (unique) ground state,  i.e.  the smallest element in the spectrum $\sigma(\sN_\kappa)$ is a simple eigenvalue admitting a strictly positive eigenfunction,  called the ground state,  and no other eigenvalues admit strictly positive eigenfunctions.  More significantly,  this ground state corresponds to the unique $h$-transformation such that the Markov process obtained by $h$-transformation associated with $\sN_\kappa$ is recurrent;  see Example~\ref{EXA65}, Theorems~\ref{THM412} and \ref{THM55}. 



The developments of DN operators for perturbations are also motivated by the so-called Calder\'on's problem initiated in a pioneer contribution \cite{C80}.  This is an inverse problem considering whether one can determine the electrical conductivity of a medium by making voltage and current measurements at the boundary of the medium.  A systemic survey is referred to \cite{U12}.  To be more mathematical,  take two potentials $V_1,V_2\in L^\infty(\Omega)$ and let $D_{V_i}$ be the DN operator for the Schr\"odinger operator $\frac{1}{2}\Delta-V_i$.  Then the classical Calder\'on's problem is to ask if $D_{V_1}=D_{V_2}$ implies $V_1=V_2$.  The case of dimension $d\geq 3$ was solved in a seminal paper \cite{SU87} and the answer is positive.  Then the case of dimension $2$ is considered in, e.g., \cite{B08}.  Other related literatures include,  e.g.,   \cite{SU91,  BGU21, GSU20}.  In this paper we will also give some remarks on the Calder\'on's problem in the context of Dirichlet forms. 


The rest of this paper is organized as follows.  The section \S\ref{SEC2}  is devoted to introducing and studying the  DN operator for a regular and irreducible Dirichlet form.  The classical DN operator will be recovered in \S\ref{SEC4}.   In \S\ref{SEC3},  we will explore DN operators for perturbations.  
Their probabilistic counterparts are figure out in \S\ref{SEC34}. In \S\ref{Revisiting},  the DN operator $D_\lambda$ for $\frac{1}{2}\Delta-\lambda$ with a constant $\lambda\in \bR$ is explored.  The quadratic form induced by $D_\lambda$ is formulated in Theorem~\ref{THM44-2}.  As a corollary,  we obtain the irreducibility of the $L^2$-semigroup associated with $D_\lambda$ in Corollary~\ref{COR410} for $\lambda>\lambda^\text{D}_1/2$,  where $\lambda^\text{D}_1$ is the first eigenvalue of the Dirichlet Laplacian.  Then Theorem~\ref{THM412} states more properties about the Markov processes obtained by $h$-transformations associated with $D_\lambda$.  Perturbations of uniformly elliptic operators and perturbations supported on boundary are studied in \S\ref{SEC5} and \S\ref{SEC7} respectively.  


\subsection*{Notations}

We prepare notations that will be frequently used for handy reference.   Given a topological space $E$,  $\mathcal{B}(E)$ represents the family of all Borel measurable functions on $E$.  For a family $\sG$ of certain functions,  $p\sG$ (resp. $b\sG$) stands for the subfamily consisting of positive (resp. bounded) functions in $\sG$.   Given a measure $m$ or a function $u$ on $E$ and $F\subset E$,  $m|_F$ or $u|_F$ stands for the restriction of $m$ or $u$ to $F$.  For $u,v\in L^2(E,m)$,  $(u,v)_m:=\int_E uv dm$.  Given an operator $\sL$ on a Hilbert space,  $\cD(\sL)$ stands for its domain and $\sigma(\sL)$ stands for its spectrum. 

The symbol $\lesssim$ (resp. $\gtrsim$) means that the left (resp. right) term is bounded by the right (resp. left) term multiplying a non-essential constant.

For a regular or quasi-regular Dirichlet form $(\sE,\sF)$ on $L^2(E,m)$,  its extended Dirichlet space is denoted by $\sF_\re$.  For convenience,  every function in $\sF_\re$ is taken to be its $\sE$-quasi continuous $m$-version if without other statements.  For $\alpha\geq 0$,  $\sE_\alpha(u,v):=\sE(u,v)+\alpha (u,v)_m$ and $\|u\|_{\sE_\alpha}:=\sE_\alpha(u,u)^{1/2}$ for all $u,v\in\sF$.  

Let $\Omega\subset \bR^d$,  $d\geq 2$,  be a domain.  Set $$H^1(\Omega)=W^{1,2}(\Omega)=\{u\in L^2(\Omega): \partial_{x_i}u\in L^2(\Omega), 1\leq i\leq d\}$$and for $u,v\in H^1(\Omega)$,  $\mathbf{D}(u,v):=\int_\Omega \nabla u(x)\nabla v(x)dx$.  The notation $H^1_0(\Omega)$ stands for the closure of $C_c^\infty(\Omega)$ in $H^1(\Omega)$,  where $C_c^\infty(\Omega)$ is the family of all smooth functions  with compact support in $\Omega$.  Similarly,  $H^s(\Omega)=W^{s,2}(\Omega)$ denotes the Sobolev space of order $s\geq 0$.  To be more general,  $W^{m,p}(\Omega)$,  $m\in \mathbb{N},  p\geq 1$,  is the usual Sobolev space over $\Omega$.

\section{DN operators for irreducible Dirichlet forms}\label{SEC2}

\subsection{Basic setting}

What we are concerned with is a regular and irreducible Dirichlet form $(\sE,\sF)$ on $L^2(E,m)$   associated with an $m$-symmetric Markov process $X=(X_t, \mathbf{P}_x)$ on $E_\partial:=E\cup \{\partial\}$, where $E$ is a locally compact separable metric space attached by a trap $\partial$ and $m$ is a fully supported Radon measure on $E$.  The irreducibility particularly implies the recurrence or transience of $(\sE,\sF)$.  Denote by $\mathring{\mathbf{S}}$ the totality of positive Radon measures on $E$ charging no $\sE$-polar sets.  Take a non-zero $\mu\in \mathring{\bS}$ and set $F:=\text{qsupp}[\mu]$ (see \cite[Definition~3.3.4]{CF12}). Note that $F$ is quasi closed.  For convenience, we always take quasi closed (resp. quasi open) set to be a nearly Borel and finely closed (resp. finely open) $\sE$-q.e.  version,  so that its first hitting time,  e.g., $\sigma_F$ in Lemma~\ref{LM21},  is well defined. 
Let $G:=E\setminus F$.   Denote by $(\sE^G,\sF^G)$ the part Dirichlet form of $(\sE,\sF)$ on $G$, i.e. 
\[
	\sF^G=\{u\in \sF: u=0, \sE\text{-q.e. on }F\},\quad \sE^G(u,v)=\sE(u,v),\; u,v\in \sF^G. 
\]
Then $(\sE^G,\sF^G)$ is quasi-regular on $L^2(G,m|_G)$   associated with the part process of $X$ on $G$,  whose extended Dirichlet space is
\[
	\sF_{\re}^{G}:=\{u\in \sF_\re: u=0, \sE\text{-q.e. on }F\};
\]
see \cite[Theorems~3.3.8 and 3.4.9]{CF12}.  Write
\[
	\mathcal{H}_F:=\{u\in \sF_\re: \sE(u,v)=0,\forall v\in \sF_{\re}^{G}\}.  
\]
Note that $\sF^G_\re\cap \cH_F=\{0\}$.  
The following decomposition is elementary due to the irreducibility.

\begin{lemma}\label{LM21}
Every $u\in \sF_\re$ can be written as a sum
\[
	u=u_1+u_2,
\]
where $u_1\in \sF_{\re}^{G}$ and $u_2\in \mathcal{H}_F$.  This decomposition is unique,  and indeed
\[
	u_1=u-\mathbf{H}_F u,\quad u_2=\mathbf{H}_F u,
\]
where $\mathbf{H}_F u(x):=\mathbf{E}_x\left[u(X_{\sigma_F}); \sigma_F<\infty\right]$ for $x\in E$ and $\sigma_F:=\inf\{t>0:X_t\in F\}$.  
\end{lemma}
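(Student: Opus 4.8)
The plan is to obtain the decomposition as an $\sE$-orthogonal (Riesz) splitting of $\sF_\re$ along $\sF^G_\re$, and then to identify the two pieces with $u-\mathbf{H}_F u$ and $\mathbf{H}_F u$ using the elementary properties of the hitting operator. First I would dispose of uniqueness, which is immediate: if $u=u_1+u_2=u_1'+u_2'$ with $u_1,u_1'\in \sF^G_\re$ and $u_2,u_2'\in\cH_F$, then $u_1-u_1'=u_2'-u_2\in \sF^G_\re\cap\cH_F=\{0\}$, the identity recorded just above the statement, so the two representations coincide. Note that $\sF^G_\re\cap\cH_F=\{0\}$ is exactly where irreducibility enters: since $\mu\neq 0$ and $F=\text{qsupp}[\mu]$, the set $F$ is not $\sE$-polar, so by irreducibility the part form of $(\sE,\sF)$ on $G=E\setminus F$ is transient and $(\sF^G_\re,\sE)$ is a Hilbert space; hence $w\in\sF^G_\re$ with $\sE(w,w)=0$ forces $w=0$.

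For existence I would argue as follows. Since $\sE$ is a nonnegative symmetric bilinear form on $\sF_\re$, Cauchy--Schwarz gives $|\sE(u,v)|\le \sE(u,u)^{1/2}\sE(v,v)^{1/2}$, so $v\mapsto\sE(u,v)$ is $\sE$-bounded on $\sF^G_\re$. As $(\sF^G_\re,\sE)$ is a Hilbert space, the Riesz representation theorem yields a unique $u_1\in\sF^G_\re$ with $\sE(u_1,v)=\sE(u,v)$ for all $v\in\sF^G_\re$. Setting $u_2:=u-u_1\in\sF_\re$, we get $\sE(u_2,v)=0$ for every $v\in\sF^G_\re$, i.e. $u_2\in\cH_F$; together with the previous paragraph this establishes the existence and uniqueness of the decomposition.

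It remains to identify $u_1=u-\mathbf{H}_F u$ and $u_2=\mathbf{H}_F u$, equivalently $u_2=\mathbf{H}_F u$ $\sE$-q.e. Here I would invoke the standard properties of $\mathbf{H}_F$ on the extended Dirichlet space (cf. \cite[\S3.4]{CF12}, \cite[\S4.6]{FOT11}): for any $w\in\sF_\re$ one has $\mathbf{H}_F w\in\sF_\re$, $\mathbf{H}_F w=w$ $\sE$-q.e. on $F$ (since $\sE$-q.e. point of the finely closed set $F$ is regular for $F$, so $\sigma_F=0$ there), and $\mathbf{H}_F w\in\cH_F$. Applying this to $w=u_2$: the function $u_2-\mathbf{H}_F u_2\in\sF_\re$ vanishes $\sE$-q.e. on $F$, so it lies in $\sF^G_\re$, while it also lies in the linear space $\cH_F$; hence $u_2=\mathbf{H}_F u_2$. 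Finally, since $u_1\in\sF^G_\re$ vanishes $\sE$-q.e. on $F$ and the hitting distribution of $F$ charges no $\sE$-polar set, $\mathbf{H}_F u_1=0$, and therefore $\mathbf{H}_F u=\mathbf{H}_F u_1+\mathbf{H}_F u_2=u_2$, which finishes the proof.

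The step I expect to be the real obstacle is precisely this last input, that $\mathbf{H}_F w$ belongs to $\sF_\re$, is $\sE$-harmonic off $F$, and reproduces $w$ $\sE$-q.e. on $F$, for $w$ ranging over the possibly recurrent extended Dirichlet space. In the transient case it is classical; in the recurrent case it again hinges on $F$ being non-$\sE$-polar together with irreducibility, so that the part form on $G$ is transient and $\mathbf{H}_F$ behaves like a genuine projection --- which is the sense in which the lemma is ``elementary due to the irreducibility''.
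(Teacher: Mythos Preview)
Your proof is correct and rests on the same ingredients as the paper's: the properties of $\mathbf{H}_F$ on $\sF_\re$ recorded in \cite[\S3.4]{CF12}, together with the fact that $F$ is non-$\sE$-polar so the part form on $G$ is transient. The paper simply splits into the transient and recurrent cases and cites \cite[Theorems~3.4.2, 3.4.8, 5.2.16]{CF12} directly, whereas you give a slightly more self-contained presentation: you first produce the abstract orthogonal splitting via Riesz on the Hilbert space $(\sF^G_\re,\sE)$ (which is always available, since the part form is transient regardless of whether the ambient form is), and only afterwards identify the pieces through $\mathbf{H}_F$. This unifies the two cases the paper keeps separate, at the cost of a small detour --- once you invoke $\mathbf{H}_F w\in\cH_F$ and $\mathbf{H}_F w=w$ q.e.\ on $F$ for $w\in\sF_\re$, you could have set $u_2=\mathbf{H}_F u$ from the start and skipped the Riesz step entirely. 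Either way, the substance is the same.
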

\begin{proof}
Note that $(\sE,\sF)$ is recurrent or transient due to the irreducibility.  When $(\sE,\sF)$ is transient,  the assertions have been concluded in, e.g., \cite[Theorem~3.4.2]{CF12}.  When $(\sE,\sF)$ is recurrent,  the above decomposition still holds in view of \cite[Theorem~3.4.8]{CF12},  and the uniqueness can be easily obtained by means of \cite[Theorem~5.2.16]{CF12}. 
\end{proof}
\begin{remark}
In a little abuse of notation,  we may write
\begin{equation}\label{eq:Fe}
	\sF_\re = \sF_{\re}^G\oplus \mathcal{H}_F.
\end{equation}
When $(\sE,\sF)$ is transient,  $\sF_\re$ is a Hilbert space with the inner product $\sE$ and $\sF_{\re}^G$ is clearly a closed subspace of $\sF_\re$.  Then $\mathcal{H}_F$ is the orthogonal complement of $\sF_{\re}^{G}$ and \eqref{eq:Fe} is a real direct sum decomposition.  However when $(\sE,\sF)$ is recurrent,  $\sF_\re$ is not a Hilbert space because $1\in \sF_\re, \sE(1,1)=0$; see, e.g.,  \cite[Theorem~1.6.3]{FOT11}. 
\end{remark}


\subsection{DN operators}


For $\varphi\in L^2(F,\mu)$, $u$ is called the \emph{harmonic extension} of $\varphi$ if $u\in \cH_F$ and $u|_F=\varphi$.  
Note that the harmonic extension of $\varphi$ is unique if exists.   
We introduce the following  DN operator for $(\sE,\sF)$.  

\begin{definition}\label{DEF23}
Let $\mu\in \mathring{\bS}$ and $F=\text{qsupp}[\mu]$.  The following operator
\begin{equation}\label{eq:DN2}
\begin{aligned}
	\cD(\sN)=&\big\{\varphi\in L^2(F,\mu): \exists\,u\in \cH_F\text{ and }f\in L^2(F,\mu)\text{ such that }u|_F=\varphi, \\
	&\qquad\qquad \sE(u,v)=\int_{F} f v|_F d\mu \text{ for any }v\in \sF_\re\text{ with }v|_F\in L^2(F,\mu)\big\},\\
\sN \varphi=& f,\quad \varphi\in \mathcal{D}(\sN)
\end{aligned}\end{equation}
 is called the \emph{DN operator for $(\sE,\sF)$ on $L^2(F,\mu)$}.   
\end{definition}

In what follows,  we figure out the probabilistic counterpart of $\sN$.  Note that $\mu$ induces a PCAF $A=(A_t)_{t\geq 0}$, whose support is $F$; see, e.g.,  \cite[Theorem~5.1.5]{FOT11}.  Let $\zeta:=\inf\{t>0:X_t=\partial\}$ be the lifetime of $X$ and we denote $F\cup \{\partial\}$ by $F_\partial$ regarding it as a topological subspace of $E_\partial$.  The right continuous inverse $\tau_t$ of $A$ is defined as
\[
	\tau_t:=\left\lbrace
	\begin{aligned}
		&\inf\{s>0: A_s>t\},\quad t<A_{\zeta-}, \\
		&\infty,\qquad\qquad\qquad \qquad\;\, t\geq A_{\zeta-}. 
	\end{aligned} \right.
\]
Set $\check{X}_t:=X_{\tau_t}$ for $t\geq 0$ and $\check{\zeta}:=A_{\zeta-}$.
Then $\check{X}=(\check{X}_t, \check{\zeta}, \{\mathbf{P}_x\}_{x\in F_\partial})$ is the so-called \emph{time changed process} of $X$ by the PCAF $A$.  It is known that $\check{X}$ is a right process   associated with the quasi-regular Dirichlet form $(\check{\sE},\check{\sF})$ on $L^2(F,\mu)$: 
\begin{equation}\label{eq:traceDirichletform}
\begin{aligned}
	\check{\sF}&=\sF_\re|_F\cap L^2(F,\mu),\\
	\check{\sE}(\varphi, \phi)&=\sE(\bH_F \varphi, \bH_F\phi),\quad \varphi, \phi\in \check{\sF},
\end{aligned}
\end{equation}
where $\mathbf{H}_F \varphi(x):=\mathbf{E}_x\left[\varphi(X_{\sigma_F}); \sigma_F<\infty\right]$ for $x\in E$;  see,  e.g., \cite[Theorem~5.2.7]{CF12}.  The Dirichlet form $(\check{\sE},\check{\sF})$ is called the \emph{trace Dirichlet form} of $(\sE,\sF)$ on $L^2(F,\mu)$.  Let $F^*$ be the topological support of $\mu$,  i.e. the smallest closed set such that $\mu(E\setminus F^*)=0$.  Then $F\subset F^*$,  $\sE$-q.e.  (but $F^*\setminus F$ is not necessarily $\sE$-polar).  It is worth noting that $(\check{\sE},\check{\sF})$ can be realized as a regular Dirichlet form on $L^2(F^*,\mu)$ ($=L^2(F,\mu)$);  see,  e.g.,  \cite[Theorem~5.2.13]{CF12}. 

\begin{theorem}\label{THM26}
Let $\sN$ be the DN operator for $(\sE,\sF)$ on $L^2(F,\mu)$  and $(\check{\sE},\check{\sF})$ be the trace Dirichlet form of $(\sE,\sF)$ on $L^2(F,\mu)$.  Then $-\sN$ is identified with the generator of $(\check{\sE},\check{\sF})$ on $L^2(F,\mu)$.  Particularly,  $\sN$ is a positive and self-adjoint operator on $L^2(F,\mu)$ corresponding to a strongly continuous Markovian semigroup.  
\end{theorem}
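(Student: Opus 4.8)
The plan is to show that $-\sN$ coincides, as an operator on $L^2(F,\mu)$, with the $L^2$-generator $\check{\sL}$ of the trace Dirichlet form $(\check{\sE},\check{\sF})$. Since $(\check{\sE},\check{\sF})$ is a quasi-regular Dirichlet form (and can be realized as a regular one after a change of state space), once the identification $\sN=-\check{\sL}$ is in hand the positivity and self-adjointness of $\sN$, as well as the fact that $e^{-t\sN}$ is the strongly continuous Markovian semigroup of $(\check{\sE},\check{\sF})$, come for free. I would start from the standard characterization: $\varphi\in\cD(\check{\sL})$ iff $\varphi\in\check{\sF}$ and there is $f\in L^2(F,\mu)$ with $\check{\sE}(\varphi,\psi)=-(f,\psi)_\mu$ for all $\psi\in\check{\sF}$, in which case $\check{\sL}\varphi=f$. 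So the whole proof reduces to matching this with Definition~\ref{DEF23}.

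The crux is the identity $\sE(\bH_F\varphi,v)=\check{\sE}(\varphi,v|_F)$ for every $\varphi\in\check{\sF}$ and every $v\in\sF_\re$ with $v|_F\in L^2(F,\mu)$. To establish it I would first record that $\bH_F w(x)=\E_x[w(X_{\sigma_F});\sigma_F<\infty]$ depends on $w\in\sF_\re$ only through the trace $w|_F$, because $X_{\sigma_F}\in F$ and $X$ does not hit $\sE$-polar sets; hence $\bH_F\varphi:=\bH_F w$ (any $w\in\sF_\re$ with $w|_F=\varphi$) is a well-defined element of $\cH_F$, and $(\bH_F\varphi)|_F=\varphi$ since $w-\bH_F w\in\sF_\re^G$ vanishes $\sE$-q.e. on $F$; this also supplies the unique harmonic extension used in Definition~\ref{DEF23}. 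Then, for admissible $v$, Lemma~\ref{LM21} gives $v=(v-\bH_F v)+\bH_F v$ with $v-\bH_F v\in\sF_\re^G$ and $\bH_F v=\bH_F(v|_F)\in\cH_F$; using that $\bH_F\varphi\in\cH_F$ is $\sE$-orthogonal to $\sF_\re^G$ and then the definition \eqref{eq:traceDirichletform} of $(\check{\sE},\check{\sF})$, one obtains $\sE(\bH_F\varphi,v)=\sE(\bH_F\varphi,\bH_F(v|_F))=\check{\sE}(\varphi,v|_F)$. This manipulation uses only the bilinear form $\sE$ and the decomposition \eqref{eq:Fe}, so it goes through unchanged in the recurrent case, where $\sF_\re$ is not a Hilbert space.

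Given the identity, I would close both inclusions. If $\varphi\in\cD(\sN)$ with witnesses $u\in\cH_F$ and $f\in L^2(F,\mu)$, then $\varphi=u|_F\in\sF_\re|_F\cap L^2(F,\mu)=\check{\sF}$ and $u=\bH_F\varphi$ by uniqueness of the harmonic extension; since $v|_F$ ranges over all of $\check{\sF}$ as $v$ ranges over the admissible test functions, the defining relation $\sE(u,v)=\int_F f\,v|_F\,d\mu$ turns into $\check{\sE}(\varphi,\psi)=(f,\psi)_\mu$ for all $\psi\in\check{\sF}$, whence $\varphi\in\cD(\check{\sL})$ and $\check{\sL}\varphi=-f$. (One also sees here that $\sN$ is single-valued, $f$ being determined by $\varphi$ through the density of $\check{\sF}$ in $L^2(F,\mu)$.) Conversely, for $\varphi\in\cD(\check{\sL})$ I take $u:=\bH_F\varphi$ and $f:=-\check{\sL}\varphi$ and read the same chain of equalities backwards to conclude $\varphi\in\cD(\sN)$ with $\sN\varphi=f$. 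Hence $\sN=-\check{\sL}$, and the remaining assertions follow from Dirichlet form theory. The genuinely delicate point is the identity of the second paragraph — concretely the $\sE$-q.e.\ bookkeeping behind "$\bH_F$ factors through the trace map" and behind the claim that the traces of the admissible test functions exhaust $\check{\sF}$; the rest is routine.
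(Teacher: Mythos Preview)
Your proof is correct and follows essentially the same route as the paper's: both establish the equivalence of $\varphi\in\cD(\sN)$, $\sN\varphi=f$ with the generator characterization $\varphi\in\check{\sF}$, $\check{\sE}(\varphi,\psi)=(f,\psi)_\mu$ for all $\psi\in\check{\sF}$, via the decomposition of Lemma~\ref{LM21} and the identity $\sE(\bH_F\varphi,v)=\sE(\bH_F\varphi,\bH_F v)=\check{\sE}(\varphi,v|_F)$. You are slightly more explicit than the paper in isolating this identity and in checking that $\bH_F$ factors through the trace and that admissible test traces exhaust $\check{\sF}$, but these are exactly the ingredients the paper uses inline.
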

\begin{proof}
It suffices to show that $\varphi\in \cD(\sN)$,  $\sN \varphi=f$,  if and only if $\varphi\in \check{\sF}$ and $\check{\sE}(\varphi,  \phi)=(f, \phi)_\mu$ for any $\phi\in \check{\sF}$.  To do this,  we first take $\varphi\in \cD(\sN)$ with $\sN\varphi=f$.  Then there exists $u\in \cH_F\subset \sF_\re$ such that $\varphi=u|_F\in L^2(F,\mu)$ and 
\begin{equation}\label{eq:29}
 \sE(u,v)=\int_{F} f \cdot v|_F d\mu
\end{equation}
for any $v\in \sF_\re$ with $v|_F\in L^2(F,\mu)$.  Particularly $\varphi\in \sF_\re|_F\cap L^2(F,\mu)=\check{\sF}$.  Note that
\[
	\sE(u,v)=\sE(u,\bH_Fv)=\sE(\bH_Fu,\bH_Fv).
\]
It follows from \eqref{eq:29} that for any $\phi\in \check{\sF}$ with $\phi=v|_F$ and $v\in \sF_\re$,  
\[
\check{\sE}(\varphi, \phi)=\sE(\bH_Fu,\bH_F v)=(f, \phi)_\mu. 
\]

To the contrary,  let $\varphi\in \check{\sF}$ and $f\in L^2(F,\mu)$ such that $\check{\sE}(\varphi,  \phi)=(f, \phi)_\mu$ for any $\phi\in \check{\sF}$.  Then $u:=\bH_F \varphi\in \cH_F$ is the harmonic extension of $\varphi$.  We assert that $u,f$ satisfy the condition in \eqref{eq:DN2}.  To do this,  set $\phi:=v|_F\in \check{\sF}$ for $v\in \sF_\re$ with $v|_F\in L^2(F,\mu)$.  By means of Lemma~\ref{LM21} and \eqref{eq:traceDirichletform},  we can obtain that
\[
	(f,v|_F)_\mu=\check{\sE}(\varphi, \phi)=\sE(\bH_F \varphi, \bH_F\phi)=\sE(u,v).  
\]
Hence \eqref{eq:29} is concluded.  That completes the proof. 
\end{proof}

\subsection{Recovering classical DN operators}\label{SEC4}

Let $\Omega\subset \mathbb{R}^d$,  $d\geq 2$,  be a bounded Lipschitz domain.  Let $H^1(\Omega):=\{u\in L^2(\Omega): \partial_{x_i}u\in L^2(\Omega), 1\leq i\leq d\}$ and $L^2(\Gamma)$ be the $L^2$-space on $\Gamma$ with respect to the surface measure $\sigma$,  i.e.  the restriction of $d-1$ Hausdorff measure to $\Gamma$.  Similarly,  $H^s(\Omega)$ denotes the Sobolev space of order $s\geq 0$.  In addition,  we can define the Sobolev spaces $H^s(\Gamma)$ for $0\leq s\leq 1$ in the usual way using local coordinate representations of $\Gamma$; see,  e.g.,  \cite[\S2.4]{SS11}. 

Since $\Omega$ is Lipschitz,  there is a unique \emph{trace operator} $\text{Tr}: H^1(\Omega)\rightarrow L^2(\Gamma)$ such that $\Tr(u)=u|_{\Gamma}$ for $u\in H^1(\Omega)\cap C(\bar{\Omega})$.  The \emph{weak normal derivative} is defined as follows.  For $u\in H^1(\Omega)$, we say $\Delta u\in L^2(\Omega)$ if there exists $f\in L^2(\Omega)$ such that $\int_\Omega \nabla u\nabla vdx+\int_\Omega fv=0$ for any $v\in H^1_0(\Omega)$.  In this case $\Delta u:=f$.  For $u\in H^1(\Omega)$ with $\Delta u\in L^2(\Omega)$,  we say $u$ has a weak normal derivative in $L^2(\Gamma)$ provided that there exists $f\in L^2(\Gamma)$ such that
\begin{equation}\label{eq:GreenGauss}
	\int_\Omega \nabla u\nabla v dx +\int_\Omega \Delta u vdx=\int_\Gamma f \Tr(v)d\sigma,\quad \forall v\in H^1(\Omega).  
\end{equation}
Meanwhile we denote by $\partial_{\bn}u:=f$ the weak normal derivative of $u$.  Note that \eqref{eq:GreenGauss} holds for every function in
\begin{equation}\label{eq:35}
\begin{aligned}
	\{u\in H^1(\Omega): &\Delta u\in L^2(\Omega),  u\text{ has a weak normal derivative }\partial_\bn u\in L^2(\Gamma)\} \\
	 &=H^{3/2}_\Delta(\Omega):=\{u\in H^{3/2}(\Omega): \Delta u\in L^2(\Omega)\} \\
	 &=W^1:=\{u\in H^1(\Omega): \Delta u\in L^2(\Omega), \Tr(u)\in H^1(\Gamma)\};  
\end{aligned}\end{equation}
see,  e.g.,  \cite[Lemma~2.2]{BV17}.   Following,  e.g., \cite{AM12,  AE11, EO14, BV17},  we present the DN operator $D_\lambda$ with a parameter $\lambda\geq 0$ on $L^2(\Gamma)$,  which maps the trace of certain harmonic $u\in H^1(\Omega)$ to its weak normal derivative,  in the following way. 

\begin{definition}\label{DEF31}
For $\lambda\geq 0$,  the operator
\[
\begin{aligned}
	\cD(D_\lambda)&:=\big\{\varphi\in L^2(\Gamma): \exists u\in H^1(\Omega) \text{ such that }\frac{1}{2}\Delta u=\lambda u, \Tr(u)=\varphi, \\
		&\qquad \qquad \qquad \qquad\qquad \text{ and }u\text{ has a weak normal derivative }\partial_\bn u\in L^2(\Gamma)\big\},  \\
		D_\lambda \varphi&:=\frac{1}{2}\partial_\bn u,\quad \varphi\in \cD(D_\lambda) \text{ and } u\text{ as above}
\end{aligned}
\]
is called the DN operator on $L^2(\Gamma)$.  Write $D:=D_0$ for the sake of brevity.  
\end{definition}

The main result Theorem~3.3 of \cite{BV17} concludes that $-D_\lambda$ is the $L^2$-generator of the time changed process of the $\lambda$-subprocess of the reflected Brownian motion on $\bar{\Omega}$.  In what follows, we will recover it as a special case of Theorem~\ref{THM26}.   To do this, consider the Dirichlet form $(\frac{1}{2}\bD, H^1(\Omega))$, where $\bD(u,v):=\int_\Omega \nabla u\nabla vdx$ for $u,v\in H^1(\Omega)$.  Since $\Omega$ is Lipschitz,  $(\frac{1}{2}\bD, H^1(\Omega))$ is a regular Dirichlet form on $L^2(\bar{\Omega})$, which is   associated with the reflected Brownian motion on $\bar{\Omega}$.  Let 
\begin{equation}\label{eq:Brownian}
	\sF:=H^1(\Omega),\quad \sE(u,v):=\frac{1}{2}\bD(u,v)+\lambda\cdot (u,v)_m, \; u,v\in \sF,
\end{equation}
where $\lambda\geq 0$ and $m$ is the Lebesgue measure on $\bar{\Omega}$.  Then $(\sE,\sF)$ is clearly a regular and irreducible Dirichlet form on $L^2(\bar{\Omega})$.  When $\lambda>0$, the   associated Markov process $X$ is the $\lambda$-subprocess of the reflected Brownian motion on $\bar\Omega$.  Take $\mu:=\sigma\in \mathring{\bS}$.
The following lemma is crucial to applying Theorem~\ref{THM26}.  

\begin{lemma}\label{LM32}
Let $(\sE,\sF)$ be given by \eqref{eq:Brownian}.  Then the following hold:
\begin{itemize}
\item[(1)] $\sF_\re=H^1(\Omega)$.  
\item[(2)] For any $\sE$-quasi-continuous $u\in \sF_\re$,  $u|_\Gamma=\Tr(u)$,  $\sigma$-a.e. 
\item[(3)] $\text{qsupp}[\sigma]=\Gamma$, $\sE$-q.e.,  and the topological support of $\sigma$ is also $\Gamma$. 
\item[(4)] $\sF_{\re}^{\Omega}=H^1_0(\Omega)$ and $\cH_\Gamma=\{u\in H^1(\Omega): \frac{1}{2}\Delta u=\lambda u\}$.  
\end{itemize}
\end{lemma}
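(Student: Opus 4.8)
The plan is to reduce each of the four assertions to a classical fact about $H^1$ on Lipschitz domains; the main bridges are the trace characterization $H^1_0(\Omega)=\ker\big(\Tr\colon H^1(\Omega)\to L^2(\Gamma)\big)$, which is valid because $\Omega$ is Lipschitz, Lemma~\ref{LM21}, and standard quasi-uniform-convergence arguments. Note first that $\sF=H^1(\Omega)\subset L^2(\Omega)$ since $\Omega$ is bounded. For (1): if $\lambda>0$ then $\sE=\sE_0$ is coercive with $\sE(u,u)\asymp\|u\|_{H^1(\Omega)}^2$, so $\sF$ is already complete for $\sE$ and $(\sE,\sF)$ is transient; hence $\sF_\re=\sF=H^1(\Omega)$. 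If $\lambda=0$ then $\sE=\tfrac12\bD$ and $(\sE,\sF)$ is recurrent; here I would use the Poincar\'e--Wirtinger inequality on the bounded connected Lipschitz domain $\Omega$: given an $\sE$-Cauchy sequence $u_n\in H^1(\Omega)$ with $u_n\to u$ $m$-a.e., the mean-zero parts $u_n-\bar u_n$ are Cauchy, hence convergent, in $H^1(\Omega)$, which forces the means $\bar u_n$ to converge to a constant, so $u\in H^1(\Omega)$; the reverse inclusion $H^1(\Omega)=\sF\subset\sF_\re$ is trivial. Thus $\sF_\re=\sF=H^1(\Omega)$ in both cases.

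For (2): fix $u\in\sF_\re=H^1(\Omega)$ and pick $u_n\in C^\infty(\bar\Omega)$ with $u_n\to u$ in $H^1(\Omega)$, i.e. in the $\sE_1$-norm. Along a subsequence the $\sE$-quasi-continuous versions converge $\sE$-quasi-uniformly, hence $\sE$-q.e., hence, since $\sigma\in\mathring{\bS}$ charges no $\sE$-polar set, $u_n|_\Gamma\to u|_\Gamma$ $\sigma$-a.e.; on the other hand $u_n|_\Gamma=\Tr(u_n)$ for the continuous representatives and $\Tr(u_n)\to\Tr(u)$ in $L^2(\Gamma)$, so passing to one more subsequence gives $u|_\Gamma=\Tr(u)$ $\sigma$-a.e. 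For (3): the topological support is immediate, since every nonempty relatively open subset of the Lipschitz hypersurface $\Gamma$ has positive surface measure, so $\mathrm{supp}[\sigma]=\Gamma$, while $\sigma(\bar\Omega\setminus\Gamma)=\sigma(\Omega)=0$. For the quasi support, $G_0:=\bar\Omega\setminus\text{qsupp}[\sigma]$ is, up to $\sE$-polar sets, the largest $\sE$-quasi-open $\sigma$-null set (see \cite[Definition~3.3.4]{CF12}). Writing $G_0$ q.e. as a countable intersection $\bigcap_n O_n$ of open sets (possible since $G_0$ is quasi-open), the function $w:=\inf_n\big(\mathrm{dist}(\cdot,\bar\Omega\setminus O_n)\wedge1\big)\in W^{1,\infty}(\Omega)\subset\sF$ satisfies $\{w>0\}=G_0$ q.e.; since $\Omega$ is open and $\sigma$-null we get $\Omega\subset G_0$ q.e., while $\sigma(G_0)=0$ forces $w=0$ $\sigma$-a.e. on $\Gamma$, hence $\Tr(w)=0$ in $L^2(\Gamma)$ by (2), hence $w\in H^1_0(\Omega)$ by the trace characterization of $H^1_0(\Omega)$. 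As functions in $H^1_0(\Omega)$ have $\sE$-quasi-continuous versions vanishing $\sE$-q.e. on $\Gamma$ (approximate by $C_c^\infty(\Omega)$ and use quasi-uniform convergence), $G_0\cap\Gamma=\{w>0\}\cap\Gamma$ is $\sE$-polar; therefore $G_0=\Omega$ q.e. and $\text{qsupp}[\sigma]=\Gamma$ q.e.

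For (4): by (1) and the definition of the part space, $\sF_\re^\Omega=\{u\in H^1(\Omega):\tilde u=0\ \sE\text{-q.e. on }\Gamma\}$. If $\tilde u=0$ $\sE$-q.e. on $\Gamma$, then $\tilde u=0$ $\sigma$-a.e. there, so $\Tr(u)=u|_\Gamma=0$ in $L^2(\Gamma)$ by (2), i.e. $u\in H^1_0(\Omega)$; conversely functions in $H^1_0(\Omega)$ vanish $\sE$-q.e. on $\Gamma$ as noted in (3). Hence $\sF_\re^\Omega=H^1_0(\Omega)$. Finally, by (1) and this identification, $\cH_\Gamma=\{u\in H^1(\Omega):\sE(u,v)=0,\ \forall v\in H^1_0(\Omega)\}$; since $C_c^\infty(\Omega)$ is dense in $H^1_0(\Omega)$, integrating by parts against $v\in C_c^\infty(\Omega)$ shows that $u\in\cH_\Gamma$ if and only if $\tfrac12\Delta u=\lambda u$ in the distributional sense on $\Omega$ (so in particular $\Delta u=2\lambda u\in L^2(\Omega)$), which is the asserted description.

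The step I expect to be the main obstacle is (3): items (1), (2) and (4) are routine once the classical trace theory is quoted, whereas identifying $\text{qsupp}[\sigma]$ genuinely requires converting $\sigma$-negligibility into $\sE$-polarity \emph{on $\Gamma$}. That implication fails for arbitrary subsets of $\Gamma$ (for instance a subset of $\Gamma$ of suitable intermediate Hausdorff dimension can be $\sigma$-null without being $\sE$-polar) and is rescued here only because the set in question is additionally $\sE$-quasi-open, which via the trace characterization of $H^1_0(\Omega)$ forces its representing function $w$ into $H^1_0(\Omega)$; consistently working with $\sE$-quasi-continuous versions, where (2) is used, is the delicate bookkeeping running through the whole lemma.
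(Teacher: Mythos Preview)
Your arguments for (1), (2), and (4) are essentially the paper's own, and are fine.

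For (3), your overall strategy is correct and shares the key ingredient with the paper --- namely the trace characterization $H^1_0(\Omega)=\{u\in H^1(\Omega):\Tr(u)=0\}$ --- but your explicit construction of $w$ is flawed. Writing $G_0=\bigcap_n O_n$ q.e. with $O_n$ open and decreasing is fine, and $w:=\inf_n\big(\mathrm{dist}(\cdot,\bar\Omega\setminus O_n)\wedge1\big)$ is indeed Lipschitz. However, you only get $\{w>0\}\subset\bigcap_n O_n$: for $x\in\bigcap_n O_n$ the distances $\mathrm{dist}(x,\bar\Omega\setminus O_n)$ may well decrease to $0$ (picture $O_n$ shrinking down to $G_0$ near a boundary point), so $w(x)=0$ even though $x\in G_0$. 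Thus the crucial equality $G_0\cap\Gamma=\{w>0\}\cap\Gamma$ in your last step is unjustified, and from $w\in H^1_0(\Omega)$ you only conclude that $\{w>0\}\cap\Gamma$ is polar, which says nothing about $G_0\cap\Gamma$.

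The paper avoids constructing such a $w$ altogether. It invokes \cite[Theorem~3.3.5]{CF12} to identify the part space on $\bar\Omega\setminus\text{qsupp}[\sigma]$ directly:
\[
\sF^{\Gamma_\sigma^c}=\{u\in\sF:\tilde u=0\ \sE\text{-q.e. on }\Gamma_\sigma\}=\{u\in H^1(\Omega):\tilde u=0\ \sigma\text{-a.e.}\}=\{u:\Tr(u)=0\}=H^1_0(\Omega),
\]
which coincides with the part space $\sF^\Omega$; then \cite[Theorem~3.3.8(iii)]{CF12} yields that $\Gamma_\sigma^c\setminus\Omega=\Gamma\setminus\Gamma_\sigma$ is $\sE$-polar. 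Your argument can be repaired either by following this route, or by replacing your ad hoc $w$ with the general fact that every $\sE$-quasi-open set equals $\{\tilde w>0\}$ q.e. for some nonnegative $w\in\sF$ (a standard consequence of the fine-topology characterization of quasi-open sets), which you would then have to cite.
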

\begin{proof}
\begin{itemize}
\item[(1)] It suffices to consider the case $\lambda=0$.  Since $\Omega$ is bounded and Lipschitz,  the Poincar\'e inequality 
\[
	\| f-\bar f\|_{L^2(\Omega)}^2\leq C\mathbf{D}(f,f),\quad f\in H^1(\Omega)
\]
is known to be true for some constant $C>0$ depending only on $\Omega$,  where $\bar f:=\frac{1}{m(\Omega)}\int_\Omega f(x)dx$.   Let $u\in \sF_\re$ and $\{u_n:n\geq 1\}\subset H^1(\Omega)$ be an approximating sequence for $u$.  Then $u_n\rightarrow u$ a.e.,  and the Poincar\'e inequality yields that $\{u_n-\bar{u}_n\}$ is a Cauchy sequence in $L^2(\Omega)$.  Hence there exists $v\in L^2(\Omega)$ such that $u_n-\bar{u}_n\rightarrow v$ in $L^2(\Omega)$.  Particularly,  taking a subsequence if necessary,  we have $u_n-\bar{u}_n\rightarrow v$, a.e.  Since $u_n\rightarrow u$, a.e.,  it follows that $u-v=\lim_{n\rightarrow\infty} \bar{u}_n$ is a.e. constant.  Therefore $u\in L^2(\Omega)$ because $v$ and constant functions are in $L^2(\Omega)$.  Eventually we can conclude that $\sF_\re=\sF_\re\cap L^2(\Omega)=\sF=H^1(\Omega)$.  
\item[(2)] For $u\in \sF_\re=H^1(\Omega)$,  we can find $\{u_n\}\subset H^1(\Omega)\cap C(\bar{\Omega})$ such that $u_n\rightarrow u$ in $H^1(\Omega)$.  On account of \cite[Theorem~2.3.4]{CF12},  taking a subsequence if necessary,  we get that $u_n$ converges to $u$,  $\sE$-q.e.  Since $\sigma$ charges no $\sE$-polar sets,  it follows that $u_n|_\Gamma \rightarrow u|_\Gamma$,  $\sigma$-a.e.  Note that $u_n|_\Gamma\rightarrow \Tr(u)$ in $L^2(\Gamma)$.  We can conclude that $u|_\Gamma=\Tr(u)$,  $\sigma$-a.e. 
\item[(3)] Clearly,  the topological support of $\sigma$ is $\Gamma$.  Denote $\Gamma_\sigma:=\text{qsupp}[\sigma]$.  Note that $\Gamma_\sigma\subset \Gamma$,  $\sE$-q.e.  It suffices to show that $\Gamma\setminus \Gamma_\sigma$ is $\sE$-polar.  Without loss of generality assume that $\Gamma_\sigma$ is nearly Borel and finely closed.  It follows from \cite[Theorem~3.3.5]{CF12} and $\sigma(\Gamma\setminus \Gamma_\sigma)=0$ that the part Dirichlet space of $(\sE,\sF)$ on $\Gamma_\sigma^c:=\bar{\Omega}\setminus \Gamma_\sigma$ is
\[
\begin{aligned}
	\sF^{\Gamma_\sigma^c}&=\{u\in \sF: u=0, \sE\text{-q.e. on }\Gamma_\sigma\} \\
	&=\{u\in \sF: u=0, \sigma\text{-a.e. on }\Gamma_\sigma\} \\
	&=\{u\in H^1(\Omega): \text{Tr}(u)=0\}.  
\end{aligned}\]
Using the trivial traces theorem over a Lipschitz domain (e.g.,  \cite[Theorem~4]{H20}),  we obtain that $\sF^{\Gamma_\sigma^c}$ is identified with $H^1_0(\Omega)$,   the part Dirichlet space of $(\sE,\sF)$ on $\Omega$.  On account of \cite[Theorem~3.3.8~(iii)]{CF12},  $\Gamma^c_\sigma\setminus \Omega=\Gamma\setminus \Gamma_\sigma$ is $\sE$-polar.  
\item[(4)] They are obvious by the first assertion.
\end{itemize}
That completes the proof. 
\end{proof}

In view of Lemma~\ref{LM32}~(2), we will not distinguish $u|_\Gamma$ and $\Tr(u)$ for $u\in H^1(\Omega)$ hereafter.  Denote the corresponding PACF of $\sigma$ by $L^\sigma:=(L^\sigma_t)_{t\geq 0}$,  also called the local time (of $X$) on $\Gamma$.  Let $(\check{\sE},\check{\sF})$ be the trace Dirichlet form of $(\sE,\sF)$ defined as \eqref{eq:traceDirichletform} with $F=\Gamma$ and $\mu=\sigma$.  
Now we present the following result to recover \cite[Theorem~3.3]{BV17}.  

\begin{corollary}\label{COR28}
For $\lambda\geq 0$,  let $D_\lambda$ be the DN operator in Definition~\ref{DEF31}.  Then the following hold:
\begin{itemize}
\item[(1)] $\cD(D_\lambda)=H^1(\Gamma)$ and $D_\lambda \varphi=\frac{1}{2}\partial_\bn \bH_\Gamma \varphi$ for $\varphi \in H^1(\Gamma)$. 
\item[(2)] $-D_\lambda$ is the generator of the trace Dirichlet form $(\check\sE,\check\sF)$ on $L^2(\Gamma)$, which is associated with the time changed process of $X$ by the local time $L^\sigma$.  Furthermore,  $(\check\sE,\check\sF)$ is regular on $L^2(\Gamma)$.  
\end{itemize}
\end{corollary}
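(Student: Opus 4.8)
The plan is to derive the corollary from Theorem~\ref{THM26}. For the Dirichlet form $(\sE,\sF)$ of \eqref{eq:Brownian} with $F=\Gamma$ and $\mu=\sigma$, Lemma~\ref{LM32} will translate every abstract object in Definition~\ref{DEF23} into its classical counterpart: $\sF_\re=H^1(\Omega)$, the trace $u|_\Gamma$ coincides with $\Tr(u)$, $\text{qsupp}[\sigma]=\Gamma$, and $\cH_\Gamma$ is exactly the space of $\lambda$-harmonic $H^1$-functions. So the first task is to show that the DN operator $\sN$ for $(\sE,\sF)$ on $L^2(\Gamma)$ coincides with $D_\lambda$; once that is done, part~(2) becomes essentially a restatement of Theorem~\ref{THM26}, and part~(1) will follow from the sharp elliptic regularity fact already recorded in \eqref{eq:35}.

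To obtain $\sN=D_\lambda$ I would argue by double inclusion. Given $\varphi\in\cD(D_\lambda)$ with witness $u\in H^1(\Omega)$, $\tfrac12\Delta u=\lambda u$, $\Tr(u)=\varphi$, $\partial_\bn u\in L^2(\Gamma)$, Lemma~\ref{LM32}(1)(2)(4) puts $u$ in $\cH_\Gamma\subset\sF_\re$ with $u|_\Gamma=\varphi$; moreover every $v\in\sF_\re=H^1(\Omega)$ has $v|_\Gamma=\Tr(v)\in L^2(\Gamma)$ by the trace theorem, so the class of test functions in \eqref{eq:DN2} is all of $H^1(\Omega)$. Substituting $\Delta u=2\lambda u$ into the Green--Gauss identity \eqref{eq:GreenGauss} yields $\bD(u,v)+2\lambda(u,v)_m=\int_\Gamma\partial_\bn u\cdot\Tr(v)\,d\sigma$, i.e. $\sE(u,v)=\int_\Gamma\tfrac12\partial_\bn u\cdot v|_\Gamma\,d\sigma$ for all $v\in\sF_\re$, so $\varphi\in\cD(\sN)$ and $\sN\varphi=\tfrac12\partial_\bn u=D_\lambda\varphi$. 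Conversely, for $\varphi\in\cD(\sN)$ with witness $u\in\cH_\Gamma$ and $f\in L^2(\Gamma)$, Lemma~\ref{LM32}(4) gives $\Delta u=2\lambda u\in L^2(\Omega)$, and testing $\sE(u,v)=\int_\Gamma f\cdot v|_\Gamma\,d\sigma$ against an arbitrary $v\in H^1(\Omega)$ produces $\int_\Omega\nabla u\nabla v+\int_\Omega\Delta u\,v=2\sE(u,v)=\int_\Gamma 2f\,\Tr(v)\,d\sigma$; hence $u$ has weak normal derivative $\partial_\bn u=2f\in L^2(\Gamma)$, so $\varphi\in\cD(D_\lambda)$ and $D_\lambda\varphi=\tfrac12\partial_\bn u=f=\sN\varphi$.

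Next I would pin down $\cD(D_\lambda)$. Any $\varphi\in\cD(D_\lambda)\subset L^2(\Gamma)$ lies in $\Tr(H^1(\Omega))=H^{1/2}(\Gamma)$, and by Lemma~\ref{LM21} (with $\sF^\Omega_\re=H^1_0(\Omega)$) it has a unique $\lambda$-harmonic $H^1$-extension $u=\bH_\Gamma\varphi\in\cH_\Gamma$, with $\Delta u=2\lambda u\in L^2(\Omega)$. By \eqref{eq:35} (i.e. \cite[Lemma~2.2]{BV17}), such a $u$ has a weak normal derivative in $L^2(\Gamma)$ if and only if $\Tr(u)=\varphi\in H^1(\Gamma)$. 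This gives $\cD(D_\lambda)=H^1(\Gamma)$, with $D_\lambda\varphi=\tfrac12\partial_\bn\bH_\Gamma\varphi$ for $\varphi\in H^1(\Gamma)$, which is part~(1).

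Part~(2) would then follow directly: since $F=\Gamma=\text{qsupp}[\sigma]$ and $\mu=\sigma$, Theorem~\ref{THM26} applied with $\sN=D_\lambda$ identifies $-D_\lambda$ with the $L^2(\Gamma)$-generator of the trace Dirichlet form $(\check\sE,\check\sF)$ of $(\sE,\sF)$, whose associated Markov process is the time change of $X$ by the PCAF with Revuz measure $\sigma$, i.e. by the local time $L^\sigma$ on $\Gamma$; as $X$ is the reflected Brownian motion on $\bar\Omega$ for $\lambda=0$ and its $\lambda$-subprocess for $\lambda>0$, this recovers \cite[Theorem~3.3]{BV17}. Regularity of $(\check\sE,\check\sF)$ on $L^2(\Gamma)$ I would obtain from \cite[Theorem~5.2.13]{CF12}, using that the quasi support of $\sigma$ and its topological support both equal $\Gamma$ by Lemma~\ref{LM32}(3). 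I expect the only genuinely nontrivial ingredient to be the regularity dichotomy \eqref{eq:35}: one must know not merely that $\bH_\Gamma\varphi\in H^1(\Omega)$ for $\varphi\in H^{1/2}(\Gamma)$, but that its weak normal derivative is in $L^2(\Gamma)$ exactly when the boundary datum gains a full derivative; everything else is the dictionary of Lemma~\ref{LM32} together with the bookkeeping translation between \eqref{eq:GreenGauss} and the form identity \eqref{eq:DN2} defining $\sN$.
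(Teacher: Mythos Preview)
Your proposal is correct and follows essentially the same route as the paper: use Lemma~\ref{LM32} to translate the abstract objects into their classical counterparts, invoke the regularity dichotomy \eqref{eq:35} to pin down $\cD(D_\lambda)=H^1(\Gamma)$, and then apply Theorem~\ref{THM26} for part~(2). The paper is terser (it leaves the verification $\sN=D_\lambda$ to the reader and cites \cite[Theorem~5.2.3]{CF12} rather than \cite[Theorem~5.2.13]{CF12} for regularity), but the substance is the same.
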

\begin{proof}
\begin{itemize}
\item[(1)] The case $\lambda=1$ has been considered in \cite[Lemma~2.2]{BV17}.  Now consider any $\lambda\geq 0$.  It suffices to show $\cD(D_\lambda)=H^1(\Gamma)$.  Take $\varphi \in \cD(D_\lambda)$.  It follows from \eqref{eq:35} that  $u:=\bH_\Gamma \varphi\in W^1$ and hence $\varphi=u|_\Gamma\in H^1(\Gamma)$.  To the contrary,  let $\varphi\in H^1(\Gamma)$.  Then there exists $u'\in H^1(\Omega)$ such that $u'|_\Gamma=\varphi$ due to the trace theorem.  Set $u:=\bH_\Gamma u' \in \sF_\re=H^1(\Omega)$ by means of Lemma~\ref{LM32}~(1).  We get from Lemma~\ref{LM32}~(4) that $\frac{1}{2}\Delta u=\lambda u\in L^2(\Omega)$.  Since $\Tr(u)=\varphi\in H^1(\Gamma)$, it follows from \eqref{eq:35} that $u$ has a weak normal derivative $\partial_\bn u\in L^2(\Gamma)$.  This implies $\varphi\in \cD(D_\lambda)$.  
\item[(2)] It can be straightforwardly verified by applying Theorem~\ref{THM26} and Lemma~\ref{LM32} with $F=\Gamma$ and $\mu=\sigma$.  The regularity of $(\check{\sE},\check{\sF})$ on $L^2(\Gamma)$ is due to \cite[Theorem~5.2.3]{CF12} and Lemma~\ref{LM32}~(3). 
\end{itemize}
That completes the proof. 
\end{proof}

When $d=2$, $\lambda=0$ and $\Omega=\mathbb{D}:=\{x: |x|<1\}$,  $\check{\sE}$ is identified with the celebrated \emph{Douglas integral};  see \cite{D31}.  To be precise,  $\Gamma=\partial \mathbb{D}=\{\theta:0\leq \theta<2\pi\}$ and
\[
\begin{aligned}
	&\check{\sE}(\varphi,\varphi)=\frac{1}{16\pi}\int_0^{2\pi}\int_0^{2\pi} \left(\varphi(\theta)-\varphi(\theta')\right)^2\sin^{-2}\left(\frac{\theta-\theta'}{2}\right)d\theta d\theta,\\
	&\check{\sF}=\{\varphi\in L^2(\partial \mathbb{D}): \check{\sE}(\varphi,\varphi)<\infty\}.  
\end{aligned}
\]
Particularly,  the DN operator $D$ corresponds to the Cauchy process on $\partial \mathbb{D}$.  

\section{DN operators for perturbations of Dirichlet forms}\label{SEC3}


Let $(\sE,\sF)$ be a regular and irreducible Dirichlet form on $L^2(E,m)$.  
Take $\kappa=\kappa^+-\kappa^-\in \bS-\bS$ and let 
$$\sF^\kappa=\sF\cap L^2(E,|\kappa|),\quad \sE^\kappa(u,v)=\sE(u,v)+\int_E uv d\kappa,\; u,v\in \sF^\kappa$$ be the perturbation of $(\sE,\sF)$ by $\kappa$,  where $|\kappa|=\kappa^++\kappa^-$; see Definition~\ref{DEFB1}.  Set
\[
	\sF^\kappa_\re:=\sF_\re\cap L^2(E,|\kappa|).
\]
We impose $\kappa^-\neq 0$ unless otherwise specified.  It is worth pointing out that $\kappa^-\neq 0$ may lead to the failure of closedness or Markovian property for $(\sE^\kappa,\sF^\kappa)$ that can not be straightforwardly linked with a certain Markov process. 

As reviewed in Appendix~\ref{APPB}, if $\kappa^-$ is $\sE^{\kappa^+}$-form bounded,  then $(\sE^\kappa,\sF^\kappa)$ is a lower bounded symmetric closed form on $L^2(E,m)$.  When $\kappa\in \bS$,  it becomes a quasi-regular Dirichlet form,  called the perturbed Dirichlet form of $(\sE,\sF)$ by $\kappa$.  Meanwhile $\sF_\re^\kappa$ is the extended Dirichlet space of $(\sE^\kappa,\sF^\kappa)$ enjoying the same quasi notions as $(\sE,\sF)$; see \cite[Proposition~5.1.9]{CF12}.  

\subsection{Definition}

Let $\mu\in \mathring{\bS}$ with $F:=\text{qsupp}[\mu]$.  Write $G:=E\setminus F$.  
Set 
\[
	\sF^{\kappa,G}_\re:=\{u\in \sF^\kappa_\re:  u=0,  \sE\text{-q.e.  on }F\}
\]
and 
\[
	\cH^\kappa_F:=\{u\in \sF^\kappa_\re: \sE^\kappa(u,v)=0,\forall v\in \sF^{\kappa,G}_\re\}.  
\]
Given $\varphi \in L^2(F,\mu)$,  $u$ is called a $\kappa$-\emph{harmonic extension} of $\varphi$ provided that $u\in \cH_F^\kappa$ and $u|_F=\varphi$.   Note that the condition
\begin{equation}\label{eq:DEF41}
	\sF^{\kappa,G}_\re\cap \cH^\kappa_F=\{0\}
\end{equation}
 implies that the $\kappa$-harmonic extension,  denoted by $\bH^\kappa_F \varphi$, of $\varphi$ is unique if exists. 
As an analogue of Definition~\ref{DEF23},  we introduce the following. 

\begin{definition}\label{DEF41}
Assume \eqref{eq:DEF41}.  The following operator 
\[
\begin{aligned}
	\cD(\sN_\kappa)=&\big\{\varphi\in L^2(F,\mu): \exists\,u\in \cH^\kappa_F\text{ and }f\in L^2(F,\mu)\text{ such that }u|_F=\varphi, \\
	&\qquad\qquad \sE^\kappa(u,v)=\int_{F} f v|_F d\mu \text{ for any }v\in \sF^\kappa_\re\text{ with }v|_F\in L^2(F,\mu)\big\},   \\
	\sN_\kappa \varphi=&f,\quad \varphi\in \cD(\sN_\kappa)
\end{aligned}
\]
is called the DN operator for $(\sE^\kappa,\sF^\kappa)$ on $L^2(F,\mu)$.  
\end{definition}

In most cases we will impose a stronger assumption:
\begin{equation}\label{eq:42-3}
\sF_\re^\kappa=\sF^{\kappa,G}_\re\oplus \cH^\kappa_F,
\end{equation}
i.e.  for any $u\in \sF^\kappa_\re$,  there exists a unique pair $(u_1,u_2)\in \sF^{\kappa,G}_\re\times \cH^\kappa_F$ such that $u=u_1+u_2$.  
See Appendix~\ref{APP2} for some remarks on this assumption. Meanwhile set 
\begin{equation}\label{eq:45}
	\check{\sF}^\kappa:=\sF^\kappa_\re|_F\cap L^2(F,\mu)=\{u|_F\in L^2(F,\mu): u\in \sF^\kappa_\re\},
\end{equation}
and because of the first assertion in the following lemma,  define 
\begin{equation}\label{eq:46}
	\check{\sE}^\kappa(\varphi, \phi):=\sE^\kappa(\bH^\kappa_F \varphi, \bH^\kappa_F \phi),\quad \varphi,\phi\in\check{\sF}^\kappa.
	\end{equation}
We call $(\check{\sE}^\kappa,\check{\sF}^\kappa)$ the \emph{trace form of $(\sE^\kappa,\sF^\kappa)$ on $L^2(F,\mu)$.} 

\begin{lemma}\label{LM43}
Assume \eqref{eq:42-3}.  The following hold:
\begin{itemize}
\item[(1)] For any $\varphi\in \check{\sF}^\kappa$,  the $\kappa$-harmonic extension of $\varphi$ exists uniquely. 
\item[(2)] $\varphi\in \cD(\sN_\kappa)$ with $f=\sN_\kappa \varphi$,  if and only if $\varphi \in \check{\sF}^\kappa,  f\in L^2(F,\mu)$ and 
\begin{equation}\label{eq:42-2}
	\check{\sE}^\kappa(\varphi, \phi)=\int_F f\phi d\mu,\quad \forall \phi\in \check{\sF}^\kappa.  
\end{equation}
\end{itemize}
\end{lemma}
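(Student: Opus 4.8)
The plan is to follow the template of the proof of Theorem~\ref{THM26}, with the hypothesis \eqref{eq:42-3} taking over the role played there by Lemma~\ref{LM21}: in the perturbed setting the splitting $\sF^\kappa_\re=\sF^{\kappa,G}_\re\oplus\cH^\kappa_F$ is no longer automatic, but once it is assumed the argument is essentially unchanged. Throughout I would keep careful track of the distinction between equality ``$\mu$-a.e.\ on $F$'' and equality ``$\sE$-q.e.\ on $F$'', passing from the former to the latter via the characterizing property of the quasi support $F=\text{qsupp}[\mu]$ (cf.\ \cite[Theorem~3.3.5]{CF12}) and from the latter to the former via the fact that $\mu$ charges no $\sE$-polar set.

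For assertion (1), given $\varphi\in\check{\sF}^\kappa$ I would first use the definition \eqref{eq:45} to pick $w\in\sF^\kappa_\re$ with $w|_F=\varphi$, then decompose $w=w_1+w_2$ according to \eqref{eq:42-3} with $w_1\in\sF^{\kappa,G}_\re$ and $w_2\in\cH^\kappa_F$. Since $w_1=0$ $\sE$-q.e.\ on $F$, hence $\mu$-a.e.\ on $F$, we get $w_2|_F=\varphi$, so $w_2$ is a $\kappa$-harmonic extension of $\varphi$. For uniqueness, if $u,u'\in\cH^\kappa_F$ both restrict to $\varphi$ on $F$, then $u-u'\in\cH^\kappa_F$ vanishes $\mu$-a.e.\ on $F=\text{qsupp}[\mu]$, hence $\sE$-q.e.\ on $F$, so $u-u'\in\sF^{\kappa,G}_\re\cap\cH^\kappa_F=\{0\}$ by \eqref{eq:DEF41} (a consequence of \eqref{eq:42-3}). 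This is precisely what makes $\bH^\kappa_F\varphi$, and therefore $\check{\sE}^\kappa$ in \eqref{eq:46}, well defined, so assertion (1) is logically prior to the rest.

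The engine of assertion (2) is the following identity: for $v\in\sF^\kappa_\re$ with $\phi:=v|_F\in L^2(F,\mu)$, writing $v=v_1+v_2$ as in \eqref{eq:42-3} we have $v_2|_F=\phi$, hence $v_2=\bH^\kappa_F\phi$ by assertion (1), and therefore, for every $u\in\cH^\kappa_F$,
\[
\sE^\kappa(u,v)=\sE^\kappa(u,v_1)+\sE^\kappa(u,v_2)=\sE^\kappa(u,\bH^\kappa_F\phi),
\]
since $\sE^\kappa(u,v_1)=0$ by the definition of $\cH^\kappa_F$. With this in hand: if $\varphi\in\cD(\sN_\kappa)$ with $\sN_\kappa\varphi=f$ and associated harmonic extension $u$, then $u=\bH^\kappa_F\varphi$, $\varphi=u|_F\in\check{\sF}^\kappa$, and for any $\phi\in\check{\sF}^\kappa$ I would choose $v\in\sF^\kappa_\re$ with $v|_F=\phi$ (such $v$ being automatically admissible in Definition~\ref{DEF41}) to obtain $\int_F f\phi\,d\mu=\sE^\kappa(u,v)=\sE^\kappa(\bH^\kappa_F\varphi,\bH^\kappa_F\phi)=\check{\sE}^\kappa(\varphi,\phi)$, which is \eqref{eq:42-2}. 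Conversely, given $\varphi\in\check{\sF}^\kappa$ and $f\in L^2(F,\mu)$ satisfying \eqref{eq:42-2}, I would set $u:=\bH^\kappa_F\varphi\in\cH^\kappa_F$ (it exists by (1)) and run the same identity backwards to check $\sE^\kappa(u,v)=\int_F fv|_F\,d\mu$ for every admissible $v$, i.e.\ $\varphi\in\cD(\sN_\kappa)$ with $\sN_\kappa\varphi=f$.

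Everything here is routine; I do not expect a genuine obstacle. The only places that demand attention are the bookkeeping between $\mu$-a.e.\ and $\sE$-q.e.\ equalities on $F$ described above, and the observation that the whole argument rests on the well-posedness of $\bH^\kappa_F$ supplied by assertion (1) --- which in turn is exactly where the standing assumption \eqref{eq:42-3}, not available ``for free'' in the perturbed case, is used.
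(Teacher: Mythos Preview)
Your proposal is correct and follows essentially the same approach as the paper's proof. The paper is terser --- it calls assertion (1) ``obvious'' and, in the forward direction of (2), plugs $v:=\bH^\kappa_F\phi$ directly into Definition~\ref{DEF41} rather than first choosing an arbitrary $v$ and then reducing --- but the underlying mechanism (the decomposition \eqref{eq:42-3} giving $v-\bH^\kappa_F(v|_F)\in\sF^{\kappa,G}_\re$, hence $\sE^\kappa(u,v)=\sE^\kappa(u,\bH^\kappa_F(v|_F))$ for $u\in\cH^\kappa_F$) is identical.
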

\begin{proof}
The first assertion is obvious.  We only prove the second one.  Take $\varphi\in \cD(\sN_\kappa)$ with $f=\sN_\kappa \varphi$.  Then $u$ appearing in Definition~\ref{DEF41} is the $\kappa$-harmonic extension of $\varphi$,  and particularly $\varphi\in \check{\sF}^\kappa$.  In addition, for any $\phi\in \check{\sF}^\kappa$,  it holds that $\bH^\kappa_F \phi \in \sF^\kappa_\re$ and $\bH^\kappa_F \phi|_F=\phi\in L^2(F,\mu)$.  Hence
\[
	\check{\sE}^\kappa(\varphi, \phi)=\sE^\kappa(u,\bH^\kappa_F \phi)=\int_F f\phi d\mu.  
\]
To the contrary,  take $\varphi\in \check{\sF}^\kappa$ satisfying \eqref{eq:42-2} for some $f\in L^2(F,\mu)$.  Then $u:=\bH^\kappa_F \varphi \in \cH^\kappa_F$ and $u|_F=\varphi$.   For any $v\in \sF^\kappa_\re$ with $\phi:=v|_F\in L^2(F,\mu)$,  \eqref{eq:42-2} yields that
\[
	\int_F f\phi d\mu=\check{\sE}^\kappa(\varphi, \phi)=\sE^\kappa(u, \bH^\kappa_F \phi)=\sE^\kappa(u,v),
\]
because $v-\bH^\kappa_F \phi \in \sF^{\kappa,G}_\re$.  
That completes the proof.  
\end{proof}


\subsection{Self-adjointness of DN operators}\label{SEC41}

It is of course interesting to ask if $\sN_\kappa$ is self-adjoint on $L^2(F,\mu)$.  Lemma~\ref{LM43} tells us that if $(\check{\sE}^\kappa,\check{\sF}^\kappa)$ is a lower bounded closed form,  then $\sN_\kappa$ is lower semi-bounded and self-adjoint.  

\begin{definition}\label{DEF34}
Let $\kappa=\kappa^+-\kappa^-, \mu$ and $F$ be as above.  Then $\kappa^-$ is called \emph{$\sE^{\kappa^+}$-form bounded on trace},  if there exist some constants $0<\delta_0<1$ and $C_{\delta_0}>0$ such that
\begin{equation}\label{eq:47}
	\int_E u^2d\kappa^-\leq \delta_0\cdot \sE^{\kappa^+}(u,u)+C_{\delta_0}\cdot  \int_F (u|_F)^2 d\mu,\quad \forall u\in \cH^\kappa_F,
\end{equation}
where $\sE^{\kappa^+}(u,u)=\sE(u,u)+\int_E u^2d\kappa^+$. 
\end{definition}

A useful sufficient condition for the self-adjointness of $\sN_\kappa$ is presented in the following.

\begin{theorem}\label{THM44}
Assume \eqref{eq:42-3} and that $\kappa^-$ is $\sE^{\kappa^+}$-form bounded on trace.  Then $\sN_\kappa$ is lower semi-bounded and self-adjoint on $L^2(F,\mu)$.  
\end{theorem}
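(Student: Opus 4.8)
The plan is to realize $\sN_\kappa$ as the self-adjoint operator associated with the symmetric form $(\check\sE^\kappa,\check\sF^\kappa)$ on $L^2(F,\mu)$. Indeed, by Lemma~\ref{LM43}~(2) one has $\varphi\in\cD(\sN_\kappa)$ with $\sN_\kappa\varphi=f$ precisely when $\varphi\in\check\sF^\kappa$ and $\check\sE^\kappa(\varphi,\phi)=\int_F f\phi\,d\mu$ for all $\phi\in\check\sF^\kappa$; hence, once we know $(\check\sE^\kappa,\check\sF^\kappa)$ is a densely defined, lower bounded and closed symmetric form, the first representation theorem for such forms identifies $\sN_\kappa$ with its associated self-adjoint operator, which is then lower semi-bounded. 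Density of $\check\sF^\kappa$ in $L^2(F,\mu)$ is obtained as for the unperturbed trace form (note $\sF\cap C_c(E)\subset\sF^\kappa$ and $\mu$ is Radon), so the real content is lower boundedness and closedness.

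Lower boundedness is the routine half. For $\varphi\in\check\sF^\kappa$ put $u:=\bH^\kappa_F\varphi\in\cH^\kappa_F$; then $\check\sE^\kappa(\varphi,\varphi)=\sE^\kappa(u,u)=\sE^{\kappa^+}(u,u)-\int_E u^2\,d\kappa^-$. Inserting the form bound on trace \eqref{eq:47} (applicable since $u\in\cH^\kappa_F$ and $u|_F=\varphi$) and using $\sE^{\kappa^+}(u,u)\ge0$ gives both $\check\sE^\kappa(\varphi,\varphi)\ge-C_{\delta_0}\|\varphi\|_{L^2(F,\mu)}^2$ and the sandwich $(1-\delta_0)\sE^{\kappa^+}(u,u)-C_{\delta_0}\|\varphi\|_{L^2(F,\mu)}^2\le\check\sE^\kappa(\varphi,\varphi)\le\sE^{\kappa^+}(u,u)$. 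Feeding the left inequality back into \eqref{eq:47} bounds $\int_E u^2\,d\kappa^-$ as well, so that $\sE^{|\kappa|}(u,u):=\sE(u,u)+\int_E u^2\,d|\kappa|$ is controlled by a constant multiple of $\check\sE^\kappa(\varphi,\varphi)+\|\varphi\|_{L^2(F,\mu)}^2$. Consequently, fixing $\beta>C_{\delta_0}$, the $\check\sE^\kappa_\beta$-norm $\varphi\mapsto\bigl(\check\sE^\kappa(\varphi,\varphi)+\beta\|\varphi\|_{L^2(F,\mu)}^2\bigr)^{1/2}$ on $\check\sF^\kappa$ is equivalent to $\varphi\mapsto\bigl(\sE^{|\kappa|}(\bH^\kappa_F\varphi,\bH^\kappa_F\varphi)+\|\varphi\|_{L^2(F,\mu)}^2\bigr)^{1/2}$, with $|\kappa|=\kappa^++\kappa^-$.

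For closedness, take a $\check\sE^\kappa_\beta$-Cauchy sequence $\{\varphi_n\}\subset\check\sF^\kappa$; it converges to some $\varphi$ in $L^2(F,\mu)$, and since \eqref{eq:42-3} makes $\bH^\kappa_F$ linear on $\check\sF^\kappa$, the equivalence above shows $u_n:=\bH^\kappa_F\varphi_n\in\cH^\kappa_F$ is Cauchy for $\sE^{|\kappa|}$, the form of the perturbation of $(\sE,\sF)$ by the positive smooth measure $|\kappa|$, whose extended space is $\sF^{|\kappa|}_\re=\sF^\kappa_\re$ (see \cite[Proposition~5.1.9]{CF12}). The crucial point is that, since $\kappa^-\neq0$ forces $|\kappa|\neq0$ and $(\sE,\sF)$ is irreducible, the Dirichlet form $(\sE^{|\kappa|},\sF^{|\kappa|})$ — that of the $|\kappa|$-subprocess of $X$, which has the same invariant sets as $X$ — is irreducible and cannot be recurrent, hence is transient; thus $(\sF^\kappa_\re,\sE^{|\kappa|})$ is a genuine Hilbert space and $u_n\to u$ in the $\sE^{|\kappa|}$-norm for some $u\in\sF^\kappa_\re$. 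Passing to a subsequence along which $u_n\to u$ $\sE$-quasi-everywhere, hence $\mu$-a.e.\ (because $\mu$ charges no $\sE$-polar set), and comparing with $\varphi_n=u_n|_F\to\varphi$ in $L^2(F,\mu)$, we get $u|_F=\varphi$, so $\varphi\in\check\sF^\kappa$; letting $n\to\infty$ in $\sE^\kappa(u_n,v)=0$ for $v\in\sF^{\kappa,G}_\re$ — legitimate because $|\sE^\kappa(u_n-u,v)|$ is bounded by $\sE^{|\kappa|}(u_n-u,u_n-u)^{1/2}$ times a constant depending only on $v$ — gives $u\in\cH^\kappa_F$, i.e.\ $u=\bH^\kappa_F\varphi$; and finally $\check\sE^\kappa(\varphi_n-\varphi,\varphi_n-\varphi)=\sE^\kappa(u_n-u,u_n-u)\to0$ since $\sE(u_n-u,u_n-u)$ and $\int_E(u_n-u)^2\,d\kappa^{\pm}$ are all dominated by $\sE^{|\kappa|}(u_n-u,u_n-u)\to0$. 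Therefore $\varphi_n\to\varphi$ in the $\check\sE^\kappa_\beta$-norm, so $(\check\sE^\kappa,\check\sF^\kappa)$ is closed and the proof is done.

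The step I expect to be the main obstacle is the structural input used for closedness. Since $\kappa^-$ is only $\sE^{\kappa^+}$-form bounded \emph{on trace}, the control of $\kappa^-$ is available merely along harmonic functions $u\in\cH^\kappa_F$, and closedness of $(\sE^\kappa,\sF^\kappa)$ itself is not assumed; so one must first upgrade a $\sE^{\kappa^+}$-Cauchy estimate on harmonic functions to a $\sE^{|\kappa|}$-Cauchy estimate, and then find a complete space hosting those harmonic functions — which is exactly why one needs the observation that the $|\kappa|$-perturbation of an irreducible Dirichlet form with $\kappa^-\neq0$ is transient, so that its extended Dirichlet space carries $\sE^{|\kappa|}$ as a Hilbert inner product. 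The remaining verifications — that the limit $u$ is again $\kappa$-harmonic and has the prescribed trace — are routine, but they rely on quasi-continuity of extended-space functions and on $\mu$ charging no $\sE$-polar sets, and should be written out.
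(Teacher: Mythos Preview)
Your proposal is correct and follows essentially the same route as the paper: reduce to showing that $(\check\sE^\kappa,\check\sF^\kappa)$ is a lower bounded closed form, obtain the lower bound and the control $\check\sE^\kappa_\alpha(\varphi,\varphi)\gtrsim \sE^{|\kappa|}(\bH^\kappa_F\varphi,\bH^\kappa_F\varphi)$ from the trace form bound \eqref{eq:47}, and then run the Cauchy-sequence argument inside the Hilbert space $(\sF^\kappa_\re,\sE^{|\kappa|})$, checking that the limit is again $\kappa$-harmonic with the correct trace. Your explicit justification that $(\sF^\kappa_\re,\sE^{|\kappa|})$ is a genuine Hilbert space (via irreducibility plus $|\kappa|\neq 0$ forcing transience of the $|\kappa|$-perturbation) is a useful elaboration of a point the paper states without comment; the only place to be a bit more careful is your density remark, since $\sF\cap C_c(E)\subset\sF^\kappa$ need not hold for non-Radon $|\kappa|$, but density of $\check\sF^\kappa$ follows instead because it is the domain of the trace Dirichlet form of the quasi-regular Dirichlet form $(\sE^{|\kappa|},\sF^{|\kappa|})$.
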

\begin{proof}
It suffices to prove that $(\check{\sE}^\kappa,\check{\sF}^\kappa)$ is a lower bounded closed form on $L^2(F,\mu)$.  For $\varphi\in \check{\sF}^\kappa$,  set $u:=\bH^\kappa_F \varphi$.  Taking $\varepsilon>0$ such that $(1+\varepsilon){\delta_0}<1$,  $\alpha_0:=(1+\varepsilon)C_{\delta_0}$ and using \eqref{eq:47},  we get 
\begin{equation}\label{eq:47-1}
\begin{aligned}
	\check{\sE}^\kappa(\varphi, \varphi)&+\alpha_0 \int_F \varphi^2d\mu \\ &=\sE^{\kappa^+}(u,u)+\varepsilon \int_E u^2 d\kappa^--(1+\varepsilon)\int_E u^2 d\kappa^- +\alpha_0 \int_F (u|_F)^2 d\mu \\
	&\geq\tilde{{\delta}}_0\cdot \sE^{\kappa^+}(u,u)+\varepsilon \int_E u^2 d\kappa^-\geq 0,
\end{aligned}\end{equation}
where $\tilde{{\delta}}_0=1-(1+\varepsilon){\delta_0}>0$.  
Hence $(\check{\sE}^\kappa_{\alpha_0}, \check{\sF}^\kappa)$ is a non-negative symmetric quadratic form.  

  Next we show that $\check{\sF}^\kappa$ is a Hilbert space under the inner product $\check{\sE}^\kappa_\alpha$ for $\alpha>\alpha_0$.  To do this, take an $\check{\sE}^\kappa_\alpha$-Cauchy sequence $\{\varphi_n\}\subset \check{\sF^\kappa}$.  Set $u_n:=\bH^\kappa_F \varphi_n$.  It follows from \eqref{eq:47-1} that $\{u_n\}$ forms an $\sE^{|\kappa|}$-Cauchy sequence in $\sF^\kappa_\re$.  Note that $\sF^\kappa_\re$ is a Hilbert space under the inner product $\sE^{|\kappa|}$.  Thus there exists $u\in \sF^\kappa_\re$ such that $\sE^{|\kappa|}(u_n-u,u_n-u)\rightarrow 0$.  Taking a subsequence if necessary,  we have that $u_n$ converges to $u$,  $\sE^{|\kappa|}$-q.e.  as well as $\sE$-q.e.   To claim $u\in \cH^\kappa_F$,  note that for any $v\in \sF^{\kappa, G}_\re$,  
  \begin{equation}\label{eq:48}
  	|\sE^\kappa(u_n-u,v)|\lesssim \sE^{|\kappa|}(u_n-u,u_n-u)^{1/2}\cdot \sE^{|\kappa|}(v,v)^{1/2}\rightarrow 0. 
  \end{equation}
 Consequently $\sE^\kappa(u,v)=\lim_{n\rightarrow \infty}\sE^\kappa(u_n,v)=0$.  This yields $u\in \cH^\kappa_F$.  
  On the other hand,  $\varphi_n=u_n|_F$ is Cauchy in $L^2(F,\mu)$.  Hence $\varphi_n\rightarrow \varphi$ in $L^2(F,\mu)$ for some $\varphi\in L^2(F,\mu)$.  Taking a subsequence if necessary we get that $\varphi_n\rightarrow \varphi$,  $\mu$-a.e. on $F$.  Since $\text{qsupp}[\mu]=F$,  we obtain that $u|_F=\varphi$ and particularly,  $\varphi \in \check{\sF}^\kappa$.  In addition, 
  \[
  	|\check{\sE}^\kappa(\varphi_n-\varphi,\varphi_n-\varphi)|=|\sE^\kappa(u_n-u,u_n-u)|\leq \sE^{|\kappa|}(u_n-u,u_n-u)\rightarrow 0.  
  	  \]
 Therefore $\check{\sE}^\kappa_\alpha(\varphi_n-\varphi,\varphi_n-\varphi)\rightarrow 0$.  That completes the proof. 
\end{proof}

Now we turn to give some remarks on the condition \eqref{eq:47}.  Note that $\sF^\kappa_\re$ is a Hilbert space under the inner product $\sE^{|\kappa|}$,  and on account of \eqref{eq:48},  $\cH^\kappa_F$ is a closed subspace of $\sF^\kappa_\re$.  In other words,  $\cH^\kappa_F$ is also a Hilbert space under the inner product $\sE^{|\kappa|}$.  The argument in the following lemma is based on the abstract Ehrling's lemma; see,  e.g.,  \cite[Chapter I,  Theorem~7.3]{W87}.  

\begin{lemma}\label{LM45}
Assume \eqref{eq:42-3}.  If $(\cH^\kappa_F, \sE^{|\kappa|})$ is compactly embedded in $L^2(E,\kappa^-)$,  i.e.  any $\{u_n\in \cH^\kappa_F:n\geq 1\}$ with $\sup_{n\geq 1}\sE^{|\kappa|}(u_n,u_n)<\infty$ forms a relatively compact sequence in $L^2(E,\kappa^-)$,  then for any $\delta>0$,  there exists $C_\delta>0$ such that
\begin{equation}\label{eq:49-2}
	\int_E u^2d\kappa^-\leq \delta\cdot \sE^{|\kappa|}(u,u)+C_\delta\cdot  \int_F (u|_F)^2 d\mu,\quad \forall u\in \cH^\kappa_F.  
\end{equation}
Particularly,  if $(\sF^\kappa_\re, \sE^{|\kappa|})$ is compactly embedded in $L^2(E,\kappa^-)$,  then for any $\delta>0$,  there exists $C_\delta>0$ such that \eqref{eq:49-2} holds. 
\end{lemma}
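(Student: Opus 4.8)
The plan is to apply the abstract Ehrling lemma to the triple of spaces $(\cH^\kappa_F, \sE^{|\kappa|}) \hookrightarrow L^2(E,\kappa^-) \hookrightarrow (\check{\sF}^\kappa, \|\cdot\|_{L^2(F,\mu)})$, where the first embedding is compact by hypothesis and the second is continuous with the trace-norm target. The key point to set up is that the map $u \mapsto u|_F$ sends $\cH^\kappa_F$ continuously into $L^2(F,\mu)$ in a way that the composite $\cH^\kappa_F \to L^2(E,\kappa^-)$ is ``dominated'' by the trace seminorm in the Ehrling sense. Concretely, Ehrling's lemma (e.g.\ \cite[Chapter~I, Theorem~7.3]{W87}) states: if $X \hookrightarrow Y \hookrightarrow Z$ with $X \hookrightarrow Y$ compact and $Y \hookrightarrow Z$ continuous and injective, then for every $\delta>0$ there is $C_\delta$ with $\|y\|_Y \le \delta \|y\|_X + C_\delta \|y\|_Z$ for $y \in X$.

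First I would identify the three spaces. Take $X = (\cH^\kappa_F, \sE^{|\kappa|})$, which is a Hilbert space by the remark preceding the lemma (it is a closed subspace of $\sF^\kappa_\re$ via \eqref{eq:48}). Take $Y = L^2(E,\kappa^-)$ with its natural norm, and take $Z = L^2(F,\mu)$ with the seminorm $u \mapsto \|u|_F\|_{L^2(F,\mu)}$ — more precisely, I would realize $Z$ as the image of the restriction map and transport the norm, so that the ``embedding'' $Y \hookrightarrow Z$ is really the restriction operator $u \mapsto u|_F$ composed with the inclusion. I need three facts: (i) the inclusion $X \hookrightarrow Y$ is well-defined and compact — this is exactly the hypothesis; (ii) the restriction map $X \to Z$, $u \mapsto u|_F$, is bounded — this follows because for $u \in \cH^\kappa_F \subset \sF^\kappa_\re$, the trace $u|_F$ lies in $\check{\sF}^\kappa \subset L^2(F,\mu)$ and $\|u|_F\|_{L^2(F,\mu)}^2 = \int_F (u|_F)^2 d\mu$ is controlled by $\check{\sE}^\kappa_\alpha(u|_F, u|_F)$ which in turn is $\sE^\kappa_\alpha(u,u) \lesssim \sE^{|\kappa|}(u,u)$ (up to the $\alpha m$-term, which is itself dominated once we absorb it — alternatively one uses that $\cH^\kappa_F \ni u \mapsto u|_F \in L^2(F,\mu)$ is closed hence bounded by the closed graph theorem, using \eqref{eq:42-3} and that $\text{qsupp}[\mu]=F$ so $u|_F=0$ forces $u=0$); (iii) injectivity where needed: the restriction map $\cH^\kappa_F \to L^2(F,\mu)$ is injective precisely because $\sF^{\kappa,G}_\re \cap \cH^\kappa_F = \{0\}$ from \eqref{eq:DEF41} together with \eqref{eq:42-3}, so no nontrivial $\kappa$-harmonic function vanishes on $F$. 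Granting Ehrling with these three spaces yields, for each $\delta>0$, a $C_\delta$ with
\[
\|u\|_{L^2(E,\kappa^-)} \le \delta\, \sE^{|\kappa|}(u,u)^{1/2} + C_\delta\, \|u|_F\|_{L^2(F,\mu)}, \quad u \in \cH^\kappa_F,
\]
and squaring (using $(a+b)^2 \le 2a^2 + 2b^2$ and renaming $\delta$, $C_\delta$) gives exactly \eqref{eq:49-2}.

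For the ``particularly'' clause, I would argue that if $(\sF^\kappa_\re, \sE^{|\kappa|})$ is compactly embedded in $L^2(E,\kappa^-)$, then so is its closed subspace $(\cH^\kappa_F, \sE^{|\kappa|})$: a bounded sequence in $\cH^\kappa_F$ is bounded in $\sF^\kappa_\re$, hence relatively compact in $L^2(E,\kappa^-)$, so the first hypothesis is automatically verified and the first part applies verbatim.

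The main obstacle I anticipate is point (ii): verifying that the restriction map $u \mapsto u|_F$ is bounded from $(\cH^\kappa_F, \sE^{|\kappa|})$ into $L^2(F,\mu)$. The cleanest route is the closed graph theorem — if $u_n \to u$ in $\sE^{|\kappa|}$ and $u_n|_F \to \psi$ in $L^2(F,\mu)$, then passing to a subsequence $u_n \to u$ $\sE$-q.e.\ (since $\sE^{|\kappa|}$-convergence implies $\sE$-quasi-uniform convergence along a subsequence), and since $\mu$ charges no $\sE$-polar set, $u_n|_F \to u|_F$ $\mu$-a.e., forcing $\psi = u|_F$; this is essentially the same argument already used inside the proof of Theorem~\ref{THM44}. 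That makes the graph closed, and since both spaces are complete (here one uses $\cH^\kappa_F$ is a closed subspace of the Hilbert space $\sF^\kappa_\re$), boundedness follows. Everything else — invoking Ehrling, squaring, the subspace remark for the last clause — is routine.
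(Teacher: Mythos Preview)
Your plan has the right shape, but there is a genuine gap in the way you invoke Ehrling. The classical version you cite requires a continuous injection $Y\hookrightarrow Z$, i.e.\ an estimate $\|u|_F\|_{L^2(F,\mu)}\lesssim \|u\|_{L^2(E,\kappa^-)}$ for $u\in\cH^\kappa_F$. No such map exists in general: $\kappa^-$ may be supported in $G=E\setminus F$, so a function can have vanishing $L^2(E,\kappa^-)$-norm and nonzero trace on $F$. Your workaround---check that the restriction $X\to Z$ is bounded and then use closed graph---does not repair this; it produces a bounded map $X\to Z$, not $Y\to Z$, so the hypotheses of the lemma as stated are still unmet. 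What you really need is the Lions variant (compact $T:X\to Y$ and injective continuous $S:X\to Z$, with $X$ reflexive), but even that has a defect here: the restriction $u\mapsto u|_F$ need not send all of $\cH^\kappa_F$ into $L^2(F,\mu)$ (recall $\check\sF^\kappa=\sF^\kappa_\re|_F\cap L^2(F,\mu)$ is a \emph{proper} intersection), so $S$ is not everywhere defined and the closed graph theorem does not apply.

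The paper avoids both obstacles by running the contradiction argument directly. Negating \eqref{eq:49-2} gives a sequence $u_n\in\cH^\kappa_F$ with $\sE^{|\kappa|}(u_n,u_n)=1$ and $\int_E u_n^2\,d\kappa^->\delta+n\int_F(u_n|_F)^2\,d\mu$; the failure of the inequality already forces $u_n|_F\in L^2(F,\mu)$ and $\|u_n|_F\|_{L^2(F,\mu)}\to 0$, so no a priori boundedness of the trace is needed. One then uses compactness in $L^2(E,\kappa^-)$ together with a Banach--Saks step (Ces\`aro means converging strongly in $(\cH^\kappa_F,\sE^{|\kappa|})$) to produce a limit $u\in\cH^\kappa_F$ with $u|_F=0$ $\mu$-a.e., hence $\sE$-q.e.\ on $F$ by $\text{qsupp}[\mu]=F$; then $u\in\sF^{\kappa,G}_\re\cap\cH^\kappa_F=\{0\}$ by \eqref{eq:42-3}, contradicting $\int_E u_n^2\,d\kappa^->\delta$. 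This is exactly the proof of the generalized Ehrling inequality in this specific setting, and it sidesteps the domain and continuity issues that block a black-box citation. Your final paragraph on the ``particularly'' clause is fine.
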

\begin{proof}
Argue by contradiction.  Suppose that for some $\delta>0$,  and any $n\in \mathbb{N}$,  there exists $u_n\in \cH^\kappa_F$ such that $\sE^{|\kappa|}(u_n,u_n)=1$ and
\begin{equation}\label{eq:4110}
	\int_E u_n^2 d\kappa^-> \delta + n \int_F (u_n|_F)^2 d\mu. 
\end{equation}
Note that $\sup_n \int_E u^2_nd\kappa^-\leq  \sup_n \sE^{|\kappa|}(u_n,u_n)=1$ leading to $\int_F (u_n|_F)^2d\mu\rightarrow 0$.  Using the relative compactness of $\{u_n\}$ in $L^2(E,\kappa^-)$,  we may (and do) assume that $u_n$ converges to some $v\in L^2(E,\kappa^-)$ both strongly in $L^2(E,\kappa^-)$  and $\kappa^-$-a.e.  
Taking a subsequence of $\{u_n\}$ if necessary,  we get that $f_N:=\frac{1}{N}\sum_{n=1}^N u_n$ converges to some $u$ strongly in $\cH^\kappa_F$ under the norm $\|\cdot\|_{\sE^{|\kappa|}}$ as $N\rightarrow \infty$,  and $f_N|_F:=\frac{1}{N}\sum_{n=1}^N u_n|_F$ converges to $0$ strongly in $L^2(F,\mu)$.  Since $\sF^\kappa_\re$ is the extended Dirichlet space of the Dirichlet form $(\sE^{|\kappa|}, \sF^\kappa)$,  it follows that a subsequence of $\{f_N\}$,  still denoted  by $\{f_N\}$,  converges to $u$,  $\sE^{|\kappa|}$-q.e. as well as $\sE$-q.e.  Since $f_N|_F\rightarrow 0$ in $L^2(F,\mu)$, taking a subsequence if necessary,  we may (and do) assume that $f_N|_F$ converges to $0$,  $\mu$-a.e.  Since $\text{qsupp}[\mu]=F$ and $\kappa^-$ charges no $\sE$-polar sets,  we obtain that $u|_F=0$,  $\sE$-q.e.,  and $u=v$,  $\kappa^-$-a.e.  As a result,  $u\in \cH^\kappa_F$ implies that $u=0$.  Particularly,  $\int_E u^2_n d\kappa^-\rightarrow 0$ leading to a contraction of \eqref{eq:4110}.  That completes the proof.   
\end{proof}

The condition \eqref{eq:49-2} is stronger than \eqref{eq:47}.  In fact,  taking $\delta<1/2$ in \eqref{eq:49-2} and letting $\delta_0:=\delta/(1-\delta),  C_{\delta_0}:=C_\delta/(1-\delta)$,  we arrive at \eqref{eq:47}.  Hence the following corollary holds.

\begin{corollary}\label{COR46}
Assume \eqref{eq:42-3} and that $(\sF^\kappa_\re, \sE^{|\kappa|})$ is compactly embedded in $L^2(E,\kappa^-)$.  Then $\sN_\kappa$ is lower semi-bounded and self-adjoint on $L^2(F,\mu)$.  
\end{corollary}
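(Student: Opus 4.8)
The plan is to obtain the corollary as a direct consequence of Lemma~\ref{LM45} and Theorem~\ref{THM44}, with only an elementary bookkeeping step in between. First I would invoke the last assertion of Lemma~\ref{LM45}: since $(\sF^\kappa_\re,\sE^{|\kappa|})$ is compactly embedded in $L^2(E,\kappa^-)$, for every $\delta>0$ there exists $C_\delta>0$ such that
\[
	\int_E u^2 d\kappa^- \leq \delta\cdot \sE^{|\kappa|}(u,u) + C_\delta\cdot \int_F (u|_F)^2 d\mu,\qquad \forall u\in \cH^\kappa_F,
\]
which is precisely \eqref{eq:49-2}. (Here \eqref{eq:42-3} is used, as in Lemma~\ref{LM45}, to make $\cH^\kappa_F$ a well-behaved closed subspace of $\sF^\kappa_\re$.)

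Next I would convert this into the hypothesis ``$\kappa^-$ is $\sE^{\kappa^+}$-form bounded on trace'' of Definition~\ref{DEF34}. The key identity is $\sE^{|\kappa|}(u,u)=\sE^{\kappa^+}(u,u)+\int_E u^2 d\kappa^-$, valid because $|\kappa|=\kappa^++\kappa^-$. Substituting it into the inequality above and rearranging yields $(1-\delta)\int_E u^2 d\kappa^- \leq \delta\,\sE^{\kappa^+}(u,u)+C_\delta\int_F (u|_F)^2 d\mu$. Choosing, say, $\delta=1/3$ (any $\delta<1/2$ works, and Lemma~\ref{LM45} supplies \eqref{eq:49-2} for all $\delta$, so arbitrarily small $\delta$ is available), and setting $\delta_0:=\delta/(1-\delta)=1/2<1$ and $C_{\delta_0}:=C_\delta/(1-\delta)$, one arrives exactly at \eqref{eq:47}. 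Thus $\kappa^-$ is $\sE^{\kappa^+}$-form bounded on trace.

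Finally, with \eqref{eq:42-3} holding by assumption and \eqref{eq:47} just established, Theorem~\ref{THM44} applies verbatim and gives that $\sN_\kappa$ is lower semi-bounded and self-adjoint on $L^2(F,\mu)$, completing the proof.

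I do not expect a genuine obstacle in this argument: the corollary is essentially a repackaging of Lemma~\ref{LM45} and Theorem~\ref{THM44}, and the only point requiring a moment's care is that the form-bound constant $\delta_0$ produced by the reduction must stay strictly below $1$ — which is guaranteed as soon as one starts from $\delta<1/2$. The substantive work is of course hidden in verifying the compact-embedding hypothesis itself, but that is exactly what is taken up in the later sections for the concrete perturbations, not here.
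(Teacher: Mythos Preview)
Your argument is correct and matches the paper's own proof essentially verbatim: the paper also derives \eqref{eq:47} from \eqref{eq:49-2} by taking $\delta<1/2$ and setting $\delta_0=\delta/(1-\delta)$, $C_{\delta_0}=C_\delta/(1-\delta)$, then invokes Theorem~\ref{THM44}.
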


A special case of great interest is $\kappa^-=V^-\cdot m$ with $V^-\in L^\infty(E,m)$.  In this case $(\sE^\kappa,\sF^\kappa)$ is clearly a lower bounded closed form on $L^2(E,m)$.   

\begin{corollary}\label{COR47}
Assume \eqref{eq:42-3} and that $\sF_\re=\sF$ (endowed with the norm $\|\cdot\|_{\sE_1}$) is compactly embedded in $L^2(E,m)$.  Consider $\kappa^-=V^-\cdot m$ with $V^-\in L^\infty(E,m)$.   Then $\sN_\kappa$ is lower semi-bounded and self-adjoint on $L^2(F,\mu)$.  
\end{corollary}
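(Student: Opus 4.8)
The plan is to deduce the assertion from Corollary~\ref{COR46}; thus the task is to verify that, under the present hypotheses, $(\sF^\kappa_\re,\sE^{|\kappa|})$ is compactly embedded in $L^2(E,\kappa^-)$.

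Two reductions come first. Since $\sF_\re=\sF$, the definition $\sF^\kappa_\re=\sF_\re\cap L^2(E,|\kappa|)$ collapses to $\sF^\kappa_\re=\sF\cap L^2(E,|\kappa|)=\sF^\kappa$; in particular $\sF^\kappa_\re\subseteq\sF$. Secondly, as $\kappa^-=V^-\cdot m$ with $V^-\in L^\infty(E,m)$, one has $\int_E u^2\,d\kappa^-\le\|V^-\|_{L^\infty}(u,u)_m$, so the inclusion $L^2(E,m)\hookrightarrow L^2(E,\kappa^-)$ is bounded. It therefore suffices to produce a constant $C>0$ with
\begin{equation*}
	\|u\|_{\sE_1}^2\le C\cdot\sE^{|\kappa|}(u,u),\qquad u\in\sF^\kappa_\re,
\end{equation*}
since then $(\sF^\kappa_\re,\sE^{|\kappa|})\hookrightarrow(\sF,\sE_1)\hookrightarrow L^2(E,m)\hookrightarrow L^2(E,\kappa^-)$ is the composition of a bounded map, a compact map (the hypothesis), and a bounded map, hence compact, and Corollary~\ref{COR46} applies. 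Because $\sE(u,u)\le\sE^{|\kappa|}(u,u)$ trivially, the displayed bound reduces to the Poincar\'e-type inequality $(u,u)_m\lesssim\sE^{|\kappa|}(u,u)$ on $\sF^\kappa_\re=\sF^\kappa$.

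To obtain this Poincar\'e bound I would set $\lambda_0:=\inf\{\sE^{|\kappa|}(u,u):u\in\sF^\kappa,\ (u,u)_m=1\}$ and argue $\lambda_0>0$; the bound then reads $(u,u)_m\le\lambda_0^{-1}\sE^{|\kappa|}(u,u)$. If $\lambda_0=0$, choose $u_n\in\sF^\kappa$ with $(u_n,u_n)_m=1$ and $\sE^{|\kappa|}(u_n,u_n)\to0$. Then $\{u_n\}$ is bounded in $\sF^\kappa$ with respect to $\sE^{|\kappa|}_1$, hence also with respect to $\sE_1$ (since $\sE^{|\kappa|}_1(u,u)\ge\sE_1(u,u)$ for $u\in\sF^\kappa\subseteq\sF$); by the assumed compact embedding a subsequence converges in $L^2(E,m)$ to some $u$ with $(u,u)_m=1$, and by lower semicontinuity of the closed form $(\sE^{|\kappa|},\sF^\kappa)$ one gets $\sE^{|\kappa|}(u,u)\le\liminf_n\sE^{|\kappa|}(u_n,u_n)=0$. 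But $\sF^\kappa_\re$ is a Hilbert space under $\sE^{|\kappa|}$ — as recorded in the paragraph preceding Lemma~\ref{LM45} — so $\sE^{|\kappa|}$ is positive definite there and $u=0$, contradicting $(u,u)_m=1$. (Concretely, $\sE^{|\kappa|}(u,u)=0$ forces $\sE(u,u)=0$ and $u=0$ $|\kappa|$-a.e.; by irreducibility of $(\sE,\sF)$ the first condition makes $u$ either $0$ or a non-zero constant, and the second then forces $|\kappa|(E)=0$, which is impossible since $\kappa^-\neq0$.) Hence $\lambda_0>0$.

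The one genuine obstacle is the strict positivity $\lambda_0>0$: a purely soft estimate is hopeless, since $(u,u)_m\le\sE^{|\kappa|}_1(u,u)=\sE^{|\kappa|}(u,u)+(u,u)_m$ is vacuous, so one must combine the compactness hypothesis — to extract an $L^2$-limit from a would-be minimizing sequence — with the non-degeneracy encoded in the Hilbert-space structure of $\sF^\kappa_\re$ (equivalently, in irreducibility together with $\kappa^-\neq0$) to rule out a non-zero limit of zero energy. The remaining steps — collapsing $\sF^\kappa_\re$ to $\sF^\kappa$, bounding the inclusion $L^2(E,m)\hookrightarrow L^2(E,\kappa^-)$, and chaining the three embeddings — are routine.
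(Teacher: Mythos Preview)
Your proof is correct, but the route differs from the paper's. The paper does not go through Corollary~\ref{COR46}; instead it applies Theorem~\ref{THM44} directly, by mimicking the Ehrling-type contradiction argument of Lemma~\ref{LM45} on the subspace $\cH^\kappa_F$ to obtain, for every $\delta>0$, a constant $C_\delta$ with
\[
\int_E u^2\,dm\le\delta\,\sE_1(u,u)+C_\delta\int_F(u|_F)^2\,d\mu,\qquad u\in\cH^\kappa_F,
\]
then rearranges and multiplies by $\|V^-\|_\infty$ to reach the trace-boundedness condition of Definition~\ref{DEF34}. Your approach instead establishes the global Poincar\'e inequality $(u,u)_m\lesssim\sE^{|\kappa|}(u,u)$ on all of $\sF^\kappa$, which yields the full compact embedding $(\sF^\kappa_\re,\sE^{|\kappa|})\hookrightarrow L^2(E,\kappa^-)$ and allows you to invoke Corollary~\ref{COR46}. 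The paper's argument is a bit more economical: it runs the Ehrling machinery only on the smaller space $\cH^\kappa_F$, where the non-degeneracy needed to rule out a non-trivial limit is supplied by assumption~\eqref{eq:42-3} (a function in $\cH^\kappa_F$ with vanishing trace lies in $\sF^{\kappa,G}_\re\cap\cH^\kappa_F=\{0\}$). Your argument instead draws the non-degeneracy from the Hilbert structure of $(\sF^\kappa_\re,\sE^{|\kappa|})$, hence ultimately from the standing hypothesis $\kappa^-\neq0$; in exchange you obtain the slightly stronger intermediate statement that $(\sF^\kappa_\re,\sE^{|\kappa|})$ embeds compactly into $L^2(E,\kappa^-)$, not merely the trace bound on $\cH^\kappa_F$.
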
 
\begin{proof}
Mimicking the proof of Lemma~\ref{LM45},  we can obtain that for any $\delta>0$,  there exists $C_\delta>0$ such that 
\[
	\int_E u^2 dm\leq \delta\cdot \sE_1(u,u)+ C_\delta \int_F (u|_F)^2d\mu,\quad \forall u\in \cH^\kappa_F\subset \sF.  
\]
This implies that for $0<\delta<1$, 
\[
	\int_E u^2 dm\leq \frac{\delta}{1-\delta}\cdot \sE(u,u)+ \frac{C_\delta}{1-\delta} \int_F (u|_F)^2d\mu,\quad \forall u\in \cH^\kappa_F.  
\]
Letting $\|V^-\|_\infty:=\|V^-\|_{L^\infty(E,m)}$ and taking $0<\delta<1$ such that $\delta_0:=\|V^-\|_\infty \cdot \delta/(1-\delta)<1$,  we get
\[
	\int_E u^2d\kappa^-=\int_E u^2 V^-dm\leq \delta_0\cdot \sE(u,u)+\frac{C_\delta \|V^-\|_\infty}{1-\delta} \int_F (u|_F)^2d\mu,\quad \forall u\in \cH^\kappa_F.  
\]
Eventually the assertion is concluded by applying Theorem~\ref{THM44}. 
\end{proof}

\subsection{Examples}\label{SEC33}

We give two examples in this short subsection. 

\begin{example}\label{EXA48}
Let $\Omega\subset \bR^d$ be a bounded Lipschitz domain,  and $0<\alpha\leq 2$.  Consider the Dirichlet form
\[
\begin{aligned}
	\sF&=\{ u\in L^2(\Omega): \sE(u,u)<\infty\}, \\
	\sE(u,v)&=\frac{c(d,\alpha)}{2}\iint_{\Omega\times \Omega\setminus \mathsf{d}_\Omega} \frac{(u(x)-u(y))(v(x)-v(y))}{|x-y|^{d+\alpha}}dxdy+\lambda \int_\Omega uvdx,\quad u,v\in \sF, 
\end{aligned}
\]
where the given constant $\lambda>0$ for $0<\alpha<2$ and $\lambda\geq 0$ for $\alpha=2$,  $c(d,\alpha)$ is the constant appearing in,  e.g.,  \cite[(1.4.27)]{FOT11},  and $\mathsf{d}_\Omega$ is the diagonal of $\Omega\times \Omega$.  Then $(\sE,\sF)$ is a regular and irreducible Dirichlet form on $L^2(\bar{\Omega})$. When $\alpha=2$,  $(\sE,\sF)$ is identified with \eqref{eq:Brownian}   associated with the $\lambda$-subprocess of the reflected Brownian motion on $\bar{\Omega}$.  When $0<\alpha<2$,  it is   associated with the $\lambda$-subprocess of the reflected $\alpha$-stable process on $\bar{\Omega}$; see \cite{BBC03}.  
Note that $\sF_\re=\sF=H^{\alpha/2}(\Omega)$,  the Sobolev space of order $\alpha/2$,  and $\|\cdot\|_{\sE_1}$ is equivalent to the norm of $H^{\alpha/2}(\Omega)$.  In view of \cite[Theorem~2.5.5]{SS11},  $\sF=H^{\alpha/2}(\Omega)$ is compactly embedded in $L^2(\Omega)$.  

Let $\mu=\sigma$, the surface measure on $\Gamma=\partial \Omega$.  In the case $\alpha=2$,   $\text{qsupp}[\mu]=\Gamma$ has been mentioned in Lemma~\ref{LM32}~(3).  For the case $0<\alpha<2$,  under a slightly stronger condition that $\Omega$ is $C^{1,1}$ (see, e.g.,  page 67 of \cite{K03}),  the same result can be obtained by virtue of \cite[Theorem~3.14]{K03} and \cite[Lemma~5.2.9]{CF12}.  

Take $\kappa=\kappa^+-\kappa^-\in \bS-\bS$ satisfying \eqref{eq:42-3}.  Let $\sN_\kappa$ be the DN operator for $(\sE^\kappa,\sF^\kappa)$ on  $L^2(\Gamma)$.  On account of Corollary~\ref{COR47},  if $\kappa^-(dx)=V^-(x)dx$ with $V^-\in L^\infty(\Omega)$,  then $\sN_\kappa$ is lower semi-bounded and self-adjoint on $L^2(\Gamma)$.  
\end{example} 

\begin{example}
In this example let $d\in \mathbb{N}$ and $0<\alpha\leq 2$.  Consider the Dirichlet form 
\[
\begin{aligned}
	\sF&=\{ u\in L^2(\bR^d): \sE(u,u)<\infty\}, \\
	\sE(u,v)&=\frac{c(d,\alpha)}{2}\iint_{\bR^d \times \bR^d\setminus \mathsf{d}} \frac{(u(x)-u(y))(v(x)-v(y))}{|x-y|^{d+\alpha}}dxdy,\quad u,v\in \sF, 
\end{aligned}
\]
where $c(d,\alpha)$ is the same constant as in Example~\ref{EXA48} and $\mathsf{d}$ is the diagonal of $\bR^d\times \bR^d$.  Clearly $(\sE,\sF)$ is a regular and irreducible Dirichlet form on $L^2(\bR^d)$.  When $0<\alpha<2$,  it is   associated with the isotropic $\alpha$-stable process on $\bR^d$.  When $\alpha=2$,  it is   associated with the Brownian motion on $\bR^d$.  The associated process is denoted by $X$.  

Let $\mu\in \mathring{\bS}$ with $F=\text{qsupp}[\mu]$.  Take $\kappa=\kappa^+-\kappa^-\in \bS-\bS$ satisfying \eqref{eq:42-3}.  Denote by $\sN_\kappa$ the DN operator for $(\sE^\kappa,\sF^\kappa)$ on $L^2(F,\mu)$.  

We first consider the transient case $\alpha<d$.  Denote by $\bS_\infty$ the subfamily of $\bS_K$,  the Kato class,  consisting of all Green-tight smooth measures with respect to $X$;  see, e.g., \cite[Definition~2.1]{T07}.  Note that $$\{V(x)dx: V\in L^\infty(\bR^d)\cap L^1(\bR^d), V\geq 0\}\subset \bS_\infty.$$ Assume that
\[
\kappa\in \bS-\bS_\infty.
\]
By virtue of \cite[Theorem~3.4]{T07},  we obtain that $(\sF^\kappa_\re,\sE^{|\kappa|})$ is compactly embedded in $L^2(\bR^d, \kappa^-)$.  Therefore Corollary~\ref{COR46} yields that $\sN_\kappa$ is lower semi-bounded and self-adjoint on $L^2(F,\mu)$.  

Next we treat the recurrent case $\alpha\geq d=1$.  For $\nu\in \bS$,  denote by $X^\nu$ the subprocess of $X$ killed by the PCAF corresponding to $\nu$.  Let $\bS^\nu_\infty$ be the family of all Green-tight smooth measures with respect to $X^\nu$;  see \cite[(2.8)]{T16}.    Assume that $$\kappa=\kappa^+-\kappa^-\in \bS_K-\bS^{\kappa^+}_\infty.$$  Using \cite[Proposition~6.3]{T16},  we get that $(\sF^\kappa_\re, \sE^{|\kappa|})$ is compactly embedded in $L^2(\bR^d,\kappa^-)$ for the case $\alpha>1$.  If $\alpha=1$,  the same compact embedding holds when certain condition on $\kappa^-$ is added;  see \cite[Remark~6.4]{T16}.  Eventually $\sN_\kappa$ is lower semi-bounded and self-adjoint on $L^2(F,\mu)$ by means of Corollary~\ref{COR46}. 
\end{example}

\subsection{Markov processes $h$-associated with DN operators}\label{SEC34}

Let us turn to figure out the probabilistic counterpart of $\sN_\kappa$.  
Adopt the assumptions of Theorem~\ref{THM44} and assume further that $\kappa^-$ is $\sE^{\kappa^+}$-form bounded.  These mean that both $(\sE^\kappa,\sF^\kappa)$ and $(\check{\sE}^\kappa,\check{\sF}^\kappa)$ are lower bounded closed forms.  
Set
\[
	\sF^{\kappa,G}:=\{u\in \sF^\kappa: u=0,\sE\text{-q.e. on }F\},\quad \sE^{\kappa,G}(u,v):=\sE^\kappa(u,v),\; u,v\in \sF^{\kappa,G}.  
\]
Then $(\sE^{\kappa,G},\sF^{\kappa,G})$ is a lower bounded closed form on $L^2(G,m|_G)$,  whose generator is denoted by $\sL_{\kappa,G}$. 

\begin{lemma}\label{LM311}
The following are equivalent:
\begin{itemize}
\item[(a)] $-\sL_{\kappa,G}$ is positive in the sense that $(-\sL_{\kappa,G}u,u)_m\geq 0$ for any $u\in \cD(\sL_{\kappa,G})$. 
\item[(b)] $\sE^{\kappa,G}(u,u)\geq 0$ for any $u\in \sF^{\kappa,G}$.
\item[(c)] $\sE^{\kappa,G}(u,u)\geq 0$ for any $u\in \sF^{\kappa,G}_\re$.  
\end{itemize}
\end{lemma}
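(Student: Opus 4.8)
The statement is a standard-looking equivalence among (a) positivity of the self-adjoint operator $\sL_{\kappa,G}$, (b) non-negativity of the form $\sE^{\kappa,G}$ on its domain $\sF^{\kappa,G}$, and (c) non-negativity of $\sE^{\kappa,G}$ on the extended space $\sF^{\kappa,G}_\re$. The plan is to prove the cycle (c) $\Rightarrow$ (b) $\Rightarrow$ (a) $\Rightarrow$ (b), together with (b) $\Rightarrow$ (c); since $\sF^{\kappa,G}\subset\sF^{\kappa,G}_\re$ the implication (c) $\Rightarrow$ (b) is immediate, so the real content is (b) $\Leftrightarrow$ (a) and (b) $\Rightarrow$ (c).

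For (b) $\Leftrightarrow$ (a): this is the abstract fact that a lower bounded closed symmetric form is non-negative on its form domain if and only if the associated self-adjoint generator is non-negative. First I would note that $(\sE^{\kappa,G},\sF^{\kappa,G})$ is a lower bounded closed form on $L^2(G,m|_G)$ (already established in the paragraph preceding the lemma), so by the first representation theorem there is a self-adjoint, lower bounded operator $\sL_{\kappa,G}$ with $\sE^{\kappa,G}(u,v)=(-\sL_{\kappa,G}u,v)_m$ for $u\in\cD(\sL_{\kappa,G})$, $v\in\sF^{\kappa,G}$, and moreover $\cD(\sL_{\kappa,G})$ is a form core, i.e. dense in $\sF^{\kappa,G}$ in the form norm. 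Then (b) $\Rightarrow$ (a) is trivial by specializing $v=u\in\cD(\sL_{\kappa,G})\subset\sF^{\kappa,G}$. Conversely, assuming (a), for any $u\in\sF^{\kappa,G}$ pick $u_n\in\cD(\sL_{\kappa,G})$ with $\sE^{\kappa,G}_\beta(u_n-u,u_n-u)\to 0$ for $\beta$ large enough to make $\sE^{\kappa,G}_\beta$ an inner product; then $\sE^{\kappa,G}(u_n,u_n)=(-\sL_{\kappa,G}u_n,u_n)_m\ge 0$, and continuity of the form gives $\sE^{\kappa,G}(u,u)=\lim_n\sE^{\kappa,G}(u_n,u_n)\ge 0$. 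Alternatively one may invoke the spectral theorem: (a) says $\sigma(-\sL_{\kappa,G})\subset[0,\infty)$, hence $\sE^{\kappa,G}(u,u)=\int_0^\infty\lambda\,d\|E_\lambda u\|^2\ge 0$ on the form domain $\cD((-\sL_{\kappa,G})^{1/2})=\sF^{\kappa,G}$.

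For (b) $\Rightarrow$ (c): given $u\in\sF^{\kappa,G}_\re$, one approximates by functions in $\sF^{\kappa,G}$. The key point is that $\sF^{\kappa,G}_\re$ is the extended Dirichlet space attached to $(\sE^{\kappa,G},\sF^{\kappa,G})$ — more precisely, $(\sE^{|\kappa|,G},\sF^{\kappa,G})$ is a Dirichlet form (the part form of the perturbed Dirichlet form $(\sE^{|\kappa|},\sF^{|\kappa|})$ on $G$) with extended Dirichlet space $\sF^{\kappa,G}_\re$, by the structure recalled in Section~\ref{SEC3}; cf. \cite[Proposition~5.1.9]{CF12} and \cite[Theorems~3.3.8 and 3.4.9]{CF12}. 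So for $u\in\sF^{\kappa,G}_\re$ there is an $\sE^{|\kappa|,G}$-approximating sequence $\{u_n\}\subset\sF^{\kappa,G}$ with $u_n\to u$ a.e. and $\sE^{|\kappa|,G}(u_n-u_m,u_n-u_m)\to 0$. Since $\sE^\kappa(w,w)=\sE^{\kappa^+}(w,w)-\int_E w^2\,d\kappa^-$ and $\kappa^-$ is $\sE^{\kappa^+}$-form bounded, the form $\sE^{\kappa,G}$ is controlled by $\sE^{|\kappa|,G}$: one has $|\sE^{\kappa,G}(v,v)|\le C\,\sE^{|\kappa|,G}(v,v)$, hence $\sE^{\kappa,G}(u_n,u_n)\to\sE^{\kappa,G}(u,u)$ (using the polarization/Cauchy-Schwarz estimate in the $\sE^{|\kappa|}$-Hilbert norm, exactly as in \eqref{eq:48}), and each $\sE^{\kappa,G}(u_n,u_n)\ge 0$ by (b). Therefore $\sE^{\kappa,G}(u,u)\ge 0$, which is (c). Combined with the trivial (c) $\Rightarrow$ (b), this closes the equivalence.

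The main obstacle is the bookkeeping in (b) $\Rightarrow$ (c): one must be careful that the natural Hilbert-space structure for taking limits is the one given by $\sE^{|\kappa|}$ (under which $\sF^\kappa_\re$, and hence its closed subspace $\sF^{\kappa,G}_\re$, is complete), not $\sE^\kappa$ itself, which is only lower bounded; and one must check that the $\sE^{|\kappa|}$-convergence of the approximating sequence indeed forces convergence of $\sE^{\kappa,G}(u_n,u_n)$, which follows because the quadratic form $w\mapsto\sE^{\kappa,G}(w,w)$ is continuous in the $\sE^{|\kappa|}$-norm on $\sF^{\kappa,G}_\re$. Once the correct topology is fixed everything is routine.
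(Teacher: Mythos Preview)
Your proof is correct and follows essentially the same strategy as the paper: the equivalence (a) $\Leftrightarrow$ (b) is handled by the standard form-core density argument, and (b) $\Rightarrow$ (c) by approximation in the extended Dirichlet space followed by passage to the limit. The only cosmetic difference is that for (b) $\Rightarrow$ (c) you take the reference Dirichlet form to be $(\sE^{|\kappa|,G},\sF^{\kappa,G})$ and pass to the limit via continuity of $\sE^\kappa$ in the $\sE^{|\kappa|}$-norm, whereas the paper uses $(\sE^{\kappa^+,G},\sF^{\kappa^+,G})$ and passes to the limit separately in $\sE^{\kappa^+}$ and in $\int\cdot\,d\kappa^-$; both routes rely on the same underlying estimate $\int v^2\,d\kappa^-\le\sE^{|\kappa|}(v,v)$.
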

\begin{proof}
Clearly (c) implies (a).  Suppose (a).  Since $(\sE^{\kappa,G}_{\alpha_0}, \sF^{\kappa,G})$ is a non-negative closed form for some $\alpha_0\geq 0$,  it follows that for $\alpha>\alpha_0$,
\begin{equation}\label{eq:3.11}
	\sE^{\kappa,G}_\alpha (u,u)\geq \alpha (u,u)_m,\quad \forall u\in \cD(\sL_{\kappa, G}).
\end{equation}
Note that $\cD(\sL_{\kappa,G})$ is $\|\cdot\|_{\sE^{\kappa,G}_\alpha}$-dense in $\sF^{\kappa,G}$.  For $u\in \sF^{\kappa,G}$,  one can take a sequence $u_n\in \cD(\sL_{\kappa,G})$ such that $\|u_n-u\|_{\sE^{\kappa,G}_\alpha}\rightarrow 0$ and $(u_n,u_n)_m\rightarrow (u,u)_m$.  Applying \eqref{eq:3.11} to $u_n$ and letting $n\rightarrow 0$,  we get (b).  Finally suppose (b) and we are to derive (c).  Note that $\sF^{\kappa,G}_\re=\sF^{\kappa^+,G}_\re$ is the extended Dirichlet space of the Dirichlet form $(\sE^{\kappa^+,G},\sF^{\kappa^+,G})$,  where $\sE^{\kappa^+,G}(u,v)=\sE(u,v)+\int uv d\kappa^+$ for $u,v\in \sF^{\kappa^+,G}$.  For $u\in \sF^{\kappa,G}_\re=\sF^{\kappa^+,G}_\re$, take its approximating sequence $\{u_n\}\subset \sF^{\kappa^+,G}$, and on account of \cite[Theorem~2.3.4]{CF12},  we may and do assume that $\{u_n\}$ is $\sE^{\kappa^+,G}$-Cauchy and $u_n$ converges to $u$,  $\sE^{\kappa^+,G}$-q.e.  Particularly,  $u_n$ converges to $u$,  $\kappa^-$-a.e.    Looking at (b),  we have
\begin{equation}\label{eq:3.12}
	\sE^{\kappa^+,G}(u_n,u_n)\geq \int u_n^2 d\kappa^-.
\end{equation}
Since $\{u_n\}$ is $\sE^{\kappa^+,G}$-Cauchy, it follows that $\{u_n\}$ is $L^2(E,\kappa^-)$-Cauchy and $u_n$ converges to $u$ in $L^2(E,\kappa^-)$.  Letting $n\uparrow\infty$ in \eqref{eq:3.12},  we arrive at (c).  That completes the proof. 
\end{proof}
\begin{remark}
Either condition in this lemma admits a non-zero $\kappa^-$.  For example,  in Lemma~\ref{LM41},  they are also equivalent to $\lambda>\lambda^\text{D}_1/2$ due the Poincar\'e's inequality. 
\end{remark}

The conception of quasi-regular (non-negative) positivity preserving (symmetric) coercive form is reviewed in Appendix~\ref{APPD}.  Note that $(\check{\sE}^\kappa_{\alpha_0},\check{\sF}^\kappa)$ is a non-negative closed form for some $\alpha_0\geq 0$.  The quasi notions like nest, polar set and quasi-continuous function for $\check\sE^\kappa$ are by definition those for $\check{\sE}^\kappa_{\alpha_0}$.  We say $(\check{\sE}^\kappa,\check{\sF}^\kappa)$ is a quasi-regular positive preserving (symmetric) coercive form if so is $(\check{\sE}^\kappa_{\alpha_0},\check{\sF}^\kappa)$. 

\begin{theorem}\label{THM312}
Assume that either of the conditions in Lemma~\ref{LM311} holds.  Then $(\check{\sE}^\kappa,\check{\sF}^\kappa)$ is a quasi-regular positivity preserving (symmetric) coercive form on $L^2(F,\mu)$.  
\end{theorem}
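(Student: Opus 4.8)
The plan is to verify the three defining properties of a quasi-regular positivity preserving symmetric coercive form for $(\check{\sE}^\kappa_{\alpha_0},\check{\sF}^\kappa)$, namely: it is a (non-negative) closed form on $L^2(F,\mu)$; it is positivity preserving, i.e. its associated strongly continuous semigroup maps positive functions to positive functions; and it is quasi-regular. The closedness is already in hand: under the hypothesis of Lemma~\ref{LM311} together with the standing assumption that $\kappa^-$ is $\sE^{\kappa^+}$-form bounded, $(\check{\sE}^\kappa,\check{\sF}^\kappa)$ is a lower bounded closed form (indeed the argument of Theorem~\ref{THM44} shows $\check{\sF}^\kappa$ is complete under $\check{\sE}^\kappa_\alpha$ for $\alpha>\alpha_0$), so the real content is positivity preservation and quasi-regularity.

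For positivity preservation I would use the standard second-order criterion (the Ouhabaz/Beurling--Deny type characterization reviewed in Appendix~\ref{APPD}): a non-negative closed form $(\mathcal{A},\mathcal{V})$ is positivity preserving if and only if $u\in\mathcal{V}$ implies $|u|\in\mathcal{V}$ and $\mathcal{A}(|u|,|u|)\le\mathcal{A}(u,u)$. So let $\varphi\in\check{\sF}^\kappa$ and set $u:=\bH^\kappa_F\varphi\in\cH^\kappa_F\subset\sF^\kappa_\re$. First I must show $|\varphi|\in\check{\sF}^\kappa$, which amounts to exhibiting \emph{some} element of $\sF^\kappa_\re$ whose trace on $F$ is $|\varphi|$; the obvious candidate is $|u|$, which lies in $\sF_\re$ (since $(\sE,\sF)$ is Dirichlet) and in $L^2(E,|\kappa|)$ because $|u|^2=u^2$, hence $|u|\in\sF^\kappa_\re$ and $|u|\big|_F=|\varphi|$. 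Thus $|\varphi|\in\check{\sF}^\kappa$. Next, using $\check{\sF}^\kappa=\sF^{\kappa,G}_\re\oplus\cH^\kappa_F$, write $|u|=w+\bH^\kappa_F|\varphi|$ with $w\in\sF^{\kappa,G}_\re$, so $\bH^\kappa_F|\varphi|=|u|-w$. Then
\[
\check{\sE}^\kappa(|\varphi|,|\varphi|)=\sE^\kappa(|u|-w,|u|-w)=\sE^\kappa(|u|,|u|)-2\sE^\kappa(|u|,w)+\sE^\kappa(w,w).
\]
By the $\kappa$-harmonicity of $\bH^\kappa_F|\varphi|=|u|-w$ against $w\in\sF^{\kappa,G}_\re$ we get $\sE^\kappa(|u|-w,w)=0$, i.e. $\sE^\kappa(|u|,w)=\sE^\kappa(w,w)$, so $\check{\sE}^\kappa(|\varphi|,|\varphi|)=\sE^\kappa(|u|,|u|)-\sE^\kappa(w,w)$. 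Now $\sE^\kappa(|u|,|u|)=\sE(|u|,|u|)+\int_E u^2\,d\kappa\le\sE(u,u)+\int_E u^2\,d\kappa=\sE^\kappa(u,u)$ by the Markovian (contraction) property of $(\sE,\sF)$, while $\sE^\kappa(w,w)=\sE^{\kappa,G}(w,w)\ge0$ by the hypothesis of Lemma~\ref{LM311} (condition (c)). Combining, $\check{\sE}^\kappa(|\varphi|,|\varphi|)\le\sE^\kappa(u,u)=\check{\sE}^\kappa(\varphi,\varphi)$, which is exactly the required inequality.

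For quasi-regularity the natural route is to transfer the quasi-regularity of $(\sF^\kappa_\re,\sE^\kappa)$ (equivalently of the Dirichlet form $(\sE^{\kappa^+},\sF^{\kappa^+})$, whose extended space carries the same quasi-notions, see \cite[Proposition~5.1.9]{CF12}) through the trace map $u\mapsto u|_F$. Concretely, if $\{\check{F}_k\}$ is built from an $\sE^{\kappa^+}$-nest $\{F_k\}$ of $E$ by intersecting with $F$, then the continuity of the trace operator on $\cH^\kappa_F$ — guaranteed by $\|\bH^\kappa_F\varphi\|_{\sE^{|\kappa|}}^2\lesssim\check{\sE}^\kappa_\alpha(\varphi,\varphi)$ from \eqref{eq:47-1} — lets one pull back a separating family of quasi-continuous functions and an exhausting nest for $\check{\sE}^\kappa$; polar sets for $\check{\sE}^\kappa$ correspond to sets $N\subset F$ with $\bH^\kappa_F$-image $\sE$-polar. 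Alternatively, when $\kappa\in\bS$ one may invoke that the trace Dirichlet form of a quasi-regular Dirichlet form with respect to a smooth measure is again quasi-regular (the analogue of \cite[Theorem~5.2.7]{CF12} in the perturbed setting), and reduce the general $\kappa^-\ne0$ case to it by the form-boundedness perturbation, which does not affect quasi-notions. I expect the quasi-regularity verification to be the main obstacle: unlike the Markovian trace-form theory of \cite{CF12}, here $\check{\sE}^\kappa$ is only a coercive form, so one cannot simply quote a time-change theorem for Dirichlet forms and must instead check the abstract quasi-regularity axioms (existence of an $\check{\sE}^\kappa$-nest of compacts, density of quasi-continuous functions, a countable separating family) by hand, carefully arguing that $\sE$-quasi-continuity of $u\in\cH^\kappa_F$ restricts to $\check{\sE}^\kappa$-quasi-continuity of $u|_F$ on $F$ and that the decomposition \eqref{eq:42-3} is compatible with nests.
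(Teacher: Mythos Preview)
Your positivity-preservation argument is correct and, in content, identical to the paper's: both rely on exactly the same two inputs, namely the contraction property $\sE(|u|,|u|)\le\sE(u,u)$ of the underlying Dirichlet form and condition~(c) of Lemma~\ref{LM311} applied to the ``correction'' term in $\sF^{\kappa,G}_\re$. The only cosmetic difference is that the paper works with the criterion $\check{\sE}^\kappa(\varphi^+,\varphi^-)\le 0$ (as in \eqref{eq:C1}), writing $u^+-\bH^\kappa_F\varphi^+=u^--\bH^\kappa_F\varphi^-=:u_0\in\sF^{\kappa,G}_\re$ and computing $\check{\sE}^\kappa(\varphi^+,\varphi^-)=\sE(u^+,u^-)-\sE^\kappa(u_0,u_0)\le 0$; your version via $\check{\sE}^\kappa(|\varphi|,|\varphi|)\le\check{\sE}^\kappa(\varphi,\varphi)$ is equivalent.

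The gap is in quasi-regularity. Your outlined routes are vague, and in particular you miss the short argument the paper uses: compare $(\check{\sE}^\kappa,\check{\sF}^\kappa)$ with the \emph{trace Dirichlet form} $(\check{\sE}^{|\kappa|},\check{\sF}^{|\kappa|})$ of the genuine Dirichlet form $(\sE^{|\kappa|},\sF^{|\kappa|})$ on $L^2(F,\mu)$. One has $\check{\sF}^{|\kappa|}=\sF^{|\kappa|}_\re|_F\cap L^2(F,\mu)=\sF^{\kappa}_\re|_F\cap L^2(F,\mu)=\check{\sF}^\kappa$, and the key inequality is $\check{\sE}^{|\kappa|}(\varphi,\varphi)\ge\check{\sE}^\kappa(\varphi,\varphi)$ for all $\varphi\in\check{\sF}^\kappa$. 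This last step is \emph{not} just ``form-boundedness does not affect quasi-notions''; it requires a second application of Lemma~\ref{LM311}(c): writing $w:=\bH^{|\kappa|}_F\varphi-\bH^\kappa_F\varphi\in\sF^{\kappa,G}_\re$, one gets
\[
\check{\sE}^{|\kappa|}(\varphi,\varphi)=\sE^{|\kappa|}(\bH^{|\kappa|}_F\varphi,\bH^{|\kappa|}_F\varphi)\ge \sE^{\kappa}(\bH^{|\kappa|}_F\varphi,\bH^{|\kappa|}_F\varphi)=\check{\sE}^\kappa(\varphi,\varphi)+\sE^{\kappa}(w,w)\ge\check{\sE}^\kappa(\varphi,\varphi).
\]
Since $(\check{\sE}^{|\kappa|},\check{\sF}^{|\kappa|})$ is quasi-regular by standard time-change theory, Lemma~\ref{LMD3} gives the quasi-regularity of $(\check{\sE}^\kappa,\check{\sF}^\kappa)$ in one line. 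Your nest-intersection approach would eventually work but is substantially more laborious, and your alternative (b) never identifies $|\kappa|$ as the right comparison perturbation nor the domination inequality above.
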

\begin{proof}
We first show that $(\check{\sE}^\kappa,\check{\sF}^\kappa)$  is positivity preserving.  Take $\varphi\in \check{\sF}^\kappa$, and set $\varphi^+:=\varphi \vee 0,  \varphi^-:=\varphi^+-\varphi$.  Clearly $\varphi^\pm \in \check{\sF}^\kappa$.  Let $u:=\bH^\kappa_F \varphi$ and $u^+:=u\vee 0, u^-:=u^+-u$.  Since $u=\bH^\kappa_F \varphi^+-\bH^\kappa_F\varphi^-$,  it follows that
\[
	u^+-\bH^\kappa_F\varphi^+=u^--\bH^\kappa_F\varphi^-=:u_0\in \sF^\kappa_\re.  
\]
Note that $u_0|_F=0$,  $\mu$-a.e.  and hence $\sE$-q.e.  due to $\text{qsupp}[\mu]=F$. This implies that $u_0\in \sF^{\kappa,G}_\re$.  By the definition of $\check{\sE}^\kappa$, one have that
\[
\begin{aligned}
	\check{\sE}&^\kappa(\varphi^+,\varphi^-)\\ &=\sE^\kappa(\bH^\kappa_F \varphi^+,\bH^\kappa_F\varphi^-)=\sE^\kappa(u^+-u_0,u^--u_0)=\sE(u^+,u^-)-\sE^\kappa(u_0,u_0). 
\end{aligned}\]
By means of (c) in Lemma~\ref{LM311} and the positivity preserving property of $(\sE,\sF)$,  we get that
\[
	\check{\sE}^\kappa(\varphi^+,\varphi^-)\leq \sE(u^+,u^-)\leq 0. 
\]
Hence $(\check{\sE}^\kappa,\check{\sF}^\kappa)$  is positivity preserving.  

Denote by $(\check{\sE}^{|\kappa|},\check{\sF}^{|\kappa|})$ the trace Dirichlet form of $(\sE^{|\kappa|},\sF^{|\kappa|})$ on $L^2(F,\mu)$.  Clearly,   $(\check{\sE}^{|\kappa|},\check{\sF}^{|\kappa|})$ is quasi-regular on $L^2(F,\mu)$.  Note that
\[
	\check{\sF}^{|\kappa|}=\sF^{|\kappa|}_\re|_F\cap L^2(F,\mu)=\sF^\kappa_\re|_F\cap L^2(F,\mu)=\check{\sF}^\kappa. 
\]
For $\varphi\in \check{\sF}^\kappa$,  $\bH^\kappa_F\varphi, \bH^{|\kappa|}_F\varphi \in \sF^\kappa_\re$ and $\bH^\kappa_F\varphi-\bH^{|\kappa|}_F\varphi\in \sF^{\kappa,G}_\re$.  By means of (c) in Lemma~\ref{LM311},  we have that
\[
\begin{aligned}
	\check{\sE}^{|\kappa|}(\varphi,\varphi)&=\sE^{|\kappa|}\left(\bH^{|\kappa|}_F\varphi,\bH^{|\kappa|}_F\varphi\right)\geq \sE^\kappa\left(\bH^{|\kappa|}_F\varphi,\bH^{|\kappa|}_F\varphi \right) \\
	&=\sE^\kappa\left(\bH^\kappa_F\varphi+(\bH^{|\kappa|}_F\varphi-\bH^\kappa_F\varphi), \bH^\kappa_F\varphi+(\bH^{|\kappa|}_F\varphi-\bH^\kappa_F\varphi)\right).   \\
\end{aligned}\]
On account of $\bH^\kappa_F\varphi-\bH^{|\kappa|}_F\varphi\in \sF^{\kappa,G}_\re$,  the last term is equal to
\[
\begin{aligned}
	&=\sE^\kappa(\bH^\kappa_F\varphi, \bH^\kappa_F\varphi)+\sE^\kappa\left(\bH^{|\kappa|}_F\varphi-\bH^\kappa_F\varphi,\bH^{|\kappa|}_F\varphi-\bH^\kappa_F\varphi \right)\\
	&\geq \sE^\kappa(\bH^\kappa_F\varphi, \bH^\kappa_F\varphi)=\check{\sE}^\kappa(\varphi,\varphi).  
\end{aligned}\]
In view of Lemma~\ref{LMD3} we eventually obtain the quasi-regularity of $(\check{\sE}^\kappa,\check{\sF}^\kappa)$.  That completes the proof. 
\end{proof}

Take a constant $\alpha_0\geq 0$ such that $\check{\sE}^\kappa_{\alpha_0}$ is non-negative.  
Denote by $\check{T}^\kappa=(\check{T}^\kappa_t)_{t\geq 0}$ and $(\check{G}^\kappa_\alpha)_{\alpha>\alpha_0}$ the $L^2$-semigroup and $L^2$-resolvent of $(\check{\sE}^\kappa,\check{\sF}^\kappa)$ respectively.  For $\alpha\in \bR$,  $\varphi$ is called \emph{$\alpha$-excessive} if $\varphi\in pL^2(F,\mu)$ and $\re^{-\alpha t}\check{T}^\kappa_t \varphi\leq \varphi$,  $\mu$-a.e. for all $t>0$.  Define
\[
\mathbf{E}^+_\alpha:=\{\varphi \text{ is } \alpha\text{-excessive}: \varphi\in \check{\sF}^\kappa,  \varphi>0, \mu\text{-a.e.}\}.  
\]
Note that $\mathbf{E}^+_\alpha\subset \mathbf{E}^+_{\alpha'}$ in case $\alpha<\alpha'$,  and for $\alpha>\alpha_0$, 
\[
	\{\check{G}^\kappa_\alpha f: f\in L^2(F,\mu),  f>0, \mu\text{-a.e.}\}\subset \mathbf{E}_\alpha^+;
\] 
see \cite[Lemma~3.6]{MR95}.  For $\alpha\in \bR$ and $h\in L^2(F,\mu),  h>0$, $\mu$-a.e.,  set
\[
\begin{aligned}
	&\check\sF^{\kappa,h}:=\{\varphi\in L^2(F,h^2\cdot \mu): \varphi h\in \check\sF^{\kappa}\}, \\
		&\check\sE^{\kappa,h}_\alpha(\varphi,\phi):=\check\sE^{\kappa}_\alpha(\varphi h,\phi h),\quad \varphi,\phi\in \check\sF^{\kappa,h},
\end{aligned}
\]
called the $h$-transform of $(\check{\sE}^\kappa_\alpha,\check{\sF}^\kappa)$.  Write $\check{\sE}^{\kappa,h}:=\check{\sE}^{\kappa,h}_0$.  Clearly,  $(\check{\sE}^{\kappa,h}_\alpha,\check{\sF}^{\kappa,h})$ is a lower bounded closed form on $L^2(F,h^2\cdot \mu)$,  whose $L^2$-semigroup and $L^2$-resolvent are
\[
	\check{T}^{\kappa,h,\alpha}_t\varphi=\frac{\re^{-\alpha t}\check{T}^\kappa_t\left(\varphi h\right)}{h},\quad \check{G}^{\kappa,h,\alpha}_{\alpha'}\varphi=\frac{1}{h}\int_0^\infty \re^{-(\alpha+\alpha') t} \check{T}^{\kappa}_t (\varphi h)dt
\]
for $t\geq 0$ and $\alpha'>(\alpha_0-\alpha)\vee 0$.  Its $L^2$-generator is 
\[
\begin{aligned}
	&\cD\left(\check{\sL}^{\kappa,h}_\alpha\right)=\{\varphi \in L^2(F,h^2\cdot \mu): \varphi h\in \cD(\sN_\kappa)\},  \\
	&\check{\sL}^{\kappa,h}_\alpha \varphi =-\frac{\sN_\kappa (\varphi h)}{h}-\alpha \varphi,\quad \varphi \in \cD\left(\check{\sL}^{\kappa,h}_\alpha \right).
\end{aligned}\]
When $h\in \mathbf{E}^+_\alpha$,  $\check{T}^{\kappa, h,\alpha}_t$ is Markovian,  i.e.  $0\leq \check{T}^{\kappa, h,\alpha}_t \varphi\leq 1$ for $\varphi\in L^2(F,h^2\cdot \mu)$ with $0\leq \varphi\leq 1$.  Particularly,  the resolvent can be extended to a Markovian one with parameter $\alpha'>0$ on $L^\infty(F,\mu)$,  i.e.  
\[
	\check{G}^{\kappa,h,\alpha}_{\alpha'} \varphi=\frac{1}{h}\int_0^\infty \re^{-(\alpha+\alpha') t} \check{T}^{\kappa}_t (\varphi h)dt,\quad \alpha'>0, \varphi \in L^\infty(F,\mu)
\]
forms a Markovian resolvent on $L^\infty(F,\mu)$;  see,  e.g., \cite[page 8]{O13}.  

\begin{corollary}
Adopt the same assumptions in Theorem~\ref{THM312}.  For any $\alpha\in \bR$ and $h\in \mathbf{E}^+_\alpha$,  the $h$-transform $(\check{\sE}^{\kappa,h}_\alpha,\check{\sF}^{\kappa,h})$ is a quasi-regular lower bounded symmetric Dirichlet form on $L^2(F,h^2\cdot \mu)$.  Furthermore,  there exists an $h^2\cdot \mu$-symmetric Markov process $\check{X}^{\kappa,  h, \alpha}$ such that for $\alpha'>0$ and $\varphi\in L^\infty(F,\mu)$,
\[
	\check{R}^{\kappa,h,\alpha}_{\alpha'}\varphi(\cdot):=\mathbf{E}_\cdot \left[\int_0^\infty \re^{-\alpha' t} \varphi(\check{X}^{\kappa, h,\alpha}_t)dt\right]
\]
is an $\check{\sE}^{\kappa,h}_\alpha$-quasi-continuous $\mu$-version of $\check{G}^{\kappa, h,\alpha}_{\alpha'} \varphi$.  
\end{corollary}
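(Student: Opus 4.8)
The plan is to read off the corollary from Theorem~\ref{THM312} together with the $h$-transform material recorded just before the statement and in Appendix~\ref{APPD}. Fix $\alpha\in\bR$, $h\in\mathbf{E}^+_\alpha$, and let $\alpha_0\geq 0$ be such that $\check{\sE}^\kappa_{\alpha_0}$ is non-negative. As already noted above, $(\check{\sE}^{\kappa,h}_\alpha,\check{\sF}^{\kappa,h})$ is a lower bounded closed symmetric form on $L^2(F,h^2\cdot\mu)$ whose $L^2$-semigroup is $\check{T}^{\kappa,h,\alpha}_t$, and since $h\in\mathbf{E}^+_\alpha$ this semigroup is Markovian. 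A Markovian symmetric semigroup is an $L^2(F,h^2\cdot\mu)$-contraction, so its generator is non-positive; hence $(\check{\sE}^{\kappa,h}_\alpha,\check{\sF}^{\kappa,h})$ is in fact a (non-negative) symmetric Dirichlet form on $L^2(F,h^2\cdot\mu)$, which is the first assertion apart from quasi-regularity.

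For quasi-regularity I would use that, by Theorem~\ref{THM312}, $(\check{\sE}^\kappa,\check{\sF}^\kappa)$ is a quasi-regular positivity preserving symmetric coercive form on $L^2(F,\mu)$, and that the $h$-transform by a strictly positive quasi-continuous function preserves quasi-regularity. Concretely: $h$ admits an $\check{\sE}^\kappa_{\alpha_0}$-quasi-continuous $\mu$-version, again written $h$, with $h>0$ $\mu$-a.e., so the level sets $\{k^{-1}\leq h\leq k\}$ exhaust $F$ up to an $\check{\sE}^\kappa$-polar set; intersecting an $\check{\sE}^\kappa_{\alpha_0}$-nest of compact sets carrying a special standard core with these level sets yields an $\check{\sE}^{\kappa,h}_\alpha$-nest, and multiplication by $h$ carries the corresponding $\check{\sE}^\kappa$-quasi-continuous functions bijectively onto $\check{\sE}^{\kappa,h}_\alpha$-quasi-continuous ones. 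Thus a special standard core for $\check{\sE}^\kappa$ is turned into one for $\check{\sE}^{\kappa,h}_\alpha$, the two systems of quasi notions coincide, and $(\check{\sE}^{\kappa,h}_\alpha,\check{\sF}^{\kappa,h})$ is quasi-regular on $L^2(F,h^2\cdot\mu)$; compare \cite{MR95,O13} and Appendix~\ref{APPD}.

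Finally, the general correspondence between quasi-regular symmetric Dirichlet forms and right Markov processes yields an $h^2\cdot\mu$-symmetric right Markov process $\check{X}^{\kappa,h,\alpha}$ properly associated with $(\check{\sE}^{\kappa,h}_\alpha,\check{\sF}^{\kappa,h})$, so that for every $\alpha'>0$ and $g\in L^2(F,h^2\cdot\mu)$ the function $x\mapsto\mathbf{E}_x\!\left[\int_0^\infty\re^{-\alpha' t}g(\check{X}^{\kappa,h,\alpha}_t)dt\right]$ is an $\check{\sE}^{\kappa,h}_\alpha$-quasi-continuous $\mu$-version of $\check{G}^{\kappa,h,\alpha}_{\alpha'}g$. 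To extend this to arbitrary $\varphi\in L^\infty(F,\mu)=L^\infty(F,h^2\cdot\mu)$, I would, after reducing to $0\leq\varphi\leq 1$ by linearity, choose $\varphi_n\in L^2(F,h^2\cdot\mu)$ with $0\leq\varphi_n\uparrow\varphi$, observe that $\check{R}^{\kappa,h,\alpha}_{\alpha'}\varphi_n\uparrow\check{R}^{\kappa,h,\alpha}_{\alpha'}\varphi$ pointwise (monotone convergence under $\mathbf{E}_x$) and that $\check{G}^{\kappa,h,\alpha}_{\alpha'}\varphi_n\to\check{G}^{\kappa,h,\alpha}_{\alpha'}\varphi$ for the Markovian $L^\infty$-resolvent recorded before the statement (cf. \cite[page~8]{O13}), and conclude $\check{R}^{\kappa,h,\alpha}_{\alpha'}\varphi=\check{G}^{\kappa,h,\alpha}_{\alpha'}\varphi$ $\mu$-a.e. with the left-hand side $\check{\sE}^{\kappa,h}_\alpha$-quasi-continuous.

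I expect the main obstacle to be the second step: showing that the $h$-transform of the quasi-regular positivity preserving coercive form $(\check{\sE}^\kappa,\check{\sF}^\kappa)$ remains quasi-regular, i.e. that nests and special standard cores survive multiplication by the strictly positive quasi-continuous function $h$ and that the $\check{\sE}^\kappa$- and $\check{\sE}^{\kappa,h}_\alpha$-quasi notions agree. Everything else is either immediate from Theorem~\ref{THM312} or a routine invocation of the Dirichlet-form--process correspondence together with the $L^\infty$-extension of the resolvent already established above.
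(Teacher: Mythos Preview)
Your proposal is correct, and your first and third steps match the paper's argument closely (the paper simply says the Dirichlet form property is ``clear'' and invokes \cite[Theorem~3.3.4]{O13} plus quasi-homeomorphism for the process, glossing over the $L^\infty$-extension that you spell out).

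The only substantive difference is in the quasi-regularity step. You sketch a direct argument via level sets of the quasi-continuous version of $h$, intersected with an $\check{\sE}^\kappa_{\alpha_0}$-nest, essentially reproving the $h$-transform half of \cite[Theorem~4.14]{MR95}. The paper instead applies Theorem~\ref{THMD2} directly after one line: since $\mathbf{E}^+_\alpha\subset\mathbf{E}^+_{\alpha'}$ whenever $\alpha<\alpha'$, one has $h\in\mathbf{E}^+_\alpha\subset\mathbf{E}^+_{\alpha\vee(\alpha_0+1)}$, so $h$ is $\beta$-excessive with $\beta=(\alpha\vee(\alpha_0+1))-\alpha_0\geq 1>0$ for the non-negative form $(\check{\sE}^\kappa_{\alpha_0},\check{\sF}^\kappa)$, and Theorem~\ref{THMD2} yields quasi-regularity of its $h$-transform $(\check{\sE}^{\kappa,h}_{\alpha_0},\check{\sF}^{\kappa,h})$. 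Since this differs from $(\check{\sE}^{\kappa,h}_\alpha,\check{\sF}^{\kappa,h})$ only by a constant multiple of the $L^2(F,h^2\cdot\mu)$-inner product, the quasi notions coincide and quasi-regularity transfers. Your route works, but the inclusion $\mathbf{E}^+_\alpha\subset\mathbf{E}^+_{\alpha\vee(\alpha_0+1)}$ is precisely the observation that lets you skip the nest construction and cite Theorem~\ref{THMD2} off the shelf; this is the shortcut you were looking for in what you flagged as the ``main obstacle.''
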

\begin{proof}
Clearly $(\check{\sE}^{\kappa,h}_\alpha,\check{\sF}^{\kappa,h})$ is a lower bounded symmetric Dirichlet form.  Note that $h\in \mathbf{E}^+_\alpha \subset \mathbf{E}^+_{\alpha \vee (\alpha_0+1)}$.  On account of Theorem~\ref{THMD2},  we obtain the quasi-regularity of $(\check{\sE}^{\kappa,h}_\alpha,\check{\sF}^{\kappa,h})$.  Using \cite[Theorem~3.3.4]{O13} and quasi-homeomorphism, one can conclude the existence of $\check{X}^{\kappa,h,\alpha}$ by a standard argument.  That completes the proof.
\end{proof}

The Markov process $\check{X}^{\kappa,  h, \alpha}$ is called $(\alpha, h)$-associated with $\sN_\kappa$.  When $\alpha=0$,  it is called $h$-associated with $\sN_\kappa$. 

\section{Revisiting classical DN operators}\label{Revisiting}

Let us turn to revisit the classical DN operators $D_\lambda$ defined as Definition~\ref{DEF31} but with possibly negative parameter $\lambda$.  For simplicity assume that $\Omega\subset \bR^d$,  $d\geq 2$,  is a bounded domain with smooth boundary (though most results in this section hold for less regular boundary like a Lipschitz one).  Let $\Delta^\text{D}$ be the Dirichlet Laplacian,  i.e.  $\Delta^\text{D} u=\Delta u$ with domain $\cD(\Delta^\text{D}):=\{u\in H^1_0(\Omega): \Delta u\in L^2(\Omega)\}$.  It is known that the spectrum $\sigma(\Delta^\text{D})$ consists of a decreasing sequence of eigenvalues:
\[
	\cdots \leq \lambda^\text{D}_3\leq \lambda^\text{D}_2<\lambda^\text{D}_1<0,
\]
where $\lambda^\text{D}_1<0$ is called the first eigenvalue of $\Delta^\text{D}$.  

Throughout this section,  let $(\sE,\sF)=(\frac{1}{2}\mathbf{D}, H^1(\Omega))$ be the Dirichlet form on $L^2(\bar{\Omega})$ associated with the reflected Brownian motion $X$ on $\bar{\Omega}$.  Denote by $(\check{\sE},\check{\sF})$ the trace Dirichlet form of $(\sE,\sF)$ on $L^2(\Gamma)$ and by $\check{X}$ its   associated Markov process on $\Gamma$.  

\subsection{Form representation for DN operators}

The DN operator $D_\lambda$ for $\lambda\in \bR$ is defined as the same way as Definition~\ref{DEF31}.  But we should point out that it is well defined,  i.e.  not multi-valued,  if and only if $2\lambda\notin \{\lambda^\text{D}_n: n\geq 1\}$;  see \cite[Proposition~4.11]{AE14}.  Another way to obtain $D_\lambda$ is to use the method formulated in \S\ref{SEC3}.  Take $\mu=\sigma$,  the surface measure on $\Gamma=\partial \Omega$ with $\text{qsupp}[\sigma]=\Gamma$,  and $\kappa=\lambda \cdot m\in \bS-\bS$.  

\begin{lemma}\label{LM41}
Let $(\sE,\sF)=(\frac{1}{2}\mathbf{D}, H^1(\Omega))$,  $\mu=\sigma$ and $\kappa=\lambda\cdot m$ for $\lambda\in \bR$.  Then the following are equivalent:
\begin{itemize}
\item[(a)] \eqref{eq:DEF41} holds;
\item[(b)] \eqref{eq:42-3} holds;
\item[(c)] $2\lambda\notin \{\lambda^\text{D}_n: n\geq 1\}$. 
\end{itemize}
In addition,  for $\lambda\neq \lambda^\text{D}_n/2$,  the following are true:
\begin{itemize}
\item[(1)] $D_\lambda$ is identified with the DN operator for $(\sE^\kappa, \sF^\kappa)$ on $L^2(\Gamma)$.  
\item[(2)] $D_\lambda$ is lower semi-bounded and self-adjoint on $L^2(\Gamma)$.  
\end{itemize} 
\end{lemma}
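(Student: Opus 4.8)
The plan is to establish the chain of equivalences (a) $\Leftrightarrow$ (b) $\Leftrightarrow$ (c) first, and then deduce (1) and (2) from the machinery of Section~\ref{SEC3} together with Lemma~\ref{LM32}. The implication (b) $\Rightarrow$ (a) is trivial since a direct sum decomposition forces the intersection $\sF^{\kappa,G}_\re\cap\cH^\kappa_F$ to be $\{0\}$. For the remaining equivalences, the key observation is that in this setting $\kappa=\lambda\cdot m$ and, by Lemma~\ref{LM32}, $\sF^\kappa_\re=H^1(\Omega)$ (the $L^2(\Omega,|\kappa|)$-integrability is automatic), $\sF^{\kappa,G}_\re=H^1_0(\Omega)$, and $\cH^\kappa_F=\{u\in H^1(\Omega):\tfrac12\Delta u=\lambda u\}$. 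So the questions (a), (b), (c) all become statements about the boundary value problem $\tfrac12\Delta u=\lambda u$. First I would observe that, given $u\in H^1(\Omega)$, solving $\sE^\kappa(u-w,v)=0$ for all $v\in H^1_0(\Omega)$ with $w\in H^1_0(\Omega)$ is a variational problem whose solvability and uniqueness is governed by whether $2\lambda$ is an eigenvalue of $\Delta^\text{D}$: the operator $\tfrac12\Delta^\text{D}-\lambda$ on $H^1_0(\Omega)$ is invertible iff $2\lambda\notin\sigma(\Delta^\text{D})=\{\lambda^\text{D}_n\}$.

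Concretely, for (c) $\Rightarrow$ (b): given $u\in H^1(\Omega)$, since $2\lambda\notin\sigma(\Delta^\text{D})$ there is a unique $w\in\cD(\Delta^\text{D})\subset H^1_0(\Omega)$ with $(\tfrac12\Delta-\lambda)w=(\tfrac12\Delta-\lambda)u$ in the weak sense (the right side being interpreted as the distribution $v\mapsto-\sE^\kappa(u,v)$ on $H^1_0(\Omega)$, which is bounded there); then $u-w\in\cH^\kappa_F$ and $u=w+(u-w)$ gives the decomposition, and uniqueness follows because any element of $\sF^{\kappa,G}_\re\cap\cH^\kappa_F$ is an $H^1_0$ solution of $\tfrac12\Delta v=\lambda v$, hence zero by $2\lambda\notin\sigma(\Delta^\text{D})$. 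For the contrapositive (a) $\Rightarrow$ (c) (equivalently $\neg$(c) $\Rightarrow$ $\neg$(a)): if $2\lambda=\lambda^\text{D}_n$ for some $n$, then the associated Dirichlet eigenfunction $\psi_n\in H^1_0(\Omega)$ satisfies $\tfrac12\Delta\psi_n=\lambda\psi_n$, so $\psi_n\in\sF^{\kappa,G}_\re$ and also $\sE^\kappa(\psi_n,v)=0$ for all $v\in\sF^{\kappa,G}_\re=H^1_0(\Omega)$ (integrate by parts), hence $\psi_n\in\sF^{\kappa,G}_\re\cap\cH^\kappa_F$ is a nonzero element, violating \eqref{eq:DEF41}. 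This closes the loop (a) $\Leftrightarrow$ (b) $\Leftrightarrow$ (c).

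For (1): assuming $2\lambda\notin\{\lambda^\text{D}_n\}$, I would compare Definition~\ref{DEF31} with Definition~\ref{DEF41} applied to $(\sE^\kappa,\sF^\kappa)$, $\mu=\sigma$, $F=\Gamma$. A function $\varphi\in L^2(\Gamma)$ lies in $\cD(D_\lambda)$ iff there is $u\in H^1(\Omega)$ with $\tfrac12\Delta u=\lambda u$, $\Tr(u)=\varphi$, and $u$ has a weak normal derivative $\partial_\bn u\in L^2(\Gamma)$; by \eqref{eq:35} and Lemma~\ref{LM32}~(2),(4), this $u$ is exactly the (unique) $\kappa$-harmonic extension $\bH^\kappa_\Gamma\varphi\in\cH^\kappa_F$, and the Green--Gauss identity \eqref{eq:GreenGauss} reads $\sE^\kappa(u,v)=\tfrac12\int_\Gamma\partial_\bn u\cdot\Tr(v)\,d\sigma$ for all $v\in H^1(\Omega)$, which is precisely the defining relation in Definition~\ref{DEF41} with $f=\tfrac12\partial_\bn u=D_\lambda\varphi=\sN_\kappa\varphi$. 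Conversely any $\varphi\in\cD(\sN_\kappa)$ yields $u=\bH^\kappa_\Gamma\varphi\in\cH^\kappa_F$ with $\Tr(u)\in\check\sF^\kappa\subset H^1(\Gamma)$ (again by \eqref{eq:45} and Lemma~\ref{LM32}, since $\sF^\kappa_\re|_\Gamma=H^1(\Omega)|_\Gamma\subset H^1(\Gamma)$) and, via \eqref{eq:35}, $u$ has a weak normal derivative, so $\varphi\in\cD(D_\lambda)$ and $D_\lambda\varphi=\sN_\kappa\varphi$. One must check that the test function class in \eqref{eq:GreenGauss} (all of $H^1(\Omega)$) matches the class in Definition~\ref{DEF41} (those $v\in\sF^\kappa_\re$ with $v|_\Gamma\in L^2(\Gamma)$); by Lemma~\ref{LM32}, $\sF^\kappa_\re=H^1(\Omega)$ and $\Tr(v)\in H^1(\Gamma)\subset L^2(\Gamma)$ automatically, so the two classes coincide. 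Finally (2): by part (1), $D_\lambda=\sN_\kappa$, and by Lemma~\ref{LM32}~(1) $\sF_\re=\sF=H^1(\Omega)$ is compactly embedded in $L^2(\Omega,m)$ (Rellich--Kondrachov for the bounded smooth $\Omega$), so Corollary~\ref{COR47} applies with $\kappa^-=|\lambda|\cdot m\,\mathbf 1_{\{\lambda<0\}}$ (when $\lambda\ge0$ there is nothing to do), giving that $\sN_\kappa$ is lower semi-bounded and self-adjoint on $L^2(\Gamma)$.

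The main obstacle I anticipate is not conceptual but bookkeeping: carefully verifying that the ``weak normal derivative'' in Definition~\ref{DEF31}, which is defined through the Green--Gauss formula \eqref{eq:GreenGauss} tested against all of $H^1(\Omega)$, is genuinely the same object as the boundary datum $f$ in Definition~\ref{DEF41}, which is tested only against $v\in\sF^\kappa_\re$ with $v|_F\in L^2(F,\mu)$ --- and in particular handling the interplay between the Laplacian $\Delta u$ appearing in \eqref{eq:GreenGauss} and the form relation $\sE^\kappa(u,v)=\tfrac12\mathbf D(u,v)+\lambda(u,v)_m$. The identity $\sF^{\kappa,G}_\re=H^1_0(\Omega)$ and the characterization $\cH^\kappa_\Gamma=\{u:\tfrac12\Delta u=\lambda u\}$ (Lemma~\ref{LM32}~(4), whose proof there is for $\lambda\ge0$ but extends verbatim since only $\sF_\re=\sF=H^1(\Omega)$ is used) are what make everything fit, but one should state these reductions cleanly to avoid circularity with the equivalence (a)--(c).
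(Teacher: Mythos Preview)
Your approach is essentially the paper's: the paper also argues (b)$\Rightarrow$(a) trivially, (c)$\Rightarrow$(b) via the invertibility of $\sL_{\kappa,G}$ (invoking Lemma~\ref{LMB1}), $\neg$(c)$\Rightarrow\neg$(a) by producing a nonzero element of $\sF^{\kappa,G}_\re\cap\cH^\kappa_F$, part~(1) by direct comparison with Definition~\ref{DEF31} via Lemma~\ref{LM32}, and part~(2) via Corollary~\ref{COR47}. Your $\neg$(c)$\Rightarrow\neg$(a) using the Dirichlet eigenfunction $\psi_n$ directly is in fact cleaner than the paper's, which detours through \cite[Proposition~4.11]{AE14} to first say $D_\lambda$ is multi-valued.

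One genuine slip to fix in your argument for~(1): you claim $\sF^\kappa_\re|_\Gamma=H^1(\Omega)|_\Gamma\subset H^1(\Gamma)$, but the trace of $H^1(\Omega)$ on a smooth (or Lipschitz) boundary is $H^{1/2}(\Gamma)$, not $H^1(\Gamma)$; indeed $\check\sF^\kappa=H^{1/2}(\Gamma)$. So you cannot invoke the $W^1$-characterization in \eqref{eq:35} via $\Tr(u)\in H^1(\Gamma)$ to obtain the weak normal derivative in the converse direction. Fortunately this detour is unnecessary: if $\varphi\in\cD(\sN_\kappa)$ with $u=\bH^\kappa_\Gamma\varphi$ and $\sE^\kappa(u,v)=\int_\Gamma f\,v|_\Gamma\,d\sigma$ for all $v\in H^1(\Omega)$, then since $\Delta u=2\lambda u\in L^2(\Omega)$ the relation rewrites as $\int_\Omega\nabla u\nabla v\,dx+\int_\Omega\Delta u\,v\,dx=2\int_\Gamma f\,v|_\Gamma\,d\sigma$, which is precisely \eqref{eq:GreenGauss} with $\partial_\bn u=2f$. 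So the weak normal derivative comes for free from the defining relation of $\sN_\kappa$, and $D_\lambda\varphi=\tfrac12\partial_\bn u=f=\sN_\kappa\varphi$ as desired.
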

\begin{proof}
Clearly (b) implies (a),  and (c) implies (b) by means of Lemma~\ref{LMB1}; see also \cite[Lemma~2.2]{AM12}.  If $\lambda=\lambda^\text{D}_n/2$ for some $n$,  then $D_\lambda$ is not well defined because of \cite[Proposition~4.11]{AE14}.  This means that there exist $u_1, u_2\in \cH^\kappa_\Gamma$ such that $u_1|_\Gamma=u_2|_\Gamma$ but $u_1\neq u_2$.  Consequently $0\neq u:=u_1-u_2\in \cH^\kappa_\Gamma\cap \sF^{\kappa,  \Omega}_\re$, and \eqref{eq:DEF41} fails.  

For $\lambda\neq \lambda^\text{D}_n/2$,  it is easy to verify the first assertion by using Lemma~\ref{LM32}.  The second assertion is the consequence of Corollary~\ref{COR47}. That completes the proof.  
\end{proof}

From now on we assume that $\lambda\neq \lambda^\text{D}_n/2$.  
As mentioned in Lemma~\ref{LM43},  $-D_\lambda$ is the $L^2$-generator of the trace form $(\check{\sE}^\kappa,\check{\sF}^\kappa)$.  Fix a constant $\alpha_0\geq 0$ such that $\check{\sE}^\kappa_{\alpha_0}$ is non-negative.  The main purpose of this subsection is to formulate the expression of $(\check{\sE}^\kappa,\check{\sF}^\kappa)$.  For emphasis, write
\[
\begin{aligned}
	&(\sE^\lambda,\sF^\lambda):=(\sE^\kappa,\sF^\kappa),  \quad (\check\sE^\lambda,\check\sF^\lambda):=(\check\sE^\kappa,\check\sF^\kappa),\\  &\cH^\lambda_\Gamma:=\cH^\kappa_\Gamma,\quad \bH^\lambda_\Gamma \varphi:=\bH^\kappa_\Gamma \varphi, \text{ for }\varphi \in \check{\sF}^\lambda.  
\end{aligned}\]
Write $\bH_\Gamma\varphi:=\bH^0_\Gamma \varphi$.  Clearly $\sF^\lambda=H^1(\Omega)$ and $\check{\sF}^\lambda=H^{1/2}(\Gamma)$.  Set
\begin{equation}\label{eq:4.6}
	\mathscr{U}_\lambda(\varphi\otimes \phi):=\lambda \int_\Omega \bH^\lambda_\Gamma \varphi (x) \bH_\Gamma \phi(x)dx,\quad \varphi, \phi\in H^{1/2}(\Gamma).  
\end{equation}
Recall that $(\sE^\Omega,\sF^\Omega)$ is the part Dirichlet form of $(\sE,\sF)$ on $L^2(\Omega)$ associated with the killed Brownian motion $X^\Omega$ on $\Omega$.  Denote its $L^2$-generator by $\sL_\Omega$ with domain $\cD(\sL_\Omega)$ and set $G^\Omega:=(-\sL_\Omega)^{-1}$ to be its $0$-resolvent operator,  i.e. the Green operator on $L^2(\Omega)$.  Let $p^\Omega_t(x,y)$ be the transition density of $X^\Omega$,  i.e.  $\mathbf{E}_x\left[f(X^\Omega_t) \right]=\int_\Omega p^\Omega_t(x,y)f(y)dy$. 



\begin{lemma}\label{LM42}
Assume that $\lambda\neq \lambda^\text{D}_n/2$.  Then the following hold:
\begin{itemize}
\item[(1)] For $\varphi\in H^{1/2}(\Gamma)$,  
\begin{equation}\label{eq:4.2}
	\bH_\Gamma \varphi-\bH^\lambda_\Gamma \varphi-\lambda G^\Omega (\bH^\lambda_\Gamma \varphi)=0.
\end{equation}
\item[(2)] Assume $\lambda>\lambda^\text{D}_1/2$.   For $\varphi \in b\mathcal{B}(\Gamma)\cap H^{1/2}(\Gamma)$,  $\bH^\lambda_\Gamma\varphi$ admits the following probabilistic representation:
\begin{equation}\label{eq:4.4}
\bH^\lambda_\Gamma \varphi(x)=\mathbf{E}_x\left[\re^{-\lambda \tau}\varphi(X_\tau);\tau<\infty \right],
\end{equation}
where $\tau:=\inf\{t>0: X_t\notin \Omega\}$ and $\varphi$ on the right hand side takes its $\check{\sE}$-quasi-continuous $\sigma$-version.  Furthermore,  for $\varphi, \phi\in b\mathcal{B}(\Gamma)\cap H^{1/2}(\Gamma)$,  
\begin{equation}\label{eq:4.3}
\mathscr{U}_\lambda(\varphi\otimes \phi)=\int_{\Gamma \times \Gamma} \varphi(\xi)\phi(\eta)U_\lambda(\xi,\eta)\sigma(d\xi)\sigma(d\eta),
\end{equation}
where
\begin{equation}\label{eq:4.5-2}
	U_\lambda(\xi,\eta)=\frac{1}{4}\int_0^\infty \left(1-\re^{-\lambda t}\right) \frac{\partial^2 p^\Omega_t(\xi,\eta)}{\partial \bn_\xi \partial \bn_\eta}dt
\end{equation}
is symmetric and $\bn_\xi$ denotes the inward normal vector at $\xi$.  
\end{itemize}
\end{lemma}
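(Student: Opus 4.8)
Here is the plan.

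\medskip

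\emph{Proof of \eqref{eq:4.2}.} This step is purely form-theoretic. Set $u:=\bH^\lambda_\Gamma\varphi$ and $w:=\bH_\Gamma\varphi$; by Lemmas~\ref{LM41} and \ref{LM32} both lie in $\sF_\re=H^1(\Omega)$ with $u|_\Gamma=w|_\Gamma=\varphi$, so $w-u\in H^1_0(\Omega)$. From $u\in\cH^\lambda_\Gamma$ and $w\in\cH_\Gamma$ we have, for every $g\in H^1_0(\Omega)$, $\tfrac12\mathbf{D}(u,g)+\lambda(u,g)_m=0$ and $\tfrac12\mathbf{D}(w,g)=0$, hence $\tfrac12\mathbf{D}(w-u,g)=\lambda(u,g)_m$. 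Since $u\in L^2(\Omega)$ and $G^\Omega=(-\sL_\Omega)^{-1}$ is characterised by ``$G^\Omega h=k$ iff $k\in H^1_0(\Omega)$ and $\tfrac12\mathbf{D}(k,g)=(h,g)_m$ for all $g\in H^1_0(\Omega)$'', this reads $w-u=\lambda G^\Omega u$, which is \eqref{eq:4.2}.

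\medskip

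\emph{Proof of the probabilistic representation \eqref{eq:4.4}.} First I would observe that $X$ killed at $\tau=\sigma_\Gamma$ is the killed Brownian motion on $\Omega$, whose semigroup $(p^\Omega_t)$ satisfies $\|p^\Omega_t\|_{\infty\to\infty}\lesssim\re^{\lambda^\text{D}_1 t/2}$ for large $t$; hence $\mathbf{P}_x[\tau>t]\lesssim\re^{\lambda^\text{D}_1 t/2}$ and, \emph{precisely because} $\lambda>\lambda^\text{D}_1/2$, $\mathbf{E}_x[\re^{-\lambda\tau}]<\infty$, so $v(x):=\mathbf{E}_x[\re^{-\lambda\tau}\varphi(X_\tau)]$ is a bounded Borel function when $\varphi$ is bounded. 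Using $\re^{-\lambda\tau}=1-\lambda\int_0^\tau\re^{-\lambda(\tau-r)}dr$, Fubini (legitimate by the moment bound just established), the Markov property of $X$ at the deterministic instants $r<\tau$, and $\bH_\Gamma\varphi(x)=\mathbf{E}_x[\varphi(X_\tau)]$ (Lemma~\ref{LM21}), I obtain the Duhamel identity $v=\bH_\Gamma\varphi-\lambda G^\Omega v$ in $L^2(\Omega)$. Its right-hand side lies in $\bH_\Gamma\varphi+H^1_0(\Omega)\subset H^1(\Omega)$, so $v\in H^1(\Omega)$; taking boundary traces gives $v|_\Gamma=\varphi$, and applying $\tfrac12\Delta$ (using $\tfrac12\Delta\,\bH_\Gamma\varphi=0$ and $\tfrac12\Delta(G^\Omega v)=-v$) gives $\tfrac12\Delta v=\lambda v$ weakly in $\Omega$. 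Thus $v\in\cH^\lambda_\Gamma$ is a $\kappa$-harmonic extension of $\varphi$, and by the uniqueness in \eqref{eq:DEF41} (valid since $\lambda\neq\lambda^\text{D}_n/2$, Lemma~\ref{LM41}) $v=\bH^\lambda_\Gamma\varphi$ $\sE$-q.e., which is \eqref{eq:4.4}. (Alternatively, comparing the Duhamel identity with \eqref{eq:4.2} and using that $\mathrm{Id}+\lambda G^\Omega$ is injective on $L^2(\Omega)$ when $\lambda\notin\{\lambda^\text{D}_n/2\}$, since $\sigma(G^\Omega)=\{2/|\lambda^\text{D}_n|\}\cup\{0\}$, gives the same conclusion.)

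\medskip

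\emph{Proof of \eqref{eq:4.3}--\eqref{eq:4.5-2}.} The plan is to feed the heat-flux identity $\mathbf{P}_x[\tau\in dt,X_\tau\in d\xi]=\tfrac12\frac{\partial p^\Omega_t(x,\xi)}{\partial\bn_\xi}\,dt\,\sigma(d\xi)$ (inward normal) into \eqref{eq:4.4} and its $\lambda=0$ case, which yields
\[
\bH^\lambda_\Gamma\varphi(x)=\int_0^\infty\!\!\int_\Gamma \re^{-\lambda s}\varphi(\xi)\,\tfrac12\frac{\partial p^\Omega_s(x,\xi)}{\partial\bn_\xi}\,ds\,\sigma(d\xi),\qquad \bH_\Gamma\phi(x)=\int_0^\infty\!\!\int_\Gamma \phi(\eta)\,\tfrac12\frac{\partial p^\Omega_r(x,\eta)}{\partial\bn_\eta}\,dr\,\sigma(d\eta).
\]
Substituting into $\mathscr{U}_\lambda(\varphi\otimes\phi)=\lambda\int_\Omega\bH^\lambda_\Gamma\varphi\cdot\bH_\Gamma\phi\,dm$, applying Fubini to integrate in $x$ first, pulling the two boundary derivatives out of $\int_\Omega(\cdots)\,dx$, and using Chapman--Kolmogorov $\int_\Omega p^\Omega_s(x,\xi)p^\Omega_r(x,\eta)\,dx=p^\Omega_{s+r}(\xi,\eta)$, one is left with
\[
\mathscr{U}_\lambda(\varphi\otimes\phi)=\frac{\lambda}{4}\int_{\Gamma\times\Gamma}\varphi(\xi)\phi(\eta)\Big(\int_0^\infty\!\!\int_0^\infty\re^{-\lambda s}\,\frac{\partial^2 p^\Omega_{s+r}(\xi,\eta)}{\partial\bn_\xi\partial\bn_\eta}\,ds\,dr\Big)\sigma(d\xi)\,\sigma(d\eta).
\]
The substitution $t=s+r$ (so $s$ runs over $[0,t]$ and $\int_0^t\re^{-\lambda s}\,ds=(1-\re^{-\lambda t})/\lambda$) collapses the double time integral, the two factors $\lambda$ cancel, and \eqref{eq:4.3}--\eqref{eq:4.5-2} follow; symmetry of $U_\lambda$ is immediate from $p^\Omega_t(\xi,\eta)=p^\Omega_t(\eta,\xi)$.

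\medskip

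\emph{Main obstacle.} The non-routine part is the analytic bookkeeping behind the last two displays: justifying the Fubini/Tonelli interchanges and the differentiation of the Dirichlet heat kernel up to $\Gamma$ under $\int_\Omega(\cdots)\,dx$. This needs control of $(1-\re^{-\lambda t})\,\frac{\partial^2 p^\Omega_t(\xi,\eta)}{\partial\bn_\xi\partial\bn_\eta}$ both as $t\to\infty$ (here $\lambda>\lambda^\text{D}_1/2$ is used a second time, to beat $\re^{-\lambda t}$ against the spectral decay $\re^{\lambda^\text{D}_1 t/2}$ of $p^\Omega_t$) and as $t\to0^+$, where the parabolic boundary estimate $\frac{\partial^2 p^\Omega_t(\xi,\eta)}{\partial\bn_\xi\partial\bn_\eta}\lesssim t^{-(d+2)/2}\re^{-c|\xi-\eta|^2/t}$ together with $1-\re^{-\lambda t}\sim\lambda t$ as $t\to0$ yields an integrable $t$-singularity and also shows $U_\lambda(\xi,\eta)\lesssim|\xi-\eta|^{2-d}$ (and $\lesssim\log\tfrac{1}{|\xi-\eta|}$ when $d=2$), so that the double boundary integral in \eqref{eq:4.3} converges absolutely against $\sigma\times\sigma$. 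Smoothness of $\Gamma$ is precisely what makes these boundary heat-kernel estimates available.
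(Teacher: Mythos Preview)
Your proof is correct and follows essentially the same route as the paper: the form-theoretic derivation of \eqref{eq:4.2}, the Duhamel/Markov-property identity $v=\bH_\Gamma\varphi-\lambda G^\Omega v$ for the probabilistic candidate (this is exactly the paper's equation \eqref{eq:4.5}), and the heat-flux representation \eqref{eq:4.7-2} plugged into $\mathscr{U}_\lambda$. The only notable differences are that you supply the Chapman--Kolmogorov/time-change details where the paper writes ``a straightforward computation yields \eqref{eq:4.3}'', and for the symmetry of $U_\lambda$ you read it off directly from $p^\Omega_t(\xi,\eta)=p^\Omega_t(\eta,\xi)$, whereas the paper instead shows $\mathscr{U}_\lambda(\varphi\otimes\phi)=\mathscr{U}_\lambda(\phi\otimes\varphi)$ via \eqref{eq:4.2}, \eqref{eq:4.4} and the self-adjointness of $R^\Omega$ and then deduces the kernel symmetry.
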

\begin{proof}
\begin{itemize}
\item[(1)]  Note that $\bH_\Gamma \varphi-\bH^\lambda_\Gamma \varphi \in H^1(\Omega)$ and $(\bH_\Gamma \varphi-\bH^\lambda_\Gamma \varphi)|_\Gamma=0$.  This implies $\bH_\Gamma \varphi-\bH^\lambda_\Gamma \varphi\in H^1_0(\Omega)$.  Take $v\in H^1_0(\Omega)$.  Then
\[
	\sE^\Omega(\bH_\Gamma \varphi-\bH^\lambda_\Gamma \varphi, v)=\sE(\bH_\Gamma\varphi, v)-\sE(\bH^\lambda_\Gamma\varphi,v)=\lambda \int_\Omega \bH^\lambda_\Gamma \varphi vdx.  
\]
Thus $\bH_\Gamma\varphi-\bH^\lambda_\Gamma \varphi\in \cD(\sL_\Omega)$ and $\sL_\Omega(\bH_\Gamma\varphi-\bH^\lambda_\Gamma \varphi)=-\lambda\bH^\lambda_\Gamma \varphi$.  As a result,  \eqref{eq:4.2} holds true. 
\item[(2)]  Denote the right hand side of \eqref{eq:4.4} by $\bH^\lambda_\tau \varphi$ and write $\bH_\tau \varphi:=\mathbf{E}_x\left[\varphi(X_\tau);\tau<\infty\right]$.  Then $\bH_\tau \varphi=\bH_\Gamma \varphi$.  Let $R^\Omega$ be the (probabilistic) $0$-resolvent of $X^\Omega$,  i.e.  
\[
	R^\Omega f(x):=\mathbf{E}_x\left[ \int_0^\infty f(X^\Omega_t)dt\right],\quad f\in b\mathcal{B}(\Omega). 
\]
We assert that when $\lambda>\lambda^\text{D}_1/2$,  for $\varphi\in b\mathcal{B}(\Gamma)\cap H^{1/2}(\Gamma)$, 
\begin{equation}\label{eq:4.5}
	\bH_\tau \varphi(x)-\bH^\lambda_\tau \varphi(x)-\lambda R^\Omega\left(\bH^\lambda_\tau \varphi\right)(x)=0,\quad x\in \Omega. 
\end{equation}
Note that $\lambda>\lambda_1^\text{D}/2$ amounts to that $(\Omega,  -\lambda)$ is gaugeable,  i.e.  $x\mapsto \mathbf{E}_x\left[\re^{-\lambda \tau} \right]$ is finite and bounded;  see \cite[Theorem~4.19]{CZ95}.  Particularly $\bH^\lambda_\tau \varphi$ is finite and bounded.  
Since $X^\Omega$ is transient,  it follows that $R^\Omega(\bH^\lambda_\tau \varphi)<\infty$.  In addition,  for $\lambda\neq 0$,  
\[
\begin{aligned}
	\frac{1}{\lambda}\left(\bH_\tau \varphi(x)-\bH^\lambda_\tau \varphi(x)\right)&=\mathbf{E}_x \left[\int_0^\tau \re^{-\lambda t} \varphi(X_\tau)dt \right]  \\
	&=\mathbf{E}_x \left[\int_0^\infty 1_{\{t<\tau\}} \re^{-\lambda \tau\circ \theta_t} \varphi(X_\tau\circ \theta_t)dt  \right]\\
	&=\mathbf{E}_x\left[\int_0^\infty 1_{\{t<\tau\}}\mathbf{E}_{X_t}\left[\re^{-\lambda \tau} \varphi(X_\tau) \right] dt \right] \\
	&=R^\Omega(\bH^\lambda_\tau \varphi)(x),
\end{aligned}
\]
where $\theta_t$ is the translation operator of $X$,  and the third identity is due to the strong Markov property of $X$.  Hence we arrive at \eqref{eq:4.5}. 

Since $\sF^{\kappa,\Omega}_\re=H^1_0(\Omega)$,  one can get that $\bH^\lambda_\Gamma \varphi$ is the unique weak solution in $H^1(\Omega)$ to the following equation:
\begin{equation}\label{eq:4.1}
	\frac{1}{2}\Delta u- \lambda u=0\text{ in }\Omega
\end{equation}
with the boundary condition $u|_\Gamma=\varphi$.  
On the other hand,  on account of \cite[Theorem~4.7]{CZ95},  $\bH_\tau^\lambda \varphi$ is also a weak solution to \eqref{eq:4.1}.  In addition,  for $\check{\sE}$-q.e.  $x\in \Gamma$,
\[
	\bH^\lambda_\tau \varphi(x)=\mathbf{E}_x\left[\varphi(X_\tau)\right]=\bH_\tau \varphi(x)=\varphi(x).  
\]
It suffices to show $\bH^\lambda_\tau \varphi\in H^1(\Omega)$,  so that $\bH^\lambda_\Gamma \varphi=\bH^\lambda_\tau \varphi$.  In fact,  write $\varphi^+:=\varphi \vee 0$ and $\varphi^-:=\varphi^+-\varphi$.  Then \eqref{eq:4.5} implies that
\[
\int_\Omega \bH^\lambda_\tau \varphi^\pm(x) R^\Omega(\bH^\lambda_\tau \varphi^\pm)(x)dx<\infty.  
\]	
In view of \cite[Theorem~1.5.4]{FOT11},  $R^\Omega(\bH^\lambda_\tau \varphi^\pm)\in \sF^\Omega_\re=H^1_0(\Omega)$. Hence 
\[
	R^\Omega(\bH^\lambda_\tau \varphi)=R^\Omega(\bH^\lambda_\tau \varphi^+)-R^\Omega(\bH^\lambda_\tau \varphi^-)\in H^1_0(\Omega).  
\]
Since $\bH_\tau\varphi \in \sF_\re=H^1(\Omega)$,  it follows from \eqref{eq:4.2} that $\bH^\lambda_\tau \varphi\in H^1(\Omega)$.  Eventually we obtain that $\bH^\lambda_\Gamma \varphi=\bH^\lambda_\tau\varphi$.  

 As formulated in \cite[(5.8.1)]{CF12},  
\begin{equation}\label{eq:4.7-2}
	\mathbf{P}_x\left(\tau\in ds,  X_\tau\in d\xi \right)=\frac{1}{2}\frac{\partial p^\Omega_s(x,\xi)}{\partial {\bn_\xi}}ds\sigma(d\xi),\quad x\in \Omega, \xi\in \Gamma.  
\end{equation}
Then a straightforward computation yields \eqref{eq:4.3}.  Note that  \eqref{eq:4.2} and \eqref{eq:4.4} imply
\[
\begin{aligned}
	\mathscr U_\lambda(\varphi\otimes \phi)&=\lambda \int_\Omega \bH^\lambda_\tau \varphi(x) \bH^\lambda_\tau \phi(x)dx+\lambda^2 \int_\Omega \bH^\lambda_\tau \varphi(x)R^\Omega(\bH^\lambda_\tau \phi)(x)dx \\
	&=\lambda \int_\Omega \bH^\lambda_\tau \varphi(x) \bH^\lambda_\tau \phi(x)dx+\lambda^2 \int_\Omega R^\Omega(\bH^\lambda_\tau \varphi)(x)\bH^\lambda_\tau \phi(x)dx \\
	&=\mathscr{U}_\lambda(\phi\otimes \varphi).  
\end{aligned}\]
Therefore $U_\lambda(\xi,\eta)=U_\lambda(\eta,\xi)$.  
\end{itemize}
That completes the proof.  
\end{proof}
\begin{remark}\label{RM43}
\eqref{eq:4.2} reads as $\bH^\lambda_\Gamma \varphi=-\lambda G^\Omega(\bH^\lambda_\Gamma \varphi)+ \bH_\Gamma \varphi$,  where $-\lambda G^\Omega(\bH^\lambda_\Gamma \varphi)\in \sF^{\Omega}_\re=H^1_0(\Omega)$ and $\bH_\Gamma \varphi\in \cH_\Gamma$.  This is actually the orthogonal decomposition of $\bH^\lambda_\Gamma \varphi\in H^1(\Omega)$ in $H^1_0(\Omega)\oplus \cH_\Gamma$.  

Note that $U_\lambda(\xi,\eta)\geq 0$ when $\lambda\geq 0$,  while $U_\lambda(\xi,\eta)< 0$ when $\lambda^\text{D}_1/2<\lambda<0$.  For $\lambda>\lambda^\text{D}_1/2$,  $U_\lambda(\xi,\eta)\sigma(\xi)\sigma(\eta)$ forms a finite symmetric measure on $\Gamma\times \Gamma$,  because the gaugeability of $(\Omega, -\lambda)$ implies
\[
	\left|\iint_{\Gamma\times \Gamma}U_\lambda(\xi,\eta)\sigma(\xi)\sigma(\eta)\right|=\left| \mathscr{U}_\lambda(1\otimes 1)\right|\leq |\lambda| \int_\Omega \mathbf{E}_x\left[\re^{-\lambda \tau} \right]dx<\infty.  
\]
In addition,
\[
	\lambda\mapsto U_\lambda(\xi,\eta)
\]
is increasing and the limit $U(\xi,\eta):=\lim_{\lambda\uparrow\infty} U_\lambda(\xi,\eta)$ exists for any $\xi,\eta\in \Gamma$.  Particularly,  $(\check{\sE},\check{\sF})$ enjoys the following Beurling-Deny representation:
\begin{equation}\label{eq:4.7}
	\check{\sE}(\varphi,\varphi)=\frac{1}{2}\iint_{\Gamma\times \Gamma} \left(\varphi(\xi)-\varphi(\eta)\right)^2U(\xi,\eta)\sigma(d\xi)\sigma(d\eta),\quad \forall \varphi\in \check{\sF}=H^{1/2}(\Gamma);
\end{equation}
see \cite[(5.8.4]{CF12}.  
\end{remark}

Now we have a position to formulate the trace form $(\check{\sE}^\lambda, \check{\sF}^\lambda)$ associated with the DN operator $D_\lambda$.  

\begin{theorem}\label{THM44-2}
Assume that $\lambda\neq \lambda^\text{D}_n/2$.  Then for any $\varphi\in \check{\sF}^\lambda=H^{1/2}(\Gamma)$,  
\begin{equation}\label{eq:4.8}
	\check{\sE}^\lambda(\varphi,\varphi)=\frac{1}{2}\iint_{\Gamma\times \Gamma} \left(\varphi(\xi)-\varphi(\eta)\right)^2U(\xi,\eta)\sigma(d\xi)\sigma(d\eta)+\mathscr{U}_\lambda(\varphi\otimes \varphi),
\end{equation}
where $U(\xi,\eta)=\lim_{\lambda\uparrow \infty}U_\lambda(\xi,\eta)$ appears in Remark~\ref{RM43} and $\mathscr{U}_\lambda$ is defined as \eqref{eq:4.6}.  Furthermore,  for either $\lambda\geq 0,  \varphi\in H^{1/2}(\Gamma)$ or $\lambda>\lambda^\text{D}_1/2,  \varphi\in b\mathcal{B}(\Gamma)\cap H^{1/2}(\Gamma)$,  
\begin{equation}\label{eq:4.9}
\begin{aligned}
	\check{\sE}&^\lambda(\varphi,\varphi)\\
	&=\frac{1}{2}\iint_{\Gamma\times \Gamma}\left(\varphi(\xi)-\varphi(\eta)\right)^2\left(U(\xi,\eta)-U_\lambda(\xi,\eta)\right)\sigma(d\xi)\sigma(d\eta)+\int_\Gamma \varphi(\xi)^2V_\lambda(\xi)\sigma(d\xi),
\end{aligned}\end{equation}
where $V_\lambda(\xi)=\int_\Gamma U_\lambda(\xi,\eta)\sigma(d\eta)$.  
\end{theorem}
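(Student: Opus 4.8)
The plan is to establish \eqref{eq:4.8} directly from the orthogonal decomposition recorded in Remark~\ref{RM43}, and then to transform it into \eqref{eq:4.9} using the kernel representation \eqref{eq:4.3} of $\mathscr{U}_\lambda$, paying attention to integrability. For \eqref{eq:4.8}, fix $\varphi\in\check{\sF}^\lambda=H^{1/2}(\Gamma)$ and put $w:=\bH^\lambda_\Gamma\varphi$, $h:=\bH_\Gamma\varphi\in\cH_\Gamma$ and $w_0:=w-h$. By \eqref{eq:4.2} and the proof of Lemma~\ref{LM42}~(1), $w_0=-\lambda G^\Omega(w)\in\sF^\Omega_\re=H^1_0(\Omega)$ lies in $\cD(\sL_\Omega)$ with $-\sL_\Omega w_0=-\lambda w$. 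Since $w_0\in\sF^\Omega_\re$ and $h\in\cH_\Gamma$, the defining orthogonality of $\cH_\Gamma$ gives $\sE(w_0,h)=0$, and therefore, with $\sE=\tfrac12\mathbf{D}$,
\[
\check{\sE}^\lambda(\varphi,\varphi)=\sE^\lambda(w,w)=\sE(w,w)+\lambda(w,w)_m=\sE(w_0,w_0)+\sE(h,h)+\lambda(w,w)_m .
\]
Here $\sE(h,h)=\sE(\bH_\Gamma\varphi,\bH_\Gamma\varphi)=\check{\sE}(\varphi,\varphi)$ equals the jump term of \eqref{eq:4.8} by the Beurling--Deny representation \eqref{eq:4.7}, while $\sE(w_0,w_0)=(-\sL_\Omega w_0,w_0)_m=-\lambda(w,w-h)_m=-\lambda(w,w)_m+\lambda(w,h)_m$. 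Adding $\lambda(w,w)_m$ cancels the first term, so $\check{\sE}^\lambda(\varphi,\varphi)=\check{\sE}(\varphi,\varphi)+\lambda(w,h)_m$, and $\lambda(w,h)_m=\lambda\int_\Omega\bH^\lambda_\Gamma\varphi\,\bH_\Gamma\varphi\,dx=\mathscr{U}_\lambda(\varphi\otimes\varphi)$ by \eqref{eq:4.6}. This proves \eqref{eq:4.8} for every $\varphi\in H^{1/2}(\Gamma)$ and every $\lambda\neq\lambda^\text{D}_n/2$.

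To obtain \eqref{eq:4.9} for $\lambda>\lambda^\text{D}_1/2$ and $\varphi\in b\mathcal{B}(\Gamma)\cap H^{1/2}(\Gamma)$, I would split $U=(U-U_\lambda)+U_\lambda$ in the jump term of \eqref{eq:4.8} and insert $\mathscr{U}_\lambda(\varphi\otimes\varphi)=\iint_{\Gamma\times\Gamma}\varphi(\xi)\varphi(\eta)U_\lambda(\xi,\eta)\sigma(d\xi)\sigma(d\eta)$ from \eqref{eq:4.3}. The crucial point is that, by Remark~\ref{RM43}, the measure $|U_\lambda(\xi,\eta)|\,\sigma(d\xi)\sigma(d\eta)$ on $\Gamma\times\Gamma$ is finite when $\lambda>\lambda^\text{D}_1/2$; hence for bounded $\varphi$ one may legitimately expand the square and use the symmetry of $U_\lambda$ to write
\[
\frac12\iint_{\Gamma\times\Gamma}(\varphi(\xi)-\varphi(\eta))^2U_\lambda(\xi,\eta)\sigma(d\xi)\sigma(d\eta)=\int_\Gamma\varphi(\xi)^2V_\lambda(\xi)\sigma(d\xi)-\iint_{\Gamma\times\Gamma}\varphi(\xi)\varphi(\eta)U_\lambda(\xi,\eta)\sigma(d\xi)\sigma(d\eta),
\]
after which the two $\iint\varphi(\xi)\varphi(\eta)U_\lambda$ contributions cancel and \eqref{eq:4.9} drops out.

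To pass from bounded to arbitrary $\varphi\in H^{1/2}(\Gamma)$ in the case $\lambda\geq 0$ (which, since $\lambda^\text{D}_1<0$, is a subcase of the above), note that now $U-U_\lambda\geq 0$, because $\lambda\mapsto U_\lambda$ is increasing with supremum $U$, and $V_\lambda\geq 0$, because $U_\lambda\geq 0$. I would approximate $\varphi$ by its truncations $\varphi_n:=(-n)\vee\varphi\wedge n$, which converge to $\varphi$ in $\check{\sF}^\lambda$ (a Dirichlet form when $\lambda\geq 0$), so that $\check{\sE}^\lambda(\varphi_n,\varphi_n)\to\check{\sE}^\lambda(\varphi,\varphi)$; on the right-hand side $(\varphi_n(\xi)-\varphi_n(\eta))^2\uparrow(\varphi(\xi)-\varphi(\eta))^2$ and $\varphi_n^2\uparrow\varphi^2$, and monotone convergence applies because $(\varphi(\xi)-\varphi(\eta))^2(U-U_\lambda)\leq(\varphi(\xi)-\varphi(\eta))^2U$ is $\sigma\otimes\sigma$-integrable by \eqref{eq:4.7}. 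Passing to the limit yields \eqref{eq:4.9} in full generality. The only genuine obstacle is the bookkeeping around integrability: the limiting kernel $U$ need not be $\sigma\otimes\sigma$-integrable, so $(\varphi(\xi)-\varphi(\eta))^2$ cannot be expanded against $U$ inside \eqref{eq:4.8} directly. Peeling off the finite kernel $U_\lambda$ first --- available precisely in the gaugeable regime $\lambda>\lambda^\text{D}_1/2$ guaranteed by Remark~\ref{RM43} --- is what makes the rearrangement rigorous, and the monotonicity and positivity statements of Remark~\ref{RM43} are what permit removal of the boundedness hypothesis on $\varphi$ once $\lambda\geq 0$.
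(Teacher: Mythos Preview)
Your proof is correct and follows essentially the same route as the paper. For \eqref{eq:4.8} both arguments exploit the decomposition $\bH^\lambda_\Gamma\varphi=\bH_\Gamma\varphi-\lambda G^\Omega(\bH^\lambda_\Gamma\varphi)$ together with the orthogonality $\sE(H^1_0(\Omega),\cH_\Gamma)=0$ and the identity $\sE(G^\Omega f,G^\Omega f)=(f,G^\Omega f)_m$; your bookkeeping is just a slight rearrangement of the paper's. For \eqref{eq:4.9} with bounded $\varphi$ and $\lambda>\lambda^\text{D}_1/2$, both proofs expand the square against the finite measure $|U_\lambda|\,\sigma\otimes\sigma$ and cancel the cross term via \eqref{eq:4.3}. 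The only methodological difference is the passage to unbounded $\varphi$ when $\lambda\geq 0$: the paper argues directly that \eqref{eq:4.4} and \eqref{eq:4.3} extend to all $\varphi\in H^{1/2}(\Gamma)$ in this regime and then reads off finiteness of $\int\varphi^2 V_\lambda$, whereas you truncate and pass to the limit by monotone/dominated convergence. Your truncation argument is sound (the normal-contraction property of the Dirichlet form $(\check{\sE}^\lambda,\check{\sF}^\lambda)$ for $\lambda\geq 0$ gives $\check{\sE}^\lambda(\varphi_n,\varphi_n)\to\check{\sE}^\lambda(\varphi,\varphi)$, and $|T_n(a)-T_n(b)|$ is indeed nondecreasing in $n$), and arguably a touch more self-contained since it sidesteps verifying the probabilistic representation \eqref{eq:4.4} for unbounded boundary data.
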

\begin{proof}
It follows from \eqref{eq:4.2} that for $\varphi\in H^{1/2}(\Gamma)$,  
\[
\begin{aligned}
	\sE^\lambda(\bH^\lambda_\Gamma \varphi, \bH^\lambda_\Gamma \varphi)&=\sE(\bH^\lambda_\Gamma \varphi, \bH^\lambda_\Gamma \varphi)+\lambda \int_\Omega \left(\bH^\lambda_\Gamma \varphi\right)^2dx \\
	&=\sE(\bH_\Gamma \varphi, \bH_\Gamma \varphi)+\lambda^2 \sE(G^\Omega(\bH^\lambda_\Gamma \varphi),  G^\Omega(\bH^\lambda_\Gamma \varphi))+\lambda \int_\Omega \left(\bH^\lambda_\Gamma \varphi\right)^2dx.
\end{aligned}\]
Since $G^\Omega(\bH^\lambda_\Gamma\varphi)\in \cD(\sL_\Omega)\subset H^1_0(\Omega)$ and $-\sL_\Omega G^\Omega(\bH^\lambda_\Gamma \varphi)=\bH^\lambda_\Gamma \varphi$,  $\sE^\lambda(\bH^\lambda_\Gamma \varphi, \bH^\lambda_\Gamma \varphi)$ is equal to
\[
\sE(\bH_\Gamma \varphi, \bH_\Gamma \varphi)+\lambda^2\int_\Omega \bH^\lambda_\Gamma \varphi G^\Omega(\bH^\lambda_\Gamma\varphi) dx +\lambda \int_\Omega \left(\bH^\lambda_\Gamma \varphi\right)^2dx. 
\]
Using \eqref{eq:4.2},  we get that
\[
\sE^\lambda(\bH^\lambda_\Gamma \varphi, \bH^\lambda_\Gamma \varphi)=\sE(\bH_\Gamma \varphi, \bH_\Gamma \varphi)+\mathscr U_\lambda (\varphi\otimes \varphi).  
\]
In view of the definition of $\check{\sE}^\lambda$ as well as $\check{\sE}$ and  \eqref{eq:4.7},  the formula \eqref{eq:4.8} can be concluded.  

When $\lambda\geq 0$,  \eqref{eq:4.4} holds true for all $\varphi\in H^{1/2}(\Gamma)$.  Hence \eqref{eq:4.3} also holds true for $\varphi\in H^{1/2}(\Gamma)$.  Since $0\leq U_\lambda(\xi, \eta)\leq U(\xi,\eta)$,   it follows that
\[
\begin{aligned}
	\iint_{\Gamma\times \Gamma} \left(\varphi(\xi)-\varphi(\eta)\right)^2&U_\lambda(\xi,\eta)\sigma(d\xi)\sigma(d\eta)\\
	&\leq \iint_{\Gamma\times \Gamma} \left(\varphi(\xi)-\varphi(\eta)\right)^2U(\xi,\eta)\sigma(d\xi)\sigma(d\eta)<\infty.  
\end{aligned}\]
This implies that
\[
	\int_\Gamma \varphi(\xi)^2V_\lambda(\xi)\sigma(d\xi)=\frac{1}{2}\iint_{\Gamma\times \Gamma} \left(\varphi(\xi)-\varphi(\eta)\right)^2U_\lambda(\xi,\eta)\sigma(d\xi)\sigma(d\eta)+\mathscr{U}_\lambda(\varphi\otimes \varphi)<\infty.  
\]
Particularly,  \eqref{eq:4.9} holds true.  When $\lambda^\text{D}_1/2<\lambda<0$,  we note that $U_\lambda(\xi,\eta)\sigma(d\xi)\sigma(d\eta)$ is a finite measure on $\Gamma\times \Gamma$ and hence $V_\lambda(\xi)\sigma(\xi)$ is a finite measure on $\Gamma$.  Then the same representation \eqref{eq:4.9} holds for $\varphi\in b\mathcal{B}(\Gamma)\cap H^{1/2}(\Gamma)$  by means of Lemma~\ref{LM42}~(2).  That completes the proof.  
\end{proof}
\begin{remark}\label{RM45}
We point out that
\begin{itemize}
\item[(a)] For $\lambda>\lambda^\text{D}_1/2$,  it holds that $\check{\sE}^\lambda(\varphi,\varphi)\lesssim \|\varphi\|_{H^{1/2}(\Gamma)}^2$ for all $\varphi\in H^{1/2}(\Gamma)$;
\item[(b)] For $\lambda>0$,  it holds that $\|\varphi\|_{H^{1/2}(\Gamma)}^2\lesssim \check{\sE}^\lambda(\varphi,\varphi)$ for all  $\varphi\in H^{1/2}(\Gamma)$;
\item[(c)] For $\lambda\leq 0$ with $\lambda\neq \lambda^\text{D}_n/2$,  it holds that $$\|\varphi\|_{H^{1/2}(\Gamma)}^2\lesssim \check{\sE}^\lambda(\varphi,\varphi)+(1-\lambda)\int_\Omega \bH^\lambda_\Gamma\varphi(x)^2dx,\quad \forall \varphi\in H^{1/2}(\Gamma).$$ 
\end{itemize}
To show (a),  note that there exists a bounded extension operator $Z: H^{1/2}(\Gamma)\rightarrow H^1(\Omega)$ such that $Z\varphi|_\Gamma=\varphi$ for $\varphi\in H^{1/2}(\Gamma)$;  see,  e.g.,  \cite[Theorem~2.6.11]{SS11}.  Then $Z\varphi-\bH^\lambda_\Gamma \varphi \in H^1_0(\Omega)$ and 
\[
\begin{aligned}
	\sE^\lambda(Z\varphi,Z\varphi)&=\sE^\lambda\left((Z\varphi-\bH^\lambda_\Gamma \varphi)+\bH^\lambda_\Gamma \varphi,  (Z\varphi-\bH^\lambda_\Gamma \varphi)+\bH^\lambda_\Gamma \varphi\right) \\ 
	&=\sE^\lambda(\bH^\lambda_\Gamma\varphi,\bH^\lambda_\Gamma \varphi)+\sE^\lambda(Z\varphi-\bH^\lambda_\Gamma \varphi,Z\varphi-\bH^\lambda_\Gamma \varphi) \\
	&\geq \sE^\lambda(\bH^\lambda_\Gamma\varphi,\bH^\lambda_\Gamma \varphi)=\check{\sE}^\lambda(\varphi,\varphi),
\end{aligned}	
\]
where the inequality is due to the Poincar\'e's inequality.  By means of the boundedness of $Z$,   one get that
\[
	\check{\sE}^\lambda(\varphi,\varphi)\leq \sE^\lambda(Z\varphi,Z\varphi)\lesssim \|Z\varphi\|_{H^1(\Omega)}^2\lesssim \|\varphi\|^2_{H^{1/2}(\Gamma)}.  
\]
To prove (b) and (c),  we have
\[
	\|\varphi\|_{H^{1/2}(\Gamma)}^2=\|\text{Tr}(\bH^\lambda_\Gamma \varphi)\|^2_{H^{1/2}(\Gamma)}\lesssim \|\bH^\lambda_\Gamma \varphi\|^2_{H^1(\Omega)},
\]
since the trace operator $\text{Tr}: H^1(\Omega)\rightarrow H^{1/2}(\Gamma)$ is bounded;  see,  e.g., \cite[Theorem~2.6.8]{SS11}.  When $\lambda>0$,  it holds that $\|\varphi\|_{H^{1/2}(\Gamma)}^2\lesssim \sE^\lambda(\bH^\lambda_\Gamma \varphi,\bH^\lambda_\Gamma \varphi)=\check{\sE}^\lambda(\varphi,\varphi)$ arriving at (b).  When $\lambda\leq 0$ with $\lambda\neq \lambda^\text{D}_n/2$,  since 
\[
\begin{aligned}
	\|\bH^\lambda_\Gamma \varphi\|^2_{H^1(\Omega)}&=\sE^\lambda(\bH^\lambda_\Gamma\varphi, \bH^\lambda_\Gamma\varphi)+(1-\lambda)\int_\Omega \bH^\lambda_\Gamma\varphi(x)^2dx\\
	&=\check{\sE}^\lambda(\varphi,\varphi)+(1-\lambda)\int_\Omega \bH^\lambda_\Gamma\varphi(x)^2dx,
\end{aligned}\]
we obtain (c). 
\end{remark}

Note that the representation \eqref{eq:4.9} is not established for $\lambda^\text{D}_1/2<\lambda<0$ and unbounded $\varphi\in H^{1/2}(\Gamma)$.  In fact, we have the following.

\begin{corollary}
Assume that $\lambda>\lambda^\text{D}_1/2$.  Then \eqref{eq:4.9} holds for $\varphi\in H^{1/2}(\Gamma)$ such that $\int_\Gamma \varphi(\xi)^2|V_\lambda(\xi)|\sigma(d\xi)<\infty$.  
\end{corollary}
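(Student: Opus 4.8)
The plan is to reduce to the bounded case, which is part of Theorem~\ref{THM44-2}, by truncating $\varphi$. For $\lambda\ge 0$ the identity \eqref{eq:4.9} holds for every $\varphi\in H^{1/2}(\Gamma)$ by Theorem~\ref{THM44-2}, and there $V_\lambda\ge 0$ so that $\int_\Gamma\varphi^2|V_\lambda|\,d\sigma<\infty$ is automatic; hence one may assume $\lambda^\text{D}_1/2<\lambda<0$. In this range $\lambda\ne\lambda^\text{D}_n/2$ for all $n$, so $\check\sE^\lambda$ is a well-defined lower bounded closed form, \eqref{eq:4.8} holds, and by Remark~\ref{RM43} one has $U_\lambda\le 0\le U$ pointwise on $\Gamma\times\Gamma$; in particular $\int_\Gamma|U_\lambda(\xi,\eta)|\,\sigma(d\eta)=-V_\lambda(\xi)=|V_\lambda(\xi)|$ for $\sigma$-a.e.\ $\xi$.

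Fix $\varphi\in H^{1/2}(\Gamma)$ with $\int_\Gamma\varphi^2|V_\lambda|\,d\sigma<\infty$ and set $\varphi_n:=(-n)\vee\varphi\wedge n\in b\mathcal{B}(\Gamma)\cap H^{1/2}(\Gamma)$. Since truncation is a unit contraction, $\varphi_n\to\varphi$ in $(\check\sF,\check\sE_1)=H^{1/2}(\Gamma)$, the $\check\sE_1$-norm being equivalent to the $H^{1/2}(\Gamma)$-norm (as follows, e.g., from Remark~\ref{RM45}(a) for $\lambda=0$ together with the bounded inverse theorem, both sides being Hilbert spaces on the same underlying vector space). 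Theorem~\ref{THM44-2} gives \eqref{eq:4.9} for each $\varphi_n$, and it remains to let $n\to\infty$ there. For the left-hand side, Remark~\ref{RM45}(a) gives $\check\sE^\lambda(\psi,\psi)\lesssim\|\psi\|_{H^{1/2}(\Gamma)}^2$; applied to $\psi=\varphi_n-\varphi$, together with $\|\psi\|_{L^2(\Gamma)}\lesssim\|\psi\|_{H^{1/2}(\Gamma)}$, it shows $\varphi_n\to\varphi$ in the form norm $\|\cdot\|_{\check\sE^\lambda_{\alpha_0}}$, whence $\check\sE^\lambda(\varphi_n,\varphi_n)\to\check\sE^\lambda(\varphi,\varphi)$ (using $\check\sE^\lambda(\cdot,\cdot)=\check\sE^\lambda_{\alpha_0}(\cdot,\cdot)-\alpha_0\|\cdot\|_{L^2(\Gamma)}^2$, so that $\check\sE^\lambda_{\alpha_0}(\varphi_n,\varphi_n)$ and $\|\varphi_n\|_{L^2(\Gamma)}^2$ both converge).

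For the right-hand side of \eqref{eq:4.9} I would invoke dominated convergence twice. First, $\varphi_n^2\to\varphi^2$ pointwise with $\varphi_n^2\le\varphi^2$ and $\varphi^2|V_\lambda|\in L^1(\Gamma,\sigma)$, so $\int_\Gamma\varphi_n^2 V_\lambda\,d\sigma\to\int_\Gamma\varphi^2 V_\lambda\,d\sigma$. Second, since $U-U_\lambda\ge 0$ and truncation is $1$-Lipschitz, $(\varphi_n(\xi)-\varphi_n(\eta))^2(U-U_\lambda)(\xi,\eta)$ is dominated by $(\varphi(\xi)-\varphi(\eta))^2(U-U_\lambda)(\xi,\eta)$, which lies in $L^1(\sigma\times\sigma)$: indeed $\iint_{\Gamma\times\Gamma}(\varphi(\xi)-\varphi(\eta))^2 U\,\sigma(d\xi)\sigma(d\eta)<\infty$ by \eqref{eq:4.7} (as $\varphi\in\check\sF$), and
\[
\iint_{\Gamma\times\Gamma}(\varphi(\xi)-\varphi(\eta))^2|U_\lambda(\xi,\eta)|\,\sigma(d\xi)\sigma(d\eta)\le 4\int_\Gamma\varphi(\xi)^2\Big(\int_\Gamma|U_\lambda(\xi,\eta)|\,\sigma(d\eta)\Big)\sigma(d\xi)=4\int_\Gamma\varphi^2|V_\lambda|\,d\sigma<\infty
\]
by the symmetry of $U_\lambda$. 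Hence the first term on the right of \eqref{eq:4.9} for $\varphi_n$ converges to that for $\varphi$, all integrals being finite, and passing to the limit in \eqref{eq:4.9} written for $\varphi_n$ yields \eqref{eq:4.9} for $\varphi$. The one point I expect to require care is this integrability bookkeeping: checking that $\tfrac12\iint(\varphi(\xi)-\varphi(\eta))^2(U-U_\lambda)\,\sigma(d\xi)\sigma(d\eta)$ is absolutely convergent, so that dominated convergence is legitimate; this rests on splitting off the finite Douglas-type energy $\iint(\varphi(\xi)-\varphi(\eta))^2U$ and controlling the remainder by $\int_\Gamma\varphi^2|V_\lambda|\,d\sigma$ through the sign-definiteness of $U_\lambda$ for $\lambda<0$. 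No subtlety arises from the choice of quasi-continuous versions, since $\text{qsupp}[\sigma]=\Gamma$ makes $\sigma$-null and $\check\sE$-polar sets equally negligible.
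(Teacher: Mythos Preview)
Your argument is correct and follows essentially the same route as the paper's proof: truncate $\varphi$ by $\varphi_n=(-n)\vee\varphi\wedge n$, use dominated convergence on the right-hand side (splitting the $(U-U_\lambda)$ integral and the $V_\lambda$ integral, with the key bound $\iint(\varphi(\xi)-\varphi(\eta))^2|U_\lambda|\le 4\int\varphi^2|V_\lambda|$), and invoke Remark~\ref{RM45}(a) together with $\varphi_n\to\varphi$ in $H^{1/2}(\Gamma)$ to pass to the limit on the left. One terminological slip: $(-n)\vee\cdot\wedge n$ is a \emph{normal} contraction, not the unit contraction; the convergence $\varphi_n\to\varphi$ in $\check\sF$ you use is the standard fact for Dirichlet forms.
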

\begin{proof}
Fix $\varphi\in H^{1/2}(\Gamma)$ such that $\int_\Gamma \varphi(\xi)^2|V_\lambda(\xi)|\sigma(d\xi)<\infty$.  Set $\varphi_n:=(-n)\vee \varphi \wedge n$.  Then $\varphi_n\in H^{1/2}(\Gamma)$ and 
\[
	\iint_{\Gamma\times \Gamma}\left(\varphi_n(\xi)-\varphi_n(\eta)\right)^2U(\xi,\eta)\sigma(d\xi)\sigma(d\eta)\rightarrow \iint_{\Gamma\times \Gamma}\left(\varphi(\xi)-\varphi(\eta)\right)^2U(\xi,\eta)\sigma(d\xi)\sigma(d\eta).
\]
Since $\iint_{\Gamma\times \Gamma}\left(\varphi(\xi)-\varphi(\eta)\right)^2|U_\lambda(\xi,\eta)|\sigma(d\xi)\sigma(d\eta)\leq 4\int_\Gamma \varphi(\xi)^2|V_\lambda(\xi)|\sigma(d\xi)<\infty$,  one can obtain that
\[
	\iint_{\Gamma\times \Gamma}\left(\varphi_n(\xi)-\varphi_n(\eta)\right)^2U_\lambda(\xi,\eta)\sigma(d\xi)\sigma(d\eta)\rightarrow \iint_{\Gamma\times \Gamma}\left(\varphi(\xi)-\varphi(\eta)\right)^2U_\lambda(\xi,\eta)\sigma(d\xi)\sigma(d\eta)
\]
and
\[
	\int_\Gamma \varphi_n(\xi)^2V_\lambda(\xi)\sigma(d\xi)\rightarrow \int_\Gamma \varphi(\xi)^2V_\lambda(\xi)\sigma(d\xi).  
\]
Therefore Remark~\ref{RM45}~(a) yields that $\check{\sE}^\lambda(\varphi,\varphi)=\lim_{n\rightarrow \infty}\check{\sE}^\lambda(\varphi_n,\varphi_n)$ is equal to the right hand side of \eqref{eq:4.9}. That completes the proof. 
\end{proof}

\begin{example}
Consider $\Omega=\mathbb{D}=\{x\in \bR^d: |x|<1\}$ and $\lambda>\lambda^\text{D}_1/2$.  Note that the Poisson kernel for $\mathbb{D}$ is 
\begin{equation}\label{eq:4.12-2}
	K(x,\xi)=\frac{1}{w_d}\frac{1-|x|^2}{|x-\xi|^d},\quad x\in \mathbb{D}, \xi\in \partial \mathbb{D},
\end{equation}
where $w_d$ is the area of $\mathbb{D}$.  A straightforward computation yields that
\[
	V_\lambda(\xi)=\lambda\int_\mathbb{D} K(x,\xi)\mathbf{E}_x\re^{-\lambda \tau}dx.  
\]
Since  $x\mapsto \mathbf{E}_x \re^{-\lambda\tau}$ is bounded,  it follows that $|V_\lambda(\xi)|\lesssim  |\lambda|\int_\mathbb{D} K(x,\xi)dx$.  By the explicit expression \eqref{eq:4.12-2} of $K$,  one have that $\xi\mapsto \int_\mathbb{D}K(x,\xi)dx$ is finite and constant.  Therefore $V_\lambda(\xi)$ is finite and \eqref{eq:4.9} holds for all $\varphi\in H^{1/2}(\Gamma)$.  

Recall that $\check{\sE}^\lambda_{\alpha_0}$ is non-negative.  By means of \eqref{eq:4.9},  Remark~\ref{RM45} and the boundedness of $V_\lambda$,  one can obtain that for $\alpha>\alpha_0$,  $\|\cdot\|_{\check{\sE}^\lambda_\alpha}$ is an equivalent norm on $H^{1/2}(\Gamma)$ to $\|\cdot\|_{H^{1/2}(\Gamma)}$.  
\end{example}

Looking at \eqref{eq:4.7} and \eqref{eq:4.8},  one can find the following exchangeability of perturbation and trace transformation on $(\sE,\sF)$: For $\lambda\neq \lambda^\text{D}_n/2$,   the trace form of $(\sE^\lambda,\sF^\lambda)$ is identified with the trace Dirichlet form $(\check{\sE},\check{\sF})$ `perturbed' by the bilinear functional $\mathscr{U}_\lambda$.  When $\lambda>\lambda^\text{D}_1/2$,  the perturbation term $\mathscr{U}_\lambda$ is determined by the symmetric measure $$\check\nu_\lambda(d\xi, d\eta):=U_\lambda(\xi,\eta)\sigma(d\xi)\sigma(d\eta).$$  Particularly,  if $\lambda\geq 0$,  then $\check\nu_\lambda$ is a bivariate smooth measure with respect to $\check{\sE}$ in the sense that $\bar{\nu}_\lambda(\cdot):=\check\nu_\lambda(\cdot, \Gamma)$ is smooth with respect to $\check{\sE}$ and $\check\nu_\lambda(d\xi, d\eta)\leq U(\xi,\eta)\sigma(d\xi)\sigma(d\eta)$;  see \cite{Y96-2}.  On account of \cite[Theorem~4.3]{Y96-2}, there exists a multiplicative functional $\check M^\lambda=(\check M^\lambda_t)_{t\geq 0}$ of $\check{X}$ such that $\check\nu_\lambda$ is the bivariate Revuz measure of $\check M^\lambda$; see \cite{Y96} for the definitions of multiplicative functional and its bivariate Revuz measure.  Note that the perturbation of $(\check{\sE},\check\sF)$ by $\check\nu_\lambda$ (more exactly, by $\mathscr{U}_\lambda$) corresponds to the killing transformation of $\check{X}$ by $\check M^\lambda$; see \cite[Theorem~3.10]{Y96}.  Therefore we have the following.

\begin{corollary}
Assume $\lambda\geq 0$.  Let $X^\lambda$ be the $\lambda$-subprocess of $X$ associated with $(\sE^\lambda,\sF^\lambda)$. Then there exists a multiplicative functional $\check M^\lambda$ of $\check{X}$ such that the time changed process of $X^\lambda$ by the local time on $\Gamma$ is identified with the subprocess of $\check{X}$ perturbed by $\check{M}^\lambda$.  
\end{corollary}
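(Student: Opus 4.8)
The plan is to read both sides of the asserted identity as (the time changed / perturbed versions of) Dirichlet forms on $L^2(\Gamma)$ and to match them through the form representation~\eqref{eq:4.8}. First I would record the left‑hand side. Since $\lambda\ge 0$, the form $(\sE^\lambda,\sF^\lambda)$ coincides with the regular and irreducible Dirichlet form~\eqref{eq:Brownian} on $L^2(\bar\Omega)$, which is associated with the $\lambda$‑subprocess $X^\lambda$ of $X$. Applying Theorem~\ref{THM26} (equivalently Corollary~\ref{COR28} with parameter $\lambda$) to $(\sE^\lambda,\sF^\lambda)$ with $\mu=\sigma$, the trace Dirichlet form $(\check{\sE}^\lambda,\check{\sF}^\lambda)$ on $L^2(\Gamma)$ is exactly the Dirichlet form of the time changed process of $X^\lambda$ by the local time $L^\sigma$ on $\Gamma$, and it is regular on $L^2(\Gamma)$. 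Hence it suffices to identify $(\check{\sE}^\lambda,\check{\sF}^\lambda)$ with the Dirichlet form of the subprocess of $\check{X}$ obtained by killing along a suitable multiplicative functional $\check{M}^\lambda$.

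Next I would use Theorem~\ref{THM44-2}: for $\lambda\ge 0$ one has $\check{\sF}^\lambda=H^{1/2}(\Gamma)=\check{\sF}$ and $\check{\sE}^\lambda(\varphi,\varphi)=\check{\sE}(\varphi,\varphi)+\mathscr{U}_\lambda(\varphi\otimes\varphi)$ for all $\varphi\in H^{1/2}(\Gamma)$, where the second term is nonnegative. By~\eqref{eq:4.3} the functional $\mathscr{U}_\lambda$ is represented by the symmetric measure $\check\nu_\lambda(d\xi,d\eta):=U_\lambda(\xi,\eta)\sigma(d\xi)\sigma(d\eta)$ on $\Gamma\times\Gamma$, which is finite because $\mathscr{U}_\lambda(1\otimes1)\le\lambda\, m(\Omega)<\infty$; moreover $U_\lambda\le U$ (Remark~\ref{RM43}) gives the domination $\check\nu_\lambda(d\xi,d\eta)\le U(\xi,\eta)\sigma(d\xi)\sigma(d\eta)$ by the jumping measure of $\check{\sE}$ (see~\eqref{eq:4.7}), and~\eqref{eq:4.9} (valid for every $\varphi\in H^{1/2}(\Gamma)$ when $\lambda\ge 0$) yields $H^{1/2}(\Gamma)\subseteq L^2(\Gamma,\bar\nu_\lambda)$, where $\bar\nu_\lambda(\cdot):=\check\nu_\lambda(\cdot,\Gamma)=V_\lambda\,d\sigma$. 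Therefore $\check\nu_\lambda$ is a bivariate smooth measure with respect to $\check{\sE}$ in the sense of~\cite{Y96-2}, one has $\check{\sF}\cap L^2(\Gamma,\bar\nu_\lambda)=\check{\sF}$, and $(\check{\sE}^\lambda,\check{\sF}^\lambda)$ is precisely the perturbation of the Dirichlet form $(\check{\sE},\check{\sF})$ by $\check\nu_\lambda$.

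Finally I would invoke \cite[Theorem~4.3]{Y96-2} to produce a multiplicative functional $\check{M}^\lambda=(\check{M}^\lambda_t)_{t\ge0}$ of $\check{X}$ whose bivariate Revuz measure is $\check\nu_\lambda$, and then \cite[Theorem~3.10]{Y96} to conclude that the Dirichlet form of the subprocess of $\check{X}$ killed by $\check{M}^\lambda$ coincides with the perturbation of $(\check{\sE},\check{\sF})$ by $\check\nu_\lambda$, hence with $(\check{\sE}^\lambda,\check{\sF}^\lambda)$. Chaining this with the identification of the first paragraph, the time changed process of $X^\lambda$ by $L^\sigma$ and the subprocess of $\check{X}$ killed by $\check{M}^\lambda$ are associated with the same Dirichlet form, and are therefore identified. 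The main obstacle is the verification in the second paragraph that $\mathscr{U}_\lambda$ fits the exact hypotheses of \cite{Y96,Y96-2}: that $\check\nu_\lambda$ is a genuine bivariate smooth measure (charging no $\check{\sE}$‑polar sets, with the required inner regularity, and dominated by $U\,\sigma\otimes\sigma$) and, crucially, that the form domain is unchanged, $\check{\sF}\cap L^2(\Gamma,\bar\nu_\lambda)=\check{\sF}$ — this last point is exactly what~\eqref{eq:4.9} for $\lambda\ge0$ delivers; once these are in place, the remainder is routine bookkeeping with the cited perturbation theorems together with the regularity of $(\check{\sE},\check{\sF})$ on $L^2(\Gamma)$.
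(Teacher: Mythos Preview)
Your proposal is correct and follows essentially the same route as the paper: the argument there (given in the paragraph immediately preceding the corollary) also reads $\check{\sE}^\lambda=\check{\sE}+\mathscr{U}_\lambda$ from Theorem~\ref{THM44-2}, checks that $\check\nu_\lambda=U_\lambda\,\sigma\otimes\sigma$ is a bivariate smooth measure via $U_\lambda\le U$ and the smoothness of $\bar\nu_\lambda$, and then invokes \cite[Theorem~4.3]{Y96-2} and \cite[Theorem~3.10]{Y96} exactly as you do. Your added observation that \eqref{eq:4.9} for $\lambda\ge 0$ forces $\check{\sF}\cap L^2(\Gamma,\bar\nu_\lambda)=\check{\sF}$ is a useful explicit check that the paper leaves implicit.
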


\subsection{Irreducibility and ground state}

Let $(\check{T}^\lambda_t)_{t\geq 0}:=(\re^{-tD_\lambda})_{t\geq 0}$ be the strongly continuous semigroup associated with $(\check{\sE}^\lambda,\check{\sF}^\lambda)$ on $L^2(\Gamma)$.  Denote its resolvent by $(\check{G}^\lambda_\alpha)_{\alpha>\alpha_0}$,  i.e.  $\check{G}^\lambda_{\alpha} f=\int_0^\infty \re^{-\alpha t}\check{T}^\lambda_t f dt$ for $f\in L^2(\Gamma)$.   

\begin{lemma}\label{LM46}
Let $A\subset \Gamma$ be such that $1_A\in H^{1/2}(\Gamma)$ and $\int_{A\times (\Gamma\setminus A)}U(\xi, \eta)\sigma(d\xi)\sigma(d\eta)=0$.  Then $\sigma(A)=0$ or $\sigma(\Gamma\setminus A)=0$.  
\end{lemma}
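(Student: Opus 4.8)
The plan is to translate the hypothesis into the statement that the indicator $1_A$ has vanishing $\check{\sE}$-energy, transport this to the interior domain $\Omega$ through the harmonic extension, and there exploit connectedness of $\Omega$ to force $1_A$ to be $\sigma$-a.e. constant on $\Gamma$. First I would reformulate the hypothesis: since $U\ge 0$ (indeed $U=\lim_{\lambda\uparrow\infty}U_\lambda$ with each $U_\lambda\ge 0$) and $U$ is symmetric by Lemma~\ref{LM42}, the assumption says that $U=0$ $\sigma\times\sigma$-a.e. on $\bigl(A\times(\Gamma\setminus A)\bigr)\cup\bigl((\Gamma\setminus A)\times A\bigr)$. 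Since $1_A\in H^{1/2}(\Gamma)=\check{\sF}$ and $\bigl(1_A(\xi)-1_A(\eta)\bigr)^2$ is precisely the indicator of that set, the Beurling--Deny representation \eqref{eq:4.7} gives $\check{\sE}(1_A,1_A)=\int_{A\times(\Gamma\setminus A)}U(\xi,\eta)\,\sigma(d\xi)\sigma(d\eta)=0$.

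Next I would pass to $\Omega$. Let $h:=\bH_\Gamma 1_A$; as in the proof of Theorem~\ref{THM26}, $h\in\cH_\Gamma\subset\sF_\re=H^1(\Omega)$ with $h|_\Gamma=1_A$ ($\sigma$-a.e.), using Lemma~\ref{LM21} together with Lemma~\ref{LM32}~(2)--(3) and the fact that $\sigma$ charges no $\sE$-polar set. By the definition \eqref{eq:traceDirichletform} of the trace Dirichlet form (recall $(\sE,\sF)=(\tfrac12\mathbf{D},H^1(\Omega))$ throughout this section), $\tfrac12\mathbf{D}(h,h)=\sE(h,h)=\check{\sE}(1_A,1_A)=0$, hence $\nabla h=0$ $m$-a.e. on $\Omega$. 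Since $\Omega$ is a domain, hence connected, $h$ coincides $m$-a.e. with a constant $c$.

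To conclude, the constant function $c$ lies in $H^1(\Omega)\cap C(\bar\Omega)$, so $\Tr(c)=c$; combining this with Lemma~\ref{LM32}~(2) and $h|_\Gamma=1_A$ yields $1_A=c$ $\sigma$-a.e. on $\Gamma$. As $1_A$ takes only the values $0$ and $1$, necessarily $c\in\{0,1\}$: if $c=0$ then $\sigma(A)=0$, and if $c=1$ then $\sigma(\Gamma\setminus A)=0$.

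I do not expect a genuine obstacle. The points that need care are the identification of the boundary trace of $h$ with $1_A$ (resting on the quasi-continuity/trace equality in Lemma~\ref{LM32}~(2) and on $\sigma$ charging no $\sE$-polar set) and the passage ``$\nabla h=0$ implies $h$ constant,'' which genuinely uses the connectedness of $\Omega$; without it the lemma would fail, the relevant sets corresponding to unions of connected components. One could alternatively phrase the statement as the irreducibility of the trace Dirichlet form $(\check{\sE},\check{\sF})$, inherited from the irreducible recurrence of the reflected Brownian motion on $\bar\Omega$, but the direct computation above is shorter and self-contained.
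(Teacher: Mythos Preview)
Your proof is correct and takes a genuinely different route from the paper's. The paper argues via the explicit kernel: it unwinds $U$ back through $U_\lambda$ and the exit distribution $h(s,x,\xi)=\tfrac12\partial_{\bn_\xi}p^\Omega_s(x,\xi)$ to deduce $\int_\Omega \mathbf{P}_x(X_\tau\in\tilde A)\,\mathbf{P}_x(X_\tau\in\Gamma\setminus\tilde A)\,dx=0$, so that the harmonic function $u(x)=\mathbf{P}_x(X_\tau\in\tilde A)$ (or its complement) vanishes at some interior point, and then invokes the strong maximum principle to force $u\equiv 0$ on $\Omega$; quasi-continuity carries this to the boundary. Your argument bypasses all of this kernel analysis: you read the hypothesis directly as $\check{\sE}(1_A,1_A)=0$ via the Beurling--Deny formula \eqref{eq:4.7}, pull this back to $\tfrac12\mathbf{D}(\bH_\Gamma 1_A,\bH_\Gamma 1_A)=0$ by the very definition of the trace form, and use only that an $H^1$ function with zero gradient on a connected domain is constant. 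Both proofs ultimately rest on connectedness of $\Omega$ (yours explicitly, the paper's through the strong maximum principle), and both identify the harmonic extension with the hitting probability; but your route is shorter, avoids the smooth-boundary kernel machinery, and would transfer more readily to the general irreducible Dirichlet-form setting of \S\ref{SEC2}.
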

\begin{proof}
Since $1_A\in H^{1/2}(\Gamma)=\check{\sF}$,  take $1_{\tilde{A}}$ to be its $\check{\sE}$-quasi-continuous $\sigma$-version.  Then $\tilde{A}=A$,  $\sigma$-a.e., and $\tilde{A}$ is  nearly Borel and simultaneously finely open and finely closed.  

In view of \eqref{eq:4.5-2},  $U(\xi,\eta)=0$ if and only if $U_\lambda(\xi,\eta)=0$ for one (equivalently all) $\lambda>0$. 
Set $h(s,x,\xi):=\frac{1}{2}\frac{\partial p^\Omega_s(x,\xi)}{\partial {\bn_\xi}}$,  i.e.  $\mathbf{P}_x(\tau\in ds, X_\tau\in d\xi)=h(s,x,\xi)ds\sigma(d\xi)$;  see \eqref{eq:4.7-2}.  Then $U_\lambda(\xi,\eta)=\lambda\int_\Omega K_\lambda(x,\xi)K(x,\eta)dx$, where
\[
	K(x,\xi)=\int_0^\infty h(s,x,\xi)ds,\quad K_\lambda(x,\eta)=\int_0^\infty \re^{-\lambda s}h(s,x,\eta)ds.  
\]
Clearly $K(x,\xi)=0$ amounts to $K_\lambda(x,\xi)=0$.  Hence for $\lambda>0$,  $U_\lambda(\xi,\eta)=0$ is equivalent to $\int_\Omega K(x,\xi)K(x,\eta)dx=0$.  Since $\int_{A\times (\Gamma\setminus A)}U(\xi,\eta)\sigma(d\xi)\sigma(d\eta)=0$,  it follows that
\[
	\int_\Omega \mathbf{P}_x(X_\tau\in \tilde A)\mathbf{P}_x(X_\tau\in \Gamma \setminus \tilde A)dx=0.  
\]
Particularly,  $\mathbf{P}_x(X_\tau\in \tilde A)=0$ or $\mathbf{P}_x(X_\tau\in \Gamma\setminus \tilde A)=0$ for some $x\in \Omega$.  We only treat the former case $u(x):=\mathbf{P}_x (X_\tau\in \tilde A)=0$ for some $x\in \Omega$.  Note that $u\geq 0$ and $u$ is harmonic in $\Omega$ (see \cite[Theorem~1.23]{CZ95}).  Then the strong maximum principle implies that $u\equiv 0$ in $\Omega$.   Since $u$ is $\sE$-quasi-continuous,  it follows that $u=0$,  $\sE$-a.e. on $\bar{\Omega}$.  On account of \cite[Theorem~5.2.8]{CF12}, $1_{\tilde{A}}=u|_\Gamma=0$,  $\check{\sE}$-q.e.  Particularly,   $1_{\tilde{A}}=0$,  $\sigma$-a.e.  and hence $\sigma(A)=\sigma(\tilde{A})=0$.  That completes the proof. 
\end{proof}

For $\lambda>\lambda^\text{D}_1/2$,  the irreducibility of $(\check{T}^\lambda_t)_{t\geq 0}$ was first obtained in \cite[Theorem~4.2]{AM12} with the help of Krein-Rutman theorem.  It is worth pointing out that $\lambda>\lambda^D_1/2$ is equivalent to either condition in Lemma~\ref{LM311} (as well as the gaugeability of $(\Omega,-\lambda)$).
In what follows we give an alternative proof that based on the form representation of $D_\lambda$.  

\begin{corollary}\label{COR410}
For $\lambda>\lambda^\text{D}_1/2$,  $(\check{T}^\lambda_t)_{t\geq 0}$ is irreducible.  
\end{corollary}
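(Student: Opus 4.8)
The plan is to show the symmetric positivity preserving semigroup $(\check{T}^\lambda_t)_{t\ge0}$ has no nontrivial invariant set, by combining the form representation of $D_\lambda$ in Theorem~\ref{THM44-2} with Lemma~\ref{LM46}. Suppose $A\subset\Gamma$ is Borel and $1_AL^2(\Gamma)$ is invariant under $(\check{T}^\lambda_t)$; the goal is $\sigma(A)=0$ or $\sigma(\Gamma\setminus A)=0$. Since $(\check{\sE}^\lambda,\check{\sF}^\lambda)$ is a lower bounded symmetric closed form with $\check{\sF}^\lambda=H^{1/2}(\Gamma)$, invariance of the closed ideal $1_AL^2(\Gamma)$ is equivalent to $1_A\varphi\in\check{\sF}^\lambda$ for every $\varphi\in\check{\sF}^\lambda$ together with $\check{\sE}^\lambda(1_A\varphi,(1-1_A)\psi)=0$ for all $\varphi,\psi\in\check{\sF}^\lambda$ (the standard characterization of invariant closed ideals of a closed symmetric form). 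As $\Gamma$ is compact, $1\in H^{1/2}(\Gamma)=\check{\sF}^\lambda$, so taking $\varphi=\psi=1$ yields $1_A\in H^{1/2}(\Gamma)$ and $\check{\sE}^\lambda(1_A,1_{\Gamma\setminus A})=0$.

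Next I would insert $\varphi=1_A$ and $\psi=1_{\Gamma\setminus A}$ into the polarization of \eqref{eq:4.9}, which is legitimate because $\lambda>\lambda^{\text{D}}_1/2$ and $1_A,1_{\Gamma\setminus A}$ (and their sum and difference) are bounded elements of $H^{1/2}(\Gamma)$. Since $1_A\cdot 1_{\Gamma\setminus A}\equiv0$ the $V_\lambda$-term drops, and using $(1_A(\xi)-1_A(\eta))(1_{\Gamma\setminus A}(\xi)-1_{\Gamma\setminus A}(\eta))=-(1_A(\xi)-1_A(\eta))^2$ the remaining term gives
$$\check{\sE}^\lambda(1_A,1_{\Gamma\setminus A})=-\int_{A\times(\Gamma\setminus A)}\bigl(U(\xi,\eta)-U_\lambda(\xi,\eta)\bigr)\,\sigma(d\xi)\,\sigma(d\eta),$$
hence $\int_{A\times(\Gamma\setminus A)}(U-U_\lambda)(\xi,\eta)\,\sigma(d\xi)\,\sigma(d\eta)=0$.

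It remains to upgrade this to $\int_{A\times(\Gamma\setminus A)}U(\xi,\eta)\,\sigma(d\xi)\,\sigma(d\eta)=0$, after which Lemma~\ref{LM46} finishes the proof. From \eqref{eq:4.5-2} and $U=\lim_{\lambda\uparrow\infty}U_\lambda$ one has $U(\xi,\eta)-U_\lambda(\xi,\eta)=\frac14\int_0^\infty\re^{-\lambda t}\,g_t(\xi,\eta)\,dt$ with $g_t(\xi,\eta):=\frac{\partial^2 p^\Omega_t(\xi,\eta)}{\partial\bn_\xi\partial\bn_\eta}$; and writing $p^\Omega_t=\int_\Omega p^\Omega_{t/2}(\cdot,z)p^\Omega_{t/2}(z,\cdot)\,dz$ together with \eqref{eq:4.7-2} yields $g_t(\xi,\eta)=4\int_\Omega h(t/2,z,\xi)\,h(t/2,z,\eta)\,dz\ge0$, with $h$ as in the proof of Lemma~\ref{LM46}. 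In particular $U-U_\lambda\ge0$, so $U(\xi,\eta)=U_\lambda(\xi,\eta)$ for $\sigma\times\sigma$-a.e. $(\xi,\eta)\in A\times(\Gamma\setminus A)$; since $\re^{-\lambda t}>0$, $g_t\ge0$, and $U-U_\lambda$ is finite off the diagonal (Remark~\ref{RM43} and finiteness of the jump kernel in \eqref{eq:4.7}), this forces $g_t(\xi,\eta)=0$ for a.e. $t>0$ at such points, whence $U(\xi,\eta)=\frac14\int_0^\infty g_t(\xi,\eta)\,dt=0$. Thus $\int_{A\times(\Gamma\setminus A)}U(\xi,\eta)\,\sigma(d\xi)\,\sigma(d\eta)=0$, and since $1_A\in H^{1/2}(\Gamma)$, Lemma~\ref{LM46} gives $\sigma(A)=0$ or $\sigma(\Gamma\setminus A)=0$; this is the asserted irreducibility.

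I expect the main obstacle to be the first step: fixing the correct notion of irreducibility for $\check{T}^\lambda$ — which is symmetric and positivity preserving but, for $\lambda<0$, not sub-Markovian — and invoking the characterization of its invariant closed ideals $1_AL^2(\Gamma)$ through the form $\check{\sE}^\lambda$ rather than through the Dirichlet-form version of this fact. The only other slightly delicate point is the off-diagonal finiteness of $U-U_\lambda$ used in the last step; this is trivial for $\lambda\ge0$ (where $0\le U_\lambda\le U$), and for $\lambda^{\text{D}}_1/2<\lambda<0$ it follows from the finiteness of the measure $U_\lambda(\xi,\eta)\,\sigma(d\xi)\,\sigma(d\eta)$ recorded in Remark~\ref{RM43}.
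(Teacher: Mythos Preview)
Your proof is correct and follows essentially the same route as the paper's: invoke the form-level characterization of invariant sets (this is precisely the paper's Theorem~\ref{THMC2}) to get $1_A\in H^{1/2}(\Gamma)$ and $\check\sE^\lambda(1_A,1_{\Gamma\setminus A})=0$, use \eqref{eq:4.9} to rewrite this as $\int_{A\times(\Gamma\setminus A)}(U-U_\lambda)\,\sigma\otimes\sigma=0$, observe $U-U_\lambda\ge0$ and that equality forces $U=0$ via \eqref{eq:4.5-2}, then apply Lemma~\ref{LM46}. Your two flagged concerns are non-issues: the first is exactly the content of Theorem~\ref{THMC2}, and the second is moot because $A\times(\Gamma\setminus A)$ never meets the diagonal.
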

\begin{proof}
The positivity of $(\check{T}^\lambda_t)_{t\geq 0}$ has been obtained in Theorem~\ref{THM312}.  
To show the irreducibility,  take an invariant set $A\subset \Gamma$ with respect to $(\check{T}^\lambda_t)_{t\geq 0}$.  Since $1_\Gamma\in H^{1/2}(\Gamma)$ and Theorem~\ref{THMC2},  we have that $1_A\in H^{1/2}(\Gamma)$ and 
\[
	0=\check{\sE}^\lambda(1_A, 1_{\Gamma\setminus A})=\iint_{A\times (\Gamma \setminus A)}\left(U(\xi,\eta)-U_\lambda(\xi,\eta)\right)\sigma(d\xi)\sigma(d\eta).  
\]
Clearly,  $U-U_\lambda\geq 0$.  
In view of \eqref{eq:4.5-2},  $U(\xi,\eta)-U_\lambda(\xi,\eta)=0$ if and only if $U(\xi,\eta)=0$.  Therefore Lemma~\ref{LM46} yields $\sigma(A)=0$ or $\sigma(\Gamma\setminus A)=0$.  That completes the proof. 
\end{proof}

Next we claim the existence of a special eigenfunction, called the ground state, of $D_\lambda$ by virtue of the Krein-Rutman theorem.  Though the derivation is classical,  we present a proof for readers' convenience. 

\begin{theorem}\label{THM411}
Assume that $\lambda\neq \lambda^\text{D}_n/2$.  Then the following hold:
\begin{itemize}
\item[(1)] The DN operator $D_\lambda$ has a purely discrete spectrum $\sigma(D_\lambda)$ in the sense that $\sigma(D_\lambda)$ consists only of eigenvalues of finite multiplicities which have no finite accumulation points. 
\item[(2)] Assume that $\lambda>\lambda^\text{D}_1/2$ and let $\lambda^\text{DN}_1$ be the smallest eigenvalue in $\sigma(D_\lambda)$.  Then $\lambda^\text{DN}_1$ is a simple eigenvalue admitting a strictly positive eigenfunction $h_\lambda\in L^2(\Gamma)$,  i.e.  $h_\lambda>0$,  $\sigma$-a.e.,  and $D_\lambda h_\lambda=\lambda^\text{DN}_1 h_\lambda$.  
No other eigenvalues admit strictly positive eigenfunctions. 
\end{itemize}  
\end{theorem}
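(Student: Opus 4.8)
The plan is to derive assertion (1) from the compactness of the embedding of the form domain of $D_\lambda$ into $L^2(\Gamma)$, and assertion (2) from the Krein--Rutman theorem applied to the resolvent of $D_\lambda$.

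\emph{Proof of (1).} Recall that $D_\lambda$ is lower semi-bounded and self-adjoint (Lemma~\ref{LM41}~(2)), that $-D_\lambda$ is the $L^2(\Gamma)$-generator of the closed form $(\check{\sE}^\lambda,\check{\sF}^\lambda)$ (Lemma~\ref{LM43}), and that $\check{\sF}^\lambda=H^{1/2}(\Gamma)$. Fix $\alpha>\alpha_0$, so that $(\check{\sF}^\lambda,\check{\sE}^\lambda_\alpha)$ is a Hilbert space. The key point is that the harmonic extension map $\bH^\lambda_\Gamma\colon(\check{\sF}^\lambda,\check{\sE}^\lambda_\alpha)\to H^1(\Omega)$ is bounded. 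Indeed, it is everywhere defined (Lemma~\ref{LM43}~(1)) and it is closed: if $\varphi_n\to\varphi$ in the $\check{\sE}^\lambda_\alpha$-norm, hence in $L^2(\Gamma)$, and $\bH^\lambda_\Gamma\varphi_n\to w$ in $H^1(\Omega)$, then $w$ belongs to $\cH^\lambda_\Gamma=\{u\in H^1(\Omega):\sE^\lambda(u,v)=0\ \forall v\in H^1_0(\Omega)\}$, which is closed in $H^1(\Omega)$ (being an intersection of kernels of continuous functionals), and $w|_\Gamma=\lim_n\varphi_n=\varphi$ by continuity of $\Tr\colon H^1(\Omega)\to L^2(\Gamma)$; uniqueness of the $\kappa$-harmonic extension, which holds precisely because $2\lambda\notin\{\lambda^\text{D}_n:n\geq 1\}$, gives $w=\bH^\lambda_\Gamma\varphi$. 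The closed graph theorem then yields boundedness. Consequently the canonical inclusion $\check{\sF}^\lambda\hookrightarrow L^2(\Gamma)$ factors as $\varphi\mapsto\bH^\lambda_\Gamma\varphi\mapsto\Tr(\bH^\lambda_\Gamma\varphi)=\varphi$; since $\Tr$ maps $H^1(\Omega)$ boundedly into $H^{1/2}(\Gamma)$ and $H^{1/2}(\Gamma)\hookrightarrow L^2(\Gamma)$ is compact ($\Gamma$ being a compact smooth manifold), the second factor is compact, so the form domain $\check{\sF}^\lambda$ embeds compactly into $L^2(\Gamma)$. A lower semi-bounded self-adjoint operator whose form domain embeds compactly into the ambient Hilbert space has compact resolvent, and hence $D_\lambda$ has purely discrete spectrum.

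\emph{Proof of (2).} Assume $\lambda>\lambda^\text{D}_1/2$. By (1), $\sigma(D_\lambda)$ is a sequence of eigenvalues of finite multiplicity accumulating only at $+\infty$, so the smallest one, $\lambda^\text{DN}_1$, is well defined and equals $\inf\sigma(D_\lambda)$. Pick $\alpha$ with $\alpha+\lambda^\text{DN}_1>0$ and set $T:=(\alpha+D_\lambda)^{-1}=\check{G}^\lambda_\alpha$. By (1), $T$ is compact; it is self-adjoint; it is positivity preserving, since the semigroup $(\check{T}^\lambda_t)_{t\geq 0}$ is (Theorem~\ref{THM312}) and $\check{G}^\lambda_\alpha$ is a positive-weighted integral of it; and it is irreducible, since $(\check{T}^\lambda_t)_{t\geq 0}$ is (Corollary~\ref{COR410}) and irreducibility of a positive $C_0$-semigroup is equivalent to that of its resolvent. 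From $\sigma(T)=\{0\}\cup\{(\alpha+\mu)^{-1}:\mu\in\sigma(D_\lambda)\}$ and $\alpha+\mu>0$ for all $\mu\geq\lambda^\text{DN}_1$, the spectral radius of $T$ equals $(\alpha+\lambda^\text{DN}_1)^{-1}>0$. The Krein--Rutman theorem for compact positive irreducible operators then shows that $(\alpha+\lambda^\text{DN}_1)^{-1}$ is a simple eigenvalue of $T$ admitting an eigenfunction $h_\lambda$ with $h_\lambda>0$, $\sigma$-a.e.; equivalently, $\lambda^\text{DN}_1$ is a simple eigenvalue of $D_\lambda$ with $D_\lambda h_\lambda=\lambda^\text{DN}_1 h_\lambda$ and $h_\lambda>0$, $\sigma$-a.e. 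Finally, if some $\mu\in\sigma(D_\lambda)$ had a strictly positive eigenfunction $g$, self-adjointness would give $\mu(g,h_\lambda)_\sigma=(D_\lambda g,h_\lambda)_\sigma=(g,D_\lambda h_\lambda)_\sigma=\lambda^\text{DN}_1(g,h_\lambda)_\sigma$ with $(g,h_\lambda)_\sigma>0$, forcing $\mu=\lambda^\text{DN}_1$; hence no other eigenvalue admits a strictly positive eigenfunction.

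The step I expect to be the main obstacle is the boundedness of $\bH^\lambda_\Gamma$ on the form domain in part~(1): this is where the hypothesis $2\lambda\notin\{\lambda^\text{D}_n\}$ is really used, and the closed graph argument is designed to sidestep the need for an explicit two-sided comparison of $\check{\sE}^\lambda_\alpha$ with the $H^{1/2}(\Gamma)$-norm, which would be delicate for $\lambda\le 0$ (compare Remark~\ref{RM45}). Everything else is standard: the passage from a compact form-embedding to a compact resolvent and discrete spectrum, and the Perron--Frobenius/Krein--Rutman circle of results, whose hypotheses (compact resolvent, positivity preservation, irreducibility) have all been secured above or in Theorem~\ref{THM312} and Corollary~\ref{COR410}.
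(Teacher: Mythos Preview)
Your proof is correct and follows essentially the same route as the paper's: compactness of the resolvent via the compact embedding $H^{1/2}(\Gamma)\hookrightarrow L^2(\Gamma)$ for (1), and Krein--Rutman applied to $\check{G}^\lambda_\alpha$ using the positivity of Theorem~\ref{THM312} and the irreducibility of Corollary~\ref{COR410} for (2). The only difference is cosmetic: the paper asserts directly that $\check{G}^\lambda_\alpha(L^2(\Gamma))\subset \cD(D_\lambda)\subset H^{1/2}(\Gamma)$ and concludes compactness, whereas you spell out the compact embedding of the form domain by a closed-graph argument on $\bH^\lambda_\Gamma$; your version is a bit more explicit about the boundedness step and, as you remark, avoids needing the two-sided comparison of Remark~\ref{RM45} when $\lambda\leq 0$.
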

\begin{proof}
\begin{itemize}
\item[(1)] Note that for $\alpha>\alpha_0$,  $\check{G}^\lambda_\alpha(L^2(\Gamma))\subset \cD(D_\lambda)\subset H^{1/2}(\Gamma)$ and $H^{1/2}(\Gamma)$ is compactly embedded in $L^2(\Gamma)$.  Hence $\check{G}^\lambda_\alpha$ is a compact operator on $L^2(\Gamma)$.  The first assertion is a result of \cite[Proposition~2.11]{S12}.  
\item[(2)] The existence of $\lambda^\text{DN}_1$ is due to the lower semi-boundedness of $D_\lambda$.  Note that $\beta \in \sigma(D_\lambda)$ if and only if $(\alpha+\beta)^{-1}$ is an eigenvalue of $\check{G}^\lambda_\alpha$,  and meanwhile for $\varphi\in L^2(\Gamma)$,  $D_\lambda \varphi=\beta \varphi$,  if and only if $\check{G}^\lambda_\alpha \varphi=(\alpha+\beta)^{-1}\varphi$.  
Particularly,  $(\alpha+\lambda^\text{DN}_1)^{-1}$ is equal to the spectral radius of $\check{G}^\lambda_\alpha$.  
It follows from the irreducibility of $(\check{T}^\lambda_t)_{t\geq 0}$ that $\check{G}^\lambda_\alpha \varphi>0$,  $\sigma$-a.e.,  for any non-zero $\varphi\in pL^2(\Gamma)$.  Applying the Krein-Rutman theorem (see,  e.g.,  \cite[Theorem~1.2]{D06}) to $\check{G}^\lambda_\alpha$,  we can obtain that $(\alpha+\lambda^\text{DN}_1)^{-1}$ is a simple and unique eigenvalue of $\check{G}^\lambda_\alpha$ admitting a strictly positive eigenfunction $h_\lambda$.  This amounts to that $\lambda^\text{DN}_1$ is a simple eigenvalue of $D_\lambda$ and the only eigenvalue admitting a strictly positive eigenfunction. 
\end{itemize}
 That completes the proof. 
\end{proof}

When $\lambda\neq \lambda^\text{D}_n/2$,  $\lambda^\text{DN}_1$ is called the \emph{first eigenvalue} of $D_\lambda$.  It is worth pointing out that if $\lambda<0$,  then $\lambda^\text{DN}_1<0$;  see \cite[Lemma~3.2]{AM12}.  
When $\lambda>\lambda^\text{D}_1/2$,  $\lambda^\text{DN}_1$ is a simple eigenvalue of $D_\lambda$ and we call the eigenfunction $h_\lambda$ in this theorem the \emph{ground state} of $D_\lambda$.  

\subsection{Markov processes $h$-associated with the DN operators}

We have obtained in Corollary~\ref{COR28} the probabilistic counterpart of $D_\lambda$ for $\lambda\geq 0$,  i.e.  the time changed process of the $\lambda$-subprocess of reflected Brownian motion on $\bar{\Omega}$ by the local time on $\Gamma$.  For $\lambda<0$,  $-D_\lambda$ is not straightforwardly identified with the $L^2$-generator of a certain Markov process,  as $V_\lambda<0$ in \eqref{eq:4.9} usually leads to the failure of Markovian property for $(\check{\sE}^\lambda,\check{\sF}^\lambda)$.   Instead,  another tactic involving $h$-transformations arises in \S\ref{SEC34}.  Note that either condition in Lemma~\ref{LM311} amounts to $\lambda>\lambda^\text{D}/2$.  For $\alpha\in \bR$ and $h\in \mathbf{E}^+_\alpha$,  let $(\check{\sE}^{\lambda, h}_\alpha,  \check{\sF}^{\lambda, h})$ be the $h$-transform of $(\check{\sE}^\lambda_\alpha, \check{\sF}^\lambda)$.  Then we have the following.

\begin{theorem}\label{THM412}
Assume that $\lambda>\lambda_1^\text{D}/2$.  Let $\alpha\in \bR$ and $h\in \mathbf{E}^+_\alpha$.  Then the following hold:
\begin{itemize}
\item[(1)]  The $h$-transform $(\check{\sE}^{\lambda,h}_\alpha, \check{\sF}^{\lambda,h})$ is a quasi-regular and irreducible lower bounded symmetric Dirichlet form on $L^2(\Gamma, h^2\cdot \sigma)$.  Particularly,  the $(\alpha,h)$-associated Markov process of $D_\lambda$ is irreducible. 
\item[(2)]  $(\check{\sE}^{\lambda,h}_\alpha, \check{\sF}^{\lambda,h})$ is non-negative if and only if $\alpha\geq -\lambda^\text{DN}_1$.  
\item[(3)] $(\check{\sE}^{\lambda,h}_\alpha, \check{\sF}^{\lambda,h})$ is recurrent,  if and only if $\alpha=-\lambda^\text{DN}_1$ and $h=c\cdot h_\lambda$ for some constant $c>0$,  where $h_\lambda$ is the ground state of $D_\lambda$.  
\end{itemize}
\end{theorem}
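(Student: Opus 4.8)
The plan is to deduce all three parts from: the $h$-transform machinery of \S\ref{SEC34}; the irreducibility of $(\check T^\lambda_t)_{t\ge 0}$ (Corollary~\ref{COR410}); and the spectral description of $D_\lambda$ (Theorem~\ref{THM411}). Two elementary observations are used throughout. First, since $h>0$ $\sigma$-a.e.\ and $h\in\mathbf{E}^+_\alpha\subset\check\sF^\lambda\subset L^2(\Gamma)$, the map $\varphi\mapsto\varphi h$ is a linear bijection of $\check\sF^{\lambda,h}$ onto $\check\sF^\lambda$ intertwining $\check\sE^{\lambda,h}_\alpha$ with $\check\sE^\lambda_\alpha$; in particular $1\in\check\sF^{\lambda,h}$, because $1\cdot h=h\in\check\sF^\lambda$ and $\int_\Gamma h^2\,d\sigma=\|h\|_{L^2(\Gamma)}^2<\infty$. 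Second, $-D_\lambda$ is the $L^2(\Gamma)$-generator of $(\check\sE^\lambda,\check\sF^\lambda)$ with $\check\sF^\lambda=H^{1/2}(\Gamma)$, so $\check\sE^\lambda(\psi,\psi)\ge\lambda^\text{DN}_1\|\psi\|_{L^2(\Gamma)}^2$ for all $\psi\in\check\sF^\lambda$, where $\lambda^\text{DN}_1=\inf\sigma(D_\lambda)$ is an eigenvalue.

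For (1), quasi-regularity, the lower bounded symmetric Dirichlet form property, and the existence of the $(\alpha,h)$-associated process $\check X^{\lambda,h,\alpha}$ are exactly the output of Theorem~\ref{THM312} and the corollary following it, applicable because for $\kappa=\lambda m$ their hypotheses reduce to $\lambda>\lambda^\text{D}_1/2$ (Lemma~\ref{LM41} and the remark after Lemma~\ref{LM311}). To get irreducibility I transfer invariant sets: if $B\subset\Gamma$ is invariant for the $h$-transformed semigroup $\check T^{\lambda,h,\alpha}_t\varphi=\re^{-\alpha t}\check T^\lambda_t(\varphi h)/h$, then multiplying $1_B\check T^{\lambda,h,\alpha}_t\varphi=\check T^{\lambda,h,\alpha}_t(1_B\varphi)$ by $h$ and writing $\psi=\varphi h$ (which ranges over all of $L^2(\Gamma)$ as $h>0$ $\sigma$-a.e.) gives $1_B\check T^\lambda_t\psi=\check T^\lambda_t(1_B\psi)$, so $B$ is $(\check T^\lambda_t)$-invariant; Corollary~\ref{COR410} forces $\sigma(B)=0$ or $\sigma(\Gamma\setminus B)=0$, hence, $h^2\cdot\sigma$ and $\sigma$ being mutually absolutely continuous, $B$ is $h^2\cdot\sigma$-trivial. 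Thus $(\check T^{\lambda,h,\alpha}_t)$, the form $(\check\sE^{\lambda,h}_\alpha,\check\sF^{\lambda,h})$, and the process $\check X^{\lambda,h,\alpha}$ are irreducible.

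For (2), by the intertwining, $(\check\sE^{\lambda,h}_\alpha,\check\sF^{\lambda,h})$ is non-negative iff $\check\sE^\lambda(\psi,\psi)+\alpha\|\psi\|_{L^2(\Gamma)}^2\ge0$ for all $\psi\in\check\sF^\lambda$, i.e.\ iff $D_\lambda+\alpha\ge0$, i.e.\ iff $\alpha\ge-\inf\sigma(D_\lambda)=-\lambda^\text{DN}_1$. For (3): if the form is recurrent it is in particular a genuine (non-negative) Dirichlet form, so $\alpha\ge-\lambda^\text{DN}_1$ by (2); and recurrence gives $\check\sE^{\lambda,h}_\alpha(1,1)=0$ (\cite[Theorem~1.6.3]{FOT11}), which by the intertwining reads $\check\sE^\lambda(h,h)+\alpha\|h\|_{L^2(\Gamma)}^2=0\ge(\lambda^\text{DN}_1+\alpha)\|h\|_{L^2(\Gamma)}^2$, forcing $\alpha\le-\lambda^\text{DN}_1$, hence $\alpha=-\lambda^\text{DN}_1$ and $\check\sE^\lambda(h,h)=\lambda^\text{DN}_1\|h\|_{L^2(\Gamma)}^2$. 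Then $h$ is a null vector of the non-negative closed form $Q:=\check\sE^\lambda-\lambda^\text{DN}_1(\cdot,\cdot)_{L^2(\Gamma)}$ (whose self-adjoint operator is $D_\lambda-\lambda^\text{DN}_1$), so Cauchy--Schwarz for $Q$ gives $Q(h,g)=0$ for all $g\in\check\sF^\lambda$, i.e.\ $h\in\cD(D_\lambda)$ and $D_\lambda h=\lambda^\text{DN}_1 h$; by simplicity of $\lambda^\text{DN}_1$ (Theorem~\ref{THM411}), $h=c\,h_\lambda$, and $c>0$ since $h,h_\lambda>0$ $\sigma$-a.e. Conversely, for $h=c\,h_\lambda$, $\alpha=-\lambda^\text{DN}_1$, the domain $\check\sF^{\lambda,h}=\check\sF^{\lambda,h_\lambda}$ is independent of $c$ and $\check\sE^{\lambda,h}_\alpha=c^2\check\sE^{\lambda,h_\lambda}_\alpha$, so we may take $c=1$; then $1\in\check\sF^{\lambda,h_\lambda}$ and $\check\sE^{\lambda,h_\lambda}_{-\lambda^\text{DN}_1}(1,1)=(D_\lambda h_\lambda,h_\lambda)_{L^2(\Gamma)}-\lambda^\text{DN}_1\|h_\lambda\|_{L^2(\Gamma)}^2=0$, and since this is a quasi-regular non-negative symmetric Dirichlet form by (1)--(2), \cite[Theorem~1.6.3]{FOT11} (via a quasi-homeomorphism to a regular form) yields recurrence.

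The step I expect to be most delicate is part (3). The key realizations are that the recurrence test function $1$ already lies in the non-extended domain $\check\sF^{\lambda,h}$ --- which is exactly what makes $\check\sE^{\lambda,h}_\alpha(1,1)=\check\sE^\lambda(h,h)+\alpha\|h\|_{L^2(\Gamma)}^2$ directly computable --- and that equality in the spectral lower bound, combined with the Cauchy--Schwarz characterization of null vectors of a closed form and the simplicity of the ground state, pins $h$ down to a positive multiple of $h_\lambda$. One should be careful that ``recurrent'' presupposes the form is a genuine non-negative Dirichlet form so that (2) may be invoked, and note that, $\Gamma$ being compact, $h_\lambda^2\cdot\sigma$ is a finite measure, consistent with the conservativeness of the ground-state transform. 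The remaining ingredients --- citing the corollary to Theorem~\ref{THM312} in (1), the invariant-set bookkeeping, and the intertwining in (2) --- are routine.
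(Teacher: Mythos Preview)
Your proof is correct and, for parts (1) and (2), matches the paper's argument essentially line for line (invariant-set transfer via the conjugated semigroup for irreducibility; the intertwining $\psi=\varphi h$ reducing non-negativity to $\check\sE^\lambda_\alpha\ge 0$, i.e.\ $\alpha\ge-\lambda^\text{DN}_1$).

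For the forward implication in (3) you take a mildly different route from the paper. The paper argues: recurrence $\Rightarrow$ conservativeness $\Rightarrow$ $\check T^{\lambda,h,\alpha}_t 1=1$, which unwinds to $\check T^\lambda_t h=\re^{\alpha t}h$; Hille--Yosida then gives $D_\lambda h=-\alpha h$, and since $h>0$ $\sigma$-a.e., Theorem~\ref{THM411}(2) forces $-\alpha=\lambda^\text{DN}_1$ and $h=c\,h_\lambda$. You instead stay at the form level: from $1\in\check\sF^{\lambda,h}$ and recurrence you get $\check\sE^{\lambda,h}_\alpha(1,1)=0$, i.e.\ $\check\sE^\lambda(h,h)=-\alpha\|h\|^2$, squeeze $\alpha$ between $-\lambda^\text{DN}_1$ (from (2)) and the spectral lower bound, and then use Cauchy--Schwarz for the non-negative form $Q=\check\sE^\lambda-\lambda^\text{DN}_1(\cdot,\cdot)_\sigma$ to conclude $h\in\cD(D_\lambda)$ with $D_\lambda h=\lambda^\text{DN}_1 h$. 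Both arguments are short and valid; the paper's semigroup route delivers the eigen-equation in one step (and exploits the ``no other eigenvalue has a positive eigenfunction'' clause of Theorem~\ref{THM411}), while yours avoids conservativeness/Hille--Yosida and uses only the variational characterization plus simplicity. The converse direction of (3) is handled the same way in both proofs.
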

\begin{proof}
\begin{itemize}
\item[(1)] We only need to prove the irreducibility.  In fact,  the $L^2$-semigroup of $(\check{\sE}^{\lambda,h}_\alpha,\check{\sF}^{\lambda,h})$ is 
\begin{equation}\label{eq:4.13}
	\check{T}_t^{\lambda, h, \alpha}\varphi:=\frac{\re^{-\alpha t}\check{T}^\lambda_t(\varphi h)}{h},\quad \varphi \in L^2(\Gamma, h^2 \cdot \sigma). 
\end{equation}
Since $(\check{T}^\lambda_t)_{t\geq 0}$ is irreducible,  it follows that $(\check{T}_t^{\lambda, h, \alpha})_{t\geq 0}$ is also irreducible.  As a result of Theorem~\ref{THMC2},  $(\check{\sE}^{\lambda,h}_\alpha,\check{\sF}^{\lambda,h})$ is irreducible.  
\item[(2)] Note that $\lambda^\text{DN}_1=\inf\{(D_\lambda \varphi, \varphi)_\sigma: \varphi\in \cD(D_\lambda), \|\varphi\|_{L^2(\Gamma)}=1\}$ and $\check{\sE}^\lambda_\alpha(\varphi,\varphi)=(D_\lambda \varphi,\varphi)_\sigma+\alpha(\varphi, \varphi)_\sigma$ for $\varphi\in \cD(D_\lambda)$.  Hence $\check{\sE}^\lambda_\alpha$ is non-negative if and only if $\alpha\geq - \lambda^\text{DN}_1$.  This also amounts to that $(\check{\sE}^{\lambda,h}_\alpha, \check{\sF}^{\lambda,h})$ is non-negative.  
\item[(3)] Suppose $\alpha=-\lambda^\text{DN}_1$ and $h=c\cdot h_\lambda$.  Clearly $1\in \check{\sF}^{\lambda,h}$ and it follows from \eqref{eq:4.13} that $\check{T}^{\lambda, h,\alpha}_t 1=1$.  Hence $\check{\sE}^{\lambda, h}_\alpha(1,1)=0$.  As a result,  $(\check{\sE}^{\lambda,h}_\alpha, \check{\sF}^{\lambda,h})$ is recurrent.  To the contrary,  the recurrence of $(\check{\sE}^{\lambda,h}_\alpha, \check{\sF}^{\lambda,h})$ implies its conservativeness,  which particularly yields that $\check{T}^{\lambda, h,\alpha}_t 1=1$.  Thus $\check{T}^\lambda_t h=\re^{-\alpha t} h$.  By the Hille-Yosida theorem,  we get that $D_\lambda h=-\alpha h$.   In view of Theorem~\ref{THM411},  we  can eventually conclude that $\alpha=-\lambda^\text{DN}_1$ and $h=c\cdot h_\lambda$ for some $c>0$.  
\end{itemize}
That completes the proof.
\end{proof}
\begin{remark}
The irreducibility of $(\check{\sE}^{\lambda,h}_\alpha, \check{\sF}^{\lambda,h})$ implies that it is either recurrent or transient.  The proof of the third assertion indicates that if $(\check{\sE}^{\lambda,h}_\alpha, \check{\sF}^{\lambda,h})$ is transient,  then it is not conservative. 
\end{remark}

Finally let us formulate the Beurling-Deny decomposition for $(\check{\sE}^{\lambda, h}_{\alpha},\check{\sF}^{\lambda,h})$.  When $\lambda\geq 0$,  $(\check{\sE}^\lambda,\check{\sF}^\lambda)$ is already a regular Dirichlet form on $L^2(\Gamma)$.  A related consideration for this case is referred to \cite{Y98}. 

\begin{proposition}
For either $\lambda\geq 0,  \varphi\in b\check{\sF}^{\lambda,h}$ or $\lambda>\lambda^\text{D}_1/2,  \varphi\in b\check{\sF}^{\lambda,h}$ with $\varphi h\in bH^{1/2}(\Gamma)$,  $\check{\sE}^{\lambda,h}_{\alpha}(\varphi,\varphi)$ enjoys the following Beurling-Deny decomposition:
\[
\begin{aligned}
	\check{\sE}^{\lambda,h}_{\alpha}(\varphi,\varphi)&=\frac{1}{2}\iint_{\Gamma\times \Gamma}\left(\varphi(\xi)-\varphi(\xi)\right)^2 h(\xi)h(\eta)\left(U(\xi,\eta)-U_\lambda(\xi,\eta)\right)\sigma(d\xi)\sigma(d\eta)\\ &\qquad +\int_\Gamma \varphi^2(\xi)k^{h}(d\xi),
\end{aligned}\]
where $\int_\Gamma \varphi(\xi)^2k^{h}(d\xi)=\check{\sE}_\alpha^\lambda(\varphi^2 h,h)$.
\end{proposition}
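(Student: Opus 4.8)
The plan is to reduce the statement, via the $h$-transform relation $\check{\sE}^{\lambda,h}_\alpha(\varphi,\varphi)=\check{\sE}^\lambda_\alpha(\varphi h,\varphi h)$, to the explicit form representation \eqref{eq:4.9} together with one pointwise algebraic identity. First I would settle the bookkeeping around $k^h$. Since $h\in\mathbf{E}^+_\alpha$, for $\alpha'$ sufficiently large $(\check{\sE}^{\lambda,h}_{\alpha'},\check{\sF}^{\lambda,h})$ is a non-negative Dirichlet form, so its bounded domain elements form an algebra; hence $\varphi^2\in b\check{\sF}^{\lambda,h}$ whenever $\varphi\in b\check{\sF}^{\lambda,h}$, and therefore $\varphi^2 h,h\in\check{\sF}^\lambda=H^{1/2}(\Gamma)$, so $\check{\sE}^\lambda_\alpha(\varphi^2 h,h)$ is well defined; in the case $\lambda>\lambda^\text{D}_1/2$ one moreover gets $\varphi^2 h$ bounded from the hypothesis $\varphi h\in bH^{1/2}(\Gamma)$. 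Because $h$ is $\alpha$-excessive, $\check{\sE}^\lambda_\alpha(h,v)\geq0$ for every $0\leq v\in\check{\sF}^\lambda$ (let $t\downarrow0$ in $t^{-1}(h-\re^{-\alpha t}\check{T}^\lambda_t h,v)_\sigma\geq0$); applying this with $v=\varphi^2 h$ shows that $g\mapsto\check{\sE}^\lambda_\alpha(gh,h)$ is a positive functional, which by the usual correspondence is represented by a positive smooth measure $k^h$ on $\Gamma$ with $\int_\Gamma\varphi^2\,k^h(d\xi)=\check{\sE}^\lambda_\alpha(\varphi^2 h,h)$. This justifies the last clause of the statement and reduces the proposition to the identity
\[
	\check{\sE}^\lambda_\alpha(\varphi h,\varphi h)-\check{\sE}^\lambda_\alpha(\varphi^2 h,h)=\tfrac12\iint_{\Gamma\times\Gamma}(\varphi(\xi)-\varphi(\eta))^2 h(\xi)h(\eta)\bigl(U(\xi,\eta)-U_\lambda(\xi,\eta)\bigr)\sigma(d\xi)\sigma(d\eta).
\]

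To prove this identity I would insert the representation \eqref{eq:4.9} into both bilinear forms: into $\check{\sE}^\lambda_\alpha(\varphi h,\varphi h)$ with $\psi=\varphi h$ (admissible since $\varphi h\in H^{1/2}(\Gamma)$, bounded when $\lambda>\lambda^\text{D}_1/2$), and into $\check{\sE}^\lambda_\alpha(\varphi^2 h,h)$ after polarization, invoking for $\lambda<0$ the extension of \eqref{eq:4.9} to $H^{1/2}(\Gamma)$-functions established just after Theorem~\ref{THM44-2} (applicable because $V_\lambda$ is bounded on $\Gamma$ for the smooth $\Omega$ considered, so $V_\lambda\,\sigma$ is a finite measure). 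In the difference of the two expressions, the $\alpha$-contributions $\alpha\int_\Gamma\varphi^2 h^2\,d\sigma$ cancel, the killing contributions $\int_\Gamma\varphi^2 h^2 V_\lambda\,d\sigma$ cancel, and only the jump integrals remain; their difference has integrand
\[
	\tfrac12\bigl[(\varphi(\xi)h(\xi)-\varphi(\eta)h(\eta))^2-(\varphi(\xi)^2 h(\xi)-\varphi(\eta)^2 h(\eta))(h(\xi)-h(\eta))\bigr]=\tfrac12 h(\xi)h(\eta)(\varphi(\xi)-\varphi(\eta))^2,
\]
which is exactly the claimed right-hand side. Combining with $\check{\sE}^{\lambda,h}_\alpha(\varphi,\varphi)=\check{\sE}^\lambda_\alpha(\varphi h,\varphi h)$ and $\int_\Gamma\varphi^2\,k^h=\check{\sE}^\lambda_\alpha(\varphi^2 h,h)$ then yields the asserted Beurling--Deny decomposition.

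The delicate point is the passage, in the difference of the jump integrals, to a single integral with integrand $h(\xi)h(\eta)(\varphi(\xi)-\varphi(\eta))^2$. Since the trace Dirichlet form $(\check{\sE},\check{\sF})$ is recurrent, $U$ (hence $U-U_\lambda$) is in general non-integrable in one variable, so the monomials $\varphi(\xi)^2 h(\xi)^2$, $\varphi(\xi)^2 h(\xi)h(\eta)$, and so on need not be separately integrable against $(U-U_\lambda)(\xi,\eta)\sigma(d\xi)\sigma(d\eta)$, and term-by-term splitting is not allowed. I would handle this by keeping the two bilinear forms together: the jump part of $\check{\sE}^\lambda_\alpha(\varphi h,\varphi h)$ converges because $0\leq U-U_\lambda\leq U$ and $\iint_{\Gamma\times\Gamma}(\psi(\xi)-\psi(\eta))^2 U(\xi,\eta)\sigma(d\xi)\sigma(d\eta)=2\check{\sE}(\psi,\psi)<\infty$ for $\psi\in H^{1/2}(\Gamma)$ (see \eqref{eq:4.7}); the polarized jump part of $\check{\sE}^\lambda_\alpha(\varphi^2 h,h)$ converges absolutely by Cauchy--Schwarz from the same bound applied to $\varphi^2 h$ and $h$; and only after subtracting the two absolutely convergent integrands do the $h(\xi)^2$ and $h(\eta)^2$ terms cancel pointwise. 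Alternatively, one may avoid this bookkeeping entirely by first proving the identity for $\varphi$ with $\varphi h\in C_c(\Gamma)\cap H^{1/2}(\Gamma)$ and then passing to general $\varphi\in b\check{\sF}^{\lambda,h}$ by approximation in the energy norm, both sides of the decomposition being continuous there.
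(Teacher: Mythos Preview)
Your strategy is correct and closely parallels the paper's: both compute $\check{\sE}^{\lambda,h}_\alpha(\varphi,\varphi)-\check{\sE}^\lambda_\alpha(\varphi^2 h,h)$ and show the jump part survives with the correct kernel. The algebraic identity you use is equivalent to the paper's decomposition $I_1=I_{11}+I_{12}$.

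The difference is that the paper works from the representation \eqref{eq:4.8} (with $U$ and $\mathscr{U}_\lambda$ kept separate), whereas you work from \eqref{eq:4.9} (with $U-U_\lambda$ and the killing term $V_\lambda$). For $\lambda\geq 0$ the two bookkeepings are equivalent, since \eqref{eq:4.9} holds for all of $H^{1/2}(\Gamma)$. For $\lambda^\text{D}_1/2<\lambda<0$, however, your route needs the polarized \eqref{eq:4.9} applied to the pair $(\varphi^2 h,h)$, and $h$ is not assumed bounded; you cover this by asserting that $V_\lambda$ is bounded on $\Gamma$ for smooth $\Omega$. That claim is plausible but is not proved in the paper (only the case $\Omega=\mathbb{D}$ is worked out in the example following the Corollary), so as written this step is unjustified. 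The paper avoids the issue entirely: it stays with \eqref{eq:4.8}, and for the cross term $\mathscr{U}_\lambda(\varphi^2 h\otimes h)=\lambda\int_\Omega \mathbf{H}^\lambda_\Gamma(\varphi^2 h)\,\mathbf{H}_\Gamma h\,dx$ it invokes the probabilistic formula \eqref{eq:4.4} only for the \emph{bounded} factor $\varphi^2 h$, while $\mathbf{H}_\Gamma h=\mathbf{E}_\cdot[\tilde h(X_\tau)]$ needs no boundedness. This yields $\mathscr{U}_\lambda(\varphi^2 h\otimes h)=\iint \varphi^2(\xi)h(\xi)h(\eta)U_\lambda(\xi,\eta)\sigma(d\xi)\sigma(d\eta)$ directly, and the desired decomposition follows without any appeal to integrability of $V_\lambda$.

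Your extra paragraph justifying that $k^h$ is a positive measure (via $\alpha$-excessiveness of $h$) is a nice addition; the paper simply defines $\int\varphi^2\,k^h:=\check{\sE}^\lambda_\alpha(\varphi^2 h,h)$ without commenting on positivity.
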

\begin{proof}
Clearly,  $1\in \check{\sF}^{\lambda,h}$ and $$\int_\Gamma \varphi(\xi)^2k^{h}(d\xi)=\check{\sE}^{\lambda,h}_{\alpha}(\varphi^2, 1)=\check{\sE}^\lambda_\alpha(\varphi^2 h, h).$$
Note that  $\check{\sE}^{\lambda,h}_{\alpha}(\varphi,\varphi)=\check{\sE}^\lambda_\alpha(\varphi h, \varphi h)$,  and in view of Theorem~\ref{THM44-2},  $\check{\sE}^\lambda(\varphi h, \varphi h)$ is equal to $I_1+I_2$,  where
\[
\begin{aligned}
	&I_1:=\frac{1}{2}\iint\left(\varphi(\xi)h(\xi)-\varphi(\eta)h(\eta) \right)^2U(\xi,\eta)\sigma(d\xi)\sigma(d\eta),  \\
	&I_2:=\iint\varphi(\xi)h(\xi)\varphi(\eta)h(\eta)U_\lambda(\xi,\eta)\sigma(d\xi)\sigma(d\eta).  
\end{aligned}\]
A straightforward computation yields that $I_1=I_{11}+I_{12}$,  where
\[
\begin{aligned}
&I_{11}=\frac{1}{2}\iint \left(\varphi(\xi)-\varphi(\eta)\right)^2h(\xi)h(\eta)U(\xi,\eta)\sigma(d\xi)\sigma(d\eta),\\
&I_{12}=\frac{1}{2}\iint (\varphi^2(\xi)h(\xi)
	-\varphi^2(\eta)h(\eta))(h(\xi)-h(\eta))U(\xi,\eta)\sigma(d\xi)\sigma(d\eta). 
\end{aligned}
\]
On the other hand,  $\check{\sE}^\lambda(\varphi^2 h, h)=I_{12}+\mathscr{U}_\lambda(\varphi^2 h \otimes h)$.  Since $\varphi^2 h\in H^{1/2}(\Gamma)$ is bounded when $\lambda^D_1/2<\lambda<0$,  it follows from \eqref{eq:4.4} and $\bH_\Gamma h(x)=\mathbf{E}_x \tilde{h}(X_\tau)$,  where $\tilde{h}$ is the $\check{\sE}$-quasi-continuous $\sigma$-version of $h$,  that
\[
	\mathscr{U}_\lambda(\varphi^2 h\otimes h)=\int_\Gamma \varphi(\xi)^2h(\xi)h(\eta)U_\lambda(\xi,\eta)\sigma(d\xi)\sigma(d\eta).  
\]
Thus we get that
\[
\begin{aligned}
	\check{\sE}^{\lambda,h}_{\alpha}&(\varphi,\varphi)-\int_\Gamma \varphi^2(\xi)k^{h}(d\xi)\\
	&=I_{11}+I_2-\mathscr{U}_\lambda(\varphi^2h\otimes h) \\
&=\frac{1}{2}\iint_{\Gamma\times \Gamma}\left(\varphi(\xi)-\varphi(\xi)\right)^2 h(\xi)h(\eta)\left(U(\xi,\eta)-U_\lambda(\xi,\eta)\right)\sigma(d\xi)\sigma(d\eta).
\end{aligned}\]
That completes the proof.
\end{proof}



\section{DN operators for Schr\"odinger operators}\label{SEC5}

Let $\Omega$,  $\Gamma=\partial \Omega$ and $\sigma$ be the same as those in \S\ref{SEC4}.  Further let $(a_{ij}(x))_{1\leq i,j\leq d}$ be a matrix function on $\Omega$ such that $a_{ij}=a_{ji}$ and there exists a constant $C>1$ such that for any $\xi=(\xi_1,\cdots, \xi_d)\in \bR^d$, 
\begin{equation}\label{eq:36}
	C^{-1} |\xi|^2\leq \sum_{i,j=1}^da_{i,j}(x)\xi_i\xi_j\leq C|\xi|^2,\quad \forall x\in \Omega.  
\end{equation}
In this section consider the Dirichlet form:
\begin{equation}\label{eq:52-2}
\begin{aligned}
		\sF&:=H^1(\Omega),\\
		\sE(u,v)&:=\frac{1}{2}\sum_{i,j=1}^d\int_\Omega a_{ij}(x)\partial_{x_i}u\partial_{x_j}vdx,\quad u,v\in \sF.  
\end{aligned}
\end{equation}
Clearly $(\sE,\sF)$ is a regular and irreducible Dirichlet form on $L^2(\bar\Omega)$ associated with a Markov process $X$ on $\bar{\Omega}$.  The irreducibility is due to,  e.g.,  \cite[Corollary~4.6.4]{FOT11}.  

Denote by $\mathbf{S}$ (resp. $\mathring{\mathbf{S}}$) the totality of positive smooth measures (positive Radon smooth measures) on $\bar\Omega$ with respect to $\sE$.  Note that $(\sE,\sF)$ enjoys the same quasi notations as \eqref{eq:Brownian}.  Hence $\mathbf{S}$ and $\mathring{\mathbf{S}}$ are also the same as those in \S\ref{SEC4}.  Particularly,  $\mu:=\sigma\in \mathring{\bS}$ with $\text{qsupp}[\sigma]=\Gamma$.  Denote by $(\check{\sE},\check{\sF})$ the trace Dirichlet form of $(\sE,\sF)$ on $L^2(\Gamma)$.  
Take
\[
	\kappa(x)=V(x)dx\in \bS-\bS
\]
with $V\in L^\infty(\Omega)$.  
For emphasis,  write 
\[
\begin{aligned}
	&(\sE^V,\sF^V):=(\sE^\kappa,\sF^\kappa),\\
	& \sF^{V,\Omega}:=H^1_0(\Omega),\quad \sE^{V,\Omega}(u,v):=\sE^V(u,v),\; u,v\in \sF^{V,\Omega}. 
\end{aligned}\]
Both $(\sE^V,\sF^V)$ and $(\sE^{V,\Omega},\sF^{V,\Omega})$ are lower bounded closed forms.  Denote their $L^2$-generators by $\sL_V:=\sL-V$ and $\sL_{V,\Omega}$ respectively,  where $\sL$ is the generator of $(\sE,\sF)$.   From Lemma~\ref{LM32}~(1),  we know that $\sF^V_\re:=\sF_\re \cap L^2(\Omega,  |V(x)|dx)=H^1(\Omega)$ and $\sF^{V,\Omega}_\re:=\sF^{\kappa,\Omega}_\re=H^1_0(\Omega)$.  Set $\cH^V_\Gamma:=\cH^\kappa_\Gamma=\{u\in H^1(\Omega): \sE^V(u,v)=0,\forall v\in H^1_0(\Omega)\}$. 

The DN operator for the Schr\"odinger operator $\sL_V$ defined as below has been widely studied in,  e.g.,  \cite{AE14, AE20,  EO14, EO19}.  Note that $0\notin \sigma(\sL_{V,\Omega})$ implies \eqref{eq:42-3};  see Lemma~\ref{LMB1}.



\begin{definition}\label{DEF45}
Let $V\in L^\infty(\Omega)$ such that $0\notin \sigma(\sL_{V,\Omega})$.  The DN operator $D_V$  for $\sL_V$ on $L^2(\Gamma)$ is defined as follows:
\[
	\begin{aligned}
	&\cD(D_V):=\bigg\{\varphi\in L^2(\Gamma): \exists u\in \cH^V_\Gamma, f\in L^2(\Gamma)\text{ s.t. }\Tr(u)=\varphi, \\&\qquad\qquad\qquad \qquad\ \qquad\qquad \sE^V(u,v)=\int_\Gamma f v|_\Gamma d\sigma\text{ for all }v\in H^1(\Omega)\bigg\},  \\
	&D_V\varphi:=f,
\end{aligned}\]
The function $u\in \cH^V_\Gamma$ with $\text{Tr}(u)=\varphi$ is called the \emph{$V$-harmonic extension} of $\varphi$.  
\end{definition}

It is easy to verify that $D_V$ is identified with the DN operator for $(\sE^V,\sF^V)$ on $L^2(\Gamma)$.  On account of Corollary~\ref{COR47},  $D_V$ is lower semi-bounded and self-adjoint.  Let $(\check{\sE}^V,\check{\sF}^V):=(\check{\sE}^\kappa,\check{\sF}^\kappa)$ be the trace form associated with $D_V$.  Then $\check{\sF}^V=\check{\sF}=H^{1/2}(\Gamma)$.  Denote by $\bH^V_\Gamma \varphi$ the $V$-harmonic extension of $\varphi \in H^{1/2}(\Gamma)$ and write $\bH_\Gamma \varphi:= \bH^0_\Gamma \varphi$.  Define
\[
\mathscr{U}_V(\varphi\otimes \phi):=\int_\Gamma \bH^V_\Gamma \varphi(x) \bH_\Gamma \phi(x) V(x)dx,\quad \varphi, \phi\in H^{1/2}(\Gamma).  
\]
Repeating the argument to prove \eqref{eq:4.8},  we can also represent $\check{\sE}^V$ as the `perturbation' of $\check{\sE}$ by the bilinear symmetric functional $\mathscr{U}_V$.  

\begin{proposition}
Assume that $0\notin \sigma(\sL_{V,\Omega})$.  Then for any $\varphi, \phi\in H^{1/2}(\Gamma)$, 
\[
	\check{\sE}^V(\varphi,\phi)=\check{\sE}(\varphi,\phi)+\mathscr U_V(\varphi\otimes \phi).  
\]
\end{proposition}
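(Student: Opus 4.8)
The plan is to transcribe, essentially word for word, the computation that produced \eqref{eq:4.8} in Theorem~\ref{THM44-2}, with the constant perturbation $\lambda\cdot m$ there replaced by the measure $V\,dx$ and the half-Laplacian replaced by the uniformly elliptic form \eqref{eq:52-2}. Let $(\sE^\Omega,\sF^\Omega)$, with $\sF^\Omega:=H^1_0(\Omega)$ and $\sE^\Omega(u,v):=\sE(u,v)$, be the (unperturbed) part form of $(\sE,\sF)$ on $\Omega$, with $L^2$-generator $\sL_\Omega$. By the Poincar\'e inequality on $H^1_0(\Omega)$ this form is coercive, hence $0\notin\sigma(\sL_\Omega)$, its $0$-order Green operator $G^\Omega:=(-\sL_\Omega)^{-1}$ is a bounded self-adjoint operator on $L^2(\Omega)$, and $\sF^\Omega_\re=H^1_0(\Omega)$. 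The first step is the analogue of \eqref{eq:4.2}: for every $\varphi\in H^{1/2}(\Gamma)$,
\[
	\bH_\Gamma\varphi-\bH^V_\Gamma\varphi=G^\Omega\bigl(V\cdot\bH^V_\Gamma\varphi\bigr).
\]
Indeed $\bH_\Gamma\varphi$ and $\bH^V_\Gamma\varphi$ both lie in $H^1(\Omega)$ and restrict to $\varphi$ on $\Gamma$, so their difference lies in $H^1_0(\Omega)=\sF^\Omega_\re=\sF^{V,\Omega}_\re$; testing against $v\in H^1_0(\Omega)$ and using $\sE(\bH_\Gamma\varphi,v)=0$ (as $\bH_\Gamma\varphi\in\cH_\Gamma$) together with $\sE^V(\bH^V_\Gamma\varphi,v)=0$ (as $\bH^V_\Gamma\varphi\in\cH^V_\Gamma$) gives $\sE^\Omega(\bH_\Gamma\varphi-\bH^V_\Gamma\varphi,v)=\int_\Omega V\,\bH^V_\Gamma\varphi\,v\,dx$, and since $V\in L^\infty(\Omega)$ forces $V\bH^V_\Gamma\varphi\in L^2(\Omega)$, the claimed identity follows from the defining property of $G^\Omega$.

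Next, put $w:=\bH_\Gamma\varphi-\bH^V_\Gamma\varphi\in H^1_0(\Omega)=\sF^\Omega_\re$, so that $\bH^V_\Gamma\varphi=\bH_\Gamma\varphi-w$ with $\bH_\Gamma\varphi\in\cH_\Gamma$; since $\sE(\bH_\Gamma\varphi,w)=0$ this yields $\sE(\bH^V_\Gamma\varphi,\bH^V_\Gamma\varphi)=\sE(\bH_\Gamma\varphi,\bH_\Gamma\varphi)+\sE(w,w)$. Using the identity $\sE^\Omega(G^\Omega f,g)=(f,g)_m$ for $g\in H^1_0(\Omega)$ together with the first step,
\[
	\sE(w,w)=\int_\Omega V\,\bH^V_\Gamma\varphi\,\bigl(\bH_\Gamma\varphi-\bH^V_\Gamma\varphi\bigr)dx.
\]
Adding $\int_\Omega V(\bH^V_\Gamma\varphi)^2dx$ to both sides to pass from $\sE$ to $\sE^V$ on the left, the two $\int_\Omega V(\bH^V_\Gamma\varphi)^2dx$ terms cancel and we are left with
\[
	\sE^V(\bH^V_\Gamma\varphi,\bH^V_\Gamma\varphi)=\sE(\bH_\Gamma\varphi,\bH_\Gamma\varphi)+\int_\Omega V\,\bH^V_\Gamma\varphi\,\bH_\Gamma\varphi\,dx,
\]
which by the definitions of $\check\sE^V$, $\check\sE$ and $\mathscr U_V$ is precisely $\check\sE^V(\varphi,\varphi)=\check\sE(\varphi,\varphi)+\mathscr U_V(\varphi\otimes\varphi)$.

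Finally I would pass from this quadratic identity to the bilinear one by polarization. The maps $\varphi\mapsto\bH^V_\Gamma\varphi$ and $\varphi\mapsto\bH_\Gamma\varphi$ are linear, so $\check\sE^V$, $\check\sE$ and $\mathscr U_V$ are bilinear, and $\check\sE^V$, $\check\sE$ are symmetric; thus it only remains to check that $\mathscr U_V$ is symmetric. For that, write $\bH_\Gamma\phi=\bH^V_\Gamma\phi+G^\Omega(V\bH^V_\Gamma\phi)$, so that $\mathscr U_V(\varphi\otimes\phi)=\int_\Omega V\,\bH^V_\Gamma\varphi\,\bH^V_\Gamma\phi\,dx+\bigl(V\bH^V_\Gamma\varphi,\,G^\Omega(V\bH^V_\Gamma\phi)\bigr)_m$, where the first term is manifestly symmetric in $\varphi,\phi$ and the second is symmetric by self-adjointness of $G^\Omega$. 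Polarizing the identity of the previous paragraph then gives $\check\sE^V(\varphi,\phi)=\check\sE(\varphi,\phi)+\mathscr U_V(\varphi\otimes\phi)$ for all $\varphi,\phi\in H^{1/2}(\Gamma)$. I expect no genuine obstacle: the hypothesis $0\notin\sigma(\sL_{V,\Omega})$ is used only to secure \eqref{eq:42-3} (via Lemma~\ref{LMB1}), so that $\bH^V_\Gamma$ and the trace form $\check\sE^V$ are defined in the first place; the integrability of every term is automatic from $V\in L^\infty(\Omega)$ and $\bH^V_\Gamma\varphi\in H^1(\Omega)$; and the only points requiring a word of care are the identification $\sF^\Omega_\re=H^1_0(\Omega)$ and the boundedness/self-adjointness of $G^\Omega$, both of which are immediate from the coercivity of the unperturbed part form. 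Otherwise the proof is a verbatim copy of the $\lambda$-case.
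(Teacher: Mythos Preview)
Your proof is correct and is exactly what the paper intends: it explicitly says to ``repeat the argument to prove \eqref{eq:4.8}'', and your write-up faithfully carries out that repetition, including the analogue of \eqref{eq:4.2}, the orthogonal decomposition $\bH^V_\Gamma\varphi=\bH_\Gamma\varphi-G^\Omega(V\bH^V_\Gamma\varphi)$, and the symmetry of $\mathscr{U}_V$ via self-adjointness of $G^\Omega$ (the same device the paper uses in the proof of Lemma~\ref{LM42}(2)).
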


Let us turn to derive the probabilistic counterpart of $D_V$.  
When $V\geq 0$,  $(\check{\sE}^V,\check{\sF}^V)$ is clearly the trace Dirichlet form of $(\sE^V,\sF^V)$ on $L^2(\Gamma)$ and the probabilistic counterpart of $D_V$ can be easily figured out by applying Theorem~\ref{THM26}.  The more interesting case is that $V$ has a negative part.  Though it is hard to formulate the representation of $\check{\sE}^V$ like $\check{\sE}^\lambda$ in \S\ref{Revisiting},   the irreducibility of the $L^2$-semigroup associated with $D_V$ is obtained in \cite[Proposition~7.4]{AE20}.
That is the following. 


\begin{lemma}\label{LM53-2}
Assume that $a_{ij}\in L^\infty(\Omega)$ for $1\leq i,j\leq d$,  $0\notin \sigma(\sL_{V,\Omega})$ and $-\sL_{V,\Omega}$ is positive,  i.e.  $(-\int_\Omega \sL_{V,\Omega} u, u)_m\geq 0$ for any $u\in \cD(\sL_{V,\Omega})$.  Then the $L^2$-semigroup $\check{T}^V_t:=\re^{-t D_V}$ associated with $-D_V$ is irreducible. 
\end{lemma}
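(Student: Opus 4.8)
The plan is to imitate Corollary~\ref{COR410}: first get positivity of $(\check{T}^V_t)$ for free, then reduce irreducibility to the absence of nontrivial invariant sets, translate an invariant set into a relation between the two $V$-harmonic extensions $\bH^V_\Gamma 1_A$ and $\bH^V_\Gamma 1_{\Gamma\setminus A}$, and kill it with a strong maximum principle in $\Omega$. For the positivity: since $0\notin\sigma(\sL_{V,\Omega})$, assumption \eqref{eq:42-3} holds by Lemma~\ref{LMB1}, so $D_V$, $\cH^V_\Gamma$, $\bH^V_\Gamma$ and $(\check{\sE}^V,\check{\sF}^V)$ are well defined, and with $\kappa^-=V^-\,dx$, $V^-\in L^\infty(\Omega)$, the standing hypotheses of \S\ref{SEC34} hold. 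The assumption that $-\sL_{V,\Omega}$ is positive is precisely condition (a) of Lemma~\ref{LM311} with $G=\Omega$, so Theorem~\ref{THM312} gives that $(\check{\sE}^V,\check{\sF}^V)$ is a quasi-regular positivity preserving symmetric coercive form on $L^2(\Gamma)$; hence $\check{T}^V_t=\re^{-tD_V}$ is positivity preserving, and it remains to show $(\check{T}^V_t)$ has no nontrivial invariant set.

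So let $A\subset\Gamma$ be invariant for $(\check{T}^V_t)$. Since $1_\Gamma\in\check{\sF}^V=H^{1/2}(\Gamma)$, Theorem~\ref{THMC2} yields $1_A\in H^{1/2}(\Gamma)$ and $\check{\sE}^V(1_A,1_{\Gamma\setminus A})=0$. Put $u_A:=\bH^V_\Gamma 1_A$, $u_{A^c}:=\bH^V_\Gamma 1_{\Gamma\setminus A}$, $v:=u_A\wedge u_{A^c}$ and $w:=u_A\vee u_{A^c}$ in $H^1(\Omega)$. By Lemma~\ref{LM32}~(2), $\Tr(v)=(u_A|_\Gamma)\wedge(u_{A^c}|_\Gamma)=1_A\wedge 1_{\Gamma\setminus A}=0$ $\sigma$-a.e., so the trivial-traces theorem over a Lipschitz domain gives $v\in H^1_0(\Omega)=\sF^{V,\Omega}_\re$. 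Using the a.e. identities $\sum_{i,j}a_{ij}\,\partial_{x_i}(p\wedge q)\,\partial_{x_j}(p\vee q)=\sum_{i,j}a_{ij}\,\partial_{x_i}p\,\partial_{x_j}q$ (valid as $(a_{ij})$ is symmetric) and $(p\wedge q)(p\vee q)=pq$, the definition \eqref{eq:46} of $\check{\sE}^V$, the linear relation $w=u_A+u_{A^c}-v=\bH^V_\Gamma 1_\Gamma-v$, and the $\sE^V$-orthogonality of $\sF^{V,\Omega}_\re$ to $\cH^V_\Gamma\ni\bH^V_\Gamma 1_\Gamma$, I would get
\[
0=\check{\sE}^V(1_A,1_{\Gamma\setminus A})=\sE^V(u_A,u_{A^c})=\sE^V(v,w)=\sE^V\!\bigl(v,\bH^V_\Gamma 1_\Gamma\bigr)-\sE^V(v,v)=-\sE^{V,\Omega}(v,v).
\]
Because $-\sL_{V,\Omega}$ is positive and $0\notin\sigma(\sL_{V,\Omega})$, the quantity $\mu_1:=\inf\sigma(-\sL_{V,\Omega})$ is strictly positive, so $\sE^{V,\Omega}(w,w)\geq\mu_1\|w\|_{L^2(\Omega)}^2$ on $H^1_0(\Omega)$; hence $v=0$, i.e. $u_A\wedge u_{A^c}=0$ a.e. on $\Omega$. (This is the only point at which $0\notin\sigma(\sL_{V,\Omega})$ is genuinely used.)

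To finish, note that positivity of $-\sL_{V,\Omega}$ together with $0\notin\sigma(\sL_{V,\Omega})$ is precisely gaugeability of $(\Omega,-V)$, so --- arguing as for \eqref{eq:4.4} in Lemma~\ref{LM42}~(2), now for the diffusion $X$ of \eqref{eq:52-2}, or by \cite{CZ95} --- one has $u_A(x)=\mathbf{E}_x\!\bigl[\re^{-\int_0^\tau V(X_s)\,ds}\,1_A(X_\tau)\bigr]\geq 0$ with $\tau:=\inf\{t>0:X_t\notin\Omega\}$, and likewise $u_{A^c}\geq 0$. Since $u_A\wedge u_{A^c}=0$ a.e. and both are nonnegative, one of them vanishes on a set of positive Lebesgue measure; say $u_A(x_0)=0$ for some $x_0\in\Omega$. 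As $\re^{-\int_0^\tau V(X_s)\,ds}>0$ $\mathbf{P}_{x_0}$-a.s. ($X$ leaves the bounded domain a.s.), this forces $\mathbf{P}_{x_0}(X_\tau\in\tilde A)=0$, where $\tilde A$ is the $\check{\sE}$-quasi-continuous $\sigma$-version of $A$. But $x\mapsto\mathbf{P}_x(X_\tau\in\tilde A)$ is nonnegative and $\sL_\Omega$-harmonic on the connected $\Omega$, so the strong maximum principle (De Giorgi--Nash--Moser for the bounded measurable uniformly elliptic $\sL_\Omega$) forces it to vanish identically; then $u_A\equiv 0$ on $\Omega$, so $1_A=u_A|_\Gamma=0$ $\check{\sE}$-q.e. by \cite[Theorem~5.2.8]{CF12}, i.e. $\sigma(A)=0$. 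Symmetrically, $\sigma(\Gamma\setminus A)=0$ in the other case, and $(\check{T}^V_t)$ is irreducible.

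I expect the delicate step to be the first half of the last paragraph: establishing the Feynman--Kac representation, and hence nonnegativity, of $\bH^V_\Gamma$ for merely bounded measurable coefficients $a_{ij}$ and an $L^\infty$ potential with a negative part. In the constant-coefficient case this took some work in Lemma~\ref{LM42}~(2); in the present generality one should quote the gauge-theoretic results of \cite{CZ95}, or else bypass the representation by first deducing $u_A\geq 0$ from positivity of the (gaugeable) resolvent of $\sL_{V,\Omega}$ and then invoking Harnack's inequality for the nonnegative solution $u_A$ of $\tfrac12\mathrm{div}(a\nabla u)=Vu$ in $\Omega$.
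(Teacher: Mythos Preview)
The paper does not actually prove this lemma: it simply records it as \cite[Proposition~7.4]{AE20}. So there is no proof in the paper to compare against, and your argument is a genuine contribution rather than a reconstruction.

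Your proof is essentially correct and self-contained within the paper's framework. The reduction via Theorem~\ref{THMC2} to $\check{\sE}^V(1_A,1_{\Gamma\setminus A})=0$, the computation $\sE^V(u_A,u_{A^c})=\sE^V(v,w)$ using symmetry of $(a_{ij})$, and the conclusion $v=0$ from the spectral gap are all clean. Two remarks:

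\textbf{(i)} Your ``delicate step'' --- nonnegativity of $u_A,u_{A^c}$ via Feynman--Kac --- can be avoided entirely by a purely variational argument already implicit in your machinery. For $\varphi\geq 0$ in $H^{1/2}(\Gamma)$, write $u:=\bH^V_\Gamma\varphi$ and $u^-:=-(u\wedge 0)$. Since $\Tr(u^-)=(-\varphi)\vee 0=0$, one has $u^-\in H^1_0(\Omega)$, whence $0=\sE^V(u,u^-)=\sE^V(u^+,u^-)-\sE^V(u^-,u^-)$. As $u^+u^-=0$ a.e.\ and $\sE(u^+,u^-)\leq 0$ by positivity preservation of $(\sE,\sF)$, this gives $\sE^{V,\Omega}(u^-,u^-)\leq 0$, and then $u^-=0$ from $\mu_1>0$. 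So $u_A,u_{A^c}\geq 0$ without any probabilistic representation.

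\textbf{(ii)} In the last step you pass from ``$u_A=0$ on a set of positive measure'' to ``$u_A(x_0)=0$ for some $x_0$''. This is legitimate once you note that $u_A$ is H\"older continuous in $\Omega$ by De~Giorgi--Nash--Moser (here the assumption $a_{ij}\in L^\infty$ is used). Alternatively, observe that the zero sets $Z_A=\{u_A=0\}$ and $Z_{A^c}=\{u_{A^c}=0\}$ are closed with $Z_A\cup Z_{A^c}=\Omega$; if $Z_A\neq\Omega$ then $u_{A^c}$ vanishes on a nonempty open set, and Harnack's inequality for the equation $-\tfrac12\mathrm{div}(a\nabla u)+Vu=0$ with bounded $V$ forces $u_{A^c}\equiv 0$ on the connected $\Omega$. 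This removes the detour through $\mathbf{P}_x(X_\tau\in\tilde A)$.
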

\begin{remark}
In view of Lemma~\ref{LM311},  the positivity of $\sL_{V,\Omega}$ amounts to that $\sE^V(u,u)\geq 0$ for all $u\in H^1_0(\Omega)$. Together with the positivity of $\sL_{V,\Omega}$,  $0\notin \sigma(\sL_{V,\Omega})$ implies  additionally that $\sE^V(u,u)=0$ for $u\in H^1_0(\Omega)$ if and only if $u=0$. 
\end{remark}

Mimicking the proof of Theorem~\ref{THM411},  one can conclude that $\lambda^V_1:=\text{min }\sigma(D_V)$ is a simple eigenvalue admitting a strictly positive eigenfunction,  i.e.  there exists $h_V\in L^2(\Gamma)$ such that $h_V>0$,  $\sigma$-a.e.,  and $D_Vh_V=\lambda^V_1 h_V$.  No other eigenvalues admit strictly positive eigenfunctions.  We call $h_V$ the \emph{ground state} of $D_V$.  

For $\alpha\in\bR$, denote by $\mathbf{E}^+_\alpha$ the totality of all strictly positive $\alpha$-excessive functions in $\check{\sF}^V$.  Particularly $h_V\in \mathbf{E}^+_{-\lambda_1^V}$.  For $h\in \mathbf{E}^+_\alpha$,  let $(\check{\sE}^{V,h}_\alpha, \check{\sF}^{V,h})$ be the $h$-transform of $(\check{\sE}^V_\alpha,\check{\sF}^V)$.  As an extension of Theorem~\ref{THM412},  one can obtain the following result.  The proof can be completed by repeating the argument in the proof of Theorem~\ref{THM412}, and we omit it. 

\begin{theorem}\label{THM55}
Adopt the assumptions of Lemma~\ref{LM53-2}.  Let $\alpha\in \bR$ and $h\in \mathbf{E}^+_\alpha$.  Then the following hold:
\begin{itemize}
\item[(1)]  $(\check{\sE}^{V,h}_\alpha, \check{\sF}^{V,h})$ is a quasi-regular and irreducible lower bounded symmetric Dirichlet form on $L^2(\Gamma, h^2\cdot \sigma)$.  Particularly,  the $(\alpha,h)$-associated Markov process of $D_V$ is irreducible. 
\item[(2)]  $(\check{\sE}^{V,h}_\alpha, \check{\sF}^{V,h})$ is non-negative if and only if $\alpha\geq -\lambda^V_1$.  
\item[(3)] $(\check{\sE}^{V,h}_\alpha, \check{\sF}^{V,h})$ is recurrent,  if and only if $\alpha=-\lambda^V_1$ and $h=c\cdot h_\lambda$ for some constant $c>0$,  where $h_V$ is the ground state of $D_V$.  
\end{itemize}
\end{theorem}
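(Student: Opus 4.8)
The plan is to follow the proof of Theorem~\ref{THM412} step by step, with the constant perturbation replaced by $\kappa=V\cdot m$, using Lemma~\ref{LM53-2} in place of Corollary~\ref{COR410}, and the ground state $h_V$ in place of $h_\lambda$. First I would record that, by the remark following Lemma~\ref{LM53-2} together with Lemma~\ref{LM311}, the positivity hypothesis on $\sL_{V,\Omega}$ built into Lemma~\ref{LM53-2} is precisely one of the equivalent conditions of Lemma~\ref{LM311}, so Theorem~\ref{THM312} applies to $(\check{\sE}^V,\check{\sF}^V)$ with $F=\Gamma$ and $\mu=\sigma$. Consequently, the corollary following Theorem~\ref{THM312} makes $(\check{\sE}^{V,h}_\alpha,\check{\sF}^{V,h})$ a quasi-regular lower bounded symmetric Dirichlet form on $L^2(\Gamma,h^2\cdot\sigma)$, which already yields most of part~(1).

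For the irreducibility in part~(1), I would note that the $L^2$-semigroup of $(\check{\sE}^{V,h}_\alpha,\check{\sF}^{V,h})$ is $\check{T}^{V,h,\alpha}_t\varphi=\re^{-\alpha t}\check{T}^V_t(\varphi h)/h$, so that a set $B\subset\Gamma$ invariant for $\check{T}^{V,h,\alpha}$ becomes, after multiplication by $h$, invariant for $\check{T}^V_t=\re^{-tD_V}$; since $h>0$ $\sigma$-a.e. the measures $\sigma$ and $h^2\cdot\sigma$ are mutually absolutely continuous, so Lemma~\ref{LM53-2} forces $\sigma(B)=0$ or $\sigma(\Gamma\setminus B)=0$, and Theorem~\ref{THMC2} then transfers irreducibility to the form, hence to the $(\alpha,h)$-associated Markov process. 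For part~(2), I would use that $\varphi\mapsto\varphi h$ is a unitary isomorphism of $L^2(\Gamma,h^2\cdot\sigma)$ onto $L^2(\Gamma,\sigma)$ carrying $(\check{\sE}^{V,h}_\alpha,\check{\sF}^{V,h})$ onto $(\check{\sE}^V_\alpha,\check{\sF}^V)$, together with $\check{\sE}^V_\alpha(\varphi,\varphi)=(D_V\varphi,\varphi)_\sigma+\alpha\|\varphi\|^2_{L^2(\Gamma)}$ on the form core $\cD(D_V)$ and $\lambda^V_1=\min\sigma(D_V)$, to conclude that the $h$-transformed form is non-negative if and only if $\alpha\geq-\lambda^V_1$.

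For part~(3), in the forward direction: if $\alpha=-\lambda^V_1$ and $h=c\cdot h_V$ with $c>0$, then $D_Vh=\lambda^V_1h$, so $\check{T}^V_th=\re^{-\lambda^V_1 t}h$ and hence $\check{T}^{V,h,\alpha}_t1=\re^{-(\alpha+\lambda^V_1)t}=1$; since $h_V\in\cD(D_V)\subset H^{1/2}(\Gamma)\cap L^2(\Gamma)$, one checks $1\in\check{\sF}^{V,h}$, so $\check{\sE}^{V,h}_\alpha(1,1)=0$, and combined with the irreducibility from~(1) this is exactly recurrence. Conversely, recurrence of an irreducible Dirichlet form forces conservativeness, i.e. $\check{T}^{V,h,\alpha}_t1=1$, hence $\check{T}^V_th=\re^{\alpha t}h$; the Hille--Yosida theorem then gives $D_Vh=-\alpha h$, so $h$ is a strictly positive eigenfunction of $D_V$, and the ground-state statement recorded just before the theorem (the analogue of Theorem~\ref{THM411}) pins down $-\alpha=\lambda^V_1$ and $h$ as a positive multiple of $h_V$.

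The genuinely substantial inputs, namely Lemma~\ref{LM53-2} (irreducibility of $\check{T}^V_t$) and the simplicity of $\min\sigma(D_V)$ with a strictly positive eigenfunction, are already in hand, so I expect the main obstacle to be purely organizational: verifying carefully that the hypotheses of Theorem~\ref{THM312} and of its corollary are met under the assumptions of Lemma~\ref{LM53-2}, and making rigorous, in the converse part of~(3), the passage from conservativeness of $\check{T}^{V,h,\alpha}$ to the eigenvalue equation $D_Vh=-\alpha h$ before invoking ground-state uniqueness.
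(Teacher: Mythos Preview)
Your proposal is correct and follows precisely the approach the paper intends: the paper's own proof of this theorem is simply ``repeat the argument in the proof of Theorem~\ref{THM412}'', and you have done exactly that, replacing Corollary~\ref{COR410} by Lemma~\ref{LM53-2} and $h_\lambda$ by $h_V$. You even get the sign right in the conservativeness step ($\check{T}^V_t h=\re^{\alpha t}h$), where the paper's proof of Theorem~\ref{THM412} has a harmless typo.
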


\section{DN operators for perturbations supported on boundary}\label{SEC7}

In this section we pay our attention to the special perturbation supported on the boundary.  Let $(\sE,\sF)$ be a regular and irreducible Dirichlet form on $L^2(E,m)$ and $\mu\in \mathring{\bS}$ with $\text{qsupp}[\mu]=F$ of positive $\sE$-capacity.  Denote by $(\check{\sE},\check{\sF})$ the trace Dirichlet form of $(\sE,\sF)$ on $L^2(F,\mu)$ and by $\check{\bS}$ the family of positive smooth measures with respect to $\check{\sE}$.  Take $\kappa:=\kappa^+-\kappa^-\in \bS-\bS$ such that $|\kappa|(G)=0$ where $G:=E\setminus F$.  

\subsection{Characterization of trace forms}\label{SEC42}

We first summarize some facts about $(\sE^\kappa,\sF^\kappa)$ and its trace form $(\check{\sE}^\kappa,\check{\sF}^\kappa)$ defined as \eqref{eq:45} and \eqref{eq:46}.  

\begin{proposition}\label{THM33}
Assume that $|\kappa|(G)=0$.   Then the following hold:
\begin{itemize}
\item[(1)]  $\sF^\kappa_\re=\sF^{\kappa,G}_\re\oplus \cH^\kappa_F$,  i.e.  \eqref{eq:42-3} holds true.   
\item[(2)] $\kappa|_F \in \check{\bS}-\check{\bS}$.
\item[(3)] $(\check{\sE}^\kappa,\check{\sF}^\kappa)$  is identified with the perturbation of $(\check{\sE},\check{\sF})$ by $\kappa|_F$. 
\end{itemize} 
\end{proposition}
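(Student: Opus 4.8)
The three assertions concern the perturbation $(\sE^\kappa,\sF^\kappa)$ and its trace form when the perturbing measure $|\kappa|$ is carried by the boundary set $F$. The plan is to exploit the fact that, because $|\kappa|(G)=0$, the perturbation term $\int_E uv\,d\kappa = \int_F (u|_F)(v|_F)\,d\kappa$ depends only on the traces of $u$ and $v$, so it is ``invisible'' to the part space $\sF^{\kappa,G}_\re$.

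First I would prove (1). Since $|\kappa|(G)=0$, for $u$ vanishing $\sE$-q.e.\ on $F$ one has $\int_E u^2\,d|\kappa| = 0$, hence $\sF^{\kappa,G}_\re = \sF^G_\re$ and $\sE^\kappa = \sE$ on this space; in particular the part form is unaffected by $\kappa$. Now take $u\in \sF^\kappa_\re \subset \sF_\re$ and apply the decomposition \eqref{eq:Fe} from Lemma~\ref{LM21}: $u = u_1 + u_2$ with $u_1 = u - \bH_F u \in \sF^G_\re$ and $u_2 = \bH_F u \in \cH_F$. Since $u_1|_F = 0$ we have $u_1 \in L^2(E,|\kappa|)$ trivially, so $u_1 \in \sF^{\kappa,G}_\re$; consequently $u_2 = u - u_1 \in \sF_\re \cap L^2(E,|\kappa|) = \sF^\kappa_\re$. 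To see $u_2 \in \cH^\kappa_F$, take $v \in \sF^{\kappa,G}_\re = \sF^G_\re$; then $\sE^\kappa(u_2,v) = \sE(u_2,v) + \int_E u_2 v\,d\kappa = \sE(\bH_F u, v) + 0 = 0$ because $v|_F = 0$ $\sE$-q.e.\ (so the $\kappa$-integral vanishes) and $\bH_F u \in \cH_F$. This gives the existence of the decomposition; uniqueness follows from \eqref{eq:DEF41}, which itself holds here since $\sF^{\kappa,G}_\re \cap \cH^\kappa_F \subset \sF^G_\re \cap \cH_F = \{0\}$ (the last equality is noted right after the definition of $\cH_F$). I would also record here that $\bH^\kappa_F$ and $\bH_F$ agree on $\check\sF^\kappa$: indeed for $\varphi \in \check\sF^\kappa$, writing $\bH_F\varphi$ for the harmonic extension, the argument just given shows $\bH_F\varphi \in \cH^\kappa_F$ with trace $\varphi$, so by uniqueness $\bH^\kappa_F\varphi = \bH_F\varphi$.

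Next, (2): I must show $\kappa^+|_F$ and $\kappa^-|_F$ are smooth with respect to $\check\sE$. The natural tool is the correspondence between smooth measures on $E$ charged on $F$ and smooth measures for the trace form; concretely, a PCAF of $X$ supported on $F$ induces, via the time change $\tau_t$, a PCAF of the time-changed process $\check X$, and the Revuz measure is preserved under this correspondence (this is the Revuz-measure compatibility used, e.g., in \cite[\S5.2]{CF12}). Since $\kappa^\pm$ are smooth (w.r.t.\ $\sE$) and carried by $F$, their associated PCAFs are supported on $F$ and thus transfer to PCAFs of $\check X$; their Revuz measures w.r.t.\ $\check X$ are exactly $\kappa^\pm|_F$, which are therefore $\check\sE$-smooth. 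The mutual singularity $\kappa^+|_F \perp \kappa^-|_F$ is inherited from $\kappa^+ \perp \kappa^-$, so $\kappa|_F \in \check\bS - \check\bS$.

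Finally (3): using (1) and the identity $\bH^\kappa_F = \bH_F$ on $\check\sF^\kappa$ established above, I compute for $\varphi,\phi \in \check\sF^\kappa$:
\[
\check\sE^\kappa(\varphi,\phi) = \sE^\kappa(\bH_F\varphi,\bH_F\phi) = \sE(\bH_F\varphi,\bH_F\phi) + \int_F (\bH_F\varphi)|_F\,(\bH_F\phi)|_F\,d\kappa = \check\sE(\varphi,\phi) + \int_F \varphi\phi\,d\kappa,
\]
since $(\bH_F\varphi)|_F = \varphi$. It remains to check the domains match: $\check\sF^\kappa = \sF^\kappa_\re|_F \cap L^2(F,\mu)$, and because $\sF^\kappa_\re = \sF_\re \cap L^2(E,|\kappa|)$ with $|\kappa|$ carried by $F$, restriction to $F$ gives $\check\sF^\kappa = \check\sF \cap L^2(F,|\kappa|)$, which is precisely the domain of the perturbation of $(\check\sE,\check\sF)$ by $\kappa|_F$. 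Hence $(\check\sE^\kappa,\check\sF^\kappa)$ is that perturbation.

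The main obstacle I anticipate is part (2), specifically making rigorous the claim that restricting a smooth measure carried by $F$ yields a measure that is smooth for the trace form; one must be careful that $\check\sE$-polar sets in $F$ correspond to $\sE$-polar subsets of $F$ (true since $\mu$ charges no $\sE$-polar set and $\check X$ is obtained by time change), and that the finite-energy-integral / nest approximation of $\kappa^\pm$ can be pushed through the time change. Parts (1) and (3) are then essentially bookkeeping once the key point ``$\kappa$-integral sees only traces'' is isolated.
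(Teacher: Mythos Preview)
Your proposal is correct and follows essentially the same route as the paper. For (1) the paper makes the slightly cleaner observation that $|\kappa|(G)=0$ forces $\cH^\kappa_F=\cH_F\cap L^2(E,|\kappa|)$, which immediately gives $\bH_F u\in\cH^\kappa_F$; for (2) the paper simply invokes \cite[Theorems~5.2.6 and 5.2.8]{CF12}, which package exactly the PCAF/time-change correspondence and polar-set compatibility you outlined; and (3) is identical to your computation.
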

\begin{proof}
\begin{itemize}
\item[(1)] Note that $\sF^{\kappa,G}_{\re}=\sF_{\re}^G$ because of $|\kappa|(G)=0$.  This implies $\cH^\kappa_F=\cH_F\cap L^2(E,|\kappa|)$.    Hence for $u\in \sF^\kappa_{\re}\subset \sF_\re$, $u-\bH_F u\in \sF^{\kappa,G}_\re$ and $\bH_F u\in \cH^\kappa_F$.  Particularly $\sF^\kappa_\re=\sF^{\kappa,G}_\re\oplus \cH^\kappa_F$ holds.  
\item[(2)] Write $\kappa$ for $\kappa|_F$ for convenience.  In view of \cite[Theorems 5.2.6 and 5.2.8]{CF12} and $|\kappa|(G)=0$,  we get $\kappa^\pm\in \check{\bS}$.  Hence $\kappa\in \check{\bS}-\check{\bS}$.  
\item[(3)] Note that $\check{\sF}^\kappa=\sF_\re|_F\cap L^2(F,|\kappa|  +\mu)=\check{\sF}\cap L^2(F,|\kappa|)$.  For $\varphi \in \check{\sF}^\kappa$,  it follows from the proof of the first assertion that $\bH_F^\kappa  \varphi=\bH_F \varphi\in \cH^\kappa_F$.  Hence for $\varphi, \phi\in \check{\sF}^\kappa$,  using $|\kappa|(G)=0$, we get
\[
	\check{\sE}^\kappa(\varphi, \phi)=\sE^\kappa(\bH^\kappa_F \varphi, \bH^\kappa_F \phi)=\sE^\kappa(\bH_F \varphi, \bH_F \phi)=\check{\sE}(\varphi, \phi)+\int_F \varphi \phi d\kappa.  
\]
Therefore $(\check{\sE}^\kappa,\check{\sF}^\kappa)$ is the perturbation of $(\check{\sE},\check{\sF})$ by $\kappa$.  
\end{itemize}
That completes the proof.  
\end{proof}

From now on we would write $\kappa$ for $\kappa|_F$ if no confusions caused.  Let $\sN_\kappa$ be the DN operator for $(\sE^\kappa,\sF^\kappa)$ on $L^2(F,\mu)$. On account of Proposition~\ref{THM33}~(3) and Lemma~\ref{LM43},  if $\kappa^-$ is $\check{\sE}^{\kappa^+}$-form bounded (for example,  $\kappa^-(dx)=\beta^-(x)\mu(x)$ with $\beta^-\in pL^\infty(F,\mu)$),  then $(\check{\sE}^\kappa,\check{\sF}^\kappa)$ is a lower bounded symmetric closed form on $L^2(F,\mu)$ whose generator is $-\sN_\kappa$.  
As a corollary,  we can obtain the following exchangeability of killing transformation and time change transformation.   

\begin{corollary}
Let $\kappa\in \bS$ such that $\kappa(G)=0$.  Then the trace Dirichlet form of $(\sE^\kappa,\sF^\kappa)$ on $L^2(F,\mu)$ is identified with the perturbed Dirichlet form of $(\check{\sE},\check{\sF})$ by $\kappa$.  
\end{corollary}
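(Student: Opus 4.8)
The plan is to deduce this directly from Proposition~\ref{THM33}, after observing that when $\kappa\in\bS$ the abstractly defined trace form $(\check\sE^\kappa,\check\sF^\kappa)$ of \eqref{eq:45}--\eqref{eq:46} is genuinely the trace \emph{Dirichlet} form of the quasi-regular Dirichlet form $(\sE^\kappa,\sF^\kappa)$.

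First I would note that $\kappa\in\bS$ means $\kappa^-=0$ and $\kappa^+=\kappa$, so $(\sE^\kappa,\sF^\kappa)$ is the perturbed Dirichlet form of $(\sE,\sF)$ by $\kappa$: a quasi-regular Dirichlet form on $L^2(E,m)$, associated with the subprocess of $X$ killed by the PCAF of Revuz measure $\kappa$, with extended Dirichlet space $\sF^\kappa_\re$ enjoying the same quasi notions as $(\sE,\sF)$ (by \cite[Proposition~5.1.9]{CF12}). Since $|\kappa|(G)=\kappa(G)=0$, Proposition~\ref{THM33}~(1) supplies the decomposition \eqref{eq:42-3}, so the $\kappa$-harmonic projection $\bH^\kappa_F$ is well defined on $\sF^\kappa_\re$ and in fact $\bH^\kappa_F(u|_F)=\bH_F u$ there. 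Applying the general description of trace Dirichlet forms (\cite[Theorem~5.2.7]{CF12}) to the Dirichlet form $(\sE^\kappa,\sF^\kappa)$ and the smooth measure $\mu$, its trace Dirichlet form on $L^2(F,\mu)$ is $\check\sF^\kappa=\sF^\kappa_\re|_F\cap L^2(F,\mu)$ with $\check\sE^\kappa(\varphi,\phi)=\sE^\kappa(\bH^\kappa_F\varphi,\bH^\kappa_F\phi)$, i.e. exactly the trace form $(\check\sE^\kappa,\check\sF^\kappa)$ of \eqref{eq:45}--\eqref{eq:46}.

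Next I would invoke Proposition~\ref{THM33}~(3): $(\check\sE^\kappa,\check\sF^\kappa)$ is identified with the perturbation of $(\check\sE,\check\sF)$ by $\kappa|_F$, and $\kappa|_F=\kappa$ because $\kappa(G)=0$. Finally, Proposition~\ref{THM33}~(2) (with trivial negative part) gives $\kappa\in\check\bS$, so this perturbation of the quasi-regular Dirichlet form $(\check\sE,\check\sF)$ is itself a quasi-regular Dirichlet form, namely the perturbed Dirichlet form of $(\check\sE,\check\sF)$ by $\kappa$. Chaining the two identifications yields the claim.

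I do not expect any genuinely hard step; the only point requiring care is the identification in the second paragraph---that the abstract trace form \eqref{eq:45}--\eqref{eq:46} of $(\sE^\kappa,\sF^\kappa)$ coincides with the trace Dirichlet form in the sense of \eqref{eq:traceDirichletform}---which rests on \eqref{eq:42-3} (guaranteed by Proposition~\ref{THM33}~(1) since $|\kappa|(G)=0$) so that the orthogonal/harmonic decomposition used in both definitions is the same and $\bH^\kappa_F$ reduces to $\bH_F$ on $\sF^\kappa_\re$.
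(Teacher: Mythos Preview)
Your proposal is correct and follows essentially the same approach as the paper: the corollary is stated there as an immediate consequence of Proposition~\ref{THM33} with no further proof, and your argument simply unpacks that implication carefully. The extra step you spell out---that for $\kappa\in\bS$ the abstract trace form $(\check\sE^\kappa,\check\sF^\kappa)$ of \eqref{eq:45}--\eqref{eq:46} agrees with the trace Dirichlet form of the quasi-regular Dirichlet form $(\sE^\kappa,\sF^\kappa)$ in the sense of \eqref{eq:traceDirichletform}---is the only point requiring comment, and you handle it correctly via Proposition~\ref{THM33}~(1) and the identification $\bH^\kappa_F=\bH_F$ established in the proof of Proposition~\ref{THM33}~(3).
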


\subsection{Markov processes $h$-associated with DN operators}

Next we figure out the probabilistic counterpart of $\sN_\kappa$ under the assumption  that $\kappa^-$ is $\check{\sE}^{\kappa^+}$-form bounded.  

\begin{theorem}
Assume that $\kappa^-$ is $\check{\sE}^{\kappa^+}$-form bounded.  Then $(\check{\sE}^\kappa,\check{\sF}^\kappa)$ is a quasi-regular lower bounded positivity preserving (symmetric) coercive form on $L^2(F,\mu)$. 
\end{theorem}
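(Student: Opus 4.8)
The plan is to reduce everything to the boundary Dirichlet form $(\check{\sE},\check{\sF})$ via Proposition~\ref{THM33}, and then to reproduce, in an easier guise, the argument of Theorem~\ref{THM312}. First I would invoke Proposition~\ref{THM33}: since $|\kappa|(G)=0$, part~(1) gives the decomposition \eqref{eq:42-3}, so $(\check{\sE}^\kappa,\check{\sF}^\kappa)$ is well defined, and part~(3) identifies it with the perturbation of $(\check{\sE},\check{\sF})$ by $\kappa|_F$, that is
\[
	\check{\sF}^\kappa=\check{\sF}\cap L^2(F,|\kappa|),\qquad \check{\sE}^\kappa(\varphi,\phi)=\check{\sE}(\varphi,\phi)+\int_F\varphi\phi\,d\kappa,\quad \varphi,\phi\in\check{\sF}^\kappa.
\]
Since $\kappa^\pm|_F\in\check{\bS}$ (Proposition~\ref{THM33}~(2)) and $\kappa^-$ is $\check{\sE}^{\kappa^+}$-form bounded, the perturbation theory reviewed in Appendix~\ref{APPB}, applied to the quasi-regular Dirichlet form $(\check{\sE},\check{\sF})$ on $L^2(F,\mu)$ in place of $(\sE,\sF)$, shows that $(\check{\sE}^\kappa,\check{\sF}^\kappa)$ is a lower bounded symmetric closed form on $L^2(F,\mu)$; I would then fix $\alpha_0\ge 0$ with $\check{\sE}^\kappa_{\alpha_0}$ non-negative.

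For positivity preservation I would argue directly. Given $\varphi\in\check{\sF}^\kappa$, set $\varphi^+:=\varphi\vee 0$, $\varphi^-:=\varphi^+-\varphi$; since $\check{\sF}$ is a Dirichlet space and $|\varphi^\pm|\le|\varphi|$, we get $\varphi^\pm\in\check{\sF}\cap L^2(F,|\kappa|)=\check{\sF}^\kappa$. Because $\varphi^+\varphi^-\equiv 0$, the measure term vanishes, so
\[
	\check{\sE}^\kappa(\varphi^+,\varphi^-)=\check{\sE}(\varphi^+,\varphi^-)+\int_F\varphi^+\varphi^-\,d\kappa=\check{\sE}(\varphi^+,\varphi^-)\le 0,
\]
the last inequality being the Markovian (first Beurling–Deny) property of the Dirichlet form $(\check{\sE},\check{\sF})$. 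This is exactly the positivity preserving condition for $(\check{\sE}^\kappa,\check{\sF}^\kappa)$.

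For quasi-regularity I would mimic the final paragraph of the proof of Theorem~\ref{THM312}. Let $(\check{\sE}^{|\kappa|},\check{\sF}^{|\kappa|})$ be the trace Dirichlet form of $(\sE^{|\kappa|},\sF^{|\kappa|})$ on $L^2(F,\mu)$; since $|\kappa|\in\bS$, Proposition~\ref{THM33}~(2)--(3) identifies it with the perturbed Dirichlet form of $(\check{\sE},\check{\sF})$ by $|\kappa||_F\in\check{\bS}$, hence it is a quasi-regular Dirichlet form on $L^2(F,\mu)$. One has $\check{\sF}^{|\kappa|}=\check{\sF}\cap L^2(F,|\kappa|)=\check{\sF}^\kappa$ and, for every $\varphi\in\check{\sF}^\kappa$,
\[
	\check{\sE}^{|\kappa|}_{\alpha_0}(\varphi,\varphi)-\check{\sE}^\kappa_{\alpha_0}(\varphi,\varphi)=2\int_F\varphi^2\,d\kappa^-\ge 0 .
\]
Thus $(\check{\sE}^\kappa,\check{\sF}^\kappa)$ is a lower bounded positivity preserving symmetric closed form that is dominated, with the same form domain, by the quasi-regular Dirichlet form $(\check{\sE}^{|\kappa|},\check{\sF}^{|\kappa|})$, and Lemma~\ref{LMD3} then delivers the quasi-regularity. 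The hard part — or rather the only part carrying genuine content — will be checking that this comparison really fits the hypotheses of Lemma~\ref{LMD3}, in particular that every $\check{\sE}^{|\kappa|}$-nest is again a nest for the coercive form $\check{\sE}^\kappa$; but this follows at once from the displayed inequality together with $\check{\sF}^{|\kappa|}=\check{\sF}^\kappa$, exactly as in Theorem~\ref{THM312}, and is in fact simpler here since the domination is immediate rather than obtained through the $\cH^\kappa_F$ decomposition.
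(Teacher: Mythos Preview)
Your proof is correct and follows essentially the same approach as the paper: both reduce to the boundary via Proposition~\ref{THM33}~(3), verify positivity preservation from $\varphi^+\varphi^-=0$ and the Markov property of $(\check{\sE},\check{\sF})$, and obtain quasi-regularity through Lemma~\ref{LMD3} by domination with a quasi-regular perturbed Dirichlet form sharing the same domain. The only cosmetic difference is that the paper compares with $(\check{\sE}^{\kappa^+}_{\alpha_0},\check{\sF}^{\kappa^+})$ (perturbation by $\kappa^++\alpha_0\mu$) while you compare with $(\check{\sE}^{|\kappa|},\check{\sF}^{|\kappa|})$; both choices work for the same reason.
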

\begin{proof}
The assumption implies that $(\check{\sE}^\kappa,\check{\sF}^\kappa)$ is a lower bounded closed form.  Take a constant $\alpha_0\geq 0$ such that $(\sA,\sG):=(\check{\sE}^\kappa_{\alpha_0},\check{\sF}^\kappa)$ is non-negative.  On account of Proposition~\ref{THM33}~(3),  one can obtain that $(\sA,\sG)$ is a non-negative positivity preserving (symmetric) coercive form.  To prove the quasi-regularity,  set
\[
	\tilde \sG:=\check{\sF}^{\kappa^+},\quad \tilde{\sA}(\varphi,\varphi):=\check{\sE}^{\kappa^+}_{\alpha_0}(\varphi,\varphi),\; \varphi \in \tilde\sG. 
\]
Then $(\tilde{\sA},\tilde{\sG})$ is the perturbed Dirichlet form of $(\check{\sE},\check{\sF})$ by $\tilde{\kappa}:=\kappa^++\alpha_0\cdot \mu\in \bS$.  Particularly $(\tilde{\sA},\tilde{\sG})$ is a quasi-regular Dirichlet form on  $L^2(F,\mu)$.  
Note that $\sG=\check{\sF}^\kappa=\check{\sF}^{\kappa^+}=\tilde{\sG}$ and for any $\varphi\in \sG=\tilde{\sG}$,
\[
	\sA_1(\varphi,\varphi)\leq \tilde{\sA}_1(\varphi,\varphi).  
\]
Therefore $(\sA,\sG)$ is quasi-regular by means of Lemma~\ref{LMD3}.  That completes the proof. 
\end{proof}

As discussed in \S\ref{SEC34},  for $\alpha\in \bR$ and $h\in \mathbf{E}^+_\alpha$, the $h$-transform $(\check{\sE}^{\kappa, h}_\alpha,\check{\sF}^{\kappa,h})$ of $(\check{\sE}^\kappa_\alpha, \check{\sF}^\kappa)$ is a quasi-regular lower bounded symmetric Dirichlet form on $L^2(F,h^2\cdot \mu)$ whose associated Markov process is called $(\alpha, h)$-associated with $\sN_\kappa$.  In case $(\check{\sE},\check{\sF})$ is irreducible,  $(\check{\sE}^{\kappa, h}_\alpha,\check{\sF}^{\kappa,h})$ is also irreducible. 


\subsection{Absolutely continuous case}

Consider a special case that $\kappa^-=\beta^-\cdot\mu$ with $\beta^-\in p L^\infty(F,\mu)$.  Clearly $\kappa^-$ is $\check{\sE}$-form bounded.  
The following corollary presents a simple way to obtain another Markov process related to $\sN_\kappa$,  which has been proposed in  \cite[\S4]{BV17}.  In a word,  for any constant $\alpha_0\geq \|\beta^-\|_{L^\infty(F,\mu)}$,  $-\sN_\kappa-\alpha_0$ is the $L^2$-generator of a certain Markov process.  



\begin{corollary}\label{COR49}
Let $\kappa^-=\beta^-\cdot\mu$ with $\beta^-\in p L^\infty(F,\mu)$.  For any $\alpha_0\geq \|\beta^-\|_{L^\infty(F,\mu)}$,  $(\check{\sE}^\kappa_{\alpha_0},\check{\sF}^\kappa)$ is a quasi-regular Dirichlet form on $L^2(F,\mu)$,  whose generator is $-\sN_\kappa-\alpha_0$.  
\end{corollary}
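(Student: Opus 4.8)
The plan is to deduce this corollary directly from Proposition~\ref{THM33} and the perturbation theory recalled in Appendix~\ref{APPB}, by recognizing $(\check{\sE}^\kappa_{\alpha_0},\check{\sF}^\kappa)$ as the perturbed Dirichlet form of $(\check{\sE},\check{\sF})$ by a suitable \emph{positive} smooth measure.

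First I would record, writing $\kappa$ for $\kappa|_F$, that since $\kappa^-=\beta^-\cdot\mu$ with $\beta^-\in pL^\infty(F,\mu)$ one has $\int_F\varphi^2\,d\kappa^-\le\|\beta^-\|_{L^\infty(F,\mu)}\int_F\varphi^2\,d\mu$ for all $\varphi$; hence $\check{\sF}\subset L^2(F,\mu)\subset L^2(F,\kappa^-)$ and therefore $\check{\sF}^\kappa=\check{\sF}\cap L^2(F,|\kappa|)=\check{\sF}\cap L^2(F,\kappa^+)=\check{\sF}^{\kappa^+}$. The same estimate shows $\kappa^-$ is $\check{\sE}^{\kappa^+}$-form bounded with relative bound $0$. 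By Proposition~\ref{THM33}~(3) and Lemma~\ref{LM43}, $(\check{\sE}^\kappa,\check{\sF}^\kappa)$ is then a lower bounded closed form on $L^2(F,\mu)$ whose generator is $-\sN_\kappa$, and $\check{\sE}^\kappa(\varphi,\phi)=\check{\sE}(\varphi,\phi)+\int_F\varphi\phi\,d\kappa$ for $\varphi,\phi\in\check{\sF}^\kappa$.

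Next, fixing $\alpha_0\ge\|\beta^-\|_{L^\infty(F,\mu)}$, I would set $\tilde\kappa:=\kappa^++(\alpha_0-\beta^-)\cdot\mu$. Since $0\le\alpha_0-\beta^-\le\alpha_0$ holds $\mu$-a.e., the measure $(\alpha_0-\beta^-)\cdot\mu$ is a positive Radon measure charging no $\check{\sE}$-polar set (because $\mu$ does not), hence it lies in $\check{\bS}$; combined with $\kappa^+\in\check{\bS}$ from Proposition~\ref{THM33}~(2), we get $\tilde\kappa\in\check{\bS}$. Because $\check{\sF}\subset L^2(F,(\alpha_0-\beta^-)\cdot\mu)$ (the density being bounded by $\alpha_0$), the domain $\check{\sF}\cap L^2(F,\tilde\kappa)$ equals $\check{\sF}\cap L^2(F,\kappa^+)=\check{\sF}^\kappa$, and a direct computation gives $\check{\sE}^\kappa_{\alpha_0}(\varphi,\phi)=\check{\sE}(\varphi,\phi)+\int_F\varphi\phi\,d\tilde\kappa$. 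Thus $(\check{\sE}^\kappa_{\alpha_0},\check{\sF}^\kappa)$ is precisely the perturbed Dirichlet form of $(\check{\sE},\check{\sF})$ by the positive smooth measure $\tilde\kappa\in\check{\bS}$, which by the theory recalled in Appendix~\ref{APPB} is a quasi-regular Dirichlet form on $L^2(F,\mu)$. Finally, since $-\sN_\kappa$ is the $L^2$-generator of $(\check{\sE}^\kappa,\check{\sF}^\kappa)$, adding the bounded bilinear form $\alpha_0(\cdot,\cdot)_\mu$ shifts the generator by $-\alpha_0$, so $-\sN_\kappa-\alpha_0$ is the $L^2$-generator of $(\check{\sE}^\kappa_{\alpha_0},\check{\sF}^\kappa)$.

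The argument is essentially bookkeeping, so I do not expect a serious obstacle; the one step that deserves care is checking the hypotheses needed to invoke the perturbation theory verbatim — namely that $\tilde\kappa$ is genuinely a positive measure in $\check{\bS}$ (this is exactly where $\alpha_0\ge\|\beta^-\|_{L^\infty(F,\mu)}$ enters, ensuring $\alpha_0-\beta^-\ge 0$) and that the form domain of the resulting perturbation coincides with $\check{\sF}^\kappa$ — after which "perturbation by a measure in $\bS$ yields a quasi-regular Dirichlet form" applies directly.
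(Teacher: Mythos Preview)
Your proof is correct and follows essentially the same approach as the paper: identify $(\check{\sE}^\kappa_{\alpha_0},\check{\sF}^\kappa)$ with the perturbation of $(\check{\sE},\check{\sF})$ by the positive smooth measure $\tilde\kappa=\kappa+\alpha_0\cdot\mu=\kappa^++(\alpha_0-\beta^-)\cdot\mu\in\check{\bS}$, and invoke the standard fact that such a perturbation is a quasi-regular Dirichlet form. Your version is more detailed than the paper's (which is a one-line reference to Proposition~\ref{THM33}~(3)), but the content is the same.
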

\begin{proof}
On account of Proposition~\ref{THM33}~(3),  $(\check{\sE}^\kappa_{\alpha_0},\check{\sF}^\kappa)$ is the perturbed Dirichlet form of $(\check{\sE},\check{\sF})$ by $\tilde{\kappa}:=\kappa+\alpha_0\cdot \mu\in \bS$.  Hence $(\check{\sE}^\kappa_{\alpha_0},\check{\sF}^\kappa)$ is a quasi-regular Dirichlet form on $L^2(F,\mu)$.  That completes the proof. 
\end{proof}

The following example recovering  \cite[\S4]{BV17} is related to the classical Robin boundary condition for the Laplacian on $\Omega$.  

\begin{example}\label{EXA65}
Adopt the notations in \S\ref{SEC5} and consider $\sF=H^1(\Omega)$ and $\sE(u,v):=\frac{1}{2}\bD(u,v)$ for $u,v\in \sF$ on $L^2(\bar{\Omega})$.  Take $\kappa:=\beta\cdot \sigma$ with $\beta\in L^\infty(\Gamma)$. 

It is easy to verify that $(\sE^\kappa,\sF^\kappa)$ is a lower bounded closed form on $L^2(\Omega)$ and its generator on $L^2(\Omega)$ is the so-called Robin Laplacian on $\Omega$:
\begin{equation}\label{eq:49}
\begin{aligned}
	&\cD(\Delta_\beta):=\{u\in H^1(\Omega):\Delta u\in L^2(\Omega) \text{ and }\frac{1}{2} \partial_\bn u +\beta \cdot u|_\Gamma=0\},\\
	&\Delta_\beta u:=\frac{1}{2}\Delta u,\quad u\in \cD(\Delta_\beta).  
\end{aligned}
\end{equation}
In addition,  the trace form $(\check{\sE}^\kappa,\check{\sF}^\kappa)$ is a lower bounded closed form on $L^2(\Gamma)$ whose generator is $-\sN_\kappa$.  One can verify that $\sN_\kappa$ is identified with
\[
	\begin{aligned}
		&\cD(\sN_\beta):=\cD(D),\\
		&\sN_\beta\varphi:=\frac{1}{2}\partial_\bn u +\beta\cdot \varphi,\quad \varphi\in \cD(\sN_\beta), 
		\end{aligned}
\]
where $D$ is the classical DN operator and $u$ is the harmonic extension of $\varphi$ appearing in Definition~\ref{DEF31}.  Note that $\sN_\beta$ is what is concerned with in \cite[\S4]{BV17}. 

The probabilistic counterpart of $\sN_\beta$ obtained in Corollary~\ref{COR49} can be described as follows.  Take a constant $\alpha_0\geq \|\beta^-\|_{L^\infty(\Gamma)}$ where $\beta^-:=-(0\wedge \beta)$.  Set $\tilde{\kappa}:=\kappa+\alpha_0\cdot \sigma\in \mathring{\bS}$ and let $X^{\tilde{\kappa}}$ be the subprocess of $X$ killed by the PCAF corresponding to $\tilde{\kappa}$.   Then $-\sN_\beta-\alpha_0$ is the $L^2$-generator of the time changed process $\check{X}^{\tilde \kappa}$ of $X^{\tilde{\kappa}}$ by the PCAF corresponding to $\sigma$.  

Note incidentally that the $L^2$-semigroup associated with $\sN_\beta$ is irreducible due to the irreducibility of $(\check{\sE},\check{\sF})$.  Mimicking Theorem~\ref{THM411},  one can obtain that $\sN_\beta$ admits a strictly positive ground state,  i.e.  $\sN_\beta h_\beta=\lambda^\beta_1 h_\beta$ where $\lambda^\beta_1=\text{min }\sigma(\sN_\beta)$ is a simple eigenvalue and $h_\beta\in L^2(\Gamma)$,  $h_\beta>0$,  $\sigma$-a.e.  In addition the analogical results of Theorem~\ref{THM412} holds true for $\sN_\beta$.  Particularly $\lambda^\beta_1+\alpha_0\geq 0$ and $h_\beta\in \mathbf{E}^+_{-\lambda^\beta_1}\subset \mathbf{E}^+_{\alpha_0}$.  Let $(\check{\sE}^{\kappa, h_\beta}_{\alpha_0},\check{\sF}^{\kappa,h_\beta})$ be the $h$-transform of $(\check{\sE}^\kappa_{\alpha_0}, \check{\sF}^\kappa)$ with $h=h_\beta$.  
Then $(\check{\sE}^{\kappa, h_\beta}_{\alpha_0},\check{\sF}^{\kappa,h_\beta})$ is a quasi-regular and irreducible symmetric Dirichlet form on $L^2(\Gamma, h^2_\beta\cdot \sigma)$,  whose associated Markov process is the $h$-transform of $\check{X}^{\tilde{\kappa}}$ with $h=h_\beta$ as studied in, e.g.,  \cite{Y98}. 
\end{example}

\subsection{Calder\'on's problem}\label{SEC43-2}
Finally,  we turn to give some remarks on the Calder\'on's problem related to $(\sE,\sF)$.  It is an inverse problem considering whether one can determine the perturbation if knowing the trace information on certain boundary;  see Appendix~\ref{SEC6}.  
The following result states that if the perturbation is supported on boundary, then the uniqueness for the Calder\'on's problem holds true.  

\begin{theorem}
Take $\kappa_i\in \bS-\bS$ such that $|\kappa_i|(G)=0$ for $i=1,2$ and let $(\check{\sE}^{\kappa_i},\check{\sF}^{\kappa_i})$ be the trace form of $(\sE^{\kappa_i},\sF^{\kappa_i})$.   Then $(\check{\sE}^{\kappa_1},\check{\sF}^{\kappa_1})=(\check{\sE}^{\kappa_2},\check{\sF}^{\kappa_2})$ implies $\kappa_1=\kappa_2$.  
\end{theorem}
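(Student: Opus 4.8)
The plan is to reduce the statement, via Proposition~\ref{THM33}~(3), to a uniqueness assertion for the perturbing measures of the trace Dirichlet form $(\check\sE,\check\sF)$, and then to extract the conclusion by exploiting the structure of the common form domain through an ideal/resolvent argument.

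Write $\check\kappa_i:=\kappa_i|_F$. By Proposition~\ref{THM33}~(2)--(3) we have $\check\kappa_i^\pm\in\check\bS$, $\check\sF^{\kappa_i}=\check\sF\cap L^2(F,|\check\kappa_i|)$, and $\check\sE^{\kappa_i}(\varphi,\phi)=\check\sE(\varphi,\phi)+\int_F\varphi\phi\,d\check\kappa_i$. Equality of the two forms forces, first, $\check\sF\cap L^2(F,|\check\kappa_1|)=\check\sF\cap L^2(F,|\check\kappa_2|)=:\cG$, which therefore also equals $\check\sF\cap L^2(F,\rho)$ with $\rho:=|\check\kappa_1|+|\check\kappa_2|\in\check\bS$; and, second, $\int_F\varphi^2\,d\check\kappa_1=\int_F\varphi^2\,d\check\kappa_2$ for every $\varphi\in\cG$. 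All four quantities $\int_F\varphi^2\,d\check\kappa_i^\pm$ are finite for $\varphi\in\cG$, so setting
\[
	\rho_1:=\kappa_1^+|_F+\kappa_2^-|_F,\qquad \rho_2:=\kappa_2^+|_F+\kappa_1^-|_F,
\]
two measures in $\check\bS$ with $\rho_1+\rho_2=\rho$ (using $|\kappa_i|(G)=0$), the last identity rearranges to $\int_F\varphi^2\,d\rho_1=\int_F\varphi^2\,d\rho_2$ for all $\varphi\in\cG$. Since $\rho_1=\rho_2$ rearranges to $\check\kappa_1=\kappa_1^+|_F-\kappa_1^-|_F=\kappa_2^+|_F-\kappa_2^-|_F=\check\kappa_2$, and $|\kappa_i|(G)=0$ then yields $\kappa_1=\kappa_2$, it remains to prove: \emph{if $\rho\in\check\bS$, $g\in L^\infty(F,\rho)$ with $|g|\le1$, and $\int_F\varphi^2 g\,d\rho=0$ for all $\varphi\in\cG=\check\sF\cap L^2(F,\rho)$, then $g=0$ $\rho$-a.e.} (one then applies this with $g=d\rho_1/d\rho-d\rho_2/d\rho$).

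For this claim the point is that $\cG\cap L^\infty$ is an ideal in $\check\sF\cap L^\infty$: for $u\in\check\sF\cap L^\infty$ and $\varphi\in\cG\cap L^\infty$ one has $u\varphi\in\check\sF\cap L^\infty$ and $(u\varphi)^2\le\|u\|_\infty^2\varphi^2\in L^1(F,\rho)$, so $u\varphi\in\cG$. Fixing $\varphi\in\cG\cap L^\infty$ and applying the hypothesis to $u\varphi$, we get $\int_F u^2\,d\sigma_\varphi=0$ for all $u\in\check\sF\cap L^\infty$, where $\sigma_\varphi:=(\varphi^2 g)\cdot\rho$ is a \emph{finite} signed measure. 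As recalled before Theorem~\ref{THM26}, $(\check\sE,\check\sF)$ may be realized as a regular Dirichlet form on $L^2(F^*,\mu)$, so $\check\sF\cap C_c(F^*)$ is uniformly dense in $C_c(F^*)$; finiteness of $\sigma_\varphi$ and dominated convergence then give $\int_{F^*}w^2\,d\sigma_\varphi=0$ for all $w\in C_c(F^*)$, and since every $h\in C_c^+(F^*)$ equals $(\sqrt h)^{2}$ this forces $\sigma_\varphi=0$, i.e. $\varphi^2g=0$ $\rho$-a.e. for each $\varphi\in\cG\cap L^\infty$. Finally, take $f\in L^2(F,\mu)\cap L^\infty$ with $f>0$ $\mu$-a.e. and let $\varphi_0:=\check G^{\rho}_1 f$, the $1$-resolvent of the Dirichlet form obtained by perturbing $(\check\sE,\check\sF)$ by $\rho$, applied to $f$; then $\varphi_0\in\cG\cap L^\infty$ and $\varphi_0>0$ q.e. on $F$, hence (as $\rho$ charges no $\check\sE$-polar set) $\varphi_0>0$ $\rho$-a.e., and therefore $g=0$ $\rho$-a.e.

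The main obstacle is the richness of the common domain $\cG$: it is given to us only as the domain of a perturbed closed form, not as a core, so naive density arguments are unavailable. The two places where structure must genuinely be used are the ideal property $(\check\sF\cap L^\infty)\cdot(\cG\cap L^\infty)\subseteq\cG\cap L^\infty$, which reduces matters to the finite measures $\sigma_\varphi$ and a routine Stone--Weierstrass/monotone-class step (harmless even when $\rho$ is not Radon), and the production of an element of $\cG$ strictly positive $\rho$-a.e. via the perturbed resolvent together with the fact that $\rho$ charges no polar sets; everything else is formal bookkeeping.
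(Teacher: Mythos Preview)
Your argument is correct, and the first half (reduction via Proposition~\ref{THM33}~(3) to the identity $\int_F\varphi^2\,d\rho_1=\int_F\varphi^2\,d\rho_2$ on the common domain $\cG$, with $\rho_1=\kappa_1^+|_F+\kappa_2^-|_F$, $\rho_2=\kappa_2^+|_F+\kappa_1^-|_F$) is exactly what the paper does. Where you diverge is in extracting $\rho_1=\rho_2$ from this. The paper observes that $\cG$ is $\check\sE^{\rho_i}_1$-dense in $\check\sF^{\rho_i}$ by a nest argument (take an $\check\sE$-nest $\{F_n\}$ with $\rho(F_n)<\infty$ and note $\cup_n b\check\sF_{F_n}\subset\cG$), so the two \emph{perturbed Dirichlet forms} $(\check\sE^{\rho_1},\check\sF^{\rho_1})$ and $(\check\sE^{\rho_2},\check\sF^{\rho_2})$ coincide; uniqueness of the killing measure in the Beurling--Deny decomposition then gives $\rho_1=\rho_2$ in one stroke.

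Your route avoids Beurling--Deny entirely: the ideal property $(\check\sF\cap L^\infty)\cdot(\cG\cap L^\infty)\subset\cG$ localizes the problem to the finite signed measures $\sigma_\varphi$, regularity of $(\check\sE,\check\sF)$ on $L^2(F^*,\mu)$ plus Riesz gives $\sigma_\varphi=0$, and the perturbed resolvent supplies a strictly positive element of $\cG\cap L^\infty$. This is more elementary in the sense of not invoking a structure theorem, but longer and with more moving parts; in particular the step $\varphi_0=\check G^\rho_1 f>0$ q.e.\ is where irreducibility of $(\sE,\sF)$ (hence of $(\check\sE,\check\sF)$ and of its perturbation by $\rho$) is silently used and deserves a word. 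The paper's approach trades this for the Beurling--Deny black box and is correspondingly shorter.
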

\begin{proof}
Note that $\check{\sF}^{\kappa_1}=\check{\sF}^{\kappa_2}$ and Proposition~\ref{THM33}~(3) imply that $$\sG:=\check{\sF}\cap L^2(F,|\kappa_1|)=\check{\sF}\cap L^2(F,|\kappa_2|)=\check{\sF}\cap L^2(F,|\kappa_1|+|\kappa_2|).$$ 
 Take $\varphi, \phi\in \sG$.  It follows from $\check{\sE}^{\kappa_1}(\varphi, \phi)=\check{\sE}^{\kappa_2}(\varphi, \phi)$ that 
\begin{equation}\label{eq:44}
\check{\sE}(\varphi, \phi)+\int_F \varphi \phi d(\kappa^+_1+\kappa^-_2)=\check{\sE}(\varphi, \phi)+\int_F \varphi \phi d(\kappa^-_1+\kappa^+_2).  
\end{equation}
Set $\tilde{\kappa}_1:=\kappa_1^++\kappa^-_2$ and $\tilde{\kappa}_2:=\kappa^-_1+\kappa^+_2$.  Then $\tilde{\kappa}_1,\tilde{\kappa}_2\in \bS$.  Denote by $(\check{\sE}^{\tilde{\kappa}_i},\check{\sF}^{\tilde{\kappa}_i})$ the perturbed Dirichlet form of $(\check{\sE},\check{\sF})$ by $\tilde{\kappa}_i$ for $i=1,2$.  Clearly,  $\sG\subset \check{\sF}^{\tilde{\kappa}_1} \cap \check{\sF}^{\tilde{\kappa}_2}$.
We claim that $\sG$ is $\check{\sE}^{\tilde{\kappa}_i}_1$-dense in $\check{\sF}^{\tilde{\kappa}_i}$.  To accomplish this,  using \cite[Theorem~2.2.4]{FOT11}  we can take an $\check{\sE}$-nest $\{F_n:n\geq 1\}$ such that $|\kappa_1|(F_n)+|\kappa_2|(F_n)<\infty$.  Then
\[
	\tilde{\sG}:=\bigcup_{n\geq 1} b\check\sF_{F_n} \subset \sG,
\]
where $b\check\sF_{F_n}$ is the family of all bounded functions in $$\check\sF_{F_n}=\{u\in \check\sF: u=0, \check\sE\text{-q.e. on }F^c_n\}.$$
On account of \cite[Theorem~5.1.4]{CF12},  $\{F_n:n\geq 1\}$ is also an $\check\sE^{\tilde{\kappa}_i}$-nest.  Hence $\tilde\sG$ as well as $\sG$ is $\check{\sE}^{\tilde{\kappa}_i}_1$-dense in $\check{\sF}^{\tilde{\kappa}_i}$.  This assertion,  together with \eqref{eq:44}, tells us that $(\check{\sE}^{\tilde{\kappa}_1},\check{\sF}^{\tilde{\kappa}_1})=(\check{\sE}^{\tilde{\kappa}_2},\check{\sF}^{\tilde{\kappa}_2})$.  Eventually  applying the Beurling-Deny formula to them,  we get $\tilde{\kappa}_1=\tilde{\kappa}_2$.  Therefore $\kappa_1=\kappa_2$.  That completes the proof. 
\end{proof}

\appendix

\section{Perturbations of Dirichlet forms}\label{APPB}

Let $(\sE,\sF)$ be a regular Dirichlet form on $L^2(E,m)$ and $\kappa:=\kappa^+-\kappa^-\in \bS-\bS$.  Denote by $A^{\kappa^\pm}:=(A^{\kappa^\pm}_t)_{t\geq 0}$ the PCAF corresponding to $\kappa^\pm$ and set $A^\kappa:=A^{\kappa^+}-A^{\kappa^-}$.  

\begin{definition}\label{DEFB1}
Let $\kappa=\kappa^+-\kappa^-\in \bS-\bS$.  The following quadratic form
\[
\begin{aligned}
	\sF^\kappa&:=\sF\cap L^2(E,|\kappa|), \\
	\sE^\kappa(u,v)&:=\sE(u,v)+\int_E uvd\kappa^+-\int_E uvd\kappa^-,\quad u,v\in \sF^\kappa
\end{aligned}
\]
is called \emph{the perturbation of $(\sE,\sF)$ by $\kappa$}.  
\end{definition}

When $\kappa\in \bS$ (i.e. $\kappa^-=0$),  $(\sE^\kappa,\sF^\kappa)$ is a quasi-regular Dirichlet form on $L^2(E,m)$ associated with the subprocess of $X$ obtained by killing by $A^\kappa$;  see,  e.g.,  \cite[\S5.1]{CF12}.  Meanwhile it is also called the \emph{perturbed Dirichlet form of $(\sE,\sF)$ by $\kappa$}.  
In general if $\kappa^-\neq 0$,  then $(\sE^\kappa,\sF^\kappa)$ is not necessarily closed.  The following definition gives a well-known sufficient condition for the closedness of $(\sE^\kappa,\sF^\kappa)$.  

\begin{definition}\label{DEFA2}
Let $\kappa=\kappa^+-\kappa^-\in \bS-\bS$.  Then $\kappa^-$ is called \emph{$\sE^{\kappa^+}$-form bounded} (with $\sE^{\kappa^+}$-form bound less than $1$)  if $L^2(E,\kappa^-)\subset \sF^{\kappa^+}$ and there exist constants $0\leq \delta<1$ and $C_\delta\geq 0$ such that 
\begin{equation}\label{eq:A1}
	\int_E u^2 d\kappa^-\leq \delta\cdot \sE^{\kappa^+}(u,u)+C_\delta \int_E u^2dm,\quad \forall u\in \sF^{\kappa^+}.  
\end{equation}
The infimum of all possible constants $\delta$ for which the inequality holds is called the \emph{$\sE^{\kappa^+}$-bound} of $\kappa^-$.  
\end{definition}
\begin{remark}
A famous family of smooth measures that are $\sE$-form bounded (clearly also $\sE^{\kappa^+}$-form bounded for any $\kappa^+\in \bS$) is the so-called extended Kato class containing $\left\{V\cdot m: V\in pL^\infty(E,m)\right\}$; see, e.g.,  \cite{AM92, SV96}. 
\end{remark}

Clearly if $\kappa^-$ is $\sE^{\kappa^+}$-form bounded,  then $(\sE^\kappa,\sF^\kappa)$ is a lower bounded symmetric closed form in the sense that $(\sE^\kappa_{\alpha_0}, \sF^\kappa)$ for $\alpha_0=C_\delta$ appearing in \eqref{eq:A1} is a non-negative symmetric closed form on $L^2(E,m)$.  Meanwhile $\sF^\kappa=\sF\cap L^2(E,\kappa^+)$ and the $L^2$-semigroup of $(\sE^\kappa,\sF^\kappa)$ is the so-called Feymann-Kac semigroup $$P^\kappa_t f(x)=\mathbf{E}_x\left[e^{-A^\kappa_t}f(X_t) \right]$$ satisfying $\|P^\kappa_t f\|_{L^2(E,m)}\leq e^{\alpha_0 t}\|f\|_{L^2(E,m)}$,  whose generator is upper semi-bounded and self-adjoint on $L^2(E,m)$.


\section{Direct sum decomposition for perturbation of Dirichlet forms}\label{APP2}

Adopt the same notations as in Appendix~\ref{APPB}.  Assume that $\kappa^-$ is $\sE^{\kappa^+}$-form bounded,  so that $(\sE^\kappa,\sF^\kappa)$ is lower bounded and closed on $L^2(E,m)$.  
In this appendix we set up the analogue of \eqref{eq:Fe} for $(\sE^\kappa,\sF^\kappa)$.  

Take a quasi closed set $F\subset E$ with respect to $\sE$ and set $G=E\setminus F$.  Set further
\[
	\sF^\kappa_\re:=\sF_\re\cap L^2(E,|\kappa|),\quad \sF^{\kappa,G}_{\re}:=\{u\in \sF^\kappa_\re:  u=0,  \sE\text{-q.e.  on }F\}
\]
and 
\[
	\cH^\kappa_F:=\{u\in \sF^\kappa_\re: \sE^\kappa(u,v)=0,\forall v\in \sF^{\kappa,G}_{\re}\}.  
\] 
Write $\sF^{\kappa,G}:=\sF^{\kappa,G}_{\re}\cap L^2(G,m|_G)$ and $\sE^{\kappa,G}(u,v):=\sE^\kappa(u,v)$ for $u,v\in \sF^{\kappa, G}$.  Note that $(\sE^{\kappa,G}, \sF^{\kappa,G})$ is lower bounded and closed on $L^2(G,m|_G)$.  Denote the $L^2$-generator of $(\sE^{\kappa,G}, \sF^{\kappa,G})$ by $\sL_{\kappa,G}$.  The spectrum of $\sL_{\kappa,G}$ is denoted by $\sigma(\sL_{\kappa,G})$.  

\begin{lemma}\label{LMB1}
Assume that $0\notin \sigma(\sL_{\kappa, G})$.  Then it holds 
\begin{equation}
\sF^\kappa=\sF^{\kappa,G}\oplus \tilde\cH^\kappa_F,
\end{equation}
where $\tilde{\cH}^\kappa_F:=\{u\in \sF^\kappa: \sE^\kappa(u,v)=0,\forall v\in \sF^{\kappa,G}\}$.  
Particularly,  if $\sF^{\kappa,G}_{\re}\subset L^2(G,m|_G)$,  then 
\begin{equation}\label{eq:31-2}
	\sF^\kappa_\re=\sF^{\kappa,G}_\re\oplus \cH^\kappa_F.
\end{equation}
\end{lemma}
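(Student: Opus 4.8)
The plan is to deduce both decompositions from a single fact about the lower bounded closed form $(\sE^{\kappa,G},\sF^{\kappa,G})$: under the hypothesis $0\notin\sigma(\sL_{\kappa,G})$, the bilinear form $\sE^{\kappa,G}$ gives a topological isomorphism of $\sF^{\kappa,G}$ onto its dual; once this is available, the direct sums come out of a routine orthogonal-projection argument. First I would record the preliminaries. Since $\kappa^-$ is $\sE^{\kappa^+}$-form bounded, $\sF^\kappa=\sF\cap L^2(E,\kappa^+)=\sF\cap L^2(E,|\kappa|)$, and by \eqref{eq:A1} the norms $\|\cdot\|_{\sE^\kappa_{\alpha}}$ and $\|\cdot\|_{\sE^{|\kappa|}_{\beta}}$ are mutually comparable on $\sF^\kappa$ for suitably large $\alpha,\beta$; in particular $\sF^\kappa_\re=\sF_\re\cap L^2(E,|\kappa|)$ is the extended Dirichlet space of the quasi-regular Dirichlet form $(\sE^{|\kappa|},\sF^{|\kappa|})$, whose quasi notions I use throughout. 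I fix $\alpha_1$ large enough that $\sE^{\kappa,G}_{\alpha_1}(\cdot,\cdot)\geq(\cdot,\cdot)_m$ on $\sF^{\kappa,G}$, so that $\sF^{\kappa,G}=\sF^{\kappa,G}_{\re}\cap L^2(G,m|_G)$ is a Hilbert space under $\sE^{\kappa,G}_{\alpha_1}$. Finally, a coordinatewise Cauchy--Schwarz estimate gives $|\sE^\kappa(u,v)|\leq\sE^{|\kappa|}(u,u)^{1/2}\sE^{|\kappa|}(v,v)^{1/2}$, so for every $u\in\sF^\kappa_\re$ the functional $\ell_u(v):=\sE^\kappa(u,v)$ is bounded on $(\sF^{\kappa,G},\sE^{\kappa,G}_{\alpha_1})$.

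The heart of the matter, and the step I expect to be the main obstacle, is that $0\notin\sigma(\sL_{\kappa,G})$ forces the map $\Lambda\colon\sF^{\kappa,G}\to(\sF^{\kappa,G})'$, $\Lambda u:=\sE^{\kappa,G}(u,\cdot)$, to be a bijection (hence, by the open mapping theorem, a topological isomorphism). This is a form-theoretic companion of the Gelfand-triple fact that $\lambda\in\rho(\sL_{\kappa,G})$ iff the form realization of $\sL_{\kappa,G}-\lambda$ maps $\sF^{\kappa,G}$ onto its dual; I would prove it by functional calculus. Put $B:=\alpha_1-\sL_{\kappa,G}$, a self-adjoint operator on $L^2(G,m|_G)$ with $B\geq I$ and $\cD(B^{1/2})=\sF^{\kappa,G}$, so that $B^{-1/2}$ is a unitary from $L^2(G,m|_G)$ onto $(\sF^{\kappa,G},\sE^{\kappa,G}_{\alpha_1})$. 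From $\sE^{\kappa,G}_{\alpha_1}(B^{-1/2}f,B^{-1/2}g)=(f,g)_m$ one gets $\sE^{\kappa,G}(B^{-1/2}f,B^{-1/2}g)=\big((I-\alpha_1 B^{-1})f,g\big)_m$, so under the unitary $B^{-1/2}$ the operator $\Lambda$ is identified with the bounded self-adjoint operator $C:=I-\alpha_1 B^{-1}$ on $L^2(G,m|_G)$. By the spectral mapping theorem $\sigma(C)=\{-\lambda/(\alpha_1-\lambda):\lambda\in\sigma(\sL_{\kappa,G})\}$, hence $0\in\sigma(C)$ exactly when $0\in\sigma(\sL_{\kappa,G})$, and the hypothesis therefore makes $C$ — and so $\Lambda$ — boundedly invertible. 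The delicate point, that the form domain is not closed in $L^2$ so one cannot simply restrict the $L^2$-resolvent, is precisely what passing through the unitary $B^{-1/2}$ circumvents.

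With $\Lambda$ invertible the conclusion is immediate. For $u\in\sF^\kappa$ set $u_1:=\Lambda^{-1}(\ell_u|_{\sF^{\kappa,G}})\in\sF^{\kappa,G}$; then $u_2:=u-u_1\in\sF^\kappa$ and $\sE^\kappa(u_2,v)=\ell_u(v)-(\Lambda u_1)(v)=0$ for all $v\in\sF^{\kappa,G}$, i.e. $u_2\in\tilde{\cH}^\kappa_F$. If $w\in\sF^{\kappa,G}\cap\tilde{\cH}^\kappa_F$ then $\Lambda w=0$, whence $w=0$; this gives both existence and uniqueness, i.e. $\sF^\kappa=\sF^{\kappa,G}\oplus\tilde{\cH}^\kappa_F$. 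For the last assertion, if $\sF^{\kappa,G}_{\re}\subset L^2(G,m|_G)$ then $\sF^{\kappa,G}_{\re}=\sF^{\kappa,G}$, so $\cH^\kappa_F$ is exactly the $\sE^\kappa$-annihilator of $\sF^{\kappa,G}$ inside $\sF^\kappa_\re$; running the previous paragraph with an arbitrary $u\in\sF^\kappa_\re$ in place of $u\in\sF^\kappa$ — legitimate since $\ell_u$ is still bounded on $\sF^{\kappa,G}$ by the Cauchy--Schwarz bound, as $\sE^{|\kappa|}(u,u)<\infty$ — produces the splitting $\sF^\kappa_\re=\sF^{\kappa,G}_\re\oplus\cH^\kappa_F$ of \eqref{eq:31-2}. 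The remaining items (comparability of the $\alpha$-inner products, that $B^{-1/2}$ is the stated unitary, and openness of $\Lambda^{-1}$) are routine.
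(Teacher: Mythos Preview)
Your proof is correct and follows essentially the same route as the paper's: both define the form operator $\Lambda\colon\sF^{\kappa,G}\to(\sF^{\kappa,G})'$ given by $\Lambda u=\sE^{\kappa,G}(u,\cdot)$, show it is invertible from $0\notin\sigma(\sL_{\kappa,G})$, and then obtain the splitting via $u_1:=\Lambda^{-1}(\sE^\kappa(u,\cdot)|_{\sF^{\kappa,G}})$. The only difference is how invertibility is justified: you give a self-contained functional-calculus argument through the unitary $B^{-1/2}$ and the spectral mapping theorem for $C=I-\alpha_1 B^{-1}$, whereas the paper invokes an abstract result on parts of operators in a Gelfand triple to conclude $\sigma(\Lambda)=\sigma(-\sL_{\kappa,G})$ directly.
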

\begin{proof}
Note that $\sF^{\kappa,G}$ is a Hilbert space under the inner produce $\sE^{|\kappa|}_1(u,v):=\sE(u,v)+\int_E uv d(m+|\kappa|)$.  
Denote by $\left( \sF^{\kappa,G}\right)'$ the family of all continuous linear functionals on $\sF^{\kappa,G}$.  Consider an operator $$\sA:\cD(\sA)= \sF^{\kappa,G}\left(\subset \left( \sF^{\kappa,G}\right)'\right)\rightarrow \left( \sF^{\kappa,G}\right)'$$ defined as follows:  For any $u\in \sF^{\kappa,G}$, 
\[
	\langle \sA u,  v\rangle:=\sE^\kappa(u,v),\quad \forall v\in  \sF^{\kappa,G}.    
\]
It is easy to verify that $L^2(G,m|_G)$ is continuously embedded in $\left( \sF^{\kappa,G}\right)'$,  i.e.  for any $f\in L^2(G,m|_G)$,  $I_f: g\mapsto \int_G fgdm$ defines a continuous linear functional on $\sF^{\kappa,G}$ and $\|I_f\|_{\left( \sF^{\kappa,G}\right)'}\lesssim \|f\|_{L^2(G,m|_G)}$.  In addition,  the part $\sA_0$ of $\sA$ in $L^2(G,m|_G)$,  i.e.  $\cD(\sA_0):=\{u\in \cD(\sA)\cap L^2(G,m|_G): \sA u\in L^2(G,m|_G)\}$,  $\sA_0 u:=\sA u$,  is identified with $-\sL_{\kappa, G}$.  Applying \cite[Proposition~3.10.3]{AB01},  we get that $\sigma(\sA)=\sigma(-\sL_{\kappa,G})$.  Since $0\notin \sigma(-\sL_{\kappa,G})=\sigma(\sA)$,  it follows that $\sA$ is invertible.  

Take $u\in \sF^\kappa$.  Set $F_u(v):=\sE^\kappa(u,v)$ for any $v\in \sF^{\kappa,G}$.  It is easy to verify that $F_u\in \left( \sF^{\kappa,G}\right)'$.  Since $\sA$ is invertible,  there exists $u_1\in \sF^{\kappa,G}$ such that $F_u=\sE^\kappa(u_1,\cdot)$.  Let $u_2:=u-u_1\in \sF^\kappa$.  Then $\sE^\kappa(u_2,v)=F_u(v)-\sE^\kappa(u_1,v)=0$ for any $v\in \sF^{\kappa,G}$.  This implies $u_2\in \tilde{\cH}^\kappa_F$.  In addition,  $0\notin \sigma(\sL_{\kappa, G})$ also leads to $\sF^{\kappa,G}\cap \tilde{\cH}^\kappa_F=\{0\}$,  because any $u\in \sF^{\kappa,G}\cap \tilde{\cH}^\kappa_F$ satisfies $\sL_{\kappa,G}u=0$.  

If $\sF^{\kappa,G}_{\re}\subset L^2(G,m|_G)$,  then $\sF^{\kappa,G}_\re=\sF^{\kappa,G}$.  For $u\in \sF^\kappa_\re$,  we still have that $F_u=\sE^\kappa(u,\cdot)\in \left( \sF^{\kappa,G}\right)'$.  Hence there exists $u_1\in \sF^{\kappa,G}=\sF^{\kappa,G}_\re$ such that $F_u=\sE^\kappa(u_1,\cdot)$.  It follows that $u_2:=u-u_1\in \sF^\kappa_\re$ and $\sE^\kappa(u_2,v)=0$ for any $v\in \sF^{\kappa,G}=\sF^{\kappa,G}_\re$.  Particularly,  $u_2\in \cH^\kappa_F$.  On the other hand,  
\[
	\sF^{\kappa,G}_\re\cap \cH^\kappa_F=\sF^{\kappa,G}\cap \cH^\kappa_F=\sF^{\kappa,G}\cap \tilde\cH^\kappa_F=\{0\}.
\]
That completes the proof.  
\end{proof}


\section{Irreducibility of lower bounded closed forms}

Let $(\sE,\sF)$ be a lower bounded symmetric closed form on $L^2(E,m)$,  i.e.  there exists a constant $\alpha_0\geq 0$ such that $(\sE_{\alpha_0},\sF)$ is a non-negative symmetric closed form on $L^2(E,m)$.  Let $(T_t)_{t\geq 0}$ be the $L^2$-semigroup associated with $(\sE,\sF)$.  

\begin{definition}\label{DEFC1}
\begin{itemize}
\item[(1)] The semigroup $(T_t)_{t\geq 0}$ is \emph{positive} if $T_t f\in pL^2(E,m)$ for all $t\geq 0$ and $f\in pL^2(E,m)$.  It is called \emph{irreducible} if for every $t>0$ and non-zero function $f\in pL^2(E,m)$,  we have $T_t f(x)>0$ for $m$-a.e.  $x\in E$.  
\item[(2)] A subset $A\subset E$ is called \emph{invariant} if $1_{A^c}\cdot T_t(f1_A)=0$,  $m$-a.e.  for any $t\geq 0$ and $f\in L^2(E,m)$.  
\end{itemize}
\end{definition}

There are several equivalent conditions on $(\sE,\sF)$ to the positivity of $(T_t)_{t\geq 0}$.  For example,  it is equivalent to
\begin{equation}\label{eq:C1}
	f\in \sF \Longrightarrow f^+, f^-\in \sF \text{ and }\sE(f^+,f^-)\leq 0,
\end{equation}
where $f^+:=f\vee 0$ and $f^-:=f^+-f$;  see,  e.g.,  \cite{MR95} for other equivalent conditions.   

\begin{theorem}\label{THMC2}
Assume that $(T_t)_{t\geq 0}$ is positive.  The following are equivalent:
\begin{itemize}
\item[(1)] $A\subset E$ is invariant;
\item[(2)] $f 1_A\in \sF$ and $\sE( f1_A,   f1_{A^c})\geq 0$ for all $f\in \sF$;
\item[(3)] $f 1_A\in \sF$ and $\sE(f1_A,  f1_{A^c})=0$ for all $f\in \sF$;
\item[(4)] $f 1_A\in \sF$ and $\sE(f,f)=\sE(f1_A, f1_A)+\sE(f1_{A^c}, f1_{A^c})$ for all $f\in \sF$. 
\end{itemize}
Further,  $(T_t)_{t\geq 0}$ is irreducible,  if and only if $m(A)=0$ or $m(A^c)=0$ for any invariant set $A\subset E$. 
\end{theorem}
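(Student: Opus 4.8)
The plan is first to reduce to the case $\alpha_0=0$. Replacing $\sE$ by $\sE_{\alpha_0}$ replaces $(T_t)_{t\geq 0}$ by $(e^{-\alpha_0 t}T_t)_{t\geq 0}$, which affects neither positivity, nor the class of invariant sets, nor irreducibility; moreover conditions (2)--(4) are insensitive to this change, since $1_A$ and $1_{A^c}$ have disjoint supports and hence $\sE_{\alpha_0}(f1_A,f1_{A^c})=\sE(f1_A,f1_{A^c})$, while the extra $\alpha_0$-terms in (4) cancel. So we may assume $(\sE,\sF)$ is a non-negative symmetric closed form; then $\sF$ carries the Hilbert norm $\|\cdot\|_{\sE_1}$, the generator $L$ satisfies $-L\geq 0$, and I write $(G_\alpha)_{\alpha>0}$ for its resolvent.

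For the equivalence of (1)--(4), the relations among (2), (3), (4) are pure form algebra, using only that $f1_A\in\sF$ for all $f\in\sF$ (part of each condition): expanding $\sE(f,f)=\sE(f1_A,f1_A)+2\sE(f1_A,f1_{A^c})+\sE(f1_{A^c},f1_{A^c})$ gives (3)$\Leftrightarrow$(4), and applying (2) to $f':=2f1_A-f\in\sF$ --- for which $f'1_A=f1_A$, $f'1_{A^c}=-f1_{A^c}$ --- forces $\sE(f1_A,f1_{A^c})\leq 0$, so (2)$\Rightarrow$(3); the converse is trivial. To prove (3)$\Rightarrow$(1) I would test the resolvent: for $f\in L^2(E,m)$ supported in $A$, set $u:=G_\alpha f\in\sF$ and apply $\sE_\alpha(u,v)=(f,v)_m$ with $v:=1_{A^c}u\in\sF$; the right side vanishes, (3) kills the cross term $\sE(1_Au,1_{A^c}u)$, and non-negativity leaves $\sE(1_{A^c}u,1_{A^c}u)+\alpha\,(1_{A^c}u,1_{A^c}u)_m=0$, whence $1_{A^c}u=0$; thus $G_\alpha$ preserves $L^2(A)$, by symmetry also $L^2(A^c)$, and the exponential formula $T_t=\lim_n(\tfrac{n}{t}G_{n/t})^n$ transports this to $(T_t)_{t\geq 0}$, i.e. $A$ is invariant. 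For (1)$\Rightarrow$(3): invariance makes $G_\alpha$ block-diagonal along $L^2(A)\oplus L^2(A^c)$; since multiplication by $1_A$ commutes with $L$, hence with $(1-L)^{1/2}$, the subspaces $\sF\cap L^2(A)$ and $\sF\cap L^2(A^c)$ are $\sE_1$-closed and $\sE_1$-orthogonal, so writing $u\in\sF$ as the $\sF$-limit of $\beta G_\beta u$ (each of which splits along this decomposition) and passing to the limit yields $1_Au\in\sF$ with $\sE_1(1_Au,1_{A^c}u)=0$, which together with $(1_Au,1_{A^c}u)_m=0$ is (3). (Positivity is not actually needed for (1)--(4); it enters only in the irreducibility statement.)

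For the final assertion, one direction is easy: if $A$ is invariant with $m(A),m(A^c)>0$, pick by $\sigma$-finiteness $A'\subseteq A$ with $0<m(A')<\infty$; then $0\neq 1_{A'}\in pL^2(E,m)$, yet invariance forces $T_t1_{A'}$ to be supported in $A$, so $T_t1_{A'}=0$ on $A^c$, contradicting irreducibility. For the converse I would argue by contradiction: if $(T_t)_{t\geq 0}$ is not irreducible there are $t_0>0$ and $0\neq f\in pL^2(E,m)$ with $m(B)>0$, $B:=\{T_{t_0}f=0\}$ (taken of finite measure). The plan is to manufacture a potential vanishing on a set of positive measure: from $(T_{t_0}f,1_B)_m=0=(f,T_{t_0}1_B)_m$ and positivity one sees $T_{t_0}1_B=0$ on $\{f>0\}$, so the witness may be taken to be an indicator, and then, invoking analyticity of the symmetric semigroup (equivalently, the classical fact that here irreducibility of $(T_t)_{t\geq 0}$ coincides with irreducibility of the resolvent $(G_\beta)_{\beta>0}$), one produces $0\neq g\in pL^2(E,m)$ whose potential $u:=G_\beta g$ vanishes on a set $N$ with $m(N)>0$ (and $m(N^c)>0$, as $u\neq 0$). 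Now $u$ is $\beta$-excessive, $e^{-\beta t}T_tu=\int_t^\infty e^{-\beta s}T_sg\,ds\leq u$, so $T_tu=0$ on $N$ for every $t\geq 0$; hence $0\leq T_th\leq cT_tu$ vanishes on $N$ whenever $0\leq h\leq cu$, and by density of such $h$ in $L^2(N^c)$ we get $T_t(L^2(N^c))\subseteq L^2(N^c)$, so $N$ is a non-trivial invariant set --- the desired contradiction.

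I expect the genuine obstacle to be precisely this last step: upgrading a single-time failure of strict positivity to an honest absorbing set. The $\beta$-excessivity of $G_\beta g$ makes $\{G_\beta g=0\}$ absorbing automatically, so everything hinges on ensuring that this zero set has positive measure, which is where the analyticity of symmetric semigroups --- the rigidity of the zero sets of $t\mapsto T_tf$ for $f\geq 0$ --- or, equivalently, the identification of semigroup irreducibility with resolvent irreducibility, does the real work. The equivalence of (1)--(4) and the easy half of the irreducibility statement should be, by contrast, routine Hilbert-space and bilinear-form bookkeeping.
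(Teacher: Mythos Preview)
Your proof is correct and considerably more self-contained than the paper's.  The paper's own proof cites Ouhabaz \cite{O05} (Theorems~2.9 and~2.10 there) for the equivalence $(1)\Leftrightarrow(2)$ and for the entire irreducibility characterization, and only spells out $(2)\Leftrightarrow(3)\Leftrightarrow(4)$ by hand --- using exactly your sign-flip trick (the paper's test function $f1_A-f1_{A^c}$ is your $f'=2f1_A-f$).  By contrast, you supply direct arguments for $(1)\Leftrightarrow(3)$ via the resolvent and the commutation of $1_A$ with $(1-L)^{1/2}$, and for the irreducibility statement via the $\beta$-excessive potential $u=G_\beta g$ and the order-ideal density argument.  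You correctly isolate the one genuinely nontrivial input: for symmetric (hence analytic) positive semigroups, absence of nontrivial invariant sets --- equivalently, resolvent irreducibility --- upgrades to the strong statement $T_tf>0$ $m$-a.e.\ for every $t>0$ and nonzero $f\geq 0$.  This is precisely what is proved in the reference the paper cites, so in the end both proofs lean on the same external fact, but yours makes transparent which parts are elementary form algebra and which part needs the analytic rigidity.  Your parenthetical remark that positivity is not used in $(1)$--$(4)$ is correct and is not noted in the paper.
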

\begin{proof}
The equivalence between (1) and (2) and the characterization for the irreducibility of $(T_t)_{t\geq 0}$ are the consequences of \cite[Theorems~2.9 and 2.10]{O05}.  Clearly,  (3) and (4) are equivalent,  and (3) implies (2).  Now suppose that (2) holds.  Then for $f\in\sF$,   $u:=f1_A-f1_{A^c}\in \sF$ and $\sE(u1_A, u 1_{A^c})\geq 0$,  which amounts to
\[
	\sE(f1_A, -f1_{A^c})\geq 0. 
\]
Therefore $\sE(f1_A, f1_{A^c})=0$ and (3) holds true.  That completes the proof. 
\end{proof}

In view of this theorem,  when $(\sE,\sF)$ is a Dirichlet form,  the irreducibility of $(T_t)_{t\geq 0}$ coincides with that of $(\sE,\sF)$;  see,  e.g.,  \cite[\S1.6]{FOT11}. 

\section{Quasi-regular positivity preserving (symmetric) coercive forms}\label{APPD}

In this appendix we review the basic conceptions about quasi-regular positivity preserving (symmetric) coercive forms raised in \cite{MR95}.  Note that only symmetric cases are under consideration in our paper.

Let $(\sE,\sF)$ be a non-negative symmetric closed form on $L^2(E,m)$,  whose associated $L^2$-semigroup $(T_t)_{t\geq 0}$ is positive in the sense of Definition~\ref{DEFC1}.  This positivity preserving property amounts to \eqref{eq:C1}.  Meanwhile $(\sE,\sF)$ is also called a \emph{positivity preserving (symmetric) coercive form}.  For a closed set $F\subset E$,  we set
\[
	\sF_F:=\{u\in \sF: u=0,  m\text{-a.e.  on }E\setminus F\}.
\]
Then an increasing sequence $\{F_n:n\geq 1\}$ of closed subsets of $E$ is called an \emph{$\sE$-nest} if $\cup_{n\geq }\sF_{F_n}$ is $\sE_1$-dense in $\sF$.  A subset $N\subset E$ is called \emph{$\sE$-polar} if $N\subset \cap_{n\geq 1}(E\setminus F_n)$ for some $\sE$-nest $\{F_n:n\geq 1\}$.  We say that a property of points in $E$ holds $\sE$-quasi-everywhere (abbreviated $\sE$-q.e.),  if the property holds outside some $\sE$-polar set.  Given an $\sE$-nest $\{F_n:n\geq 1\}$,  define
\[
	C(\{F_n\}):=\{f:A\rightarrow \bR: \cup_{n\geq 1}F_n\subset A\subset E, f|_{F_n}\text{ is continuous for each }n\}.
\]
Then an $\sE$-q.e. defined function $u$ on $E$ is called \emph{$\sE$-quasi-continuous} if there exists an $\sE$-nest $\{F_n:n\geq 1\}$ such that $u\in C(\{F_n\})$.  For $\alpha\geq 0$,  $u\in L^2(E,m)$ is called \emph{$\alpha$-excessive} if $u\geq 0$,  $m$-a.e.  and $\re^{-\alpha t}T_tu\leq u$,  $m$-a.e. for all $t>0$.  

\begin{definition}\label{DEFD1}
A positivity preserving (symmetric) coercive form $(\sE,\sF)$ is called \emph{quasi-regular} if:
\begin{itemize}
\item[(i)] There exists an $\sE$-nest $\{K_n:n\geq 1\}$ consisting of compact sets.
\item[(ii)] There exists an $\sE_1$-dense subset of $\sF$ whose elements have $\sE$-quasi-continuous $m$-versions.
\item[(iii)] There exist $u_n\in\sF$,  $n\in \mathbb{N}$,  having $\sE$-quasi-continuous $m$-versions $\tilde{u}_n$,  $n\in \mathbb{N}$, and an $\sE$-polar set $N\subset E$ such that $\{\tilde{u}_n:n\in\mathbb{N}\}$ separates the point of $E\setminus  N$.
\item[(iv)] There exists an $\sE$-q.e. strictly positive $\sE$-quasi-continuous $m$-version $h$ of an $\alpha$-excessive function in $\sF$ for some $\alpha\in (0,\infty)$.
\end{itemize}
\end{definition}

Let $h\in L^2(E,m)$,  $h>0$,  $m$-a.e.  Define
\[
\begin{aligned}
		&\sF^h:=\{u\in L^2(E,h^2\cdot m): uh\in \sF\}, \\
		&\sE^h(u,v):=\sE(uh,vh),\quad u,v\in \sF^h,
\end{aligned}
\]
called the \emph{$h$-transform of $(\sE,\sF)$}. 
The following theorem is taken from \cite[Theorem~4.14]{MR95}.  The Markov process associated with the $h$-transform $(\sE^h,\sF^h)$ in this theorem is called \emph{$h$-associated with $(\sE,\sF)$.} 

\begin{theorem}\label{THMD2}
A positivity preserving (symmetric) coercive form $(\sE,\sF)$ on $L^2(E,m)$ is quasi-regular if and only if for one (hence every) $m$-a.e. strictly positive function $h\in \sF$, which is $\alpha$-excessive for some $\alpha>0$,  there exists an $\sE$-q.e. strictly positive $\sE$-quasi-continuous $m$-version $\tilde{h}$ and the corresponding $h$-transform $(\sE^h,\sF^h)$ is a quasi-regular symmetric Dirichlet form.  
\end{theorem}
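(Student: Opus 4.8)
The statement is quoted from \cite[Theorem~4.14]{MR95}, and the plan is to reproduce its argument, organised around the multiplication isometry. Fix an $m$-a.e.\ strictly positive $h\in\sF$ which is $\alpha$-excessive for some $\alpha>0$, and let $M_h\colon L^2(E,h^2\cdot m)\to L^2(E,m)$, $M_h u:=uh$, which is a unitary operator with inverse multiplication by $h^{-1}$. First I would record the purely ``abstract'' half. Since the $L^2$-semigroup $(T_t)_{t\ge0}$ of $(\sE,\sF)$ is positivity preserving, $T^h_t u:=h^{-1}T_t(uh)$ is a well-defined strongly continuous symmetric semigroup on $L^2(E,h^2\cdot m)$, whose form is $\sF^h=M_h^{-1}\sF$, $\sE^h(u,v)=\sE(M_hu,M_hv)$; and $\re^{-\alpha t}T_t h\le h$ gives $\re^{-\alpha t}T^h_t1\le1$, so $(\sE^h_\alpha,\sF^h)$ is Markovian, i.e.\ $(\sE^h,\sF^h)$ is a lower bounded symmetric Dirichlet form. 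Conversely, if $(\sE^h_\alpha,\sF^h)$ is Markovian, then $T_t=M_h\big(\re^{\alpha t}\,\text{(its semigroup)}\big)M_h^{-1}$ is positivity preserving, so $(\sE,\sF)$ is a positivity preserving coercive form. Thus the form-theoretic content of the two sides matches for free, and what remains is to transfer \emph{quasi-regularity} across $M_h$.

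For that, the key observation is that since $h>0$ $m$-a.e., for every closed $F\subset E$ one has $\sF^h_F=M_h^{-1}(\sF_F)$, because $u=0$ $h^2\!\cdot\! m$-a.e.\ on $E\setminus F$ iff $M_hu=0$ $m$-a.e.\ there. Consequently an increasing sequence of closed sets is an $\sE$-nest iff it is an $\sE^h$-nest (as $M_h$ intertwines the $\sE_\beta$- and $\sE^h_\beta$-norms), hence $\sE$-polar sets coincide with $\sE^h$-polar sets, and a function on $E$ is $\sE$-quasi-continuous iff it is $\sE^h$-quasi-continuous. Using the standing strictly positive $\sE$-q.e.\ $\sE$-quasi-continuous version $\tilde h$ of $h$ (present on both sides of the equivalence, and exactly condition (iv) of Definition~\ref{DEFD1} for $\sE$), multiplication by $\tilde h$ and by $\tilde h^{-1}$ is a bijection between $\sE$-q.c.\ functions and correspondingly between ``elements of $\sF$ admitting a q.c.\ version'' and ``elements of $\sF^h$ admitting a q.c.\ version''. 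With this dictionary, conditions (i) and (ii) of Definition~\ref{DEFD1} transfer verbatim between $\sE$ and $\sE^h$, and (iii) transfers once $\tilde h$ itself is thrown into the separating family (so that two points with a common value of $\tilde h$ are still separated after multiplying by $\tilde h$). Finally condition (iv) for $\sE^h$ is automatic: since $h\in\sF$ we have $1\in\sF^h$, and $1$ is strictly positive, continuous, and $\alpha$-excessive for $(\sE^h,\sF^h)$; while (iv) for $\sE$ is witnessed by $\tilde h$. Assembling these equivalences gives the theorem.

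I expect the main obstacle to be the ``for one (hence every)'' clause: showing that once $(\sE,\sF)$ is quasi-regular, \emph{every} $m$-a.e.\ strictly positive $\alpha$-excessive $h\in\sF$ — not merely the particular function used to verify Definition~\ref{DEFD1}(iv) — automatically possesses a strictly positive $\sE$-q.e.\ $\sE$-quasi-continuous version. This rests on the standard facts that $\alpha$-excessive elements of $\sF$ are $\sE$-quasi-continuous and that such a q.c.\ version inherits strict positivity $\sE$-q.e.\ from strict positivity $m$-a.e.\ (via the supermartingale property of $\re^{-\alpha t}h(X_t)$). The remainder is bookkeeping with nests and polar sets, which the $M_h$-dictionary reduces to routine verification.
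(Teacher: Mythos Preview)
The paper itself does not prove this theorem; it merely quotes \cite[Theorem~4.14]{MR95}. Your sketch follows the argument of that reference and is substantially correct: the multiplication isometry $M_h$ identifies $\sE$-nests with $\sE^h$-nests (since $\sF^h_F=M_h^{-1}\sF_F$), hence $\sE$-polar sets with $\sE^h$-polar sets and $\sE$-quasi-continuous functions with $\sE^h$-quasi-continuous functions; conditions (i), (ii) and (iv) of Definition~\ref{DEFD1} then transfer as you describe, and your identification of the ``for one (hence every)'' clause as the delicate point is accurate.

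There is, however, a gap in your transfer of condition~(iii). Your parenthetical (``throw $\tilde h$ into the family so that points with a common value of $\tilde h$ are still separated after multiplying by $\tilde h$'') is correct for the implication ``$(\sE^h,\sF^h)$ quasi-regular $\Rightarrow$ $(\sE,\sF)$ quasi-regular'': if $\{\tilde v_n\}$ separate for $\sE^h$, then $\{\tilde v_n\tilde h\}\cup\{\tilde h\}$ are q.c.\ versions of elements of $\sF$ (since $v_n h\in\sF$ and $h\in\sF$) and they separate, because two points with $\tilde h(x)=\tilde h(y)$ are separated by some $\tilde v_n\tilde h$, while two points with $\tilde h(x)\ne\tilde h(y)$ are separated by $\tilde h$. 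But in the forward direction ``$(\sE,\sF)$ quasi-regular $\Rightarrow$ $(\sE^h,\sF^h)$ quasi-regular'' the analogous move fails: dividing by $\tilde h$ sends $\{u_n\}\subset\sF$ to $\{u_n/h\}\subset\sF^h$, and to recover separation of points where $\tilde h$ differs you would need to adjoin $1/\tilde h$; yet $1/h\in\sF^h$ is equivalent to $1\in\sF$, which is not assumed (and $h\in\sF$ maps to the constant $1\in\sF^h$, which separates nothing). The proof in \cite{MR95} closes this gap by exploiting the additional structure available once $(\sE^h_\alpha,\sF^h)$ is known to be a genuine Dirichlet form, so this direction needs a different mechanism than the bare $M_h$-dictionary.
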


We give a simple but useful sufficient condition for the quasi-regularity of $(\sE,\sF)$.

\begin{lemma}\label{LMD3}
Let $(\sE',\sF')$ be a quasi-regular symmetric Dirichlet form on $L^2(E,m)$.  Assume that 
\begin{equation}\label{eq:D1}
	\sF'=\sF,\quad \sE_1(u,u)\leq C \sE'_1(u,u),\;\forall u\in \sF,
\end{equation}
for some  constant $C>0$. 
Then $(\sE,\sF)$ is quasi-regular. 
\end{lemma}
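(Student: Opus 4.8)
The plan is to verify the four conditions (i)--(iv) of Definition~\ref{DEFD1} for $(\sE,\sF)$, using the hypothesis that $(\sE',\sF')$ already satisfies them together with the two-sided comparison \eqref{eq:D1}. The crucial observation is that the norms $\|\cdot\|_{\sE_1}$ and $\|\cdot\|_{\sE'_1}$ are comparable: \eqref{eq:D1} gives $\sE_1 \leq C\sE'_1$ on $\sF = \sF'$, and since both are closed forms on the same space $\sF$, an application of the closed graph theorem (or directly: the identity map $(\sF,\|\cdot\|_{\sE'_1})\to(\sF,\|\cdot\|_{\sE_1})$ is a continuous bijection between Banach spaces, hence a homeomorphism) yields a reverse inequality $\sE'_1 \leq C'\sE_1$ for some $C'>0$. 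Consequently the two norms are equivalent, and in particular an $\sE$-nest is the same thing as an $\sE'$-nest, an $\sE$-polar set is the same as an $\sE'$-polar set, and the notion of $\sE$-quasi-continuity coincides with that of $\sE'$-quasi-continuity. I would establish this norm equivalence first, as a single preliminary paragraph, since everything else follows from it almost formally.

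Granting the equivalence of nests and quasi-continuity, conditions (i)--(iii) transfer immediately: the compact $\sE'$-nest from the quasi-regularity of $(\sE',\sF')$ is a compact $\sE$-nest, giving (i); the $\sE'_1$-dense subset of $\sF'$ with quasi-continuous versions is $\sE_1$-dense in $\sF$ (same topology, same space) with $\sE$-quasi-continuous versions, giving (ii); and the separating family $\{\tilde u_n\}$ of quasi-continuous functions off an $\sE'$-polar set is a separating family off an $\sE$-polar set, giving (iii). The only point requiring a little care is (iv), the existence of a strictly positive $\sE$-quasi-continuous $\alpha$-excessive function in $\sF$ for some $\alpha>0$. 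Here $\alpha$-excessivity refers to the semigroup of $(\sE,\sF)$, which is \emph{not} the semigroup of $(\sE',\sF')$, so one cannot simply reuse the $h$ provided by the quasi-regularity of $(\sE',\sF')$.

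For (iv) I would argue directly. Let $(T_t)_{t\geq 0}$ be the $L^2$-semigroup of $(\sE,\sF)$ and $(G_\alpha)_{\alpha>0}$ its resolvent; $(\sE,\sF)$ is a (non-negative) positivity preserving coercive form by hypothesis, so $G_\alpha$ maps $pL^2(E,m)$ into $pL^2(E,m)$ and $\alpha G_1 f$ is $1$-excessive for $f\in pL^2(E,m)$ (cf.~the remarks around \cite[Lemma~3.6]{MR95}). Pick any $g\in L^2(E,m)$ with $g>0$ $m$-a.e.\ (e.g.\ by exhausting $E$ with the compact $\sE$-nest $\{K_n\}$ and taking a suitable positive combination of indicators, or simply a strictly positive $L^2$ function if $m$ is $\sigma$-finite); then $h_0 := G_1 g \in \sF$ is $1$-excessive and $h_0>0$ $m$-a.e., using that the resolvent of a positivity preserving form is strictly positive on strictly positive data, which follows from the Beurling--Deny type criterion \eqref{eq:C1} in the same way as for Dirichlet forms. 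It then remains to produce a strictly positive \emph{$\sE$-quasi-continuous} $m$-version of $h_0$: since $h_0\in\sF$ and $(\sE',\sF')$ is quasi-regular with $\sF'=\sF$, condition (ii) for $(\sE',\sF')$ together with the coincidence of quasi-continuity notions shows that every element of $\sF$ has an $\sE$-quasi-continuous $m$-version (the $\sE'_1$-dense, hence $\sE_1$-dense, subset with quasi-continuous versions is in fact all of a dense subspace, and quasi-continuous versions pass to $\sE_1$-limits along a subsequence off an $\sE$-polar set by the standard nest argument). Strict positivity $\sE$-q.e.\ of this version then follows because a quasi-continuous function that is $m$-a.e.\ positive is q.e.\ positive, and one excises the q.e.-null set on which it vanishes. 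This produces the required $h$ for (iv) with $\alpha = 1$.

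The main obstacle, as just indicated, is condition (iv): one must not lose track of \emph{whose} semigroup the excessivity is measured against, and one needs the (routine but non-vacuous) facts that a positivity preserving coercive form has a positivity preserving resolvent which is strictly positive on strictly positive data, and that every function in $\sF$ admits an $\sE$-quasi-continuous version --- the latter being exactly where the quasi-regularity of the comparison form $(\sE',\sF')$, combined with the norm equivalence, is used in an essential way. Once (iv) is in hand, I would close by invoking Definition~\ref{DEFD1} to conclude that $(\sE,\sF)$ is quasi-regular.
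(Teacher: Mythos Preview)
Your proposal is correct and follows essentially the same strategy as the paper: use the inequality \eqref{eq:D1} to transfer $\sE'$-nests, $\sE'$-polar sets, and $\sE'$-quasi-continuity to the $\sE$-side (handling (i)--(iii)), then produce a strictly positive excessive function in $\sF$ via the resolvent of $(\sE,\sF)$ applied to a strictly positive $L^2$-function, and obtain its quasi-continuous version from the quasi-regular Dirichlet form $(\sE',\sF')$.

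Two minor remarks on execution. First, the open-mapping argument giving the reverse inequality $\sE'_1\leq C'\sE_1$ is correct but unnecessary: the paper uses only the one direction $\sE_1\leq C\sE'_1$, which already makes every $\sE'$-nest an $\sE$-nest and hence every $\sE'$-quasi-continuous function $\sE$-quasi-continuous; nothing more is needed. Second, your route to a quasi-continuous version of $h_0$ via ``quasi-continuous versions pass to $\sE_1$-limits'' is a detour: since $(\sE',\sF')$ is a quasi-regular Dirichlet form and $h_0\in\sF=\sF'$, it automatically has an $\sE'$-quasi-continuous $m$-version $\tilde h_0$, which is then $\sE$-quasi-continuous by the nest inclusion. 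The paper closes by invoking \cite[Proposition~4.11]{MR95} (after checking \cite[(4.8)]{MR95} and \cite[Assumption~4.6]{MR95}) rather than directly verifying (iv), which packages the ``strictly positive $\sE$-q.e.'' step you sketch.
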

\begin{proof}
The condition \eqref{eq:D1} implies that an $\sE'$-nest is also an $\sE$-nest.  Hence an $\sE'$-polar set is also $\sE$-polar and an $\sE'$-quasi-continuous function is also $\sE$-quasi-continuous.  On account of the quasi-regularity of $(\sE',\sF')$,  one can easily verify that Definition~\ref{DEFD1}(i-iii) and \cite[(4.8)]{MR95} are satisfied by $(\sE,\sF)$.   Take  $\varphi\in L^2(E,m)$ with $\varphi>0$, $m$-a.e., and set $u:=\int_0^\infty \re^{-\alpha t}T_t\varphi dt$.  Then $u$ is $\alpha$-excessive and $u>0$,  $m$-a.e. due to \cite[Lemma~3.6]{MR95}.  Note that $u\in \sF=\sF'$.  Thus $u$ admits an $\sE'$-quasi-continuous $m$-version $\tilde{u}$,  which is also $\sE$-quasi-continuous.  Clearly $u\leq \tilde{u}$,  $m$-a.e.  Consequently,  \cite[Assumption~4.6]{MR95} is satisfied by $(\sE,\sF)$.  In view of \cite[Proposition~4.11]{MR95},  we can eventually conclude the quasi-regularity of $(\sE,\sF)$.  
\end{proof}

\section{Classical Calder\'on's problem}\label{SEC6}



In this appendix we review some results for the classical Calder\'on problem (see \cite{U12, SU87, C80}).  Let $\Omega$ be a bounded domain with smooth boundary and consider the Dirichlet form $(\sE,\sF)=(\frac{1}{2}\bD, H^1(\Omega))$ on $L^2(\bar{\Omega})$.  Set $G=\Omega$, $F=\Gamma:=\partial \Omega$ and $\mu=\sigma$.   Take $\kappa_i(dx)=V_i(x)dx$ with $V_i\in L^\infty(\Omega)$ for $i=1,2$.  
We do not assume \eqref{eq:42-3} and let 
\begin{equation}\label{eq:E0}
\begin{aligned}
	C_{V_i}:=&\big\{(\varphi, f)\in L^2(\Gamma)\times L^2(\Gamma):  \exists\,u\in \cH^{\kappa_i}_\Gamma\text{ such that }u|_\Gamma=\varphi, \\
	&\qquad\qquad \sE^{\kappa_i}(u,v)=\int_{\Gamma} f v|_\Gamma d\sigma \text{ for any }v\in \sF^{\kappa_i}_\re\text{ with }v|_\Gamma\in L^2(\Gamma)\big\},
\end{aligned}\end{equation}
called the \emph{Cauchy data} for $(\sE^{\kappa_i},\sF^{\kappa_i})$ on $L^2(\Gamma)$.  Note that if \eqref{eq:42-3} holds true,  then $C_{V_i}$ is the graph of $\sN_{\kappa_i}$. The uniqueness problem for classical Calder\'on problem considers whether $C_{V_1}=C_{V_2}$ implies $V_1=V_2$.  



\begin{lemma}\label{LM53}
For $i=1,2$,  it holds that
\begin{equation}\label{eq:54}
\begin{aligned}
C_{V_i}=\left\{(u|_\Gamma,  \frac{1}{2}\partial_\bn u): u\in H^{3/2}(\Omega),  -\frac{1}{2}\Delta u +V_i u=0\right\}.  
\end{aligned}
\end{equation}
\end{lemma}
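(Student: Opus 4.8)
The plan is to show the two sets in \eqref{eq:54} coincide by a double inclusion, the key being that membership in $\cH^{\kappa_i}_\Gamma$ together with the integration-by-parts identity in \eqref{eq:E0} is equivalent to $u\in H^1(\Omega)$ solving $-\frac12\Delta u+V_iu=0$ weakly and having a weak normal derivative in $L^2(\Gamma)$, and that the latter automatically upgrades the regularity of $u$ to $H^{3/2}(\Omega)$ via the characterization \eqref{eq:35}. First I would record, as in \S\ref{SEC5} (see Lemma~\ref{LM32}(1) and the remarks preceding Definition~\ref{DEF45}), that for $(\sE,\sF)=(\frac12\bD,H^1(\Omega))$ with $G=\Omega$ one has $\sF^{\kappa_i}_\re=H^1(\Omega)$ and $\sF^{\kappa_i,\Omega}_\re=H^1_0(\Omega)$, so that
\[
	\cH^{\kappa_i}_\Gamma=\Bigl\{u\in H^1(\Omega):\ \tfrac12\bD(u,v)+\int_\Omega uv\,V_i\,dx=0,\ \forall v\in H^1_0(\Omega)\Bigr\},
\]
which by the definition of the weak Laplacian recalled in \S\ref{SEC4} is exactly $\{u\in H^1(\Omega): \frac12\Delta u=V_iu$ in $L^2(\Omega)\}$; note $V_iu\in L^2(\Omega)$ since $V_i\in L^\infty$.

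For the inclusion ``$\subset$'', take $(\varphi,f)\in C_{V_i}$ with associated $u\in\cH^{\kappa_i}_\Gamma$. Then $\frac12\Delta u=V_iu\in L^2(\Omega)$, and testing the identity $\sE^{\kappa_i}(u,v)=\int_\Gamma fv|_\Gamma\,d\sigma$ against arbitrary $v\in H^1(\Omega)$ and substituting $\int_\Omega uv\,V_i\,dx=\int_\Omega(\frac12\Delta u)v\,dx$ yields precisely the Green--Gauss identity \eqref{eq:GreenGauss} with normal derivative $\partial_\bn u=2f$ — here I must be careful about the factor $\tfrac12$ in $\sE=\tfrac12\bD$, which is exactly why the pair appears as $(u|_\Gamma,\tfrac12\partial_\bn u)$. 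By the equivalence \eqref{eq:35} (from \cite[Lemma~2.2]{BV17}), $u\in H^1(\Omega)$ with $\Delta u\in L^2(\Omega)$ and a weak normal derivative in $L^2(\Gamma)$ forces $u\in H^{3/2}(\Omega)$, and $-\frac12\Delta u+V_iu=0$; moreover $u|_\Gamma=\Tr(u)=\varphi$ by Lemma~\ref{LM32}(2) and $f=\tfrac12\partial_\bn u$. For the reverse inclusion, given $u\in H^{3/2}(\Omega)$ with $-\frac12\Delta u+V_iu=0$, note $\Delta u=2V_iu\in L^2(\Omega)$ and $u\in H^{3/2}(\Omega)$, so again \eqref{eq:35} furnishes a weak normal derivative $\partial_\bn u\in L^2(\Gamma)$ and the identity \eqref{eq:GreenGauss}; rewriting \eqref{eq:GreenGauss} using $\int_\Omega\Delta u\,v\,dx=2\int_\Omega V_iu\,v\,dx$ and halving gives $\sE^{\kappa_i}(u,v)=\int_\Gamma\tfrac12\partial_\bn u\,\Tr(v)\,d\sigma$ for all $v\in H^1(\Omega)=\sF^{\kappa_i}_\re$, while $\frac12\Delta u=V_iu$ weakly means $u\in\cH^{\kappa_i}_\Gamma$. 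Hence $(u|_\Gamma,\tfrac12\partial_\bn u)\in C_{V_i}$.

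The one genuinely delicate point is the test-function class: \eqref{eq:E0} restricts $v$ to $\sF^{\kappa_i}_\re$ with $v|_\Gamma\in L^2(\Gamma)$, whereas \eqref{eq:GreenGauss} ranges over all $v\in H^1(\Omega)$; I expect this to be harmless here because $\sF^{\kappa_i}_\re=H^1(\Omega)$ and every $v\in H^1(\Omega)$ already has $\Tr(v)\in H^{1/2}(\Gamma)\subset L^2(\Gamma)$, so the two quantifications coincide — but this identification, and the bookkeeping of the factor $\tfrac12$ coming from $\sE=\tfrac12\bD$ versus the $\tfrac12$ in $D_\lambda\varphi=\tfrac12\partial_\bn u$, is the main thing that needs to be checked rather than waved through. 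Everything else is a direct translation between the weak-PDE language of \S\ref{SEC4} and the Dirichlet-form language of \eqref{eq:E0}, using Lemma~\ref{LM32} and \eqref{eq:35}.
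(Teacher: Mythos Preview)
Your proof is correct and follows essentially the same route as the paper's own argument: a double inclusion, with the key input being the characterization \eqref{eq:35} to pass between ``$u\in H^1(\Omega)$, $\Delta u\in L^2(\Omega)$, weak normal derivative in $L^2(\Gamma)$'' and ``$u\in H^{3/2}(\Omega)$, $\Delta u\in L^2(\Omega)$''. Your additional remarks on the test-function class and the factor $\tfrac12$ are correct and make explicit points the paper leaves implicit.
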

\begin{proof}
Denote the family on the right side of \eqref{eq:54} by $\sG$.  To prove $C_{V_i}\subset \sG$,  
take $(\varphi,f)\in C_{V_i}$ and let $u$ be the function appearing in \eqref{eq:E0}, i.e.  $u\in H^1(\Omega)$,  $u|_\Gamma=\varphi$,  $-\frac{1}{2}\Delta u + V_i u=0$ weakly in $\Omega$ and 
\begin{equation}\label{eq:53}
	\frac{1}{2}\int_\Omega \nabla u \nabla v dx+\int_\Omega u(x)v(x)V_i(x)dx=\int_\Gamma f v|_\Gamma d\sigma,\quad \forall v\in H^1(\Omega). 
\end{equation}
It follows that $\Delta u=2V_i u\in L^2(\Omega)$ and for any $v\in H^1(\Omega)$,  
\begin{equation}\label{eq:52}
	\int_\Omega  \nabla u \nabla v dx + \int_\Omega \Delta u vdx=2\int_\Gamma f v|_\Gamma d\sigma.  
\end{equation}
Hence $u$ has a weak normal derivative in $L^2(\Gamma)$ and $f=\frac{1}{2}\partial_\bn u$.  On account of \eqref{eq:35},  we get $u\in H^{3/2}(\Omega)$.  Therefore $C_{V_i}\subset \sG$ is obtained.  To the contrary,  take $u\in H^{3/2}(\Omega)$ such that $-\frac{1}{2}\Delta u +V_i u=0$ weakly in $\Omega$.  Then $\Delta u=2V_i u\in L^2(\Omega)$ and \eqref{eq:35} implies that $u$ has a weak normal derivative $\partial_\bn u$ in $L^2(\Gamma)$,  i.e.  \eqref{eq:52} holds for $f:=\frac{1}{2}\partial_\bn u$.  Since $-\frac{1}{2}\Delta u +V_i u=0$ weakly in $\Omega$,  it follows that \eqref{eq:53} is true.  Thus $(\varphi,f)\in C_{V_i}$ with $\varphi=u|_\Gamma \in L^2(\Gamma)$ and $f=\frac{1}{2}\partial_\bn u$.  Eventually we can conclude \eqref{eq:54}.  
\end{proof}

The classical Calder\'on problem of dimension higher than $3$ has been solved in a seminal paper \cite{SU87};  see also \cite{U12}.  That is the following.  

\begin{theorem}
Consider a bounded domain $\Omega\subset \bR^d$,  $d\geq 3$, with smooth boundary.  Let $V_i\in L^\infty(\Omega)$ for $i=1,2$.  If $C_{V_1}=C_{V_2}$,  then $V_1=V_2$.  
\end{theorem}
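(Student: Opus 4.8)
The plan is to reduce the statement, via the identification of Cauchy data in Lemma~\ref{LM53}, to the celebrated theorem of Sylvester and Uhlmann \cite{SU87}, and to recall the main ingredients of its proof. Set $q_i:=2V_i\in L^\infty(\Omega)$. By Lemma~\ref{LM53}, $C_{V_i}$ is exactly the set of pairs $(u|_\Gamma,\tfrac12\partial_\bn u)$ with $u\in H^{3/2}(\Omega)$ solving $\Delta u=q_i u$ weakly in $\Omega$; thus the hypothesis $C_{V_1}=C_{V_2}$ is precisely the coincidence of the Dirichlet-to-Neumann data for $\Delta-q_1$ and $\Delta-q_2$, and it suffices to prove $q_1=q_2$. \textbf{Step 1 (integral identity).} Given any solution $u_1\in H^{3/2}(\Omega)$ of $\Delta u_1=q_1u_1$, the pair $(u_1|_\Gamma,\tfrac12\partial_\bn u_1)$ lies in $C_{V_1}=C_{V_2}$, so there is $u_2\in H^{3/2}(\Omega)$ with $\Delta u_2=q_2u_2$, $u_2|_\Gamma=u_1|_\Gamma$ and $\partial_\bn u_2=\partial_\bn u_1$. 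Since every function in $H^{3/2}(\Omega)=H^{3/2}_\Delta(\Omega)$ has a weak normal derivative (see \eqref{eq:35}), the second Green identity — obtained from \eqref{eq:GreenGauss} by symmetrizing — applies; using it for the pair $(u_1,w)$ and for $(u_2,w)$ with $w\in H^{3/2}(\Omega)$ an arbitrary solution of $\Delta w=q_2w$, and subtracting, all boundary terms cancel and one gets
\begin{equation*}
  \int_\Omega (q_1-q_2)\,u_1 w\,dx=0
\end{equation*}
for all such $u_1$ and $w$.

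\textbf{Step 2 (complex geometric optics solutions).} This is the analytic heart of the argument. For $\zeta\in\mathbb{C}^d$ with $\zeta\cdot\zeta=0$ and $|\zeta|$ large, the Sylvester--Uhlmann estimate furnishes a bounded right inverse of $\Delta+2\zeta\cdot\nabla$ between suitable weighted $L^2$-Sobolev spaces, with operator norm $O(|\zeta|^{-1})$; solving $(\Delta+2\zeta\cdot\nabla)\psi=q(1+\psi)$ by a contraction argument then yields, for $q\in L^\infty(\Omega)$ extended by zero and $|\zeta|$ large, a solution $u=e^{x\cdot\zeta}(1+\psi(x,\zeta))$ of $\Delta u=qu$ on a fixed ball $B\supset\bar\Omega$ with $\|\psi(\cdot,\zeta)\|_{L^2(B)}\to 0$ as $|\zeta|\to\infty$. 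Interior elliptic regularity upgrades the restriction of such $u$ to $\Omega$ to $H^2(\Omega)\subset H^{3/2}(\Omega)$, so these are admissible in Step~1. \textbf{Step 3 (Fourier inversion).} Fix $\xi\in\bR^d$. Because $d\geq 3$ there is a two-dimensional subspace orthogonal to $\xi$, which leaves enough room to choose, along a sequence, vectors $\zeta_1,\zeta_2\in\mathbb{C}^d$ with $\zeta_j\cdot\zeta_j=0$, $\zeta_1+\zeta_2=-i\xi$ and $|\zeta_1|,|\zeta_2|\to\infty$ (split $w:=\tfrac12(\zeta_1-\zeta_2)$ into real and imaginary parts that are orthogonal to each other and to $\xi$, of growing length). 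Inserting $u_1=e^{x\cdot\zeta_1}(1+\psi_1)$ and $w=e^{x\cdot\zeta_2}(1+\psi_2)$ into the identity of Step~1 gives
\begin{equation*}
  \int_{\bR^d}(q_1-q_2)(x)\,e^{-ix\cdot\xi}\,(1+\psi_1)(1+\psi_2)\,dx=0,
\end{equation*}
and letting $|\zeta_j|\to\infty$, so that $\psi_1,\psi_2\to 0$ in $L^2$, yields $\widehat{q_1-q_2}(\xi)=0$. Since $\xi\in\bR^d$ is arbitrary, $q_1=q_2$, i.e. $V_1=V_2$.

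The step I expect to be the main obstacle is Step~2: the construction of the complex geometric optics solutions, and especially the a priori solvability estimate for $\Delta+2\zeta\cdot\nabla$ in the weighted Sobolev scale, which is the genuinely hard input and is precisely the content of \cite{SU87} (see also the exposition in \cite{U12}). The remaining ingredients — the reduction through Lemma~\ref{LM53}, the Green-identity computation, and the Fourier-analytic conclusion — are routine once that estimate is available, and the dimension restriction $d\geq 3$ enters only in the elementary geometric choice of $\zeta_1,\zeta_2$ in Step~3.
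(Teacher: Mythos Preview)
Your proposal is correct and follows essentially the same approach as the paper: both reduce via Lemma~\ref{LM53} to the Sylvester--Uhlmann framework, derive the orthogonality relation $\int_\Omega (V_1-V_2)u_1u_2\,dx=0$ for solutions of the two Schr\"odinger equations, and then invoke the complex geometric optics solutions from \cite{SU87,U12} to conclude. The only cosmetic difference is that the paper obtains the integral identity directly from the weak formulation \eqref{eq:E0} (testing the equation for $u_1$ against $u_2$ and vice versa), whereas you route through Green's second identity with an auxiliary solution $w$; and the paper simply cites \cite[Theorem~2.5]{U12} for Steps~2--3 rather than sketching the CGO construction and Fourier argument as you do.
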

\begin{proof}
Take $(\varphi,f)\in C_{V_1}=C_{V_2}$.
For $i=1,2$,  let $u_i\in H^{3/2}(\Omega)$ such that $u_i|_\Gamma=\varphi$ and $\frac{1}{2}\Delta u_i-V_i u_i=0$ as in Lemma~\ref{LM53}.  We assert that 
\begin{equation}\label{eq:51}
	\int_\Omega (V_1-V_2)u_1u_2dx=0.  
\end{equation}
In fact,  it follows from \eqref{eq:E0} that for any $v\in H^1(\Omega)$,  
\[
	\frac{1}{2}\int_\Omega \nabla u_i \nabla v dx+\int_\Omega u_i(x) v(x) V_i(x)dx=\int_\Gamma f\Tr(v)d\sigma.  
\]
Taking $v=u_2$ for $i=1$ and $v=u_1$ for $i=2$,  we get
\[
\frac{1}{2}\int_\Omega \nabla u_1 \nabla u_2 dx+\int_\Omega u_1(x) u_2(x) V_1(x)dx=\int_\Gamma f\varphi d\sigma
\]
and
\[
\frac{1}{2}\int_\Omega \nabla u_2 \nabla u_1 dx+\int_\Omega u_2(x) u_1(x) V_2(x)dx=\int_\Gamma f\varphi d\sigma.  
\]
Hence \eqref{eq:51} holds.  

The conclusion $V_1=V_2$ can be obtained as follows:  Note that the restrictions of the functions appearing in \cite[(18)]{U12} (with $q_i=V_i$) to $\Omega$ belong to $H^2(\Omega)\subset H^{3/2}(\Omega)$.  Particularly,  by means of \eqref{eq:54},   \eqref{eq:51} holds for $u_1,u_2$ of the form \cite[(18)]{U12}.  Hence $V_1=V_2$ follows from the same argument as that in the proof of \cite[Theorem~2.5]{U12}.  
\end{proof}

Regarding the two-dimensional case,  Bukhgeim \cite{B08} obtains $V_1=V_2$ under a slightly different condition to $C_{V_1}=C_{V_2}$;  see \cite[Theorem~2.1]{B08}.  

\begin{theorem}
Consider $d=2$ and $\Omega=\mathbb{D}=\{x: |x|<1\}$.  Let $V_i\in L^\infty(\Omega)$ for $i=1,2$.   Set for $i=1,2$ and $2<p\leq \infty$,
\[
	\tilde{C}_{V_i, p}=\left\{(u|_\Gamma,  \frac{1}{2}\partial_\bn u): u\in W^{2,p}(\Omega),  -\frac{1}{2}\Delta u +V_i u=0\right\}.
\]
If $\tilde C_{V_1, p}=\tilde C_{V_2,p}$ for some $2<p\leq \infty$,   then $V_1=V_2$.  
\end{theorem}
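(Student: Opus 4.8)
The plan is to follow Bukhgeim's argument \cite{B08}, in two stages: first reduce the hypothesis $\tilde C_{V_1,p}=\tilde C_{V_2,p}$ to an orthogonality identity, and then feed in complex geometrical optics (CGO) solutions together with a stationary phase analysis.

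\emph{Step 1: the orthogonality identity.} I would first show that $\tilde C_{V_1,p}=\tilde C_{V_2,p}$ forces
\[
	\int_\Omega (V_1-V_2)\,u_1u_2\,dx=0
\]
for \emph{every} pair $u_1,u_2\in W^{2,p}(\Omega)$ with $-\tfrac12\Delta u_i+V_iu_i=0$ ($i=1,2$). This is the two-dimensional analogue of \eqref{eq:51}, but now $u_1$ and $u_2$ will not share their boundary traces, so one extra move is needed. Since $V_i\in L^\infty(\Omega)$ and $p>2$ we have $\Delta u_i=2V_iu_i\in L^2(\Omega)$ and, by \eqref{eq:35}, $u_i$ has a weak normal derivative in $L^2(\Gamma)$; thus $\bigl(\Tr(u_1),\tfrac12\partial_\bn u_1\bigr)\in\tilde C_{V_1,p}=\tilde C_{V_2,p}$, so there is $w\in W^{2,p}(\Omega)$ solving $-\tfrac12\Delta w+V_2w=0$ with $\Tr(w)=\Tr(u_1)$ and $\partial_\bn w=\partial_\bn u_1$. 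Applying the Green--Gauss formula \eqref{eq:GreenGauss} to the ordered pairs $(u_1,u_2)$, $(u_2,u_1)$, $(w,u_2)$, $(u_2,w)$ and taking the appropriate differences, the $\int\nabla u_1\nabla u_2$ and $\int\nabla u_2\nabla w$ terms cancel, the boundary contributions built from $w$ coincide with those built from $u_1$ (because $w$ and $u_1$ have the same Cauchy data), and what survives is precisely $\int_\Omega(V_1-V_2)u_1u_2\,dx=0$.

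\emph{Step 2: CGO solutions and stationary phase.} Fix $z_0\in\Omega$ and identify $\bR^2$ with the complex plane by $z=x_1+\mathrm{i}x_2$. Choose a holomorphic $\Phi$ on a neighbourhood of $\overline\Omega$ that is Morse with $z_0$ its unique critical point in $\overline\Omega$ (for instance $\Phi(z)=(z-z_0)^2$). For large $\tau>0$ one constructs, by the $\bar\partial$-method (inverting the Cauchy--Riemann operator conjugated by the weights $e^{\pm\tau\Phi}$ and invoking the attendant Carleman estimates), solutions
\[
	u_1=e^{\tau\Phi}\bigl(a_1+r_1^\tau\bigr)\ \text{ for }V_1,\qquad u_2=e^{-\tau\overline\Phi}\bigl(a_2+r_2^\tau\bigr)\ \text{ for }V_2,
\]
with fixed amplitudes $a_1,a_2$ satisfying $a_1(z_0)a_2(z_0)\neq0$ and remainders $r_i^\tau\to0$ as $\tau\to\infty$ in the relevant norms; elliptic regularity (using $V_i\in L^\infty$) gives $u_i\in W^{2,p}(\Omega)$, so these are admissible in Step 1. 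Since $\Phi-\overline\Phi=2\mathrm{i}\,\mathrm{Im}\,\Phi$, the product is $u_1u_2=e^{2\mathrm{i}\tau\,\mathrm{Im}\,\Phi}(a_1+r_1^\tau)(a_2+r_2^\tau)$, an oscillatory integrand whose phase $\mathrm{Im}\,\Phi$ has the single nondegenerate critical point $z_0$ in $\overline\Omega$. Multiplying $\int_\Omega(V_1-V_2)u_1u_2\,dx=0$ by the correct power of $\tau$, letting $\tau\to\infty$, absorbing the remainder contributions and applying a stationary phase asymptotic valid for merely bounded amplitudes, one extracts $c\,(V_1-V_2)(z_0)\,a_1(z_0)a_2(z_0)=0$ at every Lebesgue point $z_0$ of $V_1-V_2$, hence $V_1=V_2$ in $L^\infty(\Omega)$.

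I expect the genuine difficulty to lie entirely in Step 2, which I would simply invoke from \cite[Theorem~2.1]{B08}: the quantitative CGO construction for $L^\infty$ potentials via Carleman estimates for $\bar\partial$, and --- the crucial point --- the stationary phase lemma with non-smooth (merely $L^\infty$) amplitude that replaces the classical smooth-amplitude version, which is unavailable here because $V_1-V_2$ is only bounded. This is precisely what pins the argument to two dimensions and to the $W^{2,p}$ framework; Step 1, by contrast, is elementary bookkeeping with \eqref{eq:GreenGauss}.
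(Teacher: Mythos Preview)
Your outline is sound, and it matches the paper's treatment: the paper gives no proof of this theorem at all, simply attributing it to Bukhgeim \cite[Theorem~2.1]{B08}. Your Step~1 (the orthogonality identity via Green--Gauss and the auxiliary solution $w$) is correct and is the standard reduction; your Step~2 is precisely the Bukhgeim machinery the paper is citing, so there is nothing to compare.
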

\begin{remark}
Note that $W^{2,p}(\Omega)\subset H^{3/2}(\Omega)$ for $p>2$.  Hence $\tilde{C}_{V_i,p}\subset C_{V_i}$.  
\end{remark}





\bibliographystyle{abbrv}
\bibliography{DtN4}

\end{document}